\newtheorem{teo}{Theorem}[section]
\newtheorem{cor}[teo]{Corollary}
\newtheorem{prop}[teo]{Proposition}
\newtheorem{lema}[teo]{Lemma}
\theoremstyle{definition}
\newtheorem{alg}[teo]{Algorithm}
\newtheorem{defin}[teo]{Definition}
\newtheorem{obs}[teo]{Remark}
\theoremstyle{remark}
\newtheorem{ex}[teo]{Example}
\newcommand{\oo}{\color{magenta}}
\newcommand{\N}{\mathbb N}
\newcommand{\Z}{\mathbb Z}
\newcommand{\x}{\boldsymbol{x}}
\newcommand{\y}{\boldsymbol{y}}
\newcommand{\YT}[3]{
	\vcenter{\hbox{
			\begin{tikzpicture}[x={(0in,-#1)},y={(#1,0in)}] 
				\foreach \rowi [count=\i] in {#3} {
					\foreach \e [count=\j] in \rowi {
						\draw (\i,\j) rectangle +(-1,-1);
						\draw (\i-0.5,\j-0.5) node {$#2\e$};
					}
				}
			\end{tikzpicture}
	}}
}
\newcommand{\thiYT}[3]{
	\vcenter{\hbox{
			\begin{tikzpicture}[x={(0in,-#1)},y={(#1,0in)}] 
				\foreach \rowi [count=\i] in {#3} {
					\foreach \e [count=\j] in \rowi {
						\draw[ultra thick] (\i,\j) rectangle +(-1,-1);
						\draw (\i-0.5,\j-0.5) node {$#2\e$};
					}
				}
			\end{tikzpicture}
	}}
}
\newcommand{\SYT}[3]{
	\vcenter{\hbox{
			\begin{tikzpicture}[x={(0in,-#1)},y={(#1,0in)}] 
				\foreach \rowi [count=\i] in {#3} {
					\foreach \e [count=\j] in \rowi {
						\draw (\i,-\j) rectangle +(-1,-1);
						\draw (\i-0.5,-\j-0.5) node {$#2\e$};
					}
				}
			\end{tikzpicture}
	}}
}
\begin{document}
	\title[Cocrystals, symplectic keys and virtual keys]{Cocrystals of symplectic Kashiwara-Nakashima tableaux, symplectic Willis like direct way, virtual keys and applications}
	\author{Olga Azenhas and Jo\~ao Miguel Santos}
\address{University of Coimbra, Department of Mathematics, CMUC, 3000-143 Coimbra, Portugal}\email{oazenhas@mat.uc.pt}
 \email{jmsantos@mat.uc.pt}

	\begin{abstract}We attach a  $\mathfrak{sl}_2$ crystal, called cocrystal, to a symplectic Kashiwara-Nakashima (KN) tableau,  whose vertices are skew KN tableaux connected via the Lecouvey-Sheats symplectic \emph{jeu de taquin}. These cocrystals contain all the needed information to compute right and left keys of a symplectic KN tableau. Motivated by Willis' direct way of computing type $A$ right and left keys, we also give a way of computing symplectic, right and left, keys without the use of the symplectic \emph{jeu de taquin}. On the other hand, we  prove that Baker virtualization by folding $A_{2n-1}$ into $C_n$ commutes with dilatation of crystals. Thus we may alternatively utilize this Baker virtualization
to  embed  a type $C_n$ Demazure crystal, its opposite and atoms into  $A_{2n-1}$ ones. The right, respectively left keys of a KN tableau are thereby computed as $A_{2n-1}$ semistandard tableaux  and returned back via reverse Baker embedding to the $C_n$ crystal as its right respectively  left symplectic keys. In particular,  Baker embedding also virtualizes the crystal of Lakshmibai-Seshadri paths as $B_n$-paths into the crystal  of Lakshmibai-Seshadri paths as $\mathfrak{S}_{2n}$-paths.
Lastly, as an application of our explicit symplectic right and left key maps,
thanks to the isomorphism between  Lakshmibai-Seshadri path  and Kashiwara crystals we use, similarly to the ${{Gl}(n,\mathbb{C})}$ case, left and right key maps as  a tool to test whether a symplectic KN tableau is \emph{standard} on a  Schubert or Richardson variety  in the flag variety $Sp(2n,\mathbb{C})/B$, with $B$ a Borel subgroup. 
	\end{abstract}
	
\subjclass[2000]{05E10, 05E05, 17B37,14M15}
\keywords{ Kashiwara--Nakashima tableaux, symplectic keys,  right and left key maps, Demazure crystals, Schubert varieties, crystal dilatation, Baker virtualization.}
	\maketitle
	
	\section{Introduction}
	Symplectic tableaux \cite{king1976weight, de1979symplectic, Kashiwara1994CrystalGF} provide the monomial weight generators for the characters of the symplectic Lie algebra $\mathfrak{sp}(2n, \mathbb{C})$. Given a partition $\lambda \in \mathbb{Z}_{\ge 0}^n$, symplectic Kashiwara-Nakashima (KN) tableaux of shape $\lambda$, on the alphabet $[\pm n]$ \cite{Kashiwara1994CrystalGF}, a variation of De Concini tableaux { in symplectic \emph{standard} monomial theory} \cite{de1979symplectic}, are endowed with a type $C_n$ Kashiwara crystal structure $\mathfrak{B}(\lambda)$ compatible with a plactic monoid  and sliding algorithms, studied by Lecouvey in terms of crystal isomorphisms \cite{lecouvey2002schensted}.

Let $G=Sp(2n,\mathbb{C})$ be the symplectic group. Fix $H\subseteq B\subseteq G$, $H$ a maximal torus,   $B$ a  Borel subgroup, and let $W$ be  the associated Weyl group  identified  as a   Coxeter group  with $B_n$ (hyperoctahedral group) with longest element $w_0=-Id$. Let the Lie algebras of $G$, $B$ and  $H$ be $\mathfrak{g}=\mathfrak{sp}(2n, \mathbb{C})$,  $\mathfrak{b}$, a Borel subalgebra of $\mathfrak{g}$, and $\mathfrak{h}$, a  Cartan subalgebra of $\mathfrak{g}$, respectively. Let $V(\lambda)$ be the irreducible $G$-module with highest weight $\lambda$.
 The Kashiwara crystal $\mathfrak{B}(\lambda)$ is a combinatorial skeleton for the $G$-module $V(\lambda)$. Another combinatorial skeleton is the Littelmann crystal of Lakshmibai-Seshadri (L-S) paths of shape $\lambda$, denoted $\mathbf{B}(\lambda)$, isomorphic to the Kashiwara crystal $\mathfrak{B}(\lambda)$. For $w\in W$, the Demazure module $V_w(\lambda)\subseteq V(\lambda)$ is the $B$-submodule defined $V_w(\lambda)=\mathcal{U}(\mathfrak{b}).V(\lambda)_{w\lambda}$, where $\mathcal{U}(\mathfrak{b})$ is the enveloping algebra of the Borel subalgebra $\mathfrak{b}$ of $\mathfrak{g}$, and $V(\lambda)_{w\lambda}$ is the one dimensional weight space of $V(\lambda)$ with extremal weight $w\lambda$. The Demazure module $V_e(\lambda)$ is just one-dimensional generated by  the highest weight vector of $V(\lambda)$ and $V_{w_0}(\lambda)=V(\lambda)$.

{ Key polynomials or Demazure characters are the characters of the Demazure modules $V_w(\lambda)$.
Let $W\lambda$ be the orbit of $\lambda$ with the induced Bruhat order, and $u,v\in W\lambda$.  Kashiwara \cite{kashiwara1992crystal} and Littelmann \cite{littelmann1995crystal} have shown that they can be obtained by summing the monomial weights over certain subsets $\mathfrak{B}_v=\mathfrak{B}_w(\lambda)$, $v=w\lambda\in W\lambda$, in the crystal $\mathfrak{B}(\lambda)$, called Demazure crystals. Demazure crystals $\mathfrak{B}_v$ can be partitioned into Demazure crystal atoms, $\overline{\mathfrak{B}}_u$, where $u\in W\lambda$ runs in the Bruhat interval $\lambda\le u\le v$ of $W\lambda$. Therefore the $\dim V_w(\lambda)$ is the cardinality of $\mathfrak{B}_v$, $v=w\lambda$.}

 Considering $G/B$ the (full) flag variety, $G$ is a semi-simple algebraic group over $k$ closed algebraic field, where we fix $H\subseteq B\subseteq G$ and $W$ its Weyl group equipped with the (strong) Bruhat order, with $w_0$ is the long element, we have also for any $w\in W$ the corresponding Schubert variety $X_w=\bigsqcup_{\tau\le w}B\tau B/B\subseteq G/B$ where $G=\bigsqcup_{ \tau\in W}B\tau B$ is the Bruhat decomposition of $G$.
The Borel-Weil theorem provides a geometric  interpretation of  Demazure and opposite Demazure modules showing that they are  in natural correspondence with Schubert respectively opposite Schubert varieties also compatible with restrictions and intersections. For any $\lambda\in \mathbb{Z}_{\ge 0}^n$ a partition, we have $L_\lambda$  the line bundle on $G/B$ and its restriction (denoted by the same symbol) to $X_w$.
There is a $G$ (resp. $B$)-module isomorphism between the dual module $V^*(\lambda)$ (resp. dual submodule $V^*_w(\lambda)$) of $V(\lambda)$  and the space of global sections (also called the zero degree cohomology of the sheaf of sections of a line bundle) $H^0(G/B,L_\lambda)$ (resp. $ H^0(X_w,L_\lambda)$):  $V^*(\lambda)=V(-w_0\lambda)\simeq H^0(G/B,L_\lambda)$, and  
$V^*_w(\lambda)=V(-w_0\lambda)\simeq H^0(X_w,L_\lambda)$. When $G=Sp(2n,\mathbb{C})$ it holds $V^*(\lambda)=V(w_0\lambda)\simeq H^0(G/B,L_\lambda)$, and  
$V^*_w(\lambda)=V(\lambda)\simeq H^0(X_w,L_\lambda)$.  In particular, for $w=w_0$, respectively $w=e$, one has $X_{w_0}=G/B$,  respectively  $X_e=B$,
and  $V_{w_0}(\lambda)=V(\lambda)$, respectively $V_{e}(\lambda)=\{b_\lambda\}$.
For any $\tau\le w$ in $W$, $X_\tau\subseteq X_w$ and the restriction map $H^0(G/B,L_\lambda)\longrightarrow H^0(X_\tau,L_\lambda)$ is surjective. Hence $V_\tau(\lambda)\subseteq V_w(\lambda)$. (We refer to \cite{laklitt,lakrag} and references therein.)

Kashiwara  has  constructed a specific $\mathbb{C}$-basis for the irreducible highest weight $\mathfrak{g}$-module $V(\lambda)$ via the quantized enveloping  Lie algebra.   More precisely, the specialization $q=1$ in the Kashiwara  lower global basis (= Lusztig canonical basis \cite{lusztig})
$\{G_\lambda(b): b \in \mathfrak{B}(\lambda)\}$ \cite{kashiwaraglobal} gives that aforesaid basis for $V(\lambda)$.
We then may also conclude  from  \cite[Proposition 3.2.3 (i), (4.1)]{kashiwara1992crystal}
that the Kashiwara  lower global basis  at $ q=1$ restricts to Demazure and opposite Demazure modules of $V(\lambda)$.
 More precisely, given $w$ in the Weyl group $W$
$\{G_\lambda(b):b \in \mathfrak{B}_w(\lambda)\}$ at $q=1$ gives a basis for the Demazure module $V_w(\lambda)$
and
$\{G_\lambda(b):b \in \mathfrak{B}^w(\lambda)\}$ at $q=1$ gives a basis for the opposite Demazure module $V^w(\lambda)$.
More generally, the restriction of the Global/Canonical basis to vectors labelled
 by vertices in a Demazure crystal of a highest weight crystal gives
 the Global/canonical basis of the corresponding Demazure module.

From a geometric vein, De Concini \cite{de1979symplectic} constructed in the symplectic case  bases indexed by \emph{standard} symplectic tableaux, that is symplectic tableaux in $\mathfrak{B}(\lambda)$ indexing bases for the homogeneous components of the coordinate rings of
the isotropic flag varieties $Sp(2n,\mathbb{C})/B$ and thanks to the Borel-Weil theorem they are bases for the irreducible representations
of $Sp(2n,\mathbb{C})$. The symplectic tableaux in $\mathfrak{B}_w(\lambda)$ ($\mathfrak{B}^w(\lambda)$) index a basis for the coordinate ring of the Schubert variety $X_w$ and, by the Weil-Borel theorem, a basis for the corresponding Demazure modules \cite[Theorem A.4.0.1, Corollary A.4.0.2]{laklitt,lakrag}. These bases are called \emph{standard} monomial bases. For the general case, $G$ a semisimple algebraic group, there are  constructions using the crystal of L-S paths, and we refer to \cite{laklitt} (also here \S \ref{standardlaklitt}). Despite of these bases being indexed by the same combinatorial objects, the relation between Global basis at $q=1$ and \emph{standard} monomial bases is not yet well understood in full generality.

	Keys  in type $A_{n-1}$ have its origin in the $GL(n,\mathbb{C})$ \emph{standard} bases to detect the semistandard tableaux which are \emph{standard} on a Schubert variety. (See \cite{reinershimoz} for an application of keys within \emph{standard} monomial theory.) In type $A_{n-1}$, Lascoux and Sch\"{u}tzenberger characterized key tableaux as semistandard Young tableaux (SSYT) with nested columns \cite[Definition 2.9]{lascoux1990keys}, and have used the \emph{jeu de taquin} to define the \emph{right key respectively left key maps} which sends a SSYT $T$ to a key tableau pair $(K^+(T),K^-(T))$, called the \emph{right key} respectively \emph{left key} of $T$. A tableau is a key if and only if it is equal to its right and left key. In each Demazure crystal atom there exists exactly one key tableau and the right key map detects the Demazure crystal atom that contains a given SSYT \cite[Theorem $3.8$]{lascoux1990keys}. 
	By direct inspection of a Young tableau, Willis \cite{Willis2011ADW} has  given an alternative algorithm to compute the right (respect. left) key of a semistandard tableau that does not require the use of \emph{jeu de taquin}.
	Other methods to compute the type $A$ right or left key maps includes the alcove path model \cite{lenart2015generalization}, semi-skyline augmented fillings \cite{Mason2007AnEC},  coloured vertex models \cite{BRUBAKER2021105354} or \cite{krv23}. For a complete overview in type $A$, see \cite{BRUBAKER2021105354} and  the references therein.
	
	In type $C_n$,  symplectic key tableaux are characterized in \cite{azenhas2012key, santos2019symplectic, santos2020SLCsymplectic,jacon2019keys}. They are the unique tableaux in $\mathfrak{B}(\lambda)$ whose weight is in $W\lambda$, and, for each one, there is exactly one Demazure crystal atom indexed by the corresponding weight. Using the Lecouvey-Sheats symplectic \emph{jeu de taquin}, a right key map is given, in \cite{santos2019symplectic, santos2020SLCsymplectic,jacon2019keys}, to send a Kashiwara-Nakashima tableau $T$ to its right key tableau $K_+(T)$, that detects the Demazure crystal atom which contains $T$.
	They are also computed in the type $C_n$ alcove path model \cite{lenart2015generalization}, and in the coloured  five vertex model \cite{buciumas2021quasi}. In \cite{buciumas2021quasi}, it is also computed the right key for reverse King tableaux \cite{king1976weight}.
	Henceforth, the symplectic Demazure character $\kappa_v(x)$ is expressed in terms of right keys \cite{santos2019symplectic} $$\kappa_v(x)=\sum_{\substack{T\in \mathfrak{B}(\lambda) \\ K_+(T)\leq K(v)}} x^{\text{wt} T},$$ where $K(v)$ is the key tableau of shape $\lambda$ and weight $v$, $x^{\text{wt} T}$ is the weight monomial corresponding to $T$, with  ${\text{wt} T}\in\Z^n$ the weight of $T$, and  $K_+(T)\leq K(v)$ means entrywise comparison.
	
	The  \emph{left key map} is  dual to the  right key map. It detects the tableaux which go in each \emph{opposite} Demazure crystal \cite{choi2018lakshmibai}. Given $v\in W\lambda$, the opposite Demazure crystal $\mathfrak{B}^{-v}$ is the image of Demazure crystal $\mathfrak{B}_v\subseteq \mathfrak{B}(\lambda)$ by the
	the Schützenberger-Lusztig involution on $\mathfrak{B}(\lambda)$ \cite[Proposition 64]{santos2019symplectic}. The crystal  $\mathfrak{B}(\lambda)$ can also be partitioned into opposite Demazure crystal atoms, $\overline{\mathfrak{B}}^u$, where $u\in W\lambda$ runs in the Bruhat interval $-v\le u\le -\lambda$.  Given a Demazure crystal and its opposite, for each tableau weight in the Demazure crystal there is a symmetric tableau weight in the opposite Demazure crystal.
	
	Motivated by Lascoux' double crystal graph in type $A$ \cite{lascoux2003double}, 
	where Sch\"utzenberger \emph{jeu de taquin} slides are used as crystal operators
	, we can attach a type $A$ \emph{cocrystal} to each vertex of the type $C_n$ crystal of Kashiwara-Nakashima tableaux $\mathfrak{B}^\lambda$, in which the crystal operators are given by symplectic \emph{jeu de taquin} slides on consecutive columns.
	Given a Kashiwara-Nakashima tableau $T$ in $\mathfrak{B}(\lambda)$, the vertices of its cocrystal are Kashiwara-Nakashima skew tableaux connected to $T$ via symplectic \emph{jeu de taquin}. Our construction builds on Heo-Kwon work \cite[Lemma 2.3, Lemma 2.4]{kwonheococrystal2020} and
	uses the dual RSK correspondence \cite{fulton1997young}. 
	These cocrystals are type $A$ crystals whose elements are type $C_n$ Kashiwara-Nakashima skew tableaux, and contain all the needed information to compute the right and left key maps. Actually, the key skew tableaux of the cocrystal (Definition \ref{Defcokey}) provide all the needed information to compute left and right key maps of a Kashiwara-Nakashima tableau. The cocrystals also allow us to generalize the Proposition 7 \cite[Appendix A.5]{fulton1997young} from semistandard Young tableaux to type $C_n$ Kashiwara-Nakashima tableaux.
	This is an analogue of LS-paths which carry explicitly right and left keys as initial and final directions.
	
	Jacon and Lecouvey have suggested, in \cite{jacon2019keys}, that Willis' method \cite{Willis2011ADW}  to compute right and left keys in type $A_{n-1}$ should be adaptable to type $C_n$.   Motivated by Willis' direct inspection \cite{Willis2011ADW}, we create an alternative algorithm, based on a Kashiwara-Nakashima tableau, for the symplectic right key map, and for the symplectic left key map, that does not use the symplectic \emph{jeu de taquin}.

	Due to the added technicality of the symplectic \emph{jeu de taquin} compared to the one for SSYT, Willis' \emph{earliest weakly increasing subsequence} will fail to predict what gets slid during the Lecouvey-Sheats symplectic \emph{jeu de taquin}. Instead we need a way to calculate, without the use of \emph{jeu de taquin}, what would appear in each column if we were to swap its length with the previous column length via \emph{jeu de taquin}. The role of Willis' sequences will be replaced by our matchings (see Section \ref{rightkeydw}). In type $A$, these kind of matches were used earlier \cite{azenhas2006schur, lenart2004matching} for jeu de taquin on two columns.

	Lastly we  prove that Baker virtualization \cite{ba00a} by folding $A_{2n-1}$ into $C_n$ commutes with dilatation of crystals \cite{kash2}. Dilatation of crystals provide a constructive bijection between Kashiwara crystals and Littelmann crystals of L-S paths. Thus we may alternatively utilize the Baker virtualization
to  embed  a type $C_n$ Demazure crystal, its opposite and atoms into  $A_{2n-1}$ ones and compute symplectic right and left keys via type A methods.

	The paper is organized in eight sections as follows. In Section \ref{tabjdt}, we discuss the  type $C$ Kashiwara-Nakashima tableaux and the symplectic \emph{jeu de taquin}. Section \ref{SecDemCrys} recalls the combinatorics of Kashiwara crystals, symplectic key tableaux,  right and left key maps in terms of a Demazure crystal and opposite Demazure crystal and their parallel with the model of L-S paths and with Schubert varieties on a flag variety. In Section \ref{coc}  we  attach a type $A$ \emph{cocrystal} to each vertex of the type $C_n$ crystal of Kashiwara-Nakashima tableaux $\mathfrak{B}(\lambda)$.
	Section \ref{rightkeyjdt} 
recalls the right and left key maps via symplectic \emph{jeu de taquin} in \cite{santos2019symplectic}, as a preparation for the alternative method without jeu de taquin. In Section \ref{sec:directway}, we give an algorithm for computing  the symplectic right and left key maps that does not require the \emph{jeu de taquin}, and prove that it returns the same object as the previous method.
We end this Section, \ref{SecExample},  with an illustrative example of our new algorithm and the one based on the Lecouvey-Sheats \emph{jeu de taquin}. In Section \ref{sec:virtual} we show that Baker virtualization \cite{ba00a} commutes with dilatation of crystals \cite{kash2} and virtualize the symplectic right and left keys maps. That is, symplectic keys can be computed via type A methods and returned back to the symplectic setting via reverse Baker embedding. We end this section by illustrating   this method  for symplectic Kashiwars crystals. As for the vitualization of the crystal of Lakshmibai-Seshadri paths for the Weyl group in type $C$, we explain the recipe as the size of the needed dilatation is in general big.
Section \ref{sec:final} makes a remark on a another possible method to compute symplectic keys.
	
\bigskip

	\emph{An extended abstract \cite{santos2021fpsac} of a part of this paper, here \S \ref{rightkeydw},  by the second author (JMS),  was accepted in the Proceedings of
		the 33rd Conference on Formal Power Series and Algebraic Combinatorics,
		2021. Also  the  content of \S \ref{sec:directway}   and  partially of \S \ref{coc} appear in the PhD thesis \cite{santos2022} by the second author (JMS)}.
	
	\section{Type $C$ Kashiwara-Nakashima tableaux and \emph{jeu de taquin}}\label{tabjdt}
	
	We recall the symplectic tableaux introduced by Kashiwara and Nakashima to label the vertices of the type $C_n$ crystal graphs \cite{Kashiwara1994CrystalGF}.
	Fix $n\in \mathbb{N}_{>0}$. Define the sets $[n]=\{1, \dots, n\}$ and $[\pm n]=\{1,\dots, n, \overline{n}, \dots, \overline{1}\}$ where $\overline{i}$ is just another way of writing $-i$, hence $\overline{\overline{i}}=i$. In the second set we will consider the following order of its elements: $1<\dots< n< \overline{n}< \dots< \overline{1}$ instead of the usual order.
	A vector $\lambda=(\lambda_1,\dots, \lambda_n)\in\mathbb{Z}^n$ is a partition of $|\lambda|=\sum\limits_{i=1}^n \lambda_i$ with at most $n$ parts if $\lambda_1\geq \lambda_2\geq\dots\geq\lambda_n\geq 0$. Let $\mathcal{P}_n$ be the set of partitions $\lambda=(\lambda_1,\dots, \lambda_n)$ with at most $n$ parts.
	The \emph{Young diagram} of shape $\lambda$, in English notation, is an array of boxes (or cells), left justified, in which the $i$-th row, from top to bottom, has $\lambda_i$ boxes. We identify a partition with its Young diagram.
	For example, the Young diagram of shape $\lambda=(2,2,1)$ is $\YT{0.17in}{}{
		{{},{}},
		{{},{}},
		{{}}}$.
	Given $\mu$ and $\nu$ two partitions with $\nu\leq \mu$ entrywise, we write $\nu\subseteq \mu$. The Young diagram of shape $\mu/\nu$ is obtained after removing the boxes of the Young diagram of $\nu$ from the Young diagram of $\mu$.
	For example, the Young diagram of shape $\mu/\nu=(2,2,1)/(1,0,0)$ is $\begin{tikzpicture}[scale=.4, baseline={([yshift=-.8ex]current bounding box.center)}]
		\draw (1,0) rectangle +(-1,-1);
		\draw (1,-1) rectangle +(-1,-1);
		\draw (0,-2) rectangle +(-1,-1);
		\draw (0,-1) rectangle +(-1,-1);
	\end{tikzpicture}$.
	
	Let $\nu\subseteq \mu$ be two partitions and $A$ a completely ordered alphabet. A \emph{semistandard Young tableau} (SSYT) of skew shape $\mu/\nu$, on the alphabet $A$, is a filling of the diagram $\mu/\nu$ with letters from $A$, such that the entries are strictly increasing, from top to bottom, in each column and weakly increasing, from left to right, in each row. When $|\nu|=0$ then we obtain a semistandard Young tableau of straight shape $\mu$.
	Denote by $\mathcal{SSYT}(\mu/\nu, A)$
	the set of all skew SSYT's $T$ of shape $\mu/\nu$, with entries in $A$. In particular, when $\left|v\right|=0$ we write $\mathcal{SSYT}(\mu, A)$ and when $A=[n]$ we write $\mathcal{SSYT}(\mu/\nu, n)$.
	
	When considering tableaux with entries in $[\pm n]$, it is usual to have some extra conditions besides being semistandard. We will use a family of tableaux known as \emph{Kashiwara-Nakashima} tableaux.
	From now on we consider tableaux on the alphabet $[\pm n]$.
	
	A \emph{column} is a strictly increasing sequence of numbers (or letters) in $[\pm n]$ and it is usually displayed vertically. The height of a column is the number of letters in it.
	A column is said to be \emph{admissible} if the following \emph{one column condition} (1CC) holds for that column:
	
	\begin{defin}[1CC]\label{1CC}
		Let $C$ be a column. The $1CC$ holds for $C$ if for all pairs $i$ and $\overline{i}$ in $C$, where $i$ is in the $a$-th row counting from the top of the column, and $\overline{i}$ in the $b$-th row counting from the bottom, we have $a+b\leq i$. Equivalently, for all pairs $i$ and $\overline{i}$ in $C$, the number $N(i)$ of letters $x$ in
		$C$ such that $x\leq i$ or $x \geq \overline{i}$ satisfies $N(i) \leq i$.
	\end{defin}
	If a column $C$ satisfies the $1CC$ then $C$ has at most $n$ letters.
	If $1CC$ doesn't hold for $C$ we say that $C$ \emph{breaks the $1CC$ at $z$}, where $z$ is the minimal positive integer such that $z$ and $\overline{z}$ exist in $C$ and there are more than $z$ numbers in $C$ with absolute value less or equal than $z$.
	\begin{ex}
		The column $\YT{0.17in}{}{
			{{1}},
			{{2}},
			{{\overline{1}}}}$ breaks the $1CC$ at $1$, and $\YT{0.17in}{}{
			{{2}},
			{{3}},
			{{\overline{3}}}}$ is an admissible column.
	\end{ex}
	
	
	The following definition states conditions to when $C$ can be \emph{split}:
	\begin{defin}\label{Defsplit}
		Let $C$ be a column and let $I = \{z_1 > \dots > z_r\}$ be the
		set of unbarred letters $z$ such that the pair $(z, \overline{z})$ occurs in $C$. The column
		$C$ can be split when there exists a set of $r$ unbarred
		letters $J = \{t_1 > \dots > t_r\} \subseteq [n]$ such that:
		
		{\boldmath 1.} $t_1$ is the greatest letter of $[n]$ satisfying $t_1 < z_1$,  $t_1 \not\in C$, and $\overline{t_1}\not\in C$,
		
		{\boldmath{2.}} for $i=2, \dots, r$, we have that  $t_i$ is the greatest letter of $[n]$ satisfying $t_i < \min(t_{i-1},   z_i)$,  $t_i \not\in C$, and $\overline{t_i} \not\in C$.
	\end{defin}
	
	The $1CC$ holds for a column $C$ (or $C$ is admissible) if and only if $C$ can be split  \cite[Lemma 3.1]{sheats1999symplectic}.
	If $C$ can be split then we define \emph{right column} of $C$, $\text{r}C$, and the \emph{left column} of $C$, $\ell C$, as follows:
	
	{\boldmath{1.}} $rC$ is the column obtained by changing in $C$,  $\overline{z_i}$ into $\overline{t_i}$ for each letter $z_i \in I $ and by reordering if necessary,
	
	{\boldmath{2.}} $\ell C$ is the column obtained by changing in $C$, $z_i$ into $t_i$ for each letter $z_i \in I $ and by reordering if necessary.

	If $C$ is admissible then $\ell C\leq C \leq rC$ by entrywise comparison, where $\ell C$ has the same barred part as $C$ and $rC$ the same unbarred part. If $C$ doesn't have symmetric entries, then $C$ is admissible and  $\ell C= C =rC$.
	In the next definition we give conditions for a column $C$ to be \emph{coadmissible}.
	\begin{defin}
		We say that a column $C$ is coadmissible if for every pair $i$ and $\overline{i}$ on $C$, where $i$ is on the $a$-th row counting from the top of the column, and $\overline{i}$ on the $b$-th row counting from the top, then $b-a\leq n-i$.  Equivalently, for every pair $i$ and $\overline{i}$ on $C$, the number $N^\ast (i)$ of letters $x$ in
		$C$ such that $i \leq x \leq \overline{i}$ satisfies $N^\ast (i) \leq n-i + 1$.
	\end{defin}
	\noindent	Unlike in Definition \ref{1CC}, in the last definition $b$ is counted from the top of the column.
	
	\begin{defin}
		Let $C$ be a column and let $I = \{z_1 > \dots > z_r\}$ be the
		set of unbarred letters $z$ such that the pair $(z, \overline{z})$ occurs in $C$. The column
		$C$ is coadmissible if and only if there exists a set of $r$ unbarred
		letters $H = \{h_1 > \dots > h_r\} \subseteq [n]$ such that:
		
		{\boldmath 1.} $h_r$ is the smallest letter of $[n]$ satisfying $h_r > z_r$,  $h_r \not\in C$, and $\overline{h_r}\not\in C$,
		
		{\boldmath 2.} for $i=r-1, \dots, 1$, we have that  $h_i$ is the smallest letter of $[n]$ satisfying $h_i > \max(h_{i+1},   z_i)$,  $h_i \not\in C$, and $\overline{h_i} \not\in C$.
	\end{defin}
	
	Given an admissible column $C$, consider the map $$\Phi:C\mapsto C^\ast$$ that sends $C$ to the column $C^\ast$ of the same size in which the unbarred entries are taken from $\ell C$ and the barred entries are taken from $rC$.
	
	\begin{lema}
		Let $C$ be an admissible column on the alphabet $[\pm n]$, and $I$ and $J$ the sets in Definition \ref{Defsplit}. The entries $x$ (barred or unbarred) of $\Phi(C)$ are such that
		\begin{enumerate}
			\item  $x \in \Phi(C)$ and $\overline{x} \notin \Phi(C)$ if and only if $x \in C$ and $\overline{x} \notin C$.
			\item $x, \overline{x} \in \Phi(C)$ if and only if  $x \in J$ or $\overline{x}\in J$.
		\end{enumerate}
		Equivalently, the set of entries in $\Phi(C)$ is $(J \cup \overline{J} \cup C) \setminus (I \cup \overline{I})$.\end{lema}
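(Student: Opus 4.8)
The plan is to turn the statement into a single identity between finite subsets of the alphabet $[\pm n]$, read off directly from the descriptions of $\ell C$ and $rC$ in Definition~\ref{Defsplit}. Write $C^{+}$ and $C^{-}$ for the unbarred, respectively barred, letters of $C$, so $C=C^{+}\sqcup C^{-}$ with $I\subseteq C^{+}$ and $\overline I\subseteq C^{-}$. By construction, $\ell C$ keeps the barred part $C^{-}$ of $C$ and has unbarred part $(C^{+}\setminus I)\cup J$, while $rC$ keeps the unbarred part $C^{+}$ and has barred part $(C^{-}\setminus\overline I)\cup\overline J$. Since $\Phi(C)=C^{\ast}$ takes its unbarred letters from $\ell C$ and its barred letters from $rC$, its set of entries is
\[
\bigl((C^{+}\setminus I)\cup J\bigr)\cup\bigl((C^{-}\setminus\overline I)\cup\overline J\bigr)=\bigl(C\setminus(I\cup\overline I)\bigr)\cup J\cup\overline J .
\]

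The next step is to record the disjointness facts that make this set coincide with $(J\cup\overline J\cup C)\setminus(I\cup\overline I)$. Conditions $1$ and $2$ of Definition~\ref{Defsplit} force $t_i\notin C$ and $\overline{t_i}\notin C$ for all $i$, hence $J\cap C=\emptyset$ and $\overline J\cap C=\emptyset$; in particular $J$ and $\overline J$ are disjoint from $I\cup\overline I\subseteq C$. Moreover $J\subseteq[n]$ is a set of unbarred letters, so $J\cap\overline J=\emptyset$ and, for parity reasons, $J\cap\overline I=\emptyset$ and $\overline J\cap I=\emptyset$. Thus $(J\cup\overline J)\cap(I\cup\overline I)=\emptyset$, and the displayed set equals $(J\cup\overline J\cup C)\setminus(I\cup\overline I)$, which is the ``Equivalently'' assertion.

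Finally I would deduce $(1)$ and $(2)$ from this entry set by a brief case split on whether $x\in J\cup\overline J$. If $x\in J$ (resp. $\overline x\in J$) then both $x$ and $\overline x$ lie in $J\cup\overline J\subseteq\Phi(C)$, giving the backward direction of $(2)$; conversely, if $x,\overline x\in\Phi(C)$ with $x\notin J\cup\overline J$ then $\overline x\notin J\cup\overline J$ too, so $x,\overline x\in C\setminus(I\cup\overline I)$, and then the unbarred element among $x,\overline x$ lies in $I$ by the definition of $I$, contradicting $x,\overline x\notin I\cup\overline I$; this proves $(2)$. For $(1)$, since $x\in J\cup\overline J$ always forces $\overline x\in\Phi(C)$, the case ``$x\in\Phi(C)$ and $\overline x\notin\Phi(C)$'' can occur only when $x,\overline x\notin J\cup\overline J$, where ``$y\in\Phi(C)$'' means exactly ``$y\in C\setminus(I\cup\overline I)$''; using $I\cup\overline I\subseteq C$ together with the fact that $I\cup\overline I$ is precisely the set of letters of $C$ whose conjugates also belong to $C$, one checks that ``$x\in C\setminus(I\cup\overline I)$ and $\overline x\notin C\setminus(I\cup\overline I)$'' is equivalent to ``$x\in C$ and $\overline x\notin C$'', as required.

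The argument is essentially formal once the first display is in place, so I do not expect a real obstacle; the only delicate points are the first reduction itself — correctly reading off which letters of $\Phi(C)$ come from $\ell C$ and which from $rC$ — and, throughout, keeping the bars straight, i.e.\ never confusing $I$ with $\overline I$ or $J$ with $\overline J$ and remembering that the letters of $J$ are fresh, absent from $C$.
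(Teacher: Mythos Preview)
Your proof is correct. The paper actually states this lemma without proof, treating it as an immediate consequence of the definitions of $\ell C$, $rC$ and $\Phi$; your argument makes that immediacy explicit by writing out the set of entries of $\Phi(C)$ as $\bigl((C^{+}\setminus I)\cup J\bigr)\sqcup\bigl((C^{-}\setminus\overline I)\cup\overline J\bigr)$ and then cleanly deducing the two enumerated items from the disjointness properties of $J$ with $C$.
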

	Henceforth, $\Phi(C) = C$ if and only if $I = \emptyset$ (hence $J = \emptyset)$, that is, $C$ does not have symmetric entries.
	
	The \emph{column $\Phi(C)$} is a coadmissible column and the algorithm to form $\Phi(C)$ from $C$ is reversible \cite[Section 2.2]{lecouvey2002schensted}. In particular, every column on the alphabet $[n]$ is simultaneously admissible and coadmissible. The map $\Phi$ is a bijection between admissible and coadmissible columns of the same height on the alphabet $[\pm n]$.
	
	\begin{ex}Let
		$C=\!\YT{0.17in}{}{
			{{2}},
			{{4}},
			{{\overline{2}}}}$ be an admissible column, so it can be split. Then $\ell C=\YT{0.17in}{}{
			{{1}},
			{{4}},
			{{\overline{2}}}}$ and $rC=\YT{0.17in}{}{
			{{2}},
			{{4}},
			{{\overline{1}}}}$. So $\Phi(C)=\YT{0.17in}{}{
			{{1}},
			{{4}},
			{{\overline{1}}}}$ is coadmissible. $C$ is also coadmissible and $\Phi^{-1}(C)=\YT{0.17in}{}{
			{{3}},
			{{4}},
			{{\overline{3}}}}$.
	\end{ex}
	
	Let $T$ be a skew tableau with all of its columns admissible. The \emph{split form} of a skew tableau $T$, $spl(T)$, is the skew tableau obtained after replacing each column $C$ of $T$ by the two columns $\ell C\,rC$. The tableau $spl(T)$ has double the amount of columns of $T$.

	A semistandard skew tableau $T$ is a \emph{Kashiwara-Nakashima (KN) skew tableau} if its split form is a semistandard skew tableau. We define $\mathcal{KN}(\mu/\nu, n)$ to be the set of all KN tableaux of shape $\mu/\nu$ in the alphabet $[\pm n]$.  When $\nu=0$, we obtain $\mathcal{KN}(\mu, n)$.
{ If $T$ is a skew KN tableau, the \emph{column reading} of $T$, $cr(T)$, is
the word read in $T$ in the Chinese/Japanese way, column reading top to bottom and right to left.
The \emph{length} of ${w}$ is the total number of letters in ${w}$. The weight of a KN tableau $T$ is the vector $\text{wt}( T):=\text{wt}(cr(T))$ in $\Z^n$ whose $i$-th entry is the number of $i$'s minus the number of $\overline{i}$ for $i\in [n]$ (see \S \ref{sec:tensor}).
}

	\begin{ex} Let $n=3$. The split of the tableau
		$T=\YT{0.17in}{}{
			{{2},{2}},
			{{3},{3}},
			{{\overline{3}}}}$ is the tableau $spl(T)=\YT{0.17in}{}{
			{{1}, {2},{2},{2}},
			{{2},{3},{3},{3}},
			{{\overline{3}},{{\overline{1}}}}}$. Hence $T\in \mathcal{KN}((2,2,1),3)$, $cr(T)=2323\overline{3}$ and weight $\text{wt}( T)=(0,2,1)$.
	\end{ex}

	If $T$ is a tableau without symmetric entries in any of its columns, i.e., for all $i\in[n]$ and for all columns $C$ in $T$, $i$ and $\overline{i}$ do not appear simultaneously in the entries of $C$, then in order to check whether $T$ is a KN tableau it is enough to check whether $T$ is semistandard in the alphabet $\left[\pm n\right]$. In particular $\mathcal{SSYT}(\mu/\nu, n)\subseteq \mathcal{KN}(\mu/\nu, n)$.
	
	\subsection{Symplectic \textit{jeu de taquin}}
	Lecouvey-Sheats symplectic \textit{jeu de taquin} (SJDT) \cite{lecouvey2002schensted, sheats1999symplectic} is a procedure on KN skew tableaux, compatible with \emph{Knuth equivalence} (or plactic equivalence on words over the alphabet $[\pm n]$) \cite{lecouvey2002schensted}, that allows us to change the shape of a tableau and to rectify it.
	To explain how the SJDT behaves, we need to look how it works on $2$-column $C_1C_2$ KN skew tableaux. A skew tableau is \emph{punctured} if one of its box contains
	the symbol $\ast$ called the \emph{puncture}.
	A punctured column
	is admissible if the column is admissible when ignoring the puncture. A punctured skew tableau is
	admissible if its columns are admissible and the rows of its split form are weakly increasing ignoring
	the puncture. Let $T$ be a punctured skew tableau with two columns $C_1$ and $C_2$ with the puncture
	in $C_1$. In that
	case, the puncture splits into two punctures in $spl(T)$, and ignoring the punctures, $spl(T)$ must be semistandard.
	Let $\alpha$ be the entry under the puncture of $rC_1$, and $\beta$ the entry to the right of the puncture of $rC_1$.
	$$spl(T)=\ell C_1 rC_1\ell C_2rC_2=\YT{0.22in}{}{
		{{\dots},{\dots}, {\dots}, {\dots}},
		{{\ast},{\ast},{\beta},{\dots}},
		{{\dots},{\alpha},{\dots},{\dots}},
		{{\dots},{\dots}}},$$
	where $\alpha$ or $\beta$ may not exist. The elementary steps of SJDT are the following:
	
	\textbf{A.} If $\alpha\leq \beta$ or $\beta$ does not exist,  then the puncture of $T$ will change its position with the cell beneath it. This is a vertical slide.

	\textbf{B.}	If the slide is not vertical, then it is horizontal. So we have $\alpha> \beta$ or $\alpha$ does not exist. Let $C_1'$ and $C_2'$ be the columns obtained after the slide. We have two subcases, depending on the sign of $\beta$:
	
	{\boldmath 1.} If $\beta$ is barred, we are moving a barred letter, $\beta$, from $\ell C_2$ to the punctured box of $rC_1$, and the puncture will occupy $\beta$'s place in $\ell C_2$. Note that $\ell C_2$ has the same barred part as $C_2$ and that $rC_1$ has the same barred part as $\Phi(C_1)$. Looking at $T$, we will have an horizontal slide of the puncture, getting $C_2'=C_2\setminus \{\beta\}\sqcup\{\ast\}$ and $C_1'=\Phi^{-1}(\Phi(C_1)\setminus\{\ast\}\sqcup \{\beta\})$. In a sense, $\beta$ went from $C_2$ to $\Phi(C_1)$.
	
	{\boldmath 2.} If $\beta$ is unbarred, we have a similar story, but this time $\beta$ will go from $\Phi(C_2)$ to $C_1$, hence $C_1'=C_1\setminus\{\ast\}\cup \{\beta\}$ and $C_2'=\Phi^{-1}(\Phi(C_2)\setminus \{\beta\}\sqcup\{\ast\})$. Although in this case it may happen that $C_1'$ is no longer admissible. In this situation, if the 1CC breaks at $i$, we erase both $i$ and $\overline{i}$ from the column and remove a cell from the bottom and from the top column, and place all the remaining cells orderly with respect to their entries.
	
	Applying successively elementary SJDT slides, eventually, the puncture will be a cell such that $\alpha$ and $\beta$ do not exist. In this case we redefine the shape to not include this cell and the \textit{jeu de taquin} ends.
	
	Given an admissible tableau $T$ of shape $\mu/\nu$, a box of the diagram of shape $\nu$ such that boxes under it and to the right are not in that shape is called an inner corner of $\mu/\nu$. An outside corner is a box of $\mu$ such that boxes under it and to the right are not in the shape $\mu$. The rectification of $T$ consists in playing the SJDT until we get a tableau of shape $\lambda$, for some partition $\lambda$. More precisely, apply successively elementary SJDT steps
	to $T$ until each cell of $\nu$ becomes an outside corner. At the end, we obtain a KN tableau for some
	shape $\lambda$. The rectification is independent of the order in which the inner corners of $\nu$ are filled \cite[Corollary 6.3.9]{lecouvey2002schensted}.
	
	\begin{ex} Consider the KN skew tableau
		$T=\SYT{0.145in}{}{
			{{2}},
			{{3},{1}},
			{{\overline{1}},{2}}}$.
		Let $C_1$ and $C_2$ be the first and second columns of $T$.	To rectify $T$ via symplectic \textit{jeu taquin}, one creates a puncture in the inner corner of $T$ and, by splitting, one obtains $\YT{0.17in}{}{
			{{\ast}, {\ast},{2},{2}},
			{{1},{1},{3},{3}},
			{{2},{2},{\overline{1}},{\overline{1}}}}$. So, the first two slides are vertical, obtaining $\YT{0.17in}{}{
			{{1}, {1},{2},{2}},
			{{2},{2},{3},{3}},
			{{\ast},{\ast},{\overline{1}},{\overline{1}}}}$. Finally, we do an horizontal slide, of type $\mathbf{B.1}$, in which we take $\overline{1}$ from $C_2$, and add it to the coadmissible column $\Phi(C_1)$. That is, $C_2'=(C_2\cup \{\ast\})\setminus{\overline{1}}$ and $C_1'=\Phi^{-1}((\Phi(C_1)\setminus\{\ast\})
		\cup {\overline{1}})$, obtaining the tableau
		$\YT{0.145in}{}{
			{{2},{2}},
			{{3},{3}},
			{{\overline{3}}}}$.
	\end{ex}
	
	
	Let $T$ be a {\ KN} skew tableau of shape $\mu/\nu$ { ($\nu$ possibly empty)}. Consider a punctured box that can be added to $\mu$, so that $\mu\cup \{\ast\}$ is a valid shape.
	The SJDT is reversible, meaning that we can move $\ast$, the empty cell outside of $\mu$, to the inner shape $\nu$ of the skew tableau $T$, simultaneously increasing both the inner and outer shapes of $T$ by one cell. The slides work similarly to the previous case: the vertical slide means that an empty cell is going up and an horizontal slide means that an entry goes from $\Phi(C_1)$ to $C_2$ or from $C_1$ to $\Phi(C_2)$, depending on whether the slid entry is barred or not, respectively.
	We will also call the \emph{reverse jeu de taquin} as SJDT. In the next sections we will be mostly dealing with the \emph{reverse jeu de taquin}.
	Consider the following examples, each containing a tableau and a punctured box that will be slid to its inner shape:
	$\YT{0.17in}{}{
		{{},{},{\ast}},
		{{1},{\overline{1}}},
		{{2}}} \mapsto \YT{0.17in}{}{
		{{},{},{}},
		{{1},{\overline{1}}},
		{{2}}}$;\quad
	$\YT{0.17in}{}{
		{{},{}},
		{{1},{\overline{1}}},
		{{2},{\ast}}}\mapsto \YT{0.17in}{}{
		{{},{}},
		{{},{2}},
		{{2},{\overline{2}}}}$.
	\medskip
	
	\begin{obs}\label{equaljdt}If a tableau with columns $C_1$ and $C_2$ does not have symmetric entries then the SJDT applied to $C_1C_2$ coincides with the \textit{jeu de taquin} known for SSYT's.
\end{obs}
	In sections \ref{rightkeyjdt} and \ref{leftkeyjdt}, we use SJDT to swap lengths of consecutive columns in a skew tableau,  to obtain skew tableaux Knuth related to a straight tableau, which is minimal for the number of cells within its Knuth class. Recall that in the elementary step $B.2$ it is possible to lose cells. If we do a reverse elementary step $B.2$ that results in having two more cells in the skew tableau, we would have to start by adding two symmetric entries to an admissible column, making it non admissible \cite[Lemma 3.2.3]{lecouvey2002schensted}, and then slide an unbarred cell to the column to its right. For instance, consider the following reverse elementary step $B.2$ ($\equiv$ denotes type $C_n$ Knuth equivalence \cite[Definition $3.2.1$]{lecouvey2002schensted}):
	$$ \YT{0.17in}{}{
		{{1},{\ast}},
		{{2}}} \equiv
	\YT{0.17in}{}{
		{{1},{\ast}},
		{{2}},
		{{3}},
		{\overline{3}}}
	\equiv
	\YT{0.17in}{}{
		{{\ast},{1}},
		{{2}},
		{{3}},
		{\overline{3}}}.$$
	
	The first and last skew tableaux are Knuth equivalent, but the middle tableau is not a KN skew tableau. The three semistandard tableaux are Knuth equivalent column words, via te contractor/dilator Knuth relation \cite[Definition $3.2.1$]{lecouvey2002schensted}.

	Hence, a reverse elementary step $B.2$ that results in having more cells in the skew tableau has to be forced, since we have to start by forcing the existence of a non admissible column. This means that if we start with a minimal skew tableau, that is, a skew-tableau with the number of cells of its rectification, we can play SJDT, or its reverse, without ever incur in a loss/gain of boxes.
	
	\section{Crystals, keys,  Demazure crystals and their opposite 
}\label{SecDemCrys}
	In this section we review Demazure and opposite Demazure crystals and their atoms of  a crystal in types $A_{n-1}$ respectively $C_n$ crystals $\mathfrak{B}(\lambda)$,  where $\lambda\in \mathcal{P}_n$, and { their detection}  with right and left key maps in \cite{lascoux1990keys,santos2019symplectic}. We also recall how right and left keys surface in appropriate dilatation of crystals \cite{kash2, Kash95, Kashsimilar96}, and apply it to the  characterization of the intersection of Demazure and opposite Demazure crystals \cite{kashiwara1992crystal}. In addition Demazure and opposite Demazure crystals and their intersections are in natural correspondence correspondence with Schubert, opposite Schubert varieties and Richardson varieties, as explained by the classical Borel-Weil theorem, which in turn amounts to the relevance of key maps in standard monomial theory as originally considered by Lascoux-Sch\"utzenberger in \cite{lascoux1990keys}.

	\subsection{Kashiwara crystal}\label{sec:crystal}
	Let $V$ be an Euclidean space with inner product $\langle\cdot,\cdot\rangle$.
	Fix a root system $\Phi$ with simple roots $\{\alpha_i \mid i\in I\}$ where $I$ is an indexing set and a weight lattice $\Lambda\supseteq\Z\text{-span}\{\alpha_i \mid i\in I\}$. A \emph{Kashiwara crystal} of type $\Phi$ is a nonempty set $\mathfrak{B}$ together with maps \cite{bump2017crystal}:
	\begin{align*}
		e_i, f_i:\mathfrak{B}\rightarrow \mathfrak{B}\sqcup\{0\}\quad
		\varepsilon_i, \varphi_i:\mathfrak{B}\rightarrow \Z\sqcup\{-\infty\}\quad
		\text{\text{w}t}: \mathfrak{B}\rightarrow \Lambda
	\end{align*}
	where $i\in I$ and $0\notin \mathfrak{B}$ is an auxiliary element, satisfying the following conditions:
	\begin{enumerate}
		\item [(a)] if $a, b \in \mathfrak{B}$ then $e_i(a)=b\Leftrightarrow f_i(b)=a$. In this case, we also have \begin{align}\label{weights}\text{wt}(b)=\text{wt}(a)+\alpha_i,\mbox{ and }\; \varepsilon_i(b)=\varepsilon_i(a)-1,\;\; \varphi_i(b)=\varphi_i(a)+1;\end{align}
		\item [(b)] for all $a \in \mathfrak{B}$, we have
\begin{align}\label{lengths}\varphi_i(a)=\langle \textrm{wt}(a),\alpha_i^\vee\rangle+\varepsilon_i(a) \mbox{  with $\alpha_i^\vee= \frac{2\alpha_i}{\langle \alpha_i, \alpha_i\rangle}$}.
\end{align}
	\end{enumerate}
	 Let $\Lambda^+$ denote the set of dominant weights, that is, those $\lambda \in \Lambda$ such that $\langle\lambda, \alpha_i^\vee\rangle\ge 0$, for all $i\in I$. The root systems under consideration in this paper are of types  $A_{n-1}$ or $C_n$, thus $\Lambda=\mathbb{Z}^n$.
	 For type $A_{n-1}$, $I=[n-1]$,  and type $C_n$, $I=[n]$, and one has $\alpha_i^\vee= \alpha_i=\textbf{e}_i-\textbf{e}_{i+1}$, for $1\le i<n$, and,  for $i=n$, in type $C_n$, $\alpha_n=2 \textbf{e}_n$ and $\alpha_n^\vee=\textbf{e}_n$ with $\textbf{e}_i,\,i\in I$, the standard basis of $\mathbb{R}^n$. Thus $\Lambda^+=\mathcal{P}_n$  in type $C_n$, and $\Lambda^+=\{\lambda=(\lambda_1,\dots,\lambda_n)\in \mathbb{Z}^n: \lambda_1\ge\cdots\ge\lambda_n\ge 0\}$ in type $A_{n-1}$.

	The crystals we deal with are seminormal \cite{kash2, Kash95, bump2017crystal}, i.e., $\varphi_i(a)=\max\{k\in \Z_{\geq 0}\mid f_i^k(a)\neq 0\}$ and  $\varepsilon_i(a)=\max\{k\in \Z_{\geq 0}\mid e_i^k(a)\neq 0\}$.
	An element $u\in \mathfrak{B}$ such that $e_i(u)=0$ for all $i\in I $ is called  a \emph{highest weight element}. A \emph{lowest weight element} is an element $u\in \mathfrak{B}$ such that $f_i(u)=0$ for all $i\in I $.
	We associate with $\mathfrak{B}$ a coloured oriented graph with vertices in $\mathfrak{B}$ and edges labeled by $i\in I$: $b\overset{i}{\rightarrow} b'$ if and only if $b'=f_i(b)$, $i\in I$, $b, b'\in\mathfrak{B}$. This is the \emph{crystal graph} of $\mathfrak{B}$.

For types $A_{n-1}$ and $C_n$, the  Weyl groups are  $W=\mathfrak{S}_n$ respectively $W=B_n$. { The Weyl group $W$ of type $C_n$, known as hyperocthaedral group, is the Coxeter group $B_n$ ($2^nn!$ elements) generated by the involutions $s_1, \dots, s_n$ (simple reflections) subject to relations $(s_is_{i+1})^3 = 1, 1 \le i \le n - 2;\;(s_{n-1}s_n)^4=1;\;(s_is_j)^2 = 1, \,1 \le i < j \le n,\, |i - j| > 1$.
 The subgroup generated by the simple reflections $s_1, \dots, s_{n-1}$ is the symmetric group $\mathfrak{S}_n\subseteq B_n$. The elements of $B_n$ can also be seen as the bijective maps $\sigma$ on $[\pm n]$ such that $\sigma(-i)=$ $-\sigma(i)$ and then $B_n$ can be identified with the group of signed permutations with generators $s_i=(i\,i+1)(\bar i,\overline{i+1}),$ $1\le i<n$, $s_n=(n,\bar n)$.
 The elements of the symmetric group can be identified with the permutation matrices, and if we allow the non-zero entries to be either $1$ or $-1$, we have the elements of $B_n$. The elements of $W$ act on $z = (z_1, \dots , z_n) \in \mathbb{Z}^n$ by $s_i z := (z_1, \dots, z_{i+1}, z_i, \dots, z_n)$, $1 \leq i\leq n-1$, $s_n z = (z_1, \dots,\overline{z_n})$. The long element of $B_n$,  $w_0=(s_1s_2\cdots s_n)^n$, satisfies $w_0z=-z$ in which case we may consider  $w_0=-Id$ to be the negative $n\times n$ identity matrix.}

Let $G=Sp(2n,\mathbb{C})$, $ GL(n,\mathbb{C})$   be the symplectic group and respectively the general linear group of degree $n$ over $\mathbb{C}$. Let
$\mathfrak{g}=\mathfrak{sp}(2n, \mathbb{C})$, $\mathfrak{gl}(n, \mathbb{C})$ be the corresponding Lie algebras.
The finite dimensional irreducible representations of $G$ are parameterized by partitions $\lambda\in \mathcal{P}_{n}$.
For  any $\lambda\in\mathcal{P}_{n}$, we denote by $V(\lambda)$ the corresponding finite dimensional irreducible representation (or $\mathfrak{g}$-module).
To each partition $\lambda\in\mathcal{P}_{n}$ corresponds a (connected) crystal graph
$\mathfrak{B}(\lambda)$ which can be seen as the combinatorial skeleton of the simple
module $V(\lambda)$. In particular, its vertices label a distinguished basis
of $V(\lambda)$. The crystal graph $\mathfrak{B}(\lambda)$  has various combinatorial realizations, that is, vertex
labeling,  as KN (resp. SSYT) tableaux, see Figure \ref{cristal21only}, \textbf{or} Littelmann's paths, in particular, Lakshmibai-Seshadri paths.

For any $i\in I$, the crystal $\mathfrak{B}(\lambda)$ can be decomposed
into its {$i$-chains (or $i$-strings) which are obtained just by keeping the $i$-arrows.  There is a unique vertex $b_{\lambda}$ in $\mathfrak{B}(\lambda)$
such that ${e}_{i}(b_{\lambda})=0$ for any $i\in I$, that is,
$b_{\lambda}$ is the source vertex of each $i$-chain containing $b_{\lambda}$,
called the \textsf{highest weight vertex} of $\mathfrak{B}(\lambda)$ in which case we have $\mathrm{wt}%
(b_{\lambda})=\lambda$.\
\ For any $b\in \mathfrak{B}(\lambda)$, there is a path
$b={f}_{i_{1}}\cdots{f}_{i_{r}}(b_{\lambda})$ from $b_{\lambda}$
to $b$,  for some $i_1,\dots,i_r\in I$. The weight function $\mathrm{wt}$ satisfies
\begin{align}
\mathrm{wt}(b)=\lambda-\sum_{k=1}^{r}\alpha_{i_{k}}.\label{weight}
\end{align}
There is a unique vertex $b_{w_0\lambda}$ in $\mathfrak{B}(\lambda)$
such that ${f}_{i}(b_{w_0\lambda})=0$ for any $i\in I$, that is,
$b_{w_0\lambda}$ is the sink vertex of each $i$-chain containing $b_{\lambda}$,
called the \textsf{lowest weight vertex} of $\mathfrak{B}(\lambda)$ in which case we have $\mathrm{wt}%
(b_{w_0\lambda})=w_0\lambda$.\ This means that $\mathfrak{B}(\lambda)$ can be generated by applying  all the sequences of lowering (resp. raising) Kashiwara operators $f_i$ (resp. $e_i$) to $b_\lambda$ (resp. $b_{w_0\lambda}$), as long they do not annihilate

For any $i\in I$, the crystal $\mathfrak{B}(\lambda)$ can be decomposed
into its \textsf{$i$-chains (or strings)} which are obtained just by keeping the $i$-arrows. This means that, for each vertex $b$ and each $i$ in $I$ there is only one $i$-string containing $b$.

\begin{align}\label{stringdraw}\begin{tikzpicture}[xscale=1,yscale=1]
\draw[gray, thick] (-2,2) -- (10,2);
\filldraw[black] (-2,2) circle (2pt) ;
\filldraw[black] (2,2) circle (2pt);
\filldraw[black] (1,2) circle (2pt) ;
\filldraw[black] (3,2) circle (2pt) ;
\filldraw[black] (10,2) circle (2pt) ;
\filldraw[black] (6,2) circle (2pt) ;
\draw (1,1.6) node {\scriptsize$e_i(b)$};
\draw (2,1.6) node {\scriptsize$b$};
\draw (3,1.6) node {\scriptsize$f_i(b)$};
\draw (10,1.5) node {\scriptsize$f_i^{\varphi_i(b)}(b)$};
\draw (-2,1.5) node {\scriptsize$\scriptsize e_i^{\scriptsize\varepsilon_i(b)}(\scriptsize b)$};
\draw [decorate, decoration = {brace, amplitude = 5pt}, yshift = 5pt] (-2,2) -- (2,2) node [black, midway, yshift = 10pt] {{\scriptsize
$\varepsilon_i(b)$}};
\draw [decorate, decoration = {brace, amplitude = 5pt}, yshift = 5pt]
(6,2) -- (10,2) node [black, midway, yshift = 10pt] {{\scriptsize
$\varphi_i(b')=\varepsilon_i(b) $}};
\end{tikzpicture}
\end{align}
For $b\in \mathfrak{B}(\lambda)$ and $i\in I$, we may then set $f_i^{\textrm{max}}(b):=f_i^{\varphi_i(b)}(b)$,  { and } $e_i^{\textrm{max}}(b):=e_i^{\varepsilon_i(b)}(b).$

\begin{obs}\label{stringlength} When $b$ is the source of an $i$-string, that is, $e_i(b)=0=\varepsilon_i(b)$, then from \eqref{lengths} $\varphi_i(b)= $ $\langle \textrm{wt}(b),\alpha_i^\vee\rangle$ $= $ $\textrm{wt}_i(b)-\textrm{wt}_{i+1}(b)\ge 0$, $1\le i<n$, and, in type $C_n$, $\varphi_n(b)=\langle \textrm{wt}(b),\alpha_n^\vee\rangle=\textrm{wt}_n(b)\ge 0$, for $i=n$, is the length of the $i$-string, where $\textrm{wt}_i(b)$ indicates the $i$-th component of ${\textrm{wt}}(b)\in \mathbb{Z}^n$.
\end{obs}Henceforth, from \eqref{weights}
\begin{align*}{\textrm{wt}}(f_i^{\varphi_i(b)}(b))&=\textrm{wt}(b)-\varphi_i(b)\alpha_i\\
&=
\begin{cases}{\textrm{wt}}(b)-(\textrm{wt}_i(b)-\textrm{wt}_{i+1}(b))\alpha_i=s_i \textrm{wt}(b)&  1\le i<n,\\
{\textrm{wt}}(b)-\textrm{wt}_n(b)\alpha_n=s_n\textrm{wt}(b)&i=n.
\end{cases}
\end{align*}
In particular $\varphi_i(b_\lambda)=\lambda_i-\lambda_{i+1}$, $1\le i<n$, and $\lambda_n$, for $i=n$, and $f_i^{\lambda_i-\lambda_{i+1}}(b_\lambda)$ with weight $s_i\lambda$ is uniquely determined by its weight. Similarly, when $b$ is the sink of an $i$-string, in which case $\varepsilon_i(b)=-\langle \textrm{wt}(b),\alpha_i^\vee\rangle$ $= $ $-\textrm{wt}_i(b)+\textrm{wt}_{i+1}(b)\ge 0$, $1\le i<n$, and $\varepsilon_n(b)=-\langle \textrm{wt}(b),\alpha_n^\vee\rangle=-\textrm{wt}_n(b)\ge 0$, for $i=n$.

The Weyl group $W$  acts on the vertices of $\mathfrak{B}(\lambda)$ \cite{Kash95}: the action of
the simple reflection $s_{i}$ on $\mathfrak{B}(\lambda)$ sends each vertex $b$ on the
unique vertex $b^{\prime}$ in the $i$-chain of $b$ such that $\varphi
_{i}(b^{\prime})=\varepsilon_{i}(b)$ and $\varepsilon_{i}(b^{\prime}%
)=\varphi_{i}(b)$. Thus  this means that $b$ and $b^{\prime}$ correspond by
the reflection with respect to the center of the $i$-chain containing $b$.
More precisely, from \eqref{lengths} and \eqref{stringdraw},
$$s_i.b=\begin{cases}f_i^{\varphi_i(b)-\varepsilon_i(b)}(b)=f_i^{\langle \textrm{wt}(b),\alpha_i^\vee\rangle}(b)
& \mbox{if } \langle \textrm{wt}(b),\alpha_i^\vee\rangle\ge 0\\
e_i^{\varphi_i(b)-\varepsilon_i(b)}(b)=e_i^{-\langle \textrm{wt}(b),\alpha_i^\vee\rangle}(b)& \mbox{if } \langle \textrm{wt}(b),\alpha_i^\vee\rangle\le 0,
\end{cases}
$$
and $\textrm{wt}(s_i.b)=s_i.\textrm{wt}(b)$, for $i\in I$.


	
\subsection{Tensor product of crystals and signature rule}\label{sec:tensor}
{ If $B$ and $C$ are crystals,
the crystal ${B}\otimes C$ has set of
vertices the cartesian product of the sets of vertices of ${B}$ and
${C}$, denoted $u\otimes v$, $u\in B$ and  $v\in C$,} and crystal structure  given {by } $\mathrm{wt}${$(u\otimes
v)=$}$\mathrm{wt}${$(u)+$}$\mathrm{wt}${$(v)$} and  the following rules where we follow the Kashiwara convention \cite{Kashiwara1994CrystalGF, kash2}%
\begin{equation}
{e}_{i}(u\otimes v)=\left\{
\begin{array}
[c]{l}%
u\otimes {e}_{i}(v)\text{ if }\varepsilon_{i}(v)>\varphi_{i}(u)\\
{e}_{i}(u)\otimes v\text{ if }\varepsilon_{i}(v)\le \varphi_{i}(u)
\end{array}
\right.  \text{ and }{f}_{i}(u\otimes v)=\left\{
\begin{array}
[c]{l}%
{f}_{i}(u)\otimes v\text{ if }\varphi_{i}(u)>\varepsilon_{i}(v)\\
u\otimes{f}_{i}(v)\text{ if }\varphi_{i}(u)\le\varepsilon_{i}(v)
\end{array}
\right.  . \label{tens_crys}%
\end{equation}
We adopt the convention that $u\otimes0=0\otimes v=0$. Given two partitions $\lambda$ and $\mu$ in
$\mathcal{P}_{n}$, the crystal graph of the representation $V(\lambda)\otimes V(\mu)$ is the crystal $\mathfrak{B}(\lambda)\otimes \mathfrak{B}(\mu)$ whose decomposition into connected components as well multiplicity correspond to the decomposition of $V(\lambda)\otimes V(\mu)$ into irreducible representations.

If $B$, $C$  and $D$ are crystals the map $(u\otimes v)\otimes z\mapsto u\otimes(v\otimes z)$ is a crystal isomorphism between $(B\otimes C)\otimes D$ and $B\otimes (C\otimes D)$. We  also have $B\otimes C\simeq C\otimes B$ but the isomorphism is not natural.
Let $b=b_1\otimes\cdots\otimes b_r$. Then $\textrm{wt}(b)=\sum_{k=1}^r\textrm{wt}(b_k)$, and from \eqref{lengths},
\begin{align}
\varphi(b)=\max\{\varphi(b_k)+\sum_{k< u\le r}\left\langle\textrm{wt}(b_u),\alpha^\vee\right\rangle:1\le k\le r\},\label{tensorphi0}\\
\varepsilon(b)=\max\{\varepsilon(b_k)-\sum_{1\le u<k}\left\langle\textrm{wt}(b_u),\alpha^\vee\right\rangle:1\le k\le r\},\nonumber
\end{align}
and
\begin{align}f(b)=b_1\otimes\cdots \otimes(f b_{k_f})\otimes\cdots\otimes b_r,\label{tensorphi1}\\
e(b)=b_1\otimes\cdots \otimes(e b_{k_e})\otimes\cdots\otimes b_r,\nonumber
\end{align}
where $k_f$ ($k_e$) is the biggest (smallest) integer such that
$$\varphi(b)=\varphi(b_k)+\sum_{k< u\le r}\left\langle\textrm{wt}(b_u),\alpha^\vee\right\rangle\quad \left(\varepsilon(b)=\epsilon(b_k)-\sum_{1\le u< k}\left\langle\textrm{wt}(b_u),\alpha^\vee\right\rangle\right).$$

\subsubsection{The signature rule}The tensor product of crystals allows us to define the crystal operators on arbitrary words on the alphabet $[\pm n]$ or KN skew tableaux so that one has a crystal structure in type $C_n$. Let $\Lambda_k=\textbf{e}_1+\cdots+\textbf{e}_k$, $1\le k\le n$, be the fundamental weights in type $C_n$. In type $C_{n}$, the standard crystal is seminormal and has the following  crystal graph:
\begin{align}\label{standardcrystal}
1\xrightarrow{1} 2\xrightarrow{2} \dots \xrightarrow{n-2}n-1\xrightarrow{n-1} n\xrightarrow{n} \overline{n}\xrightarrow{n-1}\overline{n-1}\xrightarrow{n-2}\overline{n-2}\dots \xrightarrow{2}\overline{2} \xrightarrow{1}\overline{1}\end{align}
 with set
	$\mathfrak{B}=[\pm n]$, $ \text{wt}(\boxed{i})=\bf e_i$, $ \text{wt}(\boxed{\overline{i}})=-\bf e_i$ \cite{Kashiwara1994CrystalGF,bump2017crystal}. The highest weight element is the word $1$, and the highest weight $\Lambda_1=\bf e_1$. This is the crystal graph of the simple $\mathfrak{sp}_{2n}$--module, $V(\Lambda_1)$, and we denote it by  $\mathfrak{B}({\Lambda_1})$.
The crystal $\mathfrak{B}{(\Lambda_1)}$ is the crystal on the words of $[\pm n]^\ast$ with a sole letter. The tensor product of crystals allows us to define the crystal
\begin{align}G_n=\bigoplus\limits_{k\ge 0} {\mathfrak{B}{(\Lambda_1)}}^{\otimes k}\label{crystalwords}
\end{align}
 of all words in $[\pm n]^\ast$, where the vertex $w_1\otimes \dots \otimes w_k$ is identified with the word $w_1\dots w_k\in[\pm n]^\ast$ such that the weight $\textrm{wt}(w)=\sum_{i=1}^k\textrm{wt}(w_i)$. The irreducible representation $V(\Lambda_k)$, $1\le k\le n$,  is embedded into $V(\Lambda_1)^{\otimes k}$. Similarly the crystal $\mathfrak{B}(\Lambda_k)$ is embedded into ${\mathfrak{B}{(\Lambda_1)}}^{\otimes k}$ where the admissible column
$\!\YT{0.20in}{}{
			{{u_1}},
			{{\vdots}},
			{{u_k}}}$ of length $k$ is identified with the word $u_1\otimes\cdots\otimes u_k.$ Let $\lambda=\Lambda_{m_1}+\cdots +\Lambda_{m_k}$, $1\le m_1\le\cdots\le m_k\le n$. By the embedding of $V(\lambda)$ into $V(\Lambda_{m_1})\otimes\cdots \otimes V(\Lambda_{m_k})$, $\mathfrak{B}(\lambda)$ is also embedded
into $\mathfrak{B}(\Lambda_{m_1})\otimes\cdots \otimes \mathfrak{B}(\Lambda_{m_k})$ and the KN tableau of shape $\lambda$ and with columns $C_k\cdots C_1$  is identified with the word $C_1\otimes\cdots\otimes C_k$.

 The action of the operators $e_i$ and $f_i$ is easily given by the \emph{signature rule} \cite{Kashiwara1994CrystalGF,lecouvey2002schensted,kash2}. We substitute each letter $w_j$ by $+$ if $w_j\in \{i, \overline{i+1}\}$ or by $-$ if $w_j\in \{i+1, \overline{i}\}$, and erase it in any other case. Then successively erase any pair $+-$ until all the remaining letters form a word that looks like $-^a +^b$. Then $\varphi_i(w)=b$ and $\varepsilon_i(w)=a$, $e_i$ acts on the letter associated to the rightmost unbracketed $-$ (i.e., not erased), whereas $f_i$ acts on the letter $w_j$ associated to the leftmost unbracketed $+$,
\begin{align}\label{signature}f_i(w_j)=\begin{cases}
	i+1\,\,\text{if } w_j=i \text{ and } i\neq n \\
	\overline{i}\,\,\text{if }  w_j=\overline{i+1}\\
	\overline{n}\,\,  \text{if } w_j=i \text{ and } i=n,
	\end{cases}
\end{align} and the other letters of $w$ are unchanged, and $e_i$ is the inverse map. If $b=0$ then $f_i(w)=0$ and if $a=0$ then $e_i(w)=0$.
	
	The set $\mathcal{KN}(\lambda, n)$ (resp. $\mathcal{SSYT}(\lambda, n)$) is endowed with a Kashiwara crystal structure of type $C_n$ (resp. $A_{n-1}$) \cite{kashiwara1992crystal, lecouvey2002schensted}.
	The crystal operators are fully characterized on words in the alphabet $[\pm n]$ (resp. $[n]$) or on KN  skew tableaux (resp. skew SSYT) via the signature rule \eqref{signature}. We identify $\mathfrak{B}(\lambda)$ with $\mathcal{KN}(\lambda, n)$ (resp. $\mathcal{SSYT}(\lambda, n)$)  in type $C_n$ (respectively type $A_{n-1}$).
Observe that $\mathcal{SSYT}(\lambda, n)$ is a subcrystal of $\mathfrak{B}(\lambda)=\mathcal{KN}(\lambda, n)$.  See Figure \ref{cristal21only}.

\begin{figure}[h]
		\begin{center}

			{
				\begin{tikzpicture}
					[scale=.5,auto=left]
					\node (n0) at (0,14.5) {$\YT{0.17 in}{}{{{1},{1}},{{2}}}$};
					\node (n1l) at (-3.5,13.5)  {$\YT{0.17 in}{}{{{1},{2}},{{2}}}$};
					\node (n1r) at (3.5,13.5)  {$\YT{0.17 in}{}{{{1},{1}},{{\overline{2}}}}$};
					\node (n2l) at (-3.5,11)  {$\YT{0.17 in}{}{{{1},{\overline{2}}},{{2}}}$};
					\node (n2r) at (3.5,11) {$\YT{0.17 in}{}{{{1},{2}},{{\overline{2}}}}$};
					\node (n3r) at (3.5,8.5)  {$\YT{0.17 in}{}{{{2},{2}},{{\overline{2}}}}$};
					\node (n4rr) at (5,6)  {$\YT{0.17 in}{}{{{2},{2}},{{\overline{1}}}}$};
					\node (n4r) at (2,6)  {$\YT{0.17 in}{}{{{2},{\overline{2}}},{{\overline{2}}}}$};
					\node (n5r) at (3.5,3.5)  {$\YT{0.17 in}{}{{{2},{\overline{2}}},{{\overline{1}}}}$};
					\node (n6r) at (3.5,1)  {$\YT{0.17 in}{}{{{\overline{2}},{\overline{2}}},{{\overline{1}}}}$};
					\node (n3ll) at (-5,8.5)  {$\YT{0.17 in}{}{{{1},{\overline{2}}},{{\overline{2}}}}$};
					\node (n4l) at (-3.5,6)  {$\YT{0.17 in}{}{{{1},{\overline{1}}},{{\overline{2}}}}$};
					\node (n3l) at (-2,8.5)  {$\YT{0.17 in}{}{{{1},{\overline{1}}},{{2}}}$};
					\node (n5l) at (-3.5,3.5)  {$\YT{0.17 in}{}{{{2},{\overline{1}}},{{\overline{2}}}}$};
					\node (n6l) at (-3.5,1)  {$\YT{0.17 in}{}{{{2},{\overline{1}}},{{\overline{1}}}}$};
					\node (n7) at (0,0)  {$\YT{0.17 in}{}{{{\overline{2}},{\overline{1}}},{{\overline{1}}}}$};
					
					\draw[->] [draw=red,  thick] (-2.5,0.7)--(-1,0.3);
					\draw[->] [draw=blue,  thick] (2.5,0.7)--(1,0.3);
					\draw[->] [draw=blue,  thick] (-3.5,2.5)--(-3.5,2);
					\draw[->] [draw=red,  thick] (3.5,2.5)--(3.5,2);
					\draw[->] [draw=blue,  thick] (-3.5,5)--(-3.5,4.5);
					\draw[->] [draw=blue,  thick] (2.5,5)--(3,4.5);
					\draw[->] [draw=red,  thick] (4.5,5)--(4,4.5);
					\draw[->] [draw=red,  thick] (-2.5,7.5)--(-3,7);
					\draw[->] [draw=blue,  thick] (-4.5,7.5)--(-4,7);
					\draw[<-] [draw=blue,  thick] (4.5,7)--(4,7.5);
					\draw[<-] [draw=red,  thick] (2.5,7)--(3,7.5);
					\draw[->] [draw=blue,  thick] (-3,10)--(-2.5,9.5);
					\draw[->] [draw=red,  thick] (-4,10)--(-4.5,9.5);
					\draw[->] [draw=blue,  thick] (3.5,10)--(3.5,9.5);
					\draw[->] [draw=red,  thick] (-3.5,12.5)--(-3.5,12);
					\draw[->] [draw=blue,  thick] (3.5,12.5)--(3.5,12);
					\draw[->] [draw=blue,  thick] (-1,14.2)--(-2.5,13.8);
					\draw[->] [draw=red,  thick] (1,14.2)--(2.5,13.8);
				\end{tikzpicture}
			}	
\caption{The type $C_2$ crystal graph $\mathcal{KN}((2,1),2)$ containing the $A_1$ crystal $\mathcal{SSYT}((2,1),2)$, consisting of the two top left most tableaux, as a subcrystal. The type $C_2$ lowering crystal operators are $f_1$, $\color{blue}{\rightarrow}$, and $~~~~~~~~f_2$, $~~\color{red}{\rightarrow}$.
\label{cristal21only}}
		\end{center}
\end{figure}

	\subsection{Bruhat order} For an element $w\in W$ as a Coxeter group, the minimum number of simple reflections needed to produce $w$ is $\ell(w)$, the \emph{length} of $w$.
  An expression of the form $s_{i_1} s_{i_2} \cdots s_{i_k}$ representing $ \sigma\in W$ where all the $s_{i_j}$'s are simple reflections and $\ell(\sigma)=k$ is called a {\emph{reduced} decomposition} of $\sigma$.
 The {(\emph{strong}) Bruhat order} $\leq$ on $W$ as a Coxeter group
can be defined by $\sigma^{\prime}\leq\sigma$ in $W$ if and
only if there is a reduced decomposition of $\sigma$ admitting a subexpression (not
necessarily made of consecutive letters) which is a reduced decomposition of
$\sigma^{\prime}$, if and only if every reduced decomposition of $\sigma$
admits a subexpression which is a reduced decomposition of $\sigma^{\prime}$
(see~\cite[Corollary 2.2.3]{bjorner2006combinatorics}).
   We refer the reader to~\cite{bjorner2006combinatorics} for basic statements on  a finite Coxeter group.

 Given any partition $\lambda$ in $\mathcal{P}_{n}$,
we denote by $W_{\lambda}$ its \emph{stabilizer} under the action of
$W$, that is, $W_{\lambda}$ is
parabolic subgroup $W_J$, where $J = \{ 1\le i\le n: \langle \lambda,\alpha^\vee_i\rangle= 0\}$. Each coset in $W/W_{\lambda}$
contains a unique element of minimal length and the set of elements of minimal
length is denoted by $W^{\lambda}$.\ Then each $\sigma
\in W$ admits a unique decomposition of the form $\sigma=\sigma^\lambda\sigma'$
with $\sigma^\lambda\in W^{\lambda}$ and $\sigma'\in W_{\lambda},$  and
$\ell(\sigma)=\ell(\sigma^\lambda)+\ell(\sigma')$. One then has a one-to-one correspondence
between the elements of $W^{\lambda}$ and $W{\lambda}$, the $W%
$-orbit of $\lambda$ \cite{CB,bjorner2006combinatorics}. (Similarly, for the  unique element of maximal  length in each coset in $W/W_{\lambda}$
\cite[Corollary 2.4.5]{bjorner2006combinatorics}.)

\begin{obs} \label{inducedbruhat} The induced (strong) Bruhat order of $W$ on $W\lambda$  coincides with the restriction of the (strong) Bruhat order on $W$ to $W^\lambda$.\begin{itemize}
\item
Consider the set $W \lambda$, which is in bijection with $W^\lambda$ through $w \lambda \mapsto w^\lambda$, where $w^\lambda$ is the representative of minimal length of $w W_\lambda$. Then the transitive closure of the relations \[
	\mu<s_\alpha \mu
	,\;\mbox{if $\langle \mu,\alpha^\vee\rangle>0$,  $s_\alpha$ a reflection in $ W$   and $\mu\in
		W\lambda$, $\alpha\in\Phi^+$}
	\]
yields a partial order on $W \lambda$, which coincides through the aforementioned bijection with the restriction of the (strong) Bruhat order on $W$ to $W^\lambda$. That is, $\mu<s_\alpha \mu$ if and only if $w^\lambda<(s_\alpha w)^\lambda$, where $\mu=w\lambda$  for some $w\in W$. It also amounts to note that, for $\mu<\nu$ in $W\lambda$ in the induced (strong) Bruhat order if and only if $\mu<s_{\alpha_1}\mu<\cdots s_{\alpha_r}\mu$ if and only if $w<(s_{\alpha_1}w)^\lambda<\cdots(s_{\alpha_r}w)^\lambda$ in $W^\lambda$, for some reflections $t_1,\dots,t_r$ in $W$, and where $\mu=w\lambda$, and $\sigma\lambda$ for some $w,\sigma\in W$.
\item  The transpositions are the reflections in the symmetric group, and in $B_n$, seen as a subgroup of $\mathfrak{S}_{2n}$,  the reflections are   $(i\,-i)$, $(i\,j)(-i\,-j)$, $(i\,-j)(-i,j)$, $1\le i<j\le n$.
    \end{itemize}
\end{obs}

 Next we gather some properties of the long element $w_0$ of $W$ useful in the sequel.
 \begin{obs}
Let $S$ be the set of generators of $W$, and  $x, y, w \in W$. Then the long element $w_0$ in $W$ has the following properties:
\begin{itemize}
\item $x \le  y$ if and only
if $w_0y \le  w_0x$ (resp. $yw_0 \le xw_0)$) \cite[Section 3.7]{CB}, and
\item $\ell(w_0w)=\ell(ww_0)=\ell(w_0)-\ell(w)$, for all $w\in W$ \cite[Section 3.7]{CB}.

\item We  may write $w_0w=w_0s_{j_{\ell}}\cdots s_{j_1}=s_{t_q}\cdots s_{t_1}$ for some reduced words $w=s_{j_{\ell}}\cdots s_{j_1}$ and $s_{t_q}\cdots s_{t_1}$ in $W$ where $q=\ell(w_0)-\ell(w)$ \cite[Proposition 3.1.2] {bjorner2006combinatorics}.
\end{itemize}

\end{obs}

\begin{defin}\label{key}A \emph{key tableau} of shape $\lambda$, in type  $C_n$, is a KN tableau in $\mathcal{KN}(\lambda, n)$ in which the set of elements of
	each column 
is contained in the set of elements of the previous column, left to right, if any, and the letters $i$ and $\overline{i}$ do not appear simultaneously as entries, for any $i \in[n]$. A \emph{key tableau} of shape $\lambda$, in type  $A_{n-1}$, or in  $\mathcal{SSYT}(\lambda, n)$, is a key tableau of type $C_n$ where all entries are positive.
\end{defin}

 Given a permutation  $\sigma=[\sigma_1\ldots\sigma_n]\in B_n$ (respect. $\mathfrak{S}_n$) in window (respect. one-line) notation \cite{santos2019symplectic} and a partition $\lambda$ with at
most $n$ parts, $\sigma$ determines a key tableau $K$ of shape $\lambda$ and weight $\sigma\lambda$, denoted $K=K(\sigma\lambda)$,  as follows: the entries in each column
of $K$,  say with  $j$ boxes, are just $\sigma_1,\ldots,\sigma_j$ arranged in increasing order (top to bottom). (Note that in the window notation for $B_n$ there are no symmetric entries.) In particular, either we consider the minimal or maximal representatives of a coset in $W/W_\lambda$  the corresponding key tableaux are the same. The KN tableau $K$ depends only upon the left coset $\sigma W_\lambda$
of the stabiliser subgroup $W_\lambda$ of $W$, and the map $\sigma\mapsto K$ sets up a bijection between such
cosets and key tableaux. (See   \cite{krv23} for type $A_{n-1}$.)

In fact, either in type $A_{n-1}$ or $C_n$, it is possible to compute   the unique minimal element of the coset $\sigma W_\lambda$ \cite[Prosition 6]{santos2019symplectic} from the key  $K(\sigma\lambda)$, a generalization of Lascoux' method for type $A_{n-1}$ keys \cite{lascoux2003double}.
The  unique maximal element in the coset $\sigma W_\lambda$ can also be computed using the same key and  the same rule in \cite[Proposition 6]{santos2019symplectic} but this time by reading along the  key columns, bottom to top, right  to left, to maximize the number inversions \cite[Proposition 1]{santos2019symplectic} among the elements in the coset.  (See  also \cite{krv23}, a recent method in type $A_{n-1}$ to determine the left and right keys of a semistandard tableau where the unique maximal element representatives of the cosets are also considered.) For instance, consider $W=B_5$ and
$$K=\YT{0.17in}{}{
		{{1},{4},{4},{\overline{5}},{\overline{5}}},
		{{4},{\overline{5}},{\overline{5}}},
		{{\overline{5}},{\overline{3}},{\overline{3}}},
		{\overline{3}}}$$ with shape $\lambda=(5,3,3,1,0)$, weight $v=(1,0,\overline{3},3,\overline{5})\in W(5,3,3,1,0)$ and $\sigma=[\overline{5}\,4\,\overline{3}\,1\,2]=\oo\sigma^\lambda$, in  window notation \cite{santos2019symplectic}, is the minimal representative of the coset $\sigma W_\lambda=\sigma <s_2>$, $W_\lambda=<s_2>=\{1,s_2\}$. The maximal element in $\sigma W_\lambda$ is $[\overline{5}\,\overline{3}\,4\, 1\,2]$.

\begin{prop}\label{keybruhat}\cite{santos2019symplectic} Let $W\lambda$ be equipped with the induced Bruhat order  in $W$ restricted to the set  of  minimal coset representatives in $W^\lambda$. Then,
 for $u,v\in W\lambda$,  $u\le v$ if and only if  $K(u)\le K(v)$ by entry-wise comparison (in the corresponding alphabets).
\end{prop}
 For $\lambda=(\lambda_1>\lambda_2>\cdots>\lambda_n)$ a stair partition, $W_\lambda=\{1\}$ and  $W^\lambda=W$ and write just $K(\sigma)$ for $K(\sigma\lambda)$.
 If $\alpha, \beta\in W$ are written in window (one-line) notation then the Bruhat order in $W$ has an alternative definition $\alpha\le \beta\Leftrightarrow K(\alpha)\le K(\beta)$ \cite{santos2019symplectic} and the latter inequality can be read as $K(\alpha)\le K(\beta)\Leftrightarrow \alpha[i]\le \beta[i]$, for $1\le i\le n$,  where we mean  $\alpha[i]=\{a_1<\cdots<a_i\}\le \beta[i]=\{b_1<b_2<\cdots<b_i\}$ to be $a_k\le b_k$, $1\le k\le i$. (See also \cite{fulton1997young, speyer} in type $A_{n-1}$.)}

\subsubsection{The keys poset } The \emph{highest weight element} of $\mathfrak{B}(\lambda)=\mathcal{KN}(\lambda, n)$ (resp. $\mathcal{SSYT}(\lambda, n)$)  is the key tableau $b_\lambda=K(\lambda)$ of shape and weight $\lambda$.
Recall the Weyl group $W$  acts on the vertices of $\mathfrak{B}(\lambda)$ (see \S \ref{sec:crystal})
such that  the simple reflection $s_{i}$ 
sends each vertex $b$ to the unique vertex $s_ib:=b^{\prime}$ in the $i$-\emph{chain} of $b$   where $b^{\prime}$ is given by the reflection with respect to the center of the $i$-chain containing $b$.
In particular,
any key tableau $K\neq K(\lambda)$ of shape $\lambda$   is the vertex at the end of a $i$-chain of positive length belonging to a path of chains of positive lengths connected to $K(\lambda)$. More precisely, it can be obtained from $K(\lambda)$ by a sequence $s_{i_r},\ldots, s_{i_1}$  of simple reflections, $K=K(s_{i_{r}}\cdots s_{i_1}\lambda)=s_{i_r} K(s_{i_{r-1}}\cdots s_{i_1}\lambda)$  corresponding to the sequence  $(i_1,\dots,i_r)$-strings of positive length
connecting $b_\lambda=K(\lambda)$ to $\sigma b_\lambda:=b_{\sigma\lambda}=K$ with $\sigma=s_{i_{r}}\cdots s_{i_1}$,
where $b_{w_0\lambda}=K(w_0\lambda)$, with $w_0$ the longest element of $W$.
We
 write
\[
O(\lambda)=\{\sigma K({\lambda})=K({\sigma\lambda})\mid\sigma
\in W^{\lambda}\}
\]
for the orbit of the highest weight vertex $b_\lambda=K(\lambda)$ of $\mathfrak{B}(\lambda
)$.\ Observe that $K({\sigma\lambda})$ is then the unique vertex in
$\mathfrak{B}(\lambda)$ of weight $\sigma\lambda$. For each element of $W\lambda$ there is
	exactly one key tableau of shape $\lambda$ with that weight.
{The elements of $O(\lambda)$, called
the \textsf{keys} of $\mathfrak{B}(\lambda)$, }are those vertices of $\mathfrak{B}(\lambda)$ which are
completely characterized by their weight. As expected, one has a direct
correspondence between the keys and the cosets of $W/W_{\lambda}$.

If in the crystal $\mathfrak{B}(\lambda)$ one just keeps the vertices in $O(\lambda)$ and the edges of the $i$-chains connecting them, this directed coloured graph defines a coloured poset for the left weak Bruhat order on the minimal representatives in $W^\lambda$.
Recall the left weak Bruhat order in $W$ can defined by the cover relations $\sigma<s_i\sigma$ which holds whenever $\ell(s_i\sigma)=\ell(\sigma)+1$,
 for  $\sigma \in W$ and the simple reflection $s_i\in W$ \cite{hershlenart}. Consider the  weak Bruhat order in $W$ induced on $W\lambda$ as in Remark \ref{inducedbruhat} replacing the reflection $s_\alpha $ by a simple reflection $s_i$ \cite{hershlenart}.
 For $u=(u_1,\ldots,u_n)\in W\lambda$, it means $u<s_iu$ whenever $\langle u,\alpha_i^\vee\rangle> 0$, $i\in I$, that is, $u_i>u_{i+1}$, $1\le i<n$, and $u_n>0$ if $i=n$.
 For brevity, often we just refer weak Bruhat order since right weak Bruhat order will not be used.
  Therefore,  one has the following assertions.
 \begin{prop}\label{keystring}
  For $u\in W\lambda$, $i\in I$ and $s_i\in W$. The following assertions are equivalent
  \begin{enumerate}
 \item $K(u)<K(s_iu)$.
 \item  $u<s_iu$ which happens when
$\langle u,\alpha_i^\vee\rangle> 0$.
\item  There is a crystal $i$-chain from $K(u)$ to $K(s_iu)$ of length $\langle u,\alpha_i^\vee\rangle> 0$.
\item  $f^{\mathrm{max}}_i(K(u))=K(s_iu)$, with $\mathrm{max}=\varphi_i(K(u))=\langle u,\alpha_i^\vee\rangle>0$.
    \end{enumerate}
\end{prop}

By Proposition \ref{keybruhat}, one has $\lambda<u$ in $W\lambda$ if and only if $K(\lambda)<K(u)$. By the crystal string property this happens when there is a weak saturated chain in $W\lambda$  from $\lambda$ to $u$, that is,
\begin{align}\label{saturated} K(\lambda)<K(u)&\Leftrightarrow K(\lambda)<K(s_{i_1}\lambda)<K(s_{i_2}s_{i_1}\lambda)<\cdots <K(s_{i_r}\cdots s_{i_1}\lambda)=K(u)\nonumber\\
&\Leftrightarrow\lambda<s_{i_1}\lambda<s_{i_2}s_{i_1}\lambda<\cdots <s_{i_r}\cdots s_{i_1}\lambda=u\nonumber\\
&\Leftrightarrow \lambda<u.
\end{align}
for some $(i_r,\dots,i_1)$ coloured sequence of strings (of positive lengths) in $\mathfrak{B}(\lambda)$ connecting $K(\lambda)$ to $K(u)$.
Indeed, \eqref{saturated} means that $\lambda<w\lambda$, $w\in W$, if and only if there is  a chain of reduced words  $ 1<s^\lambda_{i_1}=s_{i_1}<$ $(s_{i_2}s_{i_1})^\lambda$ $\cdots<(s_{i_r}\cdots s_{i_1})^\lambda=w^\lambda$ in $W^\lambda$.
This also means that there is a  $(i_r,\dots,i_1)$ coloured sequence of strings (of positive lengths) such that
\begin{align}\label{string}f_{i_r}^{<s_{i_{r-1}}\cdots s_{i_1}\lambda, \alpha_{i_r}^\vee>}\cdots f_{i_2}^{<s_{i_1}\lambda, \alpha_{i_2}^\vee>} f_{i_1}^{<\lambda, \alpha_{i_1}^\vee>}(K(\lambda))=K(u),
\end{align}
or equivalently
$$f_{i_r}^{\textrm{max}}\cdots f_{i_2}^{\textrm{max}} f_{i_1}^{\textrm{max}}(K(\lambda))=K(u) \; \mbox{  with all $\textrm{max}
$ positive}.$$
Thus we may write
\begin{align}\label{O} O(\lambda)
=\{{f^{\textrm{max}}_{j_r}}\cdots {f^{\textrm{max}}_{j_1}}(K(\lambda))\mid j_1,\dots, j_r\in I, ~r\ge 0\},
\end{align}
noting that $f_j^{\textrm{max}}(K(u)):=f_j^{\varphi_j(K(u))}(K(u))=K(s_ju)$ if $\textrm{max}=$ $\varphi_j(K(u))=$ $\langle \mu,\alpha_j^\vee\rangle>0$ and $K(u)$ if $\langle u,\alpha_j^\vee\rangle=0$.

\begin{ex}Let $n=5$, $\lambda=(2,2,2,1,0)$, and \[
K(\lambda)=\YT{0.19in}{}{
	    	{{1},{1}},
			{{2},{2}},
			{3,3},
			{4}}\overset{f_3=f_3^{\mathrm{max}}}\longrightarrow K(s_3\lambda)=\YT{0.19in}{}{
	    	{{1},{1}},
			{{2},{2}},
			{3,4},
			{4}}
\overset{f^2_4=f_4^{\mathrm{max}}}\longrightarrow K(s_4s_3\lambda)=\YT{0.19in}{}{
	    	{{1},{1}},
			{{2},{2}},
			{3,5},
			{5}}.\]
One then has
\[K(\lambda)<K(s_3\lambda)<K(s_4s_3\lambda) \mbox{ by entrywise comparision}\Leftrightarrow\]
\[\Leftrightarrow\lambda=(2,2,2,1,0)<s_3\lambda=(2,2,1,2,0)<s_4s_3\lambda=(2,2,1,0,2) \mbox{ in $W\lambda$},\]
and from Proposition  \ref{keybruhat}
\[1<s_3=s_3^\lambda=[12435]<(s_4s_3)^\lambda=[12534]\neq s_4s_3=[12453] \mbox{ in $W^\lambda$}.\]
\end{ex}

\subsection{Keys, dilatation of crystals and  Lakshmibai-Seshadri paths}\label{subsecdilatation}
Let $m$ be a positive integer and $\lambda$ a partition in $\mathcal{P}_n$. There exists a
unique embedding of crystals $\psi_{m}:\mathfrak{B}(\lambda)\hookrightarrow \mathfrak{B}(m\lambda)$
such that for any vertex $b\in \mathfrak{B}(\lambda)$ and any path $b={f}_{i_{1}%
}\cdots{f}_{i_{l}}(b_{\lambda})$ in $\mathfrak{B}(\lambda)$, we have \cite{Kashsimilar96,kash2}
\[
\psi_{m}(b)={f}_{i_{1}}^{m}\cdots{f}_{i_{l}}^{m}(b_{m\lambda}).
\]
Since the vertex $b_{\lambda}^{\otimes m}$ is of highest weight $m\lambda$ in
$\mathfrak{B}(\lambda)^{\otimes m}$, one gets a particular realization $\mathfrak{B}(b_{\lambda
}^{\otimes m})$ of $\mathfrak{B}(m\lambda)$ in $\mathfrak{B}(\lambda)^{\otimes m}$ with highest
weight vertex $b_{\lambda}^{\otimes m}$. This  gives a canonical
embedding
\begin{equation}
\theta_{m}:\left\{
\begin{array}
[c]{c}%
\mathfrak{B}(b_{\lambda})\hookrightarrow \mathfrak{B}(b_{\lambda}^{\otimes m})\subset \mathfrak{B}(b_{\lambda
})^{\otimes m}\\
b\longmapsto b_{1}\otimes\cdots\otimes b_{m}%
\end{array}
\right.  \label{embdedd}%
\end{equation}
with important properties given in the  theorem below.

\begin{teo}
\label{Th_Dila}(see \cite{Kashsimilar96, kash2, agl})

\begin{enumerate}
\item Let $\sigma\in W^{\lambda}.$ We have $\theta_{m}%
(b_{\sigma\lambda})=b_{\sigma\lambda}^{\otimes m}$.

\item Let $b\in \mathfrak{B}(\lambda)$. When $m$ has sufficiently many factors, in general, the least common multiple of the maximal $i$-string lengths, there
exist elements $\sigma_{1},\ldots,\sigma_{m}$ in $W^{\lambda}$
such that $\theta_{m}(b)=b_{\sigma_{1}\lambda}\otimes\cdots\otimes
b_{\sigma_{m}\lambda}$. Moreover, in this case

\begin{itemize}
\item the elements $b_{\sigma_{1}\lambda}$ and $b_{\sigma_{m}\lambda}$ in
$\theta_{m}(b)$ do not then depend on $m$,

\item up to repetition, the sequence $(\sigma_{1}\lambda,\ldots,\sigma
_{m}\lambda)$ in $\theta_{m}(b)$ does not depend on the realization of the
crystal $\mathfrak{B}(\lambda)$ and we have $\sigma_{1}\geq\sigma_{2}\geq\cdots\geq
\sigma_{m}$.
\end{itemize}
\end{enumerate}
\end{teo}

 We  define the pair of keys, right and left, of an element in
$\mathfrak{B}(\lambda)$ as follows.

\begin{defin}
\label{DefKeys}Let $b\in \mathfrak{B}(\lambda)$. The keys $K^{+}(b)$, the right key, and $K^{-}(b)$, the left key,
of $b$ are defined as follows:
\[
K^{+}(b)=b_{\sigma_{1}\lambda}\text{ and }K^{-}(b)=b_{\sigma_{m}\lambda}.
\]
{In particular, $K^{+}(b_{\sigma\lambda})=K^{-}(b_{\sigma\lambda}%
)=b_{\sigma\lambda}$ for any $\sigma\in W^{\lambda}$. The orbit
$O(\lambda)$ is simultaneously the set of all left and right keys of $\mathfrak{B}%
(\lambda)$.}
\end{defin}
See Figure \ref{dilatation} for an example in type C, and \cite[Example 2.12 ]{agl} for an example in type A.

\begin{obs}\cite[Chapitre 8]{kash2}, \cite[Section 2.3.2]{agl}
$K^-(b)\le  K^+(b)$ for any $b\in \mathfrak{B}(\lambda)$, and $K^-(b)= K^+(b)$ if and only if $b$ is  in ${O}(\lambda)$.
\end{obs}

\begin{obs}\label{LSkeys}
\begin{enumerate}
\item A Lakshmibai-Seshadri path can  be described
as a $W$-path satisfying certain integrality conditions. We refer the reader to \cite{littelmann1994LS,laklitt} for further details. (See also \cite{choi2018lakshmibai}.)
Let $\lambda\in\Lambda^+$ and consider the Bruhat order on $W/W_\lambda$.  Let $\mathbf{\tau} = (\tau_0 > \cdots>\tau_r )$ be a
strictly decreasing sequence of elements of $W/W_\lambda$ and let $\mathbf{a}=(0 < a_1 < \cdots < a_r < 1)$
be strictly increasing sequence of rational numbers.
The pair $\mathbf{\pi} = (\mathbf{\tau}, \mathbf{a})$ is called a rational $W$-path of shape $\lambda$ or a convex
subset of shape $\lambda$ of the orbit $W\lambda$ \cite{laklitt}. If in addition  the pair $\mathbf{\pi}=(\mathbf{\tau}, \mathbf{a})$ satisfies certain
integrality conditions \cite{littelmann1994LS,laklitt} then is called a Lakshmibai-Seshadri (L-S) path of shape $\lambda$.

Let $\mathbf{B}(\lambda)$ be the crystal of Lakshmibai–Seshadri (L-S) paths of shape $\lambda$ \cite{littelmann1994LS}. The crystal of L-S paths of shape $\lambda$ is isomorphic to the Kashiwara crystal $\mathfrak{B}(\lambda)$ \cite{Kashsimilar96}. The dilatation of crystals was the main tool for proving that Littelmann's crystals obtained from its path model coincide with Kashiwara's ones. It is enough to show it for L-S path model that it is what we got from the dilatation \cite[Chapitre 8]{kash2}.
If $\mathbf{\pi}=(\mathbf{\tau},\mathbf{a})$ is an L–S path of shape $\lambda$, the sequence $\mathbf{\tau} = (\tau_0,\dots,\tau_r )$
{  strictly decreasing in the Bruhat order on $W/W_\lambda$}. We call $i(\pi) = \tau_0$,
the initial direction and $e(\pi) = \tau_r$, the final direction of the path which coincide with the right key respectively left key of the corresponding vertex $b$ in $\mathfrak{B}(\lambda)$. That is, $K^+(b)=K(\tau_0\lambda)=b_{\tau_0\lambda}$ and $K^-(b)=K(\tau_r\lambda)=b_{\tau_r\lambda}$.
\item  Indeed Theorem \ref{Th_Dila}, Assertion (2),   gives a procedure  to compute the right and left key maps on a abstract crystal. However this method is not in general effective when $m$ is big. (In general we take the least common multiple of the maximal string lengths.).
Despite  the terminology "left" and "right" keys as in
the original Lascoux's definition \cite{lascoux1990keys}, based on the tableau model realization,
does not fit with the positions of $b_{\sigma_{1}\lambda}$ and $b_{\sigma_{k}\lambda}$ in $\theta_{k}(b)$ we still keep it.
\end{enumerate}
\end{obs}

\begin{figure}[h]
		\begin{multicols}{2}
			
			${
				\begin{tikzpicture}
					[scale=1.35,auto=left]
					\node (n0) at (0,14.5) {$\thiYT{0.17 in}{}{{{1},{1}},{{2}}}^{\otimes 6}$};
					\node (n1l) at (-3.5,13.5)  {$\thiYT{0.17 in}{}{{{1},{2}},{{2}}}^{\otimes 6}$};
					\node (n1r) at (3.5,13.5)  {$\thiYT{0.17 in}{}{{{1},{1}},{{\overline{2}}}}^{\otimes 6}$};
					\node (n2l) at (-3.5,11)  {$\YT{0.17 in}{}{{{1},{\overline{2}}},{\overline{2}}}^{\otimes 3}\otimes\YT{0.17 in}{}{{{1},{{2}}},{{2}}}^{\otimes 3}$};
					\node (n2r) at (3.5,11) {$\YT{0.17 in}{}{{{2},{2}},{{\overline{1}}}}^{\otimes 2}\otimes\YT{0.17 in}{}{{{1},{{1}}},{{\overline 2}}}^{\otimes 4}$};
					\node (n3r) at (3.5,8.5)  {$\YT{0.17 in}{}{{{2},{2}},{{\overline{1}}}}^{\otimes 4}\otimes\YT{0.17 in}{}{{{1},{{1}}},{{\overline 2}}}^{\otimes 2}$};
					\node (n4rr) at (5,6)  {$\thiYT{0.17 in}{}{{{2},{2}},{{\overline{1}}}}^{\otimes 6}$};
					\node (n4r) at (2,6)  {$\YT{0.17 in}{}{{\overline{2},{\overline{2}}},{{\overline{1}}}}^{\otimes 3}\otimes \YT{0.17 in}{}{{{2},{{2}}},{{\overline{1}}}}\otimes \YT{0.17 in}{}{{{1},{{1}}},{{\overline{2}}}}^{\otimes 2}$};
					\node (n5r) at (3.5,3.5)  {$\YT{0.17 in}{}{{\overline{2},{\overline{2}}},{{\overline{1}}}}^{\otimes 3}\otimes \YT{0.17 in}{}{{{2},{{2}}},{{\overline{1}}}}^{\otimes 3}$};
					\node (n6r) at (3.5,1)  {$\thiYT{0.17 in}{}{{{\overline{2}},{\overline{2}}},{{\overline{1}}}}^{\otimes 6}$};
					\node (n3ll) at (-5,8.5)  {$\thiYT{0.17 in}{}{{{1},{\overline{2}}},{{\overline{2}}}}^{\otimes 6}$};
					\node (n4l) at (-3.5,6)  {$\YT{0.17 in}{}{{{2},{\overline{1}}},{{\overline{1}}}}^{\otimes 2}\otimes\YT{0.17 in}{}{{{1},{\overline{2}}},{{\overline{2}}}}^{\otimes 4}$};
					\node (n3l) at (-2,8.5)  {$\YT{0.17 in}{}{{{2},{\overline{1}}},{\overline{1}}}^{\otimes 2}\otimes\YT{0.17 in}{}{{{1},{\overline{2}}},{{\overline{2}}}}\otimes \YT{0.17 in}{}{{{1},{{2}}},{{2}}}^{\otimes 3}$};
					\node (n5l) at (-3.5,3.5)  {$\YT{0.17 in}{}{{{2},{\overline{1}}},{{\overline{1}}}}^{\otimes 4}\otimes \YT{0.17 in}{}{{{1},{\overline{2}}},{{\overline{2}}}}^{\otimes 2}$};
					\node (n6l) at (-3.5,1)  {$\thiYT{0.17 in}{}{{{2},{\overline{1}}},{{\overline{1}}}}^{\otimes 6}$};
					\node (n7) at (0,0)  {$\thiYT{0.17 in}{}{{{\overline{2}},{\overline{1}}},{{\overline{1}}}}^{\otimes 6}$};
					\draw[->] [draw=red,  thick] (-2.5,0.7)--(-1,0.3);
					\draw[->] [draw=blue,  thick] (2.5,0.7)--(1,0.3);
					\draw[->] [draw=blue,  thick] (-3.5,2.5)--(-3.5,2);
					\draw[->] [draw=red,  thick] (3.5,2.5)--(3.5,2);
					\draw[->] [draw=blue,  thick] (-3.5,5)--(-3.5,4.5);
					\draw[->] [draw=blue,  thick] (2.5,5)--(3,4.5);
					\draw[->] [draw=red,  thick] (4.5,5)--(4,4.5);
					\draw[->] [draw=red,  thick] (-2.5,7.5)--(-3,7);
					\draw[->] [draw=blue,  thick] (-4.5,7.5)--(-4,7);
					\draw[<-] [draw=blue,  thick] (4.5,7)--(4,7.5);
					\draw[<-] [draw=red,  thick] (2.5,7)--(3,7.5);
					\draw[->] [draw=blue,  thick] (-3,10)--(-2.5,9.5);
					\draw[->] [draw=red,  thick] (-4,10)--(-4.5,9.5);
					\draw[->] [draw=blue,  thick] (3.5,10)--(3.5,9.5);
					\draw[->] [draw=red,  thick] (-3.5,12.5)--(-3.5,12);
					\draw[->] [draw=blue,  thick] (3.5,12.5)--(3.5,12);
					\draw[->] [draw=blue,  thick] (-1,14.2)--(-2.5,13.8);
					\draw[->] [draw=red,  thick] (1,14.2)--(2.5,13.8);

				\end{tikzpicture}
			}$	
		\end{multicols}
\caption{The dilatation  of the crystal $\textsf{KN}((2,1),2)$ in Figure \ref{cristal21only}, by $m=6$, the least common multiple of the maximal $i$-string lengths, inside $\textsf{KN}((12,6),2)\simeq \mathfrak{B}(K(2,1)^{\otimes 6},2)$, exhibiting the right and left keys of each vertex of $\textsf{KN}((2,1),2)$ as the leftmost  respectively rightmost factor in each $6$-fold tensor product of keys in $O_{B_2}(2,1)$. The blue (resp. red) arrow means $f_1^6$  (resp. $f_2^6$). \label{dilatation}}
\end{figure}

\begin{figure}[h]
		\begin{multicols}{2}
			
			${
				\begin{tikzpicture}
					[scale=1.35,auto=left]
					\node (n0) at (0,14.5) {$(\textbf{id};\textbf{0},\textbf{1})$};
					\node (n1l) at (-3.5,13.5)  {$(\textbf{s}_1;\textbf{0},\textbf{1})$};
					\node (n1r) at (3.5,13.5)  {$(\textbf{s}_2;\textbf{0},\textbf{1})$};
					\node (n2l) at (-3.5,11)  {$(\textbf{s}_2\textbf{s}_1,\textbf{s}_1;\textbf{0},\textbf{$\frac{3}{6}$},\textbf{1})$};
					\node (n2r) at (3.5,11) {$(\textbf{s}_1\textbf{s}_2,\textbf{s}_2;\textbf{0},\textbf{$\frac{2}{6}$},\textbf{1})$};
					\node (n3r) at (3.5,8.5)  {$(\textbf{s}_1\textbf{s}_2,\textbf{s}_2;\textbf{0},\textbf{$\frac{4}{6}$},\textbf{1})$};
					\node (n4rr) at (5,6)  {$(\textbf{s}_1\textbf{s}_2;\textbf{0},\textbf{1})$};
					\node (n4r) at(2,6) {$(\textbf{s}_2\textbf{s}_1\textbf{s}_2,\textbf{s}_1\textbf{s}_2,\textbf{s}_2;\textbf{0},\textbf{$\frac{3}{6}$},\textbf{$\frac{3}{6}+\frac{1}{6}$},\textbf{1})$};
					\node (n5r) at (3.5,3.5)  {$(\textbf{s}_2\textbf{s}_1\textbf{s}_2,\textbf{s}_1\textbf{s}_2;\textbf{0},\textbf{$\frac{3}{6}$},\textbf{1})$};
					\node (n6r) at (3.5,1)  {$(\textbf{s}_2\textbf{s}_1\textbf{s}_2,;\textbf{0},\textbf{1})$};
					\node (n3ll) at (-5,8.5)  {$(\textbf{s}_2\textbf{s}_1;\textbf{0},\textbf{1})$};
					\node (n4l) at (-3.5,6)  {$(\textbf{s}_1\textbf{s}_2\textbf{s}_1,\textbf{s}_2\textbf{s}_1;\textbf{0},\textbf{$\frac{2}{6}$},\textbf{1})$};
					\node (n3l) at (-2,8.5)  {$(\textbf{s}_1\textbf{s}_2\textbf{s}_1,\textbf{s}_2\textbf{s}_1,\textbf{s}_1;\textbf{0},\textbf{$\frac{2}{6}$},\textbf{$\frac{2}{6}+\frac{1}{6}$},
\textbf{1})$};
					\node (n5l) at (-3.5,3.5)  {$(\textbf{s}_1\textbf{s}_2\textbf{s}_1,\textbf{s}_2\textbf{s}_1;\textbf{0},\textbf{$\frac{4}{6}$},\textbf{1})$};
					\node (n6l) at (-3.5,1)  {$(\textbf{s}_1\textbf{s}_2\textbf{s}_1;\textbf{0},\textbf{1})$};
					\node (n7) at (0,0)  {$(\textbf{w}_0=\textbf{s}_2\textbf{s}_1\textbf{s}_2\textbf{s}_1;\textbf{0},\textbf{1})$};
					\draw[->] [draw=red,  thick] (-2.5,0.7)--(-1,0.3);
					\draw[->] [draw=blue,  thick] (2.5,0.7)--(1,0.3);
					\draw[->] [draw=blue,  thick] (-3.5,2.5)--(-3.5,2);
					\draw[->] [draw=red,  thick] (3.5,2.5)--(3.5,2);
					\draw[->] [draw=blue,  thick] (-3.5,5)--(-3.5,4.5);
					\draw[->] [draw=blue,  thick] (2.5,5)--(3,4.5);
					\draw[->] [draw=red,  thick] (4.5,5)--(4,4.5);
					\draw[->] [draw=red,  thick] (-2.5,7.5)--(-3,7);
					\draw[->] [draw=blue,  thick] (-4.5,7.5)--(-4,7);
					\draw[<-] [draw=blue,  thick] (4.5,7)--(4,7.5);
					\draw[<-] [draw=red,  thick] (2.5,7)--(3,7.5);
					\draw[->] [draw=blue,  thick] (-3,10)--(-2.5,9.5);
					\draw[->] [draw=red,  thick] (-4,10)--(-4.5,9.5);
					\draw[->] [draw=blue,  thick] (3.5,10)--(3.5,9.5);
					\draw[->] [draw=red,  thick] (-3.5,12.5)--(-3.5,12);
					\draw[->] [draw=blue,  thick] (3.5,12.5)--(3.5,12);
					\draw[->] [draw=blue,  thick] (-1,14.2)--(-2.5,13.8);
					\draw[->] [draw=red,  thick] (1,14.2)--(2.5,13.8);

				\end{tikzpicture}
			}$	
		\end{multicols}
\caption{The   crystal $\mathbf{B}(2,1)$ of L-S paths of shape $\lambda=\Lambda_2+\Lambda_1=(2,1)$ obtained from the dilatation of the $C_2$ crystal $\mathfrak{B}(2,1)$. The $C_2$ Weyl group $B_2=<s_1,s_2|s_1^2=s_2^2=1,\,(s_1s_2)^4=1>$ with long element $\textbf{w}_0=s_2s_1s_2s_1$. \label{laks}}
\end{figure}

In the type A tableau crystal model there are currently many ways to compute the right  and left key maps besides the original ones based on frank words and JDT \cite{lascoux1990keys}. By direct inspection of a Young tableau, Willis \cite{Willis2011ADW} has  given an alternative algorithm to compute the right key tableau that does not require the use of \emph{jeu de taquin}.  Other methods to compute the type A  right key map includes      \emph{semi skyline augmented fillings} by Mason \cite{mason2008decomposition}, and the alcove path model by Lenart \cite{lenart2015generalization}, for instance. For new methods and a complete overview  in type A, see \cite{bvbg2021vertex} and the references therein. See also for a recent  new method  given in \cite{krv23} based on Deodhar lifts known  in standard monomial theory developed by Lakshmibai, Musili, and Seshadri \cite{lamuse} and references therein.

In \cite{santos2019symplectic} the second author Santos has defined type C frank words on the alphabet $[\pm n]$ and used them to create the right and left key maps, that send KN tableaux to key tableaux in type C to be recalled in Section \ref{rightkeyjdt}. See also \cite{jacon2019keys}. In addition, in \cite{santos2021fpsac}, motivated by the Willis' direct way for semistandard tableaux, a direct way for computing the right key of a KN tableau was also provided. This direct procedure is here also shown for left keys \cite{santos2022}. Alternatively, in \S \ref{sec:virtual}, using the Baker imbedding, we virtualize the symplectic right and left keys in $A_{2n-1}$. Simultaneously, Baker embedding also virtualizes the L-S path crystal
$\mathbf{B}(\lambda)$.

\subsection{Sch\"utzenberger-Lusztig involution and dual crystal}Crystals corresponding to finite-dimensional (quantum group) $U_q(\mathfrak{g})$-representations belong to a family of crystals called \textit{normal crystals} \cite{bump2017crystal}. In classical types, these crystals may be  realized  by a tableau model  \cite{Kashiwara1994CrystalGF} and have nice combinatorial properties. Normal crystals arise as the crystals associated to the finite-dimensional representations of a quantum group $U_q(\mathfrak{g})$ for some Lie algebra $\mathfrak{g}$ \cite{bump2017crystal}. Let  $I$ be the Dynkin diagram associated to the root system of $\mathfrak{g}$.
	Let us recall the definition of Lusztig involution, also called Sch\"utzenberger involution in type $A_{n-1}$. Consider the Dynkin diagram automorphism, a permutation of its nodes which leaves the diagram invariant,  $\theta:I\rightarrow I$   defined by $\alpha_{\theta(i)} = -w_0\alpha_i$, $\alpha_i$ is the $i$-th simple root at node $i\in I$, where $w_0$ is the longest element of the Weyl group $W$. For type $A_{n-1}$ we have that $w_0$ is the reverse permutation and $\theta(i)=n-i$, and for type $C_n$ we have  $w_0=-\text{Id}$ and $\theta(i)=i$, where $\text{Id}$ is the identity map.
	\begin{defin} Let $\mathfrak{B}$ be a normal crystal.
		The Lusztig involution $\xi:\mathfrak{B}\rightarrow\mathfrak{B}$ is the only set  involution such that for all $i\in I$ ($I=[n-1]$ in type $A_{n-1}$ and $I=[n]$ in type $C_n$):
		\begin{enumerate}
			\item $\text{wt}(\xi(x))=w_0(\text{wt}(x))$,  where $w_0$ is the longest element of the Weyl group;
			\item $e_i(\xi (x))=\xi(f_{\theta(i)}(x))$ and $f_i(\xi (x))=\xi(e_{\theta(i)}(x))$;
			\item $\varepsilon_i(\xi (x))=\varphi_{\theta(i)}(x)$ and $\varphi_i(\xi (x))=\varepsilon_{\theta(i)}(x)$.\end{enumerate}
\end{defin}
	The involution  map $\mathrm{\iota}$ is an involution on $\mathfrak{B}(\lambda)$ reversing
the arrows while flipping the labels $i$, and $\theta(i)$, $i\in I$, and applying $w_0$ to the weight of each vertex. (In type $C_n$  the labels are preserved and the weights change of sign, and in type $A_{n-1}$,  the edge labels are flipped by $i\mapsto n-i$, and and vertex weights are reversed.)
If $b\in \mathfrak{B}(\lambda)$ then $b={f}_{i_{\ell}}^{k_{\ell}}\cdots{f}_{i_{1}%
}^{k_{1}}(b_\lambda)$  for some $i_1,\dots,i_{\ell}\in I$ and $k_{i_1},\dots,k_{i_{\ell}}\ge 0$, and   $\xi(b)={e}_{\theta(i_\ell)}^{k_{\ell}}\cdots{e}_{\theta(i_1)%
}^{k_{1}}(b_{w_0\lambda})$.

Let $\mathfrak{C}$ be a  connected component  in the  crystal $G_n$ \eqref{crystalwords}. The dual crystal $\mathfrak{C}^\vee$  is the crystal obtained from $\mathfrak{C}$ after reversing the direction of all arrows, and  if $x\in\mathfrak{C}$, then for the corresponding $x^\vee$ in $\mathfrak{C}^\vee$, we have $-\text{wt}(x)=\text{wt}(x^\vee)$ \cite{bump2017crystal}. For example, in type $A_{n-1}$,
\begin{align}\label{dualstandardcrystal}
\mathfrak{B}(\Lambda_1)&: \;1\xrightarrow{1} 2\xrightarrow{2} \cdots \xrightarrow{n-2}n-1\xrightarrow{n-1} n, \nonumber\\
\mathfrak{B}^\vee(\Lambda_1)= \mathfrak{B}(-w_0\Lambda_1)&:\;-1\xleftarrow{1}- 2\xleftarrow{2} \cdots \xleftarrow{n-2}-n+1\xleftarrow{n-1} -n.\end{align}
and in type $C_n$, since $w_0=-\text{Id}$, $\mathfrak{B}^\vee(\Lambda_1)= \mathfrak{B}(\Lambda_1)$, and from \eqref{standardcrystal}, we get
\begin{align}\nonumber
\overline 1\xleftarrow{1}\overline 2\xleftarrow{2} \cdots \xleftarrow{n-2}\overline {n-1}\xleftarrow{n-1} \overline n\xleftarrow{n}n \xleftarrow{n-1}{n-1}\xleftarrow{n-2}{n-2}\xleftarrow{n-3} \; \cdots \;\xleftarrow{2}{2} \xleftarrow{1}{1}.\end{align}

In type $C_n$,   it then follows from the definition that  $\mathfrak{C}$ and $\mathfrak{C}^\vee$, as crystals in $G_n$, have the same highest weight, and, therefore, they are isomorphic.  Since $\theta_i(i)=i$, the Lusztig involution  is  a realization of the dual crystal and the crystal $\mathfrak{B}(\lambda)$  is self-dual, that is $\mathfrak{B}^\vee(\lambda)=\mathfrak{B}(\lambda)$.
In type $A_{n-1}$, we may identify $\mathfrak{B}^\vee(\lambda)=\mathfrak{B}(-w_0\lambda)$ a $\mathfrak{gl}_n$-crystal with highest weight $-w_0\lambda$ realized by the Stembridge rational tableaux \cite{stembridge}
. See \cite{kwon} for details.
\begin{obs} Let $G =Sp(2n,\mathbb{C})$ be the symplectic group or $G=GL(n,\mathbb{C})$ the general linear group. Let $V(\lambda)$ be the irreducible $G$-module with highest weight $\lambda$ and let $\mathfrak{B}(\lambda)$ be the associated kashiwara crystal.
Let $V^*(\lambda)$ be the corresponding dual $G$-module of $V(\lambda)$. Then $V^*(\lambda)\simeq V(-w_0\lambda)$ with dual crystal $\mathfrak{B}^\vee(\lambda)=\mathfrak{B}(-w_0\lambda)$. When $G =Sp(2n,\mathbb{C})$, then $V^*(\lambda)\simeq V(\lambda)$ with crystal $\mathfrak{B}^\vee(\lambda)=\mathfrak{B}(\lambda)$.
\end{obs}
	
	If $\mathfrak{B}(\lambda)$ is the type $C_n$ (respect. $A_{n-1}$) crystal then the Sch\"utzenberger-Lusztig involution $\xi(T)$, for $T \in\mathfrak{B}(\lambda)$ with  tableau realization  can be computed via the   Schützenberger evacuation, $\textsf{evac}$ which consists in $\pi$-rotating $T$, swapping all of its entries $i$ by $w_0(i)$ for all $i$, and finally rectifying it via symplectic jeu de taquin, obtaining $\xi(T)=\textsf{evac}(T)$ \cite[Section 5.1,Algorithm 59]{santos2019symplectic}.

\begin{obs}The right key of a tableau is the  evacuation  of the left key of the evacuation of the same tableau   $K^+(b)=\textsf{evac}K^-(\textsf{evac}\,b )$ \cite[Proposition 64]{santos2019symplectic}.
\end{obs}

	\subsection{Demazure  crystal, its opposite, their intersection and Schubert varieties}
We scrutinize the structure of Demazure crystals,  opposite Demazure crystals and their intersections and analyse the parallels with Schubert varieties via the Borel-Weil theorem.
\subsubsection{Demazure crystal}
Given a subset $X$ of $\mathfrak{B}(\lambda)$, consider the operator $\mathfrak{D}_i$ on $X$, with $i\in [n]$ defined by
	$\mathfrak{D}_iX=\{x\in \mathfrak{B}(\lambda)\mid e_i^k(x)\in X \,\,\text{for some $k\geq 0$}\}$ \cite{bump2017crystal}. That is, $\mathfrak{D}_iX=\{f_i^k(x):x\in X, k\ge 0\}\setminus\{0\}$  consist of the union of all sections of $i$-strings from $x\in X$ to $f_i^{\textrm{max}}(x)$.
	If $v= \sigma \lambda$ where $\sigma =s_{i_{\ell}}\cdots s_{i_1}\in W$ is a reduced word, we define the \emph{Demazure crystal} $\mathfrak{B}_v$ (also denoted $ \mathfrak{B}_\sigma(\lambda)$) to be
	\begin{align}
		\mathfrak{B}_v&=\mathfrak{D}_{i_{\ell}}\cdots\mathfrak{D}_{i_1}\{K(\lambda)\}\nonumber\\
&=\{f_{i_{\ell}}^{k_{\ell}}\cdots f_{i_1}^{k_{1}}(K(\lambda))\mid(k_{\ell},\ldots,k_{1})\in\mathbb{Z}_{\geq
0}^{\ell}\}\setminus\{0\} \label{demaz0}.
	\end{align} 
	Indeed if $\sigma=e$, $\mathfrak{B}_e(\lambda)=\mathfrak{B}_{\lambda}=\{K(\lambda)\}$, and if $\sigma=w_0$, $ \mathfrak{B}_{w_0}(\lambda)=\mathfrak{B}(\lambda)$.
	This definition is independent of the reduced word for $\sigma$ \cite[Theorem 13.5]{bump2017crystal}. It is also independent of the coset representative of $\sigma W_\lambda$, that is, $\mathfrak{B}_{\sigma\lambda  }=\mathfrak{B}_{\sigma_v\lambda  }$ and $\mathfrak{B}_{\sigma'\lambda  }=\mathfrak{B}_{\lambda  }=\{K(\lambda)\}$ for $\sigma^\lambda\in W^\lambda$ the minimal representative of that coset, respectively $\sigma'\in W_\lambda$ \cite{santos2019symplectic}.

If $\rho\leq\sigma$, for the Bruhat order of $W$, then $u=\rho\lambda\leq v$ in $W\lambda$, equivalently $\rho^\lambda\le \sigma^\lambda$ in $W^\lambda$ \cite[Proposition 2.5.1]{bjorner2006combinatorics}. Since $ f_i^0(x)=x$, if $\rho\leq\sigma$  then  $\mathfrak{B}_{u}\subseteq\mathfrak{B}_{v}$.
	Thus we define the\emph{ Demazure crystal atom} $\overline{\mathfrak{B}}_{ v}$ to be
	\begin{equation*}\overline{\mathfrak{B}}_{ v}=\mathfrak{B}_{v }\setminus\bigsqcup\limits_{ u\in W\lambda,\;u<v}\mathfrak{B}_{u}=\mathfrak{B}_{v }\setminus\bigsqcup\limits_{K(u)<K(v) }\mathfrak{B}_{u}.\end{equation*}

\begin{figure}[h]
		\begin{multicols}{2}
			
			${
				\begin{tikzpicture}
					[scale=.5,auto=left]
					\node (n0) at (0,14.5) {$\thiYT{0.17 in}{}{{{1},{1}},{{2}}}$};
					\node (n1l) at (-3.5,13.5)  {$\thiYT{0.17 in}{}{{{1},{2}},{{2}}}$};
					\node (n1r) at (3.5,13.5)  {$\thiYT{0.17 in}{}{{{1},{1}},{{\overline{2}}}}$};
					\node (n2l) at (-3.5,11)  {$\YT{0.17 in}{}{{{1},{\overline{2}}},{{2}}}$};
					\node (n2r) at (3.5,11) {$\YT{0.17 in}{}{{{1},{2}},{{\overline{2}}}}$};
					\node (n3r) at (3.5,8.5)  {$\YT{0.17 in}{}{{{2},{2}},{{\overline{2}}}}$};
					\node (n4rr) at (5,6)  {$\thiYT{0.17 in}{}{{{2},{2}},{{\overline{1}}}}$};
					\node (n4r) at (2,6)  {$\YT{0.17 in}{}{{{2},{\overline{2}}},{{\overline{2}}}}$};
					\node (n5r) at (3.5,3.5)  {$\YT{0.17 in}{}{{{2},{\overline{2}}},{{\overline{1}}}}$};
					\node (n6r) at (3.5,1)  {$\thiYT{0.17 in}{}{{{\overline{2}},{\overline{2}}},{{\overline{1}}}}$};
					\node (n3ll) at (-5,8.5)  {$\thiYT{0.17 in}{}{{{1},{\overline{2}}},{{\overline{2}}}}$};
					\node (n4l) at (-3.5,6)  {$\YT{0.17 in}{}{{{1},{\overline{1}}},{{\overline{2}}}}$};
					\node (n3l) at (-2,8.5)  {$\YT{0.17 in}{}{{{1},{\overline{1}}},{{2}}}$};
					\node (n5l) at (-3.5,3.5)  {$\YT{0.17 in}{}{{{2},{\overline{1}}},{{\overline{2}}}}$};
					\node (n6l) at (-3.5,1)  {$\thiYT{0.17 in}{}{{{2},{\overline{1}}},{{\overline{1}}}}$};
					\node (n7) at (0,0)  {$\thiYT{0.17 in}{}{{{\overline{2}},{\overline{1}}},{{\overline{1}}}}$};
					\draw (0,12.5)--(-3.5,15.5);
					\draw (0,12.5)--(3.5,15.5);
					\draw (0,12.5)--(5,3.5);
					\draw (0,12.5)--(-5,7);
					\draw (0,12.5)--(-2,-.5);
					\draw (0,12.5)--(2,-.5);
					\draw (0,12.5)--(-5,12.5);
					\draw (0,12.5)--(5,12.5);
					\draw[->] [draw=red,  thick] (-2.5,0.7)--(-1,0.3);
					\draw[->] [draw=blue,  thick] (2.5,0.7)--(1,0.3);
					\draw[->] [draw=blue,  thick] (-3.5,2.5)--(-3.5,2);
					\draw[->] [draw=red,  thick] (3.5,2.5)--(3.5,2);
					\draw[->] [draw=blue,  thick] (-3.5,5)--(-3.5,4.5);
					\draw[->] [draw=blue,  thick] (2.5,5)--(3,4.5);
					\draw[->] [draw=red,  thick] (4.5,5)--(4,4.5);
					\draw[->] [draw=red,  thick] (-2.5,7.5)--(-3,7);
					\draw[->] [draw=blue,  thick] (-4.5,7.5)--(-4,7);
					\draw[<-] [draw=blue,  thick] (4.5,7)--(4,7.5);
					\draw[<-] [draw=red,  thick] (2.5,7)--(3,7.5);
					\draw[->] [draw=blue,  thick] (-3,10)--(-2.5,9.5);
					\draw[->] [draw=red,  thick] (-4,10)--(-4.5,9.5);
					\draw[->] [draw=blue,  thick] (3.5,10)--(3.5,9.5);
					\draw[->] [draw=red,  thick] (-3.5,12.5)--(-3.5,12);
					\draw[->] [draw=blue,  thick] (3.5,12.5)--(3.5,12);
					\draw[->] [draw=blue,  thick] (-1,14.2)--(-2.5,13.8);
					\draw[->] [draw=red,  thick] (1,14.2)--(2.5,13.8);
				\end{tikzpicture}
			}$	

			\noindent	The crystal is split into $\left | B_2(2,1)\right|=8$ parts, the number of elements of the $B_2$-orbit of $(2,1)$. Each part is a Demazure crystal atom and contains exactly one symplectic key tableau in $O(\lambda)$, drawn with  thick lines, so we can identify each part with the weight of that key tableau, which is a vector in the $B_2$-orbit of $(2,1)$.
			
			%
			%
		\end{multicols}
\caption{The partition of the $C_2$ crystal graph $\mathfrak{B}{(2,1)}$ into $\left | B_2(2,1)\right|=8$ Demazure atom crystals \label{cristal21}.}
\end{figure}

	Every Demazure crystal atom contains exactly one key tableau. The right key map $ K^+$, Theorem \ref{Th_Dila}, Assertiom $(2)$, or \cite{lascoux1990keys} and \cite[Theorem 14, Theorem 17]{santos2019symplectic} in types A and C respectively, sends each tableau of $\mathfrak{B}(\lambda)$ to the unique key tableau  living in the Demazure crystal atom that contains the given tableau. See  Figures  \ref{dilatation} and  \ref{cristal21}. Thereby,
\begin{align}
\overline{\mathfrak{B}}_v&=\{b\in \mathfrak{B}(\lambda): K^+(b)=K(v)\},\label{Boverline}\\
{\mathfrak{B}}_{ v}&=\bigsqcup_{v'\in W\lambda,\,v'\le v} \overline{\mathfrak{B}}_{v' }=\{b\in \mathfrak{B}(\lambda): K^+(b)\le K(v)\} \label{Bv}.
\end{align}

\begin{obs}For Cartan type $C_n$,
since $K(-v)=\textsf{evac}~ K(v)$,  and $K^-(b)=\textsf{evac}K^+(\textsf{evac}\,b )$,  it follows from \eqref{Bv},
 \begin{align*}{\mathfrak{B}}_{ -v}=\bigsqcup_{v'\in W\lambda,\,v'\ge v} \overline{\mathfrak{B}}_{-v' }&=\{b\in \mathfrak{B}(\lambda): \textsf{evac}K^+( b)\ge K(v)\}\\
 &=\{b\in \mathfrak{B}(\lambda): K^-(\textsf{evac}\, b)\ge K(v)\}.
\end{align*}
\end{obs}

\begin{ex}For Cartan type $C_2$, $\lambda=(2,1)$, $v=(1,-2)$, one has
\begin{enumerate}
\item $\mathfrak{B}_{v}=\mathfrak{B}_{s_2s_1(2,1)}=\overline{\mathfrak{B}}_{(2,1)}\sqcup \overline{\mathfrak{B}}_{(1,2)}\sqcup \overline{\mathfrak{B}}_{(2,-1)}\sqcup \overline{\mathfrak{B}}_{(1,-2)}$, since $1,s_1,s_2<s_2s_1$ $\Rightarrow \lambda,s_1\lambda, s_2\lambda<s_2s_1\lambda$, and
    \item $\mathfrak{B}_{-v}=\mathfrak{B}_{(-1,2)}=\mathfrak{B}_{w_0(1,-2)}=\mathfrak{B}_{s_1s_2(2,1)}$ where $w_0 s_2s_1=s_1s_2$ in  $W=B_2$. Then
\begin{align*}{\mathfrak{B}}_{ -v}&=\{f_{1}^{k_{1}} f_{2}^{k_{2}}(K(2,1))\mid(k_{2},k_{1})\in\mathbb{Z}_{\geq
0}^{2}\}\setminus\{0\}
\\
&=\overline{\mathfrak{B}}_{(2,1)}\sqcup \overline{\mathfrak{B}}_{(2,-1)}\sqcup\overline{\mathfrak{B}}_{(1,2)}\sqcup\overline{\mathfrak{B}}_{(-1,2)}\\
&=\overline{\mathfrak{B}}_{-(-2,-1)}\sqcup \overline{\mathfrak{B}}_{-(-2,1)}\sqcup\overline{\mathfrak{B}}_{-(-1,-2)}\sqcup\overline{\mathfrak{B}}_{-(1,-2)}.
\end{align*}
\end{enumerate}
\end{ex}

	\subsubsection{Opposite Demazure crystal} 
	Analogously to the previous case, we start by creating an opposite operator
	$\mathfrak{D}^{op}_i$ on $X$, with $i\in [n]$ defined by
	$\mathfrak{D}^{op}_iX=\{x\in \mathfrak{B}(\lambda)\mid f_{\theta(i)}^k(x)\in X \,\,\text{for some $k\geq 0$}\}$. That is,
$\mathfrak{D}^{op}_iX=\{ e_{\theta(i)}^k(x)\mid \, x\in X , \,k\geq 0 \}\setminus\{0\}$.
	If $v= \sigma \lambda$ and $\sigma =s_{i_{\ell}}\cdots s_{i_1}\in W$ is a reduced word, we define the \emph{opposite Demazure crystal} $\mathfrak{B}^{w_0v}=\mathfrak{B}^{w_0\sigma}(\lambda)$ to be
	\begin{align*}
		\mathfrak{B}^{w_0v}&:=\mathfrak{D}^{op}_{i_{\ell}}\cdots\mathfrak{D}^{op}_{i_1}\{K( w_0\lambda)\}\nonumber\\
&={\{{e}_{\theta(i_{\ell})}^{k_{\ell}}\cdots{e}_{\theta(i_{1})%
}^{k_{1}}(K(w_0\lambda))\mid(k_{\ell},\ldots,k_{1})\in
\mathbb{Z}_{\geq0}^{\ell}\}\setminus\{0\}}.
	\end{align*} 

	In other words, the opposite Demazure crystal $\mathfrak{B}^{w_0v}$ is the image of $\mathfrak{B}_v$ \eqref{Bv} by the Sch\"utzenberger-Lusztig $\xi$ involution. Recall,  if $b\in \mathfrak{B}_{v}$ then $b={f}_{i_{\ell}}^{k_{\ell}}\cdots{f}_{i_{1}%
}^{k_{1}}(K(\lambda))$ and
\begin{align*}\xi(b)&={e}_{\theta(i_{\ell})}^{k_{\ell}}\cdots{e}_{\theta(i_{1})%
}^{k_{1}}(K(w_0\lambda))=\textsf{evac}(b).\end{align*}
 Therefore
\begin{align}
		\mathfrak{B}^{w_0v}
&={\{{e}_{\theta(i_{\ell})}^{k_{\ell}}\cdots{e}_{\theta(i_{1})%
}^{k_{1}}(K( w_0\lambda))\mid(k_{\ell},\ldots,k_{1})\in
\mathbb{Z}_{\geq0}^{\ell}\}\setminus\{0\}}\nonumber\\
&=\xi(\mathfrak{B}_{v})\label{oppxi}\\
&=\xi\{b\in \mathfrak{B}(\lambda): K^+(b)\le K(v)\}\nonumber\\
&=\{b\in \mathfrak{B}(\lambda): K^+(\xi b)\le K(v)\}\nonumber\\
&=\{b\in \mathfrak{B}(\lambda):\xi K^-( b)\le K(v)\}\nonumber\\
&=\{b\in \mathfrak{B}(\lambda): K^-( b)\ge \xi K(v)\}\nonumber\\
&=\{b\in \mathfrak{B}(\lambda)\mid K^ -(b)\ge K(w_0v)\}.\nonumber
\end{align}
\mbox{and  }
\begin{align}\mathfrak{B}^{v}&=\xi(\mathfrak{B}_{w_0v})=\{b\in \mathfrak{B}(\lambda): K^-( b)\ge K(v)\}.\label{oppB}
\end{align}
	In particular, since $\mathfrak{B}^{\sigma}(\lambda)=\xi\mathfrak{B}_{w_0\sigma}(\lambda)$ then $\mathfrak{B}^{e}(\lambda)=\xi(\mathfrak{B}_{w_0}(\lambda))$.

\begin{obs} \begin{enumerate}
\item In types $A$ and $C$ the Sch\"utzenberger evacuation algorithm  is a realization of the Sch\"utzenberger-Lusztig involution on $\mathcal{SSYT}(\lambda,n)$ respectively $\mathcal{KN}(\lambda, n)$ \cite[Algorithm 59]{santos2019symplectic}. In type $C_n$ the tableau weights in $\mathfrak{B}_v$ and in $ \mathfrak{B}^{-v}$ are symmetric.
    \item The Lusztig-Sch\"utzenberger involution can be considered, apart the sign of the weights, to take $\mathfrak{B}(\lambda)$ to its dual $\mathfrak{B}(\lambda)^\vee$ as it sends $f_i$ to $ e_{\theta(i)}$, and, in particular, we may view the opposite Demazure crystal $\mathfrak{B}^{w_0\sigma}(\lambda)$ as the dual  of  the Demazure crystal  $\mathfrak{B}_{\sigma}(\lambda)$.
In general, we may view $\mathfrak{B}(\lambda)^\vee=\mathfrak{B}_{w_0}(\lambda)^\vee$ as $\mathfrak{B}(-w_0\lambda)$.
\end{enumerate}
\end{obs}

From \eqref{Bv}, \eqref{oppxi} and because  $\xi$ is an involution, we define the \emph{opposite Demazure  crystal atom} $\overline{\mathfrak{B}}^{w_0 v}$ to be
\begin{align}\label{opBoverline}
\mathfrak{B}^{w_0v}=\bigsqcup_{v'\in W\lambda,\,v'\le v} \xi(\overline{\mathfrak{B}}_{v' })=\bigsqcup_{v'\in W\lambda,\,v'\le v} \overline{\mathfrak{B}}^{w_0v' }=\bigsqcup_{v'\in W\lambda,\,v'\ge w_0v} \overline{\mathfrak{B}}^{v' }.
\end{align}
From \eqref{opBoverline}, \eqref{oppB} and \eqref{Boverline}, one has
\begin{align}
\overline{\mathfrak{B}}^{w_0v}&=\{b\in \mathfrak{B}(\lambda): K^-(b)=K(w_0v)\}=\xi(\overline{\mathfrak{B}}_v)\nonumber\\
\Leftrightarrow
\overline{\mathfrak{B}}^{v}&=\{b\in \mathfrak{B}(\lambda): K^-(b)=K(v)\}=\xi(\overline{\mathfrak{B}}_{w_0v}).\label{opBoverlinatom}
\end{align}

	Every opposite Demazure crystal atom contains exactly one key tableau. The left key map sends each tableau to the key tableau  in the opposite Demazure crystal atom that contains the tableau.

	\subsubsection{  Relations among Demazure crystals}
	\begin{prop}Let $u,v, x,y\in W\lambda$  and $b\in \mathfrak{B}(\lambda)$. Then
\begin{enumerate}
\item \cite[Chapitre 8]{kash2} $K^-(b)\le  K^+(b)$ and $K^-(b)= K^+(b)$  $\Leftrightarrow$ $b$ is a  key tableau in ${O}(\lambda)$.
\item
${\mathfrak{B}}_{x }\subseteq {\mathfrak{B}}_{y }\Leftrightarrow {\mathfrak{B}}^{x }\supseteq {\mathfrak{B}}^{y }\Leftrightarrow x\le y$.
\item  $\overline{\mathfrak{B}}^{u}\cap \overline{\mathfrak{B}}_{v}\neq \emptyset \Leftrightarrow u\le v$ which happens when $$\overline{\mathfrak{B}}^{u }\cap \overline{\mathfrak{B}}_{v }=\{b \in B(\lambda)\mid K(u)=K^-(b)\le K^+(b)=K(v)\}\supseteq \{K(u),K(v)\}.$$
    In particular, $\overline{\mathfrak{B}}^{v }\cap \overline{\mathfrak{B}}_{v }=\{K(v)\}$ and $\overline{\mathfrak{B}}^{v }\cap \overline{\mathfrak{B}}_{w_0v }\neq \emptyset$.
\item \cite[Proposition 4.4]{kashiwara1992crystal} ${\mathfrak{B}}^{u}\cap {\mathfrak{B}}_{v}\neq \emptyset \Leftrightarrow u\le v$ which happens when $${\mathfrak{B}}^{u}\cap {\mathfrak{B}}_{v}=\{b \in B(\lambda)\mid K(u)\le K^-(b)\le K^+(b)\le K(v)\}\supseteq \{K(z)\mid z\in[u,v]\},$$ 
where $[u,v]\subseteq W$ is an interval in the Bruhat order.
In particular, ${\mathfrak{B}}^{v}\cap {\mathfrak{B}}_{v}=\{K(v)\}$.

    \end{enumerate}
\end{prop}
\begin{proof}
$(1)$ From Theorem \ref{Th_Dila}, \cite[Chapitre 8]{kash2},  $K^-(b)= K^+(b)= K(z)$, for some $z\in W\lambda$, then $b=K(z)$.

$(2)$  Assuming $ x\le y\Leftrightarrow {\mathfrak{B}}_{x }\subseteq {\mathfrak{B}}_{y }$ then since $\xi$ is an involution
\begin{align*}{\mathfrak{B}}_{x }\subseteq {\mathfrak{B}}_{y }&\Leftrightarrow x\le y\Leftrightarrow w_0x\ge w_0y\Leftrightarrow  {\mathfrak{B}}_{w_0x }\supseteq {\mathfrak{B}}_{w_0y }\\
&\Leftrightarrow \xi({\mathfrak{B}}_{w_0x })\supseteq \xi({\mathfrak{B}}_{w_0y })
\Leftrightarrow {\mathfrak{B}}^{x }\supseteq {\mathfrak{B}}^{y }.
\end{align*}
It remains to prove ${\mathfrak{B}}_{x }\subseteq {\mathfrak{B}}_{y }\Rightarrow x\le y$. Let ${\mathfrak{B}}_{x }\subseteq {\mathfrak{B}}_{y }$. From \eqref{Boverline},
 $$\overline {\mathfrak{B}}_x=\{b\in \mathfrak{B}(\lambda): K^+(b)=K(x)\}\subseteq {\mathfrak{B}}_{x }\subseteq {\mathfrak{B}}_y.$$
Since, for $b\in  {\mathfrak{B}}(\lambda)$, $K^+(b) $ is uniquely determined, then from \eqref{Bv}, for $b\in \overline{\mathfrak{B}}_x \subseteq {\mathfrak{B}}_y$,  $K^+(b)=K(x)\le K(y)\Leftrightarrow x\le y$.

$(3)$ From \eqref{opBoverlinatom} and  \eqref{Boverline},
$\overline{\mathfrak{B}}^{u}=\{b\in B(\lambda): K^-(b)=K(u)\},\text{ and } \overline{\mathfrak{B}}_{v}=\{b\in B(\lambda): K^+(b)=K(v)\}
$
Therefore, \begin{align*}\overline{\mathfrak{B}}^{u}\cap \overline{\mathfrak{B}}_{v}=\{b \in B(\lambda)\mid K(u)=K^-(b)\le K^+(b)=K(v)\}\neq \emptyset\Leftrightarrow u\le v.
\end{align*}

$(4)$ From \eqref{opBoverlinatom},  \eqref{Boverline} and Assertion 3, \begin{align*}{\mathfrak{B}}^{u}\cap {\mathfrak{B}}_{v}=&\bigsqcup_{u'\in W\lambda,\,u'\ge u} \overline{\mathfrak{B}}^{u' }\bigcap \bigsqcup_{v'\in W\lambda,\,v'\le v} \overline{\mathfrak{B}}_{v' }\\
=&\bigsqcup_{\begin{smallmatrix}u', v'\in W\lambda,\\
u\le u'\le v'\le v
\end{smallmatrix}}\overline{\mathfrak{B}}^{u' }\cap \overline{\mathfrak{B}}_{v' }\\
=&\bigsqcup_{ [u', v']\subset [u, v]}\overline{\mathfrak{B}}^{u' }\cap \overline{\mathfrak{B}}_{v' }\\
=& \{b \in B(\lambda)\mid K(u)\le K^-(b)\le K^+(b)\le K(v)\}.
\end{align*}

Therefore,
${\mathfrak{B}}^{u}\cap {\mathfrak{B}}_{v}\neq \emptyset \Leftrightarrow u\le v$ which happens when ${\mathfrak{B}}^{u}\cap {\mathfrak{B}}_{v}\supseteq \{K(z)\mid z\in[u,v]\}$.
In particular,
\begin{align*}\label{Bintersect}{\mathfrak{B}}^{v}\cap {\mathfrak{B}}_{v}
=&\overline{\mathfrak{B}}^{v }\cap \overline{\mathfrak{B}}_{v }\\
=&\{b\in B(\lambda): K^-(b)=K(v)\}\cap \{b\in B(\lambda): K^+(b)=K(v)\}\\
=& \{b\in B(\lambda): K^-(b)=K(v)= K^+(b)\}, \text{  by  assertion $(1)$,}\\
=&\{K(v)\}.
\end{align*}

\end{proof}

\begin{obs} \begin{enumerate}

\item Note that that assertion $(4)$ is consistent with ${\mathfrak{B}}_{v}$ and ${\mathfrak{B}}^{u}$,
\begin{align*}{\mathfrak{B}}^{\lambda}\cap {\mathfrak{B}}_{v}&=\{b \in B(\lambda)\mid K(\lambda)\le K^-(b)\le K^+(b)\le K(v)\}\\
&=\{b \in B(\lambda)\mid K^+(b)\le K(v)\}={\mathfrak{B}}_{v}.
\end{align*}

\item

\begin{align*}{\mathfrak{B}}^{u}\cap {\mathfrak{B}}_{w_0\lambda}&=\{b \in B(\lambda)\mid K(u)\le K^-(b)\le K^+(b)\le K(-\lambda)\}\\
&=\{b \in B(\lambda)\mid K(u)\le K^-(b)\}={\mathfrak{B}}^{u}.
\end{align*}
\end{enumerate}
\end{obs}

Therefore
\begin{defin} we define ${\mathfrak{B}}^u_v$ the \emph{$u,v$-Demazure crystal} ${\mathfrak{B}}_v$,
\begin{equation}{\mathfrak{B}}^u_v:={\mathfrak{B}}^{u}\cap {\mathfrak{B}}_{v}=\bigsqcup_{ [u', v']\subset [u, v]}\overline{\mathfrak{B}}^{u' }\cap \overline{\mathfrak{B}}_{v' }=\bigsqcup_{ [u', v']\subset [u, v]} \overline{\mathfrak{B}}^{u' }_{v' },\label{demazureinterval}
\end{equation}
where ${\mathfrak{B}}^\lambda_v={\mathfrak{B}}_v$ and ${\mathfrak{B}}^u_{w_0\lambda}={\mathfrak{B}}^u$, that is, when $u=\lambda$ and $v=w_0\lambda$ we get the Demazure ${\mathfrak{B}}_v$ and opposite Demazure ${\mathfrak{B}}^u$ respectively.

Similarly, $\overline{\mathfrak{B}}^u_v$ the \emph{$u$-bounded Demazure atom crystal} ${\mathfrak{B}}_v$,
\begin{align}\overline{\mathfrak{B}}^{u }_v:=\overline{\mathfrak{B}}^{u }\cap \overline{\mathfrak{B}}_{v }.
\end{align}
\end{defin}
\begin{prop}For $u,v,u',v'\in W\lambda$, $$\overline{\mathfrak{B}}^u_v \subseteq \overline{\mathfrak{B}}^{u'}_{v'}\Leftrightarrow v'\le v\le u\le u'.$$

\end{prop}

\subsubsection{Symplectic keys and
  standard monomial theory for Schubert and Richardson varieties}\label{standardlaklitt} Demazure and opposite Demazure crystals and their intersections are in natural correspondence correspondence with Schubert, opposite Schubert varieties and Richardson varieties respectively as illuminated in \cite{fujita22} through  a polyhedral lens.  The similarity  between  Demazure crystals and Schubert, opposite  and Richardson varieties is explained by the classical { Borel-Weil} theorem. (We refer to \cite{brion,fulton1997young,speyer,wooyong23} for the geometric definitions and properties.) Next we collect { bunch of } results for the convenience of the reader.

  Let $G$ be a   semisimple algebraic group over $k$ closed algebraic field,  and $T\subseteq G$ a maximal torus. We also fix  $T\subseteq B \subseteq G$,  $B$ a Borel subgroup of $G$ (a subgroup of $G$ containing a maximal torus).  Let  $B^-$ be the corresponding opposite Borel subgroup, that is, it is the unique Borel subgroup of $G$ with
the property $B\cap B^-=T$.
Consider the Bruhat decomposition \begin{align*} G=\bigsqcup_{w\in W} BwB=\bigsqcup_{w\in W}B^-wB.
\end{align*}The quotient space $G/B$, called full flag variety, inherits a disjoint finite decomposition into cells (respect. opposite cells) $$G/B=\bigsqcup_{w\in W} BwB/B=\bigsqcup_{w\in W}B^-wB/B.$$

Let $C_v:=BvB/B$ and $C^v:=B^-vB/B$. The Schubert variety $X_w$, respectively the opposite Schubert variety $X^w$, in $G/B$ are
$$X_w=\bigsqcup_{v\le w}C_v,\;\; X^w=\bigsqcup_{u\ge w}C^u=w_0X_{w_0w}\subseteq G/B,$$
where $\le $ denotes the (strong) Bruhat order on $W$.

For $\lambda$ a dominant weight of $G$, let $L_\lambda = G\times_B k_{-\lambda}$ be the line bundle on $G/B$ associated to the $B$-character
$(-\lambda)$. That is, we consider $k_{-\lambda}$  as a $B$-module  where
$ b .z = (-\lambda)(b)z$, $z\in k$.

 Let $V (\lambda)$ be the Weyl module of highest
weight $\lambda$.
Let $\mathfrak{B}(\lambda)$ be the Kashiwara crystal of highest weight $\lambda$ and
$\textbf{B}(\lambda)$ be the set of LS paths of shape $\lambda$.
The crystal of L-S paths $\textbf{B}(\lambda)$ is isomorphic to the Kashiwara crystal $\mathfrak{B}(\lambda)$.
If $\mathbf{\pi}=(\mathbf{\tau},\mathbf{a})$ is an LS path of shape $\lambda$, the sequence $\tau = (\tau_0,\dots,\tau_r )$
{ is strictly decreasing in  $W/W_\lambda$}. The initial direction $i(\pi) = \tau_0$,
and the ending direction  $e(\pi) = \tau_r$ of the path $\pi$  coincide with the right key respectively left key of the corresponding vertex in the Kashiwara crystal $\mathfrak{B}(\lambda)$.

Following  mostly \cite{laklitt}, the LS path $\mathbf{\pi}$ in  $\mathbf{B}(\lambda)$ is said to be {\emph{standard} on a Richardson variety $X^\kappa_\tau=X^\kappa\cap X_\tau$, $\kappa,\tau \in W^\lambda$
 if  $\kappa\le e(\pi)\le i(\pi)\le \tau$ in the Bruhat order in $ W^\lambda$. We denote by $\mathbf{B}_\tau^\kappa (\lambda)$ the set of
all LS paths of shape $\lambda$, \emph{standard} on $X^\kappa_\tau$.
If $\tau=w_0$ (respectively $\kappa$) in $W^\lambda$
 (respectively $id$), then $\tau$ (respectively $\kappa$) will be omitted and we will write just
$\mathbf{B}^\kappa(\lambda)=\{\pi \in \mathbf{B}(\lambda):e(\pi)\ge \kappa\}$, called opposite Demazure crystal,  the set of
all L–S paths of shape $\lambda$, \emph{standard} on the opposite Schubert variety $X^\kappa$ (respectively $\mathbf{B}_\tau (\lambda)=\{\pi \in \textbf{B}(\lambda):i(\pi)\le \tau\}$ called the Demazure crystal,  the set of
all L–S paths of shape $\lambda$, \emph{standard} on the Schubert variety $X_\tau$). Therefore $\mathbf{B}_\tau^\kappa (\lambda)=\mathbf{B}_\tau\cap \mathbf{B}^\kappa (\lambda)$.
In the Kashiwara crystal this is equivalent to say that $b\in \mathfrak{B}(\lambda)$ is \emph{standard} on the Richardson variety $X^\kappa_\tau$ if $\kappa\le K^-(b)\le K^+(b)\le \tau
$ which means $b\in \mathfrak{B}^\kappa_\tau(\lambda)=\mathfrak{B}^\kappa\cap \mathfrak{B}_\tau(\lambda)$.

The space of global sections $H^0(G/B,L_\lambda)$ as a $G$-module is (isomorphic to)  the dual  of the module $V(\lambda)$,
        $$H^0(G/B,L_\lambda)\simeq V(\lambda)^*.$$

        Associated to the combinatorial LS path model ${B}(\lambda)$ of LS paths of shape $\lambda$ we have the \emph{path vector basis} $$\mathbb{B}(\lambda)=\{p_\tau: \tau \in \mathbf{B}(\lambda)\}\subseteq H^0(G/B,L_\lambda)=V(\lambda)^*.$$
        It is shown that $\mathbb{B}(\lambda)$ is compatible with Schubert and opposite Schubert varieties $Z\subseteq G/B$, that is, the set
$\{b_{\mid Z} : b \in \mathbb{B}(\lambda), b_{\mid Z} \not\equiv 0\}$ is linearly independent; $\mathbb{B}(\lambda)$ satisfies certain quadratic relations similar to the quadratic straightening.
For $G=GL_n(\mathbb{C}), Sp(2n,\mathbb{C)}$, associated to  the combinatorial tableau model corresponding to the Kashiwara crystal $\mathfrak{B}(\lambda)$, a basis with the same properties has been constructed, in each case. For $GL_n(\mathbb{C})$ see \cite{reinershimoz,speyer} and \cite[Chapter III]{fulton1997young}, for instance.
For $G=Sp(2n,\mathbb{C)}$ one has De Concini's construction \cite{de1979symplectic} for $V(\lambda)$.

For $\tau \in W/W_\lambda$ let $v_\tau \in  V (\lambda)$ be a extremal weight vector of weight $\tau.\lambda$. The $B$-submodule spanned by the orbit $B.v_\tau$ is the
Demazure module $V_\tau (\lambda)$ associated to $\tau$. The  $B^-$ submodule spanned by the orbit $B^-.v_\tau$ is
 the opposite Demazure module $V_\tau (\lambda)$ associated to $\tau$.

   \begin{prop} \cite{laklitt} The path vector basis $\mathbb{B}(\lambda)$ is compatible with the Demazure submodules,
i.e., the restrictions $\{{p_\pi}_{\mid V_\tau (\lambda)^*} | \pi \in B_\tau (\lambda)\}$ form a basis of $V_\tau (\lambda)^*$, and the restrictions
of the other path vectors ($\pi\notin B_\tau(\lambda)$) vanish on the submodule. Similarly, the restrictions $\{{p_\pi}_{\mid V^\tau (\lambda^*} | \pi \in B^\tau (\lambda)\}$ form a basis of $V^\tau(\lambda)^*$ and the restrictions of the other path vectors  ($\pi\notin B^\tau(\lambda)$) vanish.

Further, the restriction of the section $p_\pi\in \mathbb{B}(\lambda)$ to $X_\tau$ vanishes if and only if $i(\pi) \nleqslant \tau$. The set of path vectors $\{p_\pi \in \mathbb{B}(\lambda): i(\pi) \le \tau\}=\{p_\pi: \pi \in B_\tau (\lambda)\}$  forms a basis of $H^0(X_\tau, L_\lambda)=V_\tau(\lambda)^*$. Similarly, the set of path vectors $\{p_\pi \in \mathbb{B}(\lambda): e(\pi) \ge \tau\}$  forms a basis of $H^0(X^\tau, L_\lambda)=V^\tau(\lambda)^*$.

 The set
$$\mathbb{B}_\tau(\lambda)=\{p_\pi: \pi \in B_\tau(\lambda)\}\subseteq H^0(X_\tau,L_\lambda)$$
and
$$\mathbb{B}^\tau(\lambda)=\{p_\pi: \pi \in B(\lambda)^{\tau}\}\subseteq H^0(X^\tau,L_\lambda).$$

The set of path vectors  $\mathbb{B}^\sigma_\kappa(\lambda)=\{p_\pi: \pi \in B^\sigma_\kappa (\lambda)\}$ form a basis for $H^0(X^\sigma_\kappa,L_\lambda)$.
\end{prop}

    \begin{cor}  \cite{laklitt}  Let $\mathbb{B}(\lambda)^*= \{u_\pi \in V (\lambda) | \pi \in \mathbb{B}(\lambda)\}$ be the basis of $V (\lambda)$ dual to the path vector basis
of $H^0(G/B,L_\lambda)$.
 The vectors $\{u_\pi | \pi \in  B_\tau (\lambda)\}$ form a basis of the Demazure module $V_\tau (\lambda)$,
the vectors $\{u_\pi | \pi \in B_\tau (\lambda)\}$ form a basis of the opposite Demazure module $V _\tau (\lambda)$, and
the vectors $\{u_\pi | \pi \in B^\sigma_\tau(\lambda)\}$ form a basis of the intersection $V^\sigma_
\tau (\lambda) \}= V_\tau (\lambda) \cap V_\sigma (\lambda)$.

\end{cor}

Let $\pi$ be a LS path of shape $\lambda$.
The restriction of the section $p_\pi\in \mathbb{B}(\lambda)$ to $X_\tau$ vanishes if and only if $i(\pi) \nleqslant \tau$. Further, the set of path vectors $\{p_\pi \in \mathbb{B}(\lambda): i(\pi) \le \tau\}$ of shape $\lambda$ forms a basis of $H^0(X_\tau, L_\lambda)=V^*(\lambda)_\tau$.

\begin{cor} \begin{enumerate}\item The following are equivalent \begin{enumerate}

\item ${{\textbf{B}}}_{x }(\lambda)\subseteq {{\textbf{B}}}_{y }(\lambda)\Leftrightarrow {{\textbf{B}}}^{x }(\lambda)\supseteq {{\textbf{B}}}^{y }(\lambda)$.
\item $V_{x }(\lambda)^*\subseteq V_{y }(\lambda)^*\Leftrightarrow V^{x }(\lambda)^*\supseteq V^{y }(\lambda)^*$.
\item $x\le y$.
\end{enumerate}
\item The following are equivalent
\begin{enumerate}
\item $\textbf{B}^\sigma_\tau(\lambda)\neq \emptyset$.
\item $V^\sigma_
\tau (\lambda) \}= V_\tau (\lambda) \cap V_\sigma (\lambda)\neq \{0\}$
\item $\sigma\le \tau$
\end{enumerate}
\end{enumerate}
\end{cor}

\medskip

For $w\in W/W_\lambda$, the space of global sections $H^0(X_w,L_\lambda)$ ($H^0(X^w,L_\lambda)$ as a $B^-$-module) is isomorphic to $V_w(\lambda)^*$ ($V^w(\lambda)^*$) the dual of the Demazure module $V_w(\lambda)$ (the opposite Demazure module $V^w(\lambda)$),
        $$H^0(X_w,L_\lambda)\simeq V_w(\lambda)^*, \quad (H^0(X^w,L_\lambda)\simeq V^w(\lambda)^*).$$

        In particular, $X_{w_0}=X^{id}=G/B$ and $V_{w_0}(\lambda)=V^{id}(\lambda)=V(\lambda)$.

     \medskip
        Facts on Schubert varieties
\begin{prop}
\item For $w,w'\in W/W_\lambda$, the following are equivalent
\begin{enumerate}
\item
$X_w\subseteq X_{w'}$.
\item   $X^w\supseteq X^{w'}$
     \item    $X^w \cap X_{w'} \neq\emptyset$
     \item  $w\leq w'$
     \end{enumerate}
     \item For $\alpha,\beta,\alpha',\beta'$, the following are equivalent
     \begin{enumerate}
\item $X_\alpha^\beta\subseteq X_{\alpha'}^{\beta'}$
        \item $ \beta'\le\beta\le\alpha\le\alpha'$
        \item $[\beta,\alpha]\subseteq [\beta',\alpha']$
        \end{enumerate}
\end{prop}

 By the Bore-Weil theorem, similar facts on Demazure crystals

\begin{prop}
\item For $w,w'\in W/W_\lambda$, the following are equivalent
\begin{enumerate}
\item
$\mathfrak{B}_w(\lambda)\subseteq \mathfrak{B}_{w'}(\lambda)$.
\item   $\mathfrak{B}^w(\lambda)\supseteq \mathfrak{B}^{w'}(\lambda)$
     \item    $\mathfrak{B}^w(\lambda)\cap \mathfrak{B}_{w'}(\lambda) \neq\emptyset$
     \item  $w\leq w'$
     \end{enumerate}
     \item For $\alpha,\beta,\alpha',\beta'$, the following are equivalent
     \begin{enumerate}
\item $B_\alpha^\beta(\lambda)\subseteq \mathfrak{B}_{\alpha'}^{\beta'}(\lambda)$
        \item $ \beta'\le\beta\le\alpha\le\alpha'$
        \item $[\beta,\alpha]\subseteq [\beta',\alpha']$
        \end{enumerate}
\end{prop}

\begin{figure}[h]
		\begin{center}

			${
				\begin{tikzpicture}
					[scale=.5,auto=left]
					\node (n0) at (0,14.5) {$\thiYT{0.17 in}{}{{{1},{1}},{{2}}}$};
					\node (n1l) at (-3.5,13.5)  {$\thiYT{0.17 in}{}{{{1},{2}},{{2}}}$};
					\node (n1r) at (3.5,13.5)  {$\thiYT{0.17 in}{}{{{1},{1}},{{\overline{2}}}}$};
					\node (n2l) at (-3.5,11)  {$\YT{0.17 in}{}{{{1},{\overline{2}}},{{2}}}$};
					\node (n2r) at (3.5,11) {$\YT{0.17 in}{}{{{1},{2}},{{\overline{2}}}}$};
					\node (n3r) at (3.5,8.5)  {$\YT{0.17 in}{}{{{2},{2}},{{\overline{2}}}}$};
					\node (n4rr) at (5,6)  {$\thiYT{0.17 in}{}{{{2},{2}},{{\overline{1}}}}$};
					\node (n4r) at (2,6)  {$\YT{0.17 in}{}{{{2},{\overline{2}}},{{\overline{2}}}}$};
					\node (n5r) at (3.5,3.5)  {$\YT{0.17 in}{}{{{2},{\overline{2}}},{{\overline{1}}}}$};
					\node (n6r) at (3.5,1)  {$\thiYT{0.17 in}{}{{{\overline{2}},{\overline{2}}},{{\overline{1}}}}$};
					\node (n3ll) at (-5,8.5)  {$\thiYT{0.17 in}{}{{{1},{\overline{2}}},{{\overline{2}}}}$};
					\node (n4l) at (-3.5,6)  {$\YT{0.17 in}{}{{{1},{\overline{1}}},{{\overline{2}}}}$};
					\node (n3l) at (-2,8.5)  {$\YT{0.17 in}{}{{{1},{\overline{1}}},{{2}}}$};
					\node (n5l) at (-3.5,3.5)  {$\YT{0.17 in}{}{{{2},{\overline{1}}},{{\overline{2}}}}$};
					\node (n6l) at (-3.5,1)  {$\thiYT{0.17 in}{}{{{2},{\overline{1}}},{{\overline{1}}}}$};
					\node (n7) at (0,0)  {$\thiYT{0.17 in}{}{{{\overline{2}},{\overline{1}}},{{\overline{1}}}}$};
					
					\draw (0,2.5)--(-5,2);
					\draw (0,2.5)--(5,2);
					\draw (0,2.5)--(-2,15.5);
					\draw (0,2.5)--(2,15.5);
					\draw (0,2.5)--(6,9);
					\draw (0,2.5)--(-5.5,12.5);
					\draw (0,2.5)--(-2,-.5);
					\draw (0,2.5)--(2,-.5);
					
					\draw[->] [draw=red,  thick] (-2.5,0.7)--(-1,0.3);
					\draw[->] [draw=blue,  thick] (2.5,0.7)--(1,0.3);
					\draw[->] [draw=blue,  thick] (-3.5,2.5)--(-3.5,2);
					\draw[->] [draw=red,  thick] (3.5,2.5)--(3.5,2);
					\draw[->] [draw=blue,  thick] (-3.5,5)--(-3.5,4.5);
					\draw[->] [draw=blue,  thick] (2.5,5)--(3,4.5);
					\draw[->] [draw=red,  thick] (4.5,5)--(4,4.5);
					\draw[->] [draw=red,  thick] (-2.5,7.5)--(-3,7);
					\draw[->] [draw=blue,  thick] (-4.5,7.5)--(-4,7);
					\draw[<-] [draw=blue,  thick] (4.5,7)--(4,7.5);
					\draw[<-] [draw=red,  thick] (2.5,7)--(3,7.5);
					\draw[->] [draw=blue,  thick] (-3,10)--(-2.5,9.5);
					\draw[->] [draw=red,  thick] (-4,10)--(-4.5,9.5);
					\draw[->] [draw=blue,  thick] (3.5,10)--(3.5,9.5);
					\draw[->] [draw=red,  thick] (-3.5,12.5)--(-3.5,12);
					\draw[->] [draw=blue,  thick] (3.5,12.5)--(3.5,12);
					\draw[->] [draw=blue,  thick] (-1,14.2)--(-2.5,13.8);
					\draw[->] [draw=red,  thick] (1,14.2)--(2.5,13.8);
				\end{tikzpicture}
			}$	

		\end{center}
		
	\caption{\label{cristal21op}
	The $C_2$ crystal graph $\mathfrak{B}(2,1)$  split into opposite Demazure crystal atoms.
}
\end{figure}

	\subsection{Demazure characters and opposite Demazure characters and interval Demazure characters}
	The Demazure characters, or key polinomials, and Demazure atoms can be seen as generating functions of the tableaux in Demazure crystals. Given $v\in B_n\lambda=[\lambda,-\lambda]$:
	$$\kappa_v(x_1, \dots, x_n)=\sum_{T\in\mathfrak{B}_v}x^{\text{wt} T},\;\;
	\overline{\kappa}_v(x_1, \dots, x_n)=\sum_{T\in\overline{\mathfrak{B}}_v}x^{\text{wt} T},$$
	and we can define, analogously, opposite Demazure characters and opposite Demazure atoms:
	$$\kappa^{-v}(x_1, \dots, x_n)=\sum_{T\in\mathfrak{B}^{-v}}x^{\text{wt} T},\;\;
	\overline{\kappa}^{-v}(x_1, \dots, x_n)=\sum_{T\in\overline{\mathfrak{B}}^{-v}}x^{\text{wt} T}.$$
	
	Since the tableau weights in $\mathfrak{B}_v$ and in $\mathfrak{B}^{-v}$ \eqref{oppxi} are symmetric in $\mathbb{Z}^n$,
$$\kappa^{-v}(x_1, \dots, x_n)=\sum_{T\in\mathfrak{B}^{-v}}x^{\text{wt} T}=\sum_{T\in\mathfrak{B}_{v}}x^{-\text{wt} T}=\sum_{T\in\mathfrak{B}_{v}}(x^{-1})^{
\text{wt} T}=\kappa_v(x_1^{-1}, \dots, x_n^{-1})$$
 we have the following result:
	\begin{cor}
		$$\kappa_v(x_1, \dots, x_n)=\kappa^{-v}(x_1^{-1}, \dots, x_n^{-1})$$
	\end{cor}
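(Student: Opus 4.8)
The statement to prove is the corollary asserting
$\kappa_v(x_1, \dots, x_n) = \kappa^{op}_{-v}(x_1^{-1}, \dots, x_n^{-1})$.

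\medskip

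The plan is to derive this directly from the definitions of the two generating functions together with the weight-symmetry fact already established in the excerpt, namely that the tableau weights occurring in $\mathfrak{B}_v$ and in $\mathfrak{B}^{op}_{-v}$ are symmetric (this is stated just before the corollary, and attributed to the Sch\"utzenberger--Lusztig involution realized by the evacuation algorithm, \cite{santos2019symplectic}). First I would make precise what ``symmetric'' means: the Sch\"utzenberger--Lusztig involution $\xi$ on $\mathcal{KN}(\lambda,n)$ restricts to a bijection $\xi\colon \mathfrak{B}_v \to \mathfrak{B}^{op}_{-v}$ with $\operatorname{wt}(\xi(T)) = -\operatorname{wt}(T)$ for every $T\in\mathfrak{B}_v$. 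Granting this, the computation is immediate: starting from
\[
\kappa^{op}_{-v}(x_1^{-1},\dots,x_n^{-1}) = \sum_{S\in\mathfrak{B}^{op}_{-v}} x^{-\operatorname{wt} S},
\]
I reindex the sum over $S$ via the bijection $S = \xi(T)$, $T\in\mathfrak{B}_v$, and use $\operatorname{wt} S = \operatorname{wt}\xi(T) = -\operatorname{wt} T$, so that $x^{-\operatorname{wt} S} = x^{\operatorname{wt} T}$, yielding $\sum_{T\in\mathfrak{B}_v} x^{\operatorname{wt} T} = \kappa_v(x_1,\dots,x_n)$. Here $x^{-\operatorname{wt} S}$ denotes $x^{\operatorname{wt} S}$ evaluated at $(x_1^{-1},\dots,x_n^{-1})$, which equals $\prod_i x_i^{-(\operatorname{wt} S)_i}$; I would spell out this notational point so the manipulation of Laurent monomials is unambiguous.

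\medskip

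The only real content is the weight-symmetry input, and since that is already quoted in the excerpt as a known result, there is essentially no obstacle remaining; the proof is a one-line reindexing. If one wanted a self-contained argument for the symmetry, the route would be: recall that $\mathfrak{B}^{op}_{-v}$ is \emph{defined} as the image of $\mathfrak{B}_v$ under the Sch\"utzenberger--Lusztig involution (stated in the excerpt), and that this involution negates weights because it intertwines $e_i$ and $f_i$ (conjugating by $w_0$ on the index set, which for type $C_n$ is trivial), hence sends a vertex of weight $\mu$ to a vertex of weight $w_0\mu = -\mu$. I would keep this as a parenthetical remark rather than a full sub-proof, since \cite[Proposition 64]{santos2019symplectic} already records it.

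\medskip

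Concretely, the write-up I would insert is short:

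\begin{proof}
By \cite[Section 5, Proposition 64]{santos2019symplectic}, the Sch\"utzenberger--Lusztig involution $\xi$ on $\mathcal{KN}(\lambda,n)$ restricts to a bijection $\xi\colon \mathfrak{B}_v \to \mathfrak{B}^{op}_{-v}$ satisfying $\operatorname{wt}\xi(T) = -\operatorname{wt} T$ for all $T\in\mathfrak{B}_v$. Writing $x^{\operatorname{wt} S} = \prod_{i=1}^n x_i^{(\operatorname{wt} S)_i}$, we compute
\[
\kappa^{op}_{-v}(x_1^{-1},\dots,x_n^{-1})
= \sum_{S\in\mathfrak{B}^{op}_{-v}} \prod_{i=1}^n x_i^{-(\operatorname{wt} S)_i}
= \sum_{T\in\mathfrak{B}_v} \prod_{i=1}^n x_i^{-(\operatorname{wt}\xi(T))_i}
= \sum_{T\in\mathfrak{B}_v} \prod_{i=1}^n x_i^{(\operatorname{wt} T)_i}
= \kappa_v(x_1,\dots,x_n),
\]
where the second equality is the substitution $S = \xi(T)$ and the third uses $\operatorname{wt}\xi(T) = -\operatorname{wt} T$.
\end{proof}

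The main (and essentially only) step to be careful about is making the reindexing bijection and the weight-negation explicit; everything else is formal bookkeeping with Laurent monomials.
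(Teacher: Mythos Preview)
Your proof is correct and follows exactly the approach the paper intends: the paper states the corollary immediately after recording that the tableau weights in $\mathfrak{B}_v$ and $\mathfrak{B}^{op}_{-v}$ are symmetric (via the Sch\"utzenberger--Lusztig involution), and gives no further argument. Your write-up simply makes explicit the one-line reindexing that this symmetry entails.
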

	As a consequence, for instance, the type $C_n$ Fu-Lascoux non-symmetric Cauchy kernel, given in \cite{fu2009non}, can be written as:
	
	\begin{align*}
		\frac{\prod_{1\leq i<j\leq n}(1-x_ix_j)}{\prod_{i,j=1}^{n}(1-x_iy_j)\prod_{i,j=1}^{n}(1-x_i/y_j)}&=\sum_{v\in \N^n}\overline{\kappa}_v(x_1, \dots, x_n) \kappa_{-v}(y_1,\dots, y_n)\\
		&=\sum_{v\in \N^n}\overline{\kappa}_v(x_1, \dots, x_n) \kappa^{\nu}(y_1^{-1},\dots, y_n^{-1})
	\end{align*}
equivalently, putting $\x=(x_1, \dots, x_n)$ and $\y^{-1}=(y_1^{-1}, \dots, y_n^{-1})$,	

\begin{equation}\label{C}\displaystyle{\prod_{1\le i,j\le n}\displaystyle(1-x_iy_j)^{-1}\prod_{1\le i\le j\le n}(1-{x_i}{y_j}^{-1})^{-1}}=
\sum_{\nu\in\mathbb{N}^n}\overline\kappa_\nu(\x){\kappa^{\nu}}^C(\y)(\sum_{\beta'\; even}s_\beta(\x)),
\end{equation}

	See also \cite{choi2018lakshmibai,agl} for  type $A$ non-symmetric Cauchy kernels \cite{agl}.

The Demazure character interval
$$\kappa^u_v(x_1, \dots, x_n)=\sum_{T\in\mathfrak{B}^u_v}x^{\text{wt} T}=\sum_{\begin{smallmatrix}T\\
 K(u)\le K^-(T)\le K^+(T)\le K(v)
\end{smallmatrix}}x^{\text{wt} T},\quad \overline{\kappa}^u_v(x_1, \dots, x_n)=\sum_{T\in\overline{\mathfrak{B}}^u_v}x^{\text{wt} T}$$

	\section{Cocrystal of a KN tableau}\label{coc}

Motivated by Lascoux’s double crystal graph construction in
	type $A$ \cite{lascoux2003double}, and by Heo-Kwon work in \cite{kwonheococrystal2020} where Sch\"utzenberger \emph{jeu de
		taquin} slides are used as crystal operators for $\mathfrak{sl}_2$, the cocrystal of each KN
	tableau in the type $C_n$ crystal $\mathfrak{B}(\lambda)$ is introduced. These cocrystals contain all the needed information
	to compute right and left keys of a tableau in the type $C_n$ crystal $\mathfrak{B}(\lambda)$ and refine
	previous second author's construction of  symplectic key maps in \cite{santos2019symplectic} based on the symplectic \emph{jeu de taquin}.
	
	\subsection{Dual RSK correspondence}
	In this subsection we work with semistandard Young tableaux. Given a tableau $T\in \mathcal{SSYT}(\lambda, n)$ with column decomposition $T=C_1C_2\cdots C_k$, its \emph{column reading word}, $\text{cr}(T)$, is the word obtained after concatenating all of its column { readings} $\text{cr}(C_i)$ from right to left: $\text{cr}(T):=\text{cr}(C_k)\cdots\text{cr}(C_2)\text{cr}(C_1)$ where $\text{cr}(C_i)$ is the word obtained reading the column $C_i$ top to bottom.
	\begin{ex}\label{exrsk}
		Given $T=\YT{0.14in}{}{
			{{1},{2},{2}},
			{{2},{3}},
			{{4},4}}\in \mathcal{SSYT}((3,2,2,0),4)$,  the column reading of $T$ is  $\text{cr}(T)=2\, 234\, 124$.
	\end{ex}	
	Given a column reading word $w$, we can recover the original tableau via \textit{column insertion}  \cite{fulton1997young}:
	Let $w=w_1\cdots w_\ell$. We start with $i := 1$, $T=\emptyset$ , the empty tableau, and $p=1$.
	\begin{enumerate}
		\item If $w_i$ is bigger than all entries of $C_p$, just add a cell to the { bottom} column $C_p$ with entry $w_i$. Else find $\alpha \in C_p$ the smallest entry of $C_p$ bigger or equal than $w_i$. Then replace $\alpha$ by $w_i$ in $C_p$ and redefine $w_i:=\alpha$, $p:=p+1$ and go to $(2)$ (this is called a \textit{bumping}).
		\item If $i\neq \ell$, then $i:=i+1$ and go to $(1)$. Else the algorithm ends.
	\end{enumerate}
	
	Given $r\geq 1$, let $E_n^r$ be the set of bi-words, two rowed arrays, $w=\begin{pmatrix}
		u_1 \dots  u_r\\ v_1 \dots v_r

		\end{pmatrix}$   without repeated bi-letters, in lexicographic order,
$$\begin{pmatrix}
		u_i\\ v_i
		
	\end{pmatrix}\leq \begin{pmatrix}
		u_{i+1}\\ v_{i+1}
		
	\end{pmatrix} \text{ if $u_i<u_{i+1}$ or if $u_i=u_{i+1}$ and $v_i\leq v_{i+1}$,}$$ where the  bottom word $v_1\cdots v_r$ of $w$ is in the alphabet $[n]$, and the  top word $u_1\cdots u_r$  of $w$ is in the alphabet $[r]$.
	
The set $E_n^r$ can also be thought as
	the set of sequences of $r$ columns, possibly some of them empty, on the alphabet $[n]$, where each pair of consecutive columns has maximum overlapping, and, in the case of two non-empty columns  { whose intermediate columns} are empty, the top edge of the left column and the bottom edge of the right column are aligned. That is, in this representation a bi-word $w=\begin{pmatrix}
		u\\ v

		\end{pmatrix}$ in $E_n^r$ is a  semistandard skew tableau $T$ in the alphabet $[n]$, whose \emph{column reading word is the bottom word in the bi-word, $cr(T)=v$}, with at most  $r$ columns such that  column $i$, \textit{counted right to left}, is filled with all bottom letters in the bi-letters whose top letter is  $i$, and  each pair of consecutive columns $C_{i+1}$ and $C_{i}$ form a skew semistandard tableau where the number of rows of length two is maximized. This means that the \emph{sequence of column lengths, right to left, is equal to the weight of the top word $u$ in the bi-word}. For instance,
$$\begin{array}{ccccc}\begin{tikzpicture}[scale=.444, baseline={([yshift=-.8ex]current bounding box.center)}]
						\draw (0,0) rectangle +(1,1);
						\draw (0,1) rectangle +(1,1);
						\draw (0,2) rectangle +(1,1);
						\draw (1,2) rectangle +(1,1);
						\draw (2,2) rectangle +(1,1);
						\draw (2,3) rectangle +(1,1);
						\draw (2,4) rectangle +(1,1);
						\node at (.5,.5) {$4$};
						\node at (.5,1.5) {$2$};
						\node at (.5,2.5) {$1$};
						\node at (1.5,2.5) {$2$};	
						\node at (2.5,2.5) {$4$};
						\node at (2.5,3.5) {$3$};
						\node at (2.5,4.5) {$2$};
					\end{tikzpicture}\quad
\text{ and
$\begin{pmatrix} 1&1&1&2&3&3&3\\
2&3&4&2&1&2&4
\end{pmatrix}$ \; are identified in $E_4^3.$}\end{array}$$

 In particular, $E_n^r$ has a subset identified with $\bigsqcup\limits_{\substack{ \ell (\lambda)\leq n\\\ell (\lambda')\leq r}}\mathcal{SSYT}(\lambda, n)$  where $\ell(\lambda')$ is the length of $\lambda'$, the conjugate partition of $\lambda$.
	Given a tableau $T\in \mathcal{SSYT}(\lambda, n)$, we create a bi-word, without repeated bi-letters, whose bottom word is $cr(T)$ and in the top word we register in which column of $T$, counted from the right, was each letter of $\text{cr}(T)$ read. Each biword will be an element of $E_n^r$, where $\ell(\lambda')\leq r$. In Example \ref{exrsk}, the bi-word of $T$ is
	$$w=\begin{pmatrix}
		1 & 2 & 2 & 2 & 3 & 3 & 3\\
		2 & 2 & 3 & 4 & 1 & 2 & 4
	\end{pmatrix}\in E_4^3,\, \text{with }\ell(\lambda')=3.$$
	
	The dual RSK, $RSK^\ast$, is a bijection \cite[Section A.4.3]{fulton1997young} between $E_n^r$ and pairs of SSYT's of conjugate shapes and lengths $\leq n$ and $\leq r$, respectively:{\scriptsize\begin{align*}\scriptsize
			RSK^\ast: E_n^r&\rightarrow \bigsqcup\limits_{\substack{ \ell (\lambda)\leq n\\\ell (\lambda')\leq r}} \mathcal{SSYT}(\lambda,n)\times \mathcal{SSYT}(\lambda',r)=\bigsqcup\limits_{\substack{ \ell (\lambda)\leq n\\\ell (\lambda')\leq r\\P\in \mathcal{SSYT}(\lambda,n)}} \{P\}\times \mathcal{SSYT}(\lambda',r)\\
			w&\mapsto (P,Q).
	\end{align*}}
	
	The bijection $RSK^*$ can be calculated in the following way:
	
	Let $w=\begin{pmatrix}
		x_1 & x_2 & \dots & x_m\\
		y_1 & y_2 & \dots & y_m
	\end{pmatrix}$. Then start with $i=1$ and $P=Q$  empty tableaux.
	\begin{enumerate}
		\item Column insert $y_i$ into $P$.
		\item Add one cell to $Q$ whose entry is $x_i$, in a position such that $P$ and $Q$, with this new cell, have conjugate shapes.
		\item If $i\neq m$, then $i:=i+1$ and return to $(1)$. Else the algorithm is finished.
	\end{enumerate}
	
Given a bi-word $w$, the first and second components of $RSK^\ast(w)$ are called the $P$- symbol and the $Q$-symbol of $w$ respectively.
	
	The $RSK^\ast$ maps  the bi-word of a skew SSYT $\tilde T$ in the alphabet $ [n]$ with  at most $r$  columns, to $(rect(\tilde T), Q)$ where  $\text{rect}(\tilde{T})$ is the rectification of $\tilde{T}$ via SJDT, and the weight of $Q$  is  the sequence of column lengths of $\tilde{T}$ from right to left.
\begin{ex}\label{Ttilde} Let  $\,\tilde{T}=\begin{tikzpicture}[scale=.444, baseline={([yshift=-.8ex]current bounding box.center)}]
		\draw (0,0) rectangle +(1,1);
		\draw (0,1) rectangle +(1,1);
		\draw (0,2) rectangle +(1,1);
		\draw (1,1) rectangle +(1,1);
		\draw (1,2) rectangle +(1,1);
		\draw (2,2) rectangle +(1,1);
		\draw (2,3) rectangle +(1,1);
		\node at (.5,.5) {$4$};
		\node at (.5,1.5) {$2$};
		\node at (.5,2.5) {$1$};
		\node at (1.5,1.5) {$4$};
		\node at (1.5,2.5) {$2$};
		\node at (2.5,2.5) {$3$};
		\node at (2.5,3.5) {$2$};
	\end{tikzpicture}$ be a  SSYT. Its biword is $\tilde{w}=\begin{pmatrix}
		1 & 1 & 2 & 2 & 3 & 3 & 3\\
		2 & 3 & 2 & 4 & 1 & 2 & 4
	\end{pmatrix}$, and $$RSK^\ast(\tilde{w})=\left(\text{rect}(\tilde{T}), \tilde{Q}=\YT{0.17in}{}{{1,1,2},{2,3,3},{3}}\right).$$
	
 \noindent  The weight of $\tilde{Q}$, $\mathrm{wt}(\tilde Q)=(2,2,3)$ is the sequence of column lengths of $\tilde{T}$ from right to left.
\end{ex}

	\begin{ex}\label{exrsk2} The bi-word $w=\begin{pmatrix}
		1 & 2 & 2 & 2 & 3 & 3 & 3\\
		2 & 2 & 3 & 4 & 1 & 2 & 4
	\end{pmatrix}$ of $T\in \mathcal{SSYT}((3,3,2), 4)$, in Example \ref{exrsk}, is mapped to the pair $$\left(T=\YT{0.17in}{}{{1,2,2},{2,3},{4,4}},K(rev\,(3,2,2)')= \YT{0.17in}{}{{1,2,2},{2,3,3},{3}}\right)
$$
where  $\lambda'=(3,3,1)$, $rev\,\lambda'=(1,3,3)$.
\end{ex}	
	More generally, given $T\in \mathcal{SSYT}(\lambda, n)$ with $\ell(\lambda')\leq r$, the dual RSK maps the bi-word presentation   of $T$, to the pair $(T, K(rev(\lambda')))$, where $rev(\lambda')$ is the  vector $\lambda'$ written backwards. Note that in this case $Q$ is of shape $\lambda'$ and its weight records  the column lengths of $T$, from right to left. That is, $Q$ is of shape $\lambda'$ and its weight, $wt(Q)=rev\,\lambda'$, henceforth $Q=K(rev\,\lambda')$.
	
	\subsection{Cocrystal of SSYT's}
	Given $T\in \mathcal{SSYT}(\lambda, n)$ with $\ell(\lambda')\leq r$, we define the \emph{cocrystal} of $T$, $\mathfrak{CB}^{\lambda'}(T)$, to be the $\mathfrak{gl}_r$-crystal,
	\begin{equation}
		\label{isomorph}
		\mathfrak{CB}^{\lambda'}(T)= (\text{RSK}^\ast)^{-1}(\{T\}\times \mathcal{SSYT}(\lambda', r))\subseteq E_n^r,
	\end{equation}
	where the  lowering $\mathcal{F}_i$ and raising $\mathcal{E}_i$ crystal operators, are SJDT slides on consecutive columns $i$, $i+1$ on each element of $\mathfrak{CB}^{\lambda'}(T)$, for $i=1, \dots, r-1$, right to left. More precisely, $\mathcal{F}_i$ ($\mathcal{E}_i$), when defined,  sends a cell from the $i$ ($i+1$)-th column to the $i+1$ ($i$)-th column.
The weight map in $\mathfrak{CB}^{\lambda'}(T)$  maps each element $\tilde T$  to the sequence of its  column lengths from right to left, $\rm wt(\tilde T)=\text{ reverse sequence of  column lengths of $\tilde T$} $; and for $X\in \mathfrak{CB}^{\lambda'}(T)$ if  $\mathcal{F}_i(X)=Y$ we also have $\text{wt}(Y)=\text{wt}(X)-\alpha_i$, with $\alpha_i=e_i-e_{i+1}$, $i=1,\dots, r-1$, the simple roots of type $A_{r-1}$.
The cocrystal of $T$ is a highest weight crystal with highest weight $rev\lambda'$, and highest element  the anti-rectification of $T$,  $\textsf{arect}(T)$, that is, the rectification on $T$ is performed south-eastward, \begin{align*}\mathfrak{CB}^{\lambda'}(T)&=\{\mathcal{F}^{m_{i_k}}_{i_k}\cdots \mathcal{F}^{m_{i_1}}_{i_1}(\textsf{arect}(T))\mid (m_{i_k},\dots,m_{i_1})\in \mathbb{Z}_{\ge 0}^k,\; k\ge 0\}\setminus\{0\}\\
&=\{\mathcal{E}^{m_{i_k}}_{i_k}\cdots \mathcal{E}^{m_{i_1}}_{i_1}(T)\mid (m_{i_k},\dots,m_{i_1})\in \mathbb{Z}_{\ge 0}^k,\; k\ge 0\}\setminus\{0\},
\end{align*}
where $T$ is the lowest weight element of $\mathfrak{CB}^{\lambda'}(T)$.
The type $A_{r-1}$ crystals $\mathcal{SSYT}(\lambda', r)$ and $\mathfrak{CB}^{\lambda'}(T)$ are isomorphic { via the map
\begin{align}\label{crystalisomorphism}\mathfrak{CB}^{\lambda'}(T)\rightarrow \mathcal{SSYT}(\lambda', r),\;\tilde T\mapsto Q(\tilde T)\end{align}
\noindent such that $RSK^*(\tilde T)=(T, Q(\tilde T)) $ and $\textrm{wt}(\tilde T)=\textrm{wt}(Q(\tilde T))$.  This crystal isomorphism relies on the following proposition, a consequence of \cite[Lemma 2.3, Lemma 2.4]{kwonheococrystal2020} by Heo-Kwon:
	
	\begin{prop}\label{corheokwon}
		Let $X$ be a skew SSYT.
		The $Q$-symbol of $\mathcal{F}_i(X)$ is the same as $f_i$ applied to the $Q$-symbol of $X$, $Q(\mathcal{F}_i(X))=f_iQ(X)$ (similarly for $\mathcal{E}_i$ and $e_i$), and the weight of the $Q$-symbol of $X$ records the column lengths of $X$ from right to left.
	\end{prop}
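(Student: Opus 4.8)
The plan is to handle the two assertions separately. The statement on the weight of the $Q$-symbol is immediate from the description of $\text{RSK}^\ast$ recalled above: while the biword of the skew SSYT $T$ is processed, the box appended to the recording tableau at step $(2)$, for each letter of $\text{cr}(T)$, receives as its entry the index — counted from the right — of the column of $T$ containing that letter; and $\text{cr}(T)$ enumerates every letter of every column of $T$, so the label $j$ occurs in $Q$ exactly as many times as the $j$-th column of $T$ from the right is long. Hence $\text{wt}(Q)$ is the tuple of column lengths of $T$ read from right to left.

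For the statement about $\mathcal{F}_i$, I would first note that, since $T$ is a skew SSYT, none of its columns has symmetric entries, so an SJDT slide on two consecutive columns of $T$ is an ordinary \emph{jeu de taquin} slide; such a slide preserves the plactic class of the column reading word, whence $\text{rect}(\mathcal{F}_i(T)) = \text{rect}(T)$. Combined with the first paragraph, this shows that $\mathcal{F}_i$ fixes the $P$-symbol and changes only the $Q$-symbol, so it is a well-defined partial operator on the cocrystal $\mathfrak{CB}^{\lambda'}(T)$; under the identification of this cocrystal with $\mathcal{SSYT}(\lambda', r)$ through the $Q$-symbol, it is a weight-lowering partial operator that subtracts $\alpha_i$ from $\text{wt}(Q)$ (it lowers the length of the $i$-th column from the right by one and raises that of the $(i+1)$-th by one). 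What remains is to check that this operator coincides with the crystal lowering operator $f_i$.

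Here I would reduce to the two-column, $\mathfrak{sl}_2$ situation treated by Heo-Kwon. On the one hand, $\mathcal{F}_i$ modifies by definition only the $i$-th and $(i+1)$-th columns of $T$ counted from the right; on the other hand, the action of $f_i$ on $Q \in \mathcal{SSYT}(\lambda', r)$ is computed by the $i$-signature rule and hence depends only on the sub-skew-tableau $Q^{(i)}$ of $Q$ formed by the entries in $\{i, i+1\}$. The key point to establish is that $Q^{(i)}$, after relabelling $i$ and $i+1$ as $1$ and $2$, is precisely the $Q$-symbol — under dual RSK on the two-letter column alphabet $\{1,2\}$ — of the two-column skew SSYT $T^{(i)}$ formed by the $i$-th and $(i+1)$-th columns of $T$ from the right: indeed the top word of the biword of $T$ is weakly increasing, so its biletters with top entry in $\{i,i+1\}$ form one contiguous block, and, modulo the compatibility of $\text{RSK}^\ast$ with inserting that block into the $P$-tableau already built from the columns to the right, the growth of the recording tableau along this block is exactly the two-letter $Q$-symbol of $T^{(i)}$. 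Granting this, \cite[Lemma 2.3, Lemma 2.4]{kwonheococrystal2020} — which identify, in the $\mathfrak{sl}_2$ setting, a \emph{jeu de taquin} slide between two adjacent columns with $f_1$ (resp.\ $e_1$) acting on the two-letter $Q$-symbol of the pair — apply to $T^{(i)}$, and transporting the conclusion through the relabelling yields that the $Q$-symbol of $\mathcal{F}_i(T)$ is $f_i$ applied to the $Q$-symbol of $T$, both sides being undefined exactly when the slide cannot be carried out.

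I expect the main obstacle to be precisely that key point: that restricting the recording tableau $Q$ to its entries in $\{i,i+1\}$ corresponds under dual RSK to restricting $T$ to its $i$-th and $(i+1)$-th columns from the right. Column insertion of a whole column can set off bumping chains that reach columns other than $i$ and $i+1$, so one must argue with care that the block of the recording tableau labelled $\{i,i+1\}$ neither depends on nor perturbs the rest. The cleanest route is to phrase everything through the $\mathfrak{gl}_n \times \mathfrak{gl}_r$ bicrystal structure on biwords, in which the $\mathfrak{gl}_r$-action on top words is, by restriction to the node $i$, controlled by an $\mathfrak{sl}_2$ that only ever moves the two column-labels $i$ and $i+1$; the dual-RSK compatibility then follows from the corresponding $\mathfrak{sl}_2$ statement already contained in \cite[Lemma 2.3, Lemma 2.4]{kwonheococrystal2020}.
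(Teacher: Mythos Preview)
The paper does not actually prove this proposition: it is stated immediately after the sentence ``This isomorphism relies on the following proposition, a consequence of \cite[Lemma~2.3, Lemma~2.4]{kwonheococrystal2020} by Heo--Kwon,'' and no further argument is given. Your proposal is therefore aligned with the paper's approach---invoke Heo--Kwon for the $\mathfrak{sl}_2$ case and deduce the general statement---and in fact supplies considerably more detail than the paper does.

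Your identification of the delicate point is accurate: the reduction from the $r$-column $Q$-symbol to the two-column one is not literally ``restrict $Q$ to $\{i,i+1\}$ equals the $Q$-symbol of $T^{(i)}$ inserted into the empty tableau,'' since in the full insertion the block with top letters $i,i+1$ is inserted into $P_{i-1}$ rather than $\emptyset$. The bicrystal route you outline at the end is the right fix and is the standard way to make this rigorous: one uses that $\text{RSK}^\ast$ intertwines the $\mathfrak{gl}_r$-crystal on biwords (signature rule on the top word) with the $\mathfrak{gl}_r$-crystal on $Q$-tableaux, and then Heo--Kwon's lemmas identify the $i$-th lowering operator on the top word with a \emph{jeu de taquin} slide between columns $i$ and $i+1$ of $T$. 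With that adjustment your argument is complete and matches what the paper intends.
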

	
}	
	%
	%
	%
	%
	
	\begin{ex}\label{ExcocrystalSSYT}Recall $T $ in Example \ref{exrsk2} and $\tilde{T}$ in Example \ref{Ttilde}. Note that $T=\mathcal{F}_1(\tilde{T})$ and that the $Q$-symbols obtained from both tableaux via the map \eqref{crystalisomorphism} are connected via $f_1$, that is, $Q(T)=K(rev(3,2,2)')=f_1Q(\tilde T)=Q(\mathcal{F}_1(\tilde T))$. See Figure \ref{cocrystal}.
		
	 On the right hand side of Figure \ref{cocrystal}, we have the cocrystal $\mathfrak{CB}^{\lambda'}(T)$, whose vertices are obtained by applying the elementary
		SJDT slides $\mathcal{E}_i$, for $i=1, 2$,  on $T$, the lowest weight element of the cocrystal $\mathfrak{CB}^{\lambda'}(T)$. Namely, $\mathcal{E}_1$
		sends an entry from the second column to the first column, and $\mathcal{E}_2$
		sends an entry from the third column to the second column, where we count columns starting from the right. $\mathcal{F}_1$ and $\mathcal{F}_2$ are the inverse operations.
		
		On the left hand side, we have the type $A_2$ crystal $\mathcal{SSYT}((3,3,1),3)$, formed by the $Q$-symbols of every skew tableau that exists in the type $A_2$ crystal $\mathfrak{CB}^{\lambda'}(T)$ on the right. The type $A_2$ crystal operators on the left are defined by the signature rule on the alphabet $[3]$, whereas, on the right, $\mathcal{F}_1$  and $\mathcal{F}_2$ are type $A_2$ crystal operators defined by SJDT on  adjacent columns.
		
\begin{figure}[h]
\begin{center}	
			\begin{tikzpicture} \node at (0,10.5) {Type $A_2$ crystals $\mathcal{SSYT}((3,3,1),3)$};
				\node at (5,10.5) {$\mathfrak{CB}^{\lambda'}(T)$};
				\node at (0,8.8) {${\scriptsize K(\lambda')}=\YT{0.17in}{}{
						{{1},{1},{1}},
						{{2},{2},2},
						{{3}}}$};
				\node at (0,7.1) {$\YT{0.17in}{}{
						{{1},{1},{1}},
						{{2},{2},3},
						{{3}}}$};
				\node at (-1,5.4) {$\YT{0.17in}{}{
						{{1},{1},{2}},
						{{2},{2},3},
						{{3}}}$};
				\node at (1,5.4) {$\YT{0.17in}{}{
						{{1},{1},{1}},
						{{2},{3},3},
						{{3}}}$};
				\node at (-0.4,3.7) {$\tilde{Q}=\YT{0.17in}{}{
						{{1},{1},{2}},
						{{2},{3},3},
						{{3}}}$};
				\node at (0,2) {$K(rev\lambda')=\YT{0.17in}{}{
						{{1},{2},{2}},
						{{2},{3},3},
						{{3}}}$};
				\node[draw=none,fill=none] at (-3, 7){${\color{blue}\rightarrow}f_1$};
				\node[draw=none,fill=none] at (-3, 6.5){${\color{red}\rightarrow} f_2$};
				\node[draw=none,fill=none] at (8, 7.5){${\color{blue}\rightarrow}\mathcal{F}_1$};
				\node[draw=none,fill=none] at (8, 6.5){${\color{red}\rightarrow} \mathcal{F}_2$};
				\draw[->] [draw=red,  thick] (0, 8.4) -- (0,8);
				\draw[->] [draw=blue,  thick] (0, 3.2) -- (0,2.8);
				\draw[->] [draw=blue,  thick] (-.6, 6.4) -- (-.9,6.1);
				\draw[->] [draw=red,  thick] (.6, 6.4) -- (.9,6.1);
				\draw[->] [draw=red,  thick] (-.8, 4.9) -- (-.5,4.5);
				\draw[->] [draw=blue,  thick] (1.1, 4.9) -- (.8,4.5);
				
				\node at (4,1.6) {$T=\YT{0.17in}{}{
						{{1},{2},{2}},
						{{2},{3}},
						{{4},4}}$};
				\node at (3.6,3.4) {$\tilde{T}=\begin{tikzpicture}[scale=.444, baseline={([yshift=-.8ex]current bounding box.center)}]
						\draw (0,0) rectangle +(1,1);
						\draw (0,1) rectangle +(1,1);
						\draw (0,2) rectangle +(1,1);
						\draw (1,1) rectangle +(1,1);
						\draw (1,2) rectangle +(1,1);
						\draw (2,2) rectangle +(1,1);
						\draw (2,3) rectangle +(1,1);
						\node at (.5,.5) {$4$};
						\node at (.5,1.5) {$2$};
						\node at (.5,2.5) {$1$};
						\node at (1.5,1.5) {$4$};
						\node at (1.5,2.5) {$2$};
						\node at (2.5,2.5) {$3$};
						\node at (2.5,3.5) {$2$};
					\end{tikzpicture}$};
				\node at (2.8,4.9) {$\begin{tikzpicture}[scale=.444, baseline={([yshift=-.8ex]current bounding box.center)}]
						\draw (0,0) rectangle +(1,1);
						\draw (0,1) rectangle +(1,1);
						\draw (1,0) rectangle +(1,1);
						\draw (1,1) rectangle +(1,1);
						\draw (1,2) rectangle +(1,1);
						\draw (2,1) rectangle +(1,1);
						\draw (2,2) rectangle +(1,1);
						\node at (.5,.5) {$4$};
						\node at (.5,1.5) {$2$};
						\node at (1.5,0.5) {$4$};
						\node at (1.5,1.5) {$2$};
						\node at (1.5,2.5) {$1$};
						\node at (2.5,1.5) {$3$};
						\node at (2.5,2.5) {$2$};
					\end{tikzpicture}$};
				\node at (5.6,5.52) {$$\begin{tikzpicture}[scale=.444, baseline={([yshift=-.8ex]current bounding box.center)}]
						\draw (0,0) rectangle +(1,1);
						\draw (0,1) rectangle +(1,1);
						\draw (0,2) rectangle +(1,1);
						\draw (1,2) rectangle +(1,1);
						\draw (2,2) rectangle +(1,1);
						\draw (2,3) rectangle +(1,1);
						\draw (2,4) rectangle +(1,1);
						\node at (.5,.5) {$4$};
						\node at (.5,1.5) {$2$};
						\node at (.5,2.5) {$1$};
						\node at (1.5,2.5) {$2$};	
						\node at (2.5,2.5) {$4$};
						\node at (2.5,3.5) {$3$};
						\node at (2.5,4.5) {$2$};
					\end{tikzpicture}$$};
				\node at (4,7.05) {$$\begin{tikzpicture}[scale=.444, baseline={([yshift=-.8ex]current bounding box.center)}]
						\draw (0,0) rectangle +(1,1);
						\draw (0,1) rectangle +(1,1);
						\draw (1,1) rectangle +(1,1);
						\draw (1,2) rectangle +(1,1);
						\draw (2,1) rectangle +(1,1);
						\draw (2,2) rectangle +(1,1);
						\draw (2,3) rectangle +(1,1);
						\node at (.5,.5) {$4$};
						\node at (.5,1.5) {$2$};
						\node at (1.5,1.5) {$2$};
						\node at (1.5,2.5) {$1$};	
						\node at (2.5,1.5) {$4$};
						\node at (2.5,2.5) {$3$};
						\node at (2.5,3.5) {$2$};
					\end{tikzpicture}$$};
				\node at (4,9) {$\SYT{0.17in}{}{
						{{2},1},
						{{3},{2}},
						{{4},4,2}}$};
				\draw[->] [draw=red,  thick] (4, 8) -- (4,7.6);
				\draw[->] [draw=blue,  thick] (4, 2.8) -- (4,2.4);
				\draw[->] [draw=blue,  thick] (3.5, 6.) -- (3.2,5.7);
				\draw[->] [draw=red,  thick] (4.5, 6) -- (4.8,5.7);
				\draw[->] [draw=red,  thick] (3.2, 4.5) -- (3.5,4.1);
				\draw[->] [draw=blue,  thick] (4.8, 4.8) -- (4.5,4.4);	
			\end{tikzpicture}
		\end{center}	
\caption{The type $A_2$ crystal operators $f_1$ and $f_2$ are given by the signature rule on the alphabet $[3]$, whereas $\mathcal{F}_1$ and $\mathcal{F}_2$, even though they are also type $A_2$ crystal operators, are defined by elementary SJDT moves.\label{cocrystal}}
	\end{figure}
	\end{ex}

	\subsection{Cocrystal of KN tableaux}
	Let $T\in \mathcal{SSYT}(\lambda, n)$. Note that $T$, the lowest weight of the cocrystal $\mathfrak{CB}^{\lambda'}(T)$, is also in the type $C_n$ crystal $\textsf{KN}(\lambda)$ (recall that $\mathcal{SSYT}(\lambda, n)$ is a subcrystal of $\textsf{KN}(\lambda)$). Fixed an arbitrary tableau $Y$ in the crystal  $\textsf{KN}(\lambda)$, there is a sequence $\mathcal{S}$ of type $C_n$ crystal operators, on $\textsf{KN}(\lambda)$, such that $\mathcal{S}(T)=Y$.  All elements of the cocrystal $\mathfrak{CB}^{\lambda'}(T)$ are SJDT related and we can apply this sequence $\mathcal{S}$ to all skew tableaux in the cocrystal, obtaining, for each skew tableau, a new skew tableau of the same shape. All these skew tableaux, obtained by application of the sequence $\mathcal{S}$ to each element of $\mathfrak{CB}^{\lambda'}(T)$, will be connected via SJDT, because the SJDT and the crystal operators of $\textsf{KN}(\lambda)$ commute \cite[Theorem 6.3.8]{lecouvey2002schensted}, hence they are the elements of a new cocrystal $\mathfrak{CB}^{\lambda'}(\mathcal{S}(T))$ of type $A_{r-1}$, despite  that its vertices are type $C_n$ objects (i.e. KN skew tableaux). Recalling that the weight function of $\mathfrak{CB}^{\lambda'}(T)$ is given by the column lengths of each vertex, from right to left, which is preserved by any sequence $\mathcal{S}$ of crystal operators given by the $C_n$ signature rule in $\textsf{KN}(\lambda)$, the following is a consequence of Proposition \ref{corheokwon}.
	
	\begin{prop}
		Given $T\in \mathcal{KN}(\lambda, n)$, with $\ell(\lambda')\leq r$, the cocrystal $\mathfrak{CB}^{\lambda'}(T)$ with lowest weight element $T$, obtained from $T$ by successive application of elementary SJDT moves, is crystal isomorphic to the $\mathfrak{gl}_r$-crystal $\mathcal{SSYT}(\lambda', r)$.
	\end{prop}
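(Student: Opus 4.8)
The plan is to reduce the assertion to the semistandard case treated in the previous subsection and then transport the whole cocrystal along a sequence of type $C_n$ Kashiwara operators. Since $\lambda$ has at most $n$ parts, the highest weight tableau $K(\lambda)$ lies in $\mathcal{SSYT}(\lambda,n)\subseteq\mathfrak{B}^\lambda$, it is a genuine semistandard tableau on $[n]$, and $\mathfrak{B}^\lambda$ is a connected type $C_n$ crystal with $K(\lambda)$ as its unique highest weight element. Hence there is a sequence $S=f_{j_m}\cdots f_{j_1}$ of lowering Kashiwara operators of $\mathfrak{B}^\lambda$ with $S(K(\lambda))=T$. For the base point $K(\lambda)$ the assertion is already available: with $r\ge\ell(\lambda')$ the biword of $K(\lambda)$ lies in $E_n^r$, so $\mathfrak{CB}^{\lambda'}(K(\lambda))=(\mathrm{RSK}^\ast)^{-1}(\{K(\lambda)\},\mathcal{SSYT}(\lambda',r))$, and by Proposition \ref{corheokwon} this $\mathfrak{gl}_r$-crystal is isomorphic to $\mathcal{SSYT}(\lambda',r)$ (as in the previous subsection), with the $\mathfrak{gl}_r$-weight of each vertex equal to its vector of column lengths read from right to left, and with $K(\lambda)$ as lowest weight element.

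The core step is to show that $S$ carries $\mathfrak{CB}^{\lambda'}(K(\lambda))$ bijectively onto $\mathfrak{CB}^{\lambda'}(T)$ while intertwining the SJDT-slide operators. I would use two facts: each elementary SJDT slide on consecutive columns commutes with every type $C_n$ Kashiwara operator \cite[Theorem 6.3.8]{lecouvey2002schensted}, and the $C_n$ Kashiwara operators descend to the symplectic plactic monoid, so that rectification $\text{rect}\colon\mathcal{KN}(\mu/\nu,n)\to\mathcal{KN}(\lambda,n)$ is a morphism of type $C_n$ crystals sending $0$ to $0$. Any vertex $U$ of $\mathfrak{CB}^{\lambda'}(K(\lambda))$ is $U=\Psi(K(\lambda))$ for some composite $\Psi$ of elementary SJDT slides, so $\text{rect}(U)=K(\lambda)$; applying $S$ gives $\text{rect}(S(U))=S(\text{rect}(U))=S(K(\lambda))=T\neq 0$, whence $S(U)\neq 0$, and $S(U)$ is a KN skew tableau of the same shape as $U$ since Kashiwara operators preserve shape. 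By the commutation, $S(U)=S(\Psi(K(\lambda)))=\Psi(S(K(\lambda)))=\Psi(T)$, which is reachable from $T$ by SJDT slides, hence lies in $\mathfrak{CB}^{\lambda'}(T)$. Conversely, every $V\in\mathfrak{CB}^{\lambda'}(T)$ is $V=\Psi(T)$ for a composite $\Psi$ of SJDT slides; then $\Psi(K(\lambda))\neq 0$, since otherwise $0=S(\Psi(K(\lambda)))=\Psi(S(K(\lambda)))=\Psi(T)=V$, so $\Psi(K(\lambda))\in\mathfrak{CB}^{\lambda'}(K(\lambda))$ and $V=S(\Psi(K(\lambda)))$. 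Thus $S$ restricts to a bijection $\mathfrak{CB}^{\lambda'}(K(\lambda))\to\mathfrak{CB}^{\lambda'}(T)$, and because SJDT slides commute with $S$ it intertwines the operators $\mathcal{E}_i,\mathcal{F}_i$ on the two cocrystals; in particular the intrinsic SJDT-graph structure on $\mathfrak{CB}^{\lambda'}(T)$ coincides with the transported one, so no auxiliary choice affects the outcome.

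To conclude, I would note that $S$, being a shape-preserving sequence of $C_n$-operators, preserves the column-length vector of each vertex and hence the $\mathfrak{gl}_r$-weight; combined with the intertwining of $\mathcal{E}_i,\mathcal{F}_i$ and seminormality (which recovers $\varepsilon_i,\varphi_i$ from the string lengths), this upgrades the bijection of the second paragraph to an isomorphism of $\mathfrak{gl}_r$-crystals. Composing with the base-point isomorphism of the first paragraph gives $\mathfrak{CB}^{\lambda'}(T)\cong\mathcal{SSYT}(\lambda',r)$, as claimed. The main obstacle is the middle paragraph: one must verify that the composite operator $S$ never collapses a vertex of the base cocrystal to $0$, and that SJDT slides commute with $C_n$-operators not just individually but through arbitrary compositions — both of which I reduce to \cite[Theorem 6.3.8]{lecouvey2002schensted} together with the plactic compatibility of the $C_n$ Kashiwara operators, after which the column-length bookkeeping lets Proposition \ref{corheokwon} transfer verbatim to the KN setting.
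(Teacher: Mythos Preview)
Your proposal is correct and follows essentially the same approach as the paper: reduce to the SSYT base case (where Proposition~\ref{corheokwon} and the dual RSK give the isomorphism with $\mathcal{SSYT}(\lambda',r)$), then transport the whole cocrystal along a sequence $S$ of type $C_n$ Kashiwara operators using the commutation of SJDT slides with these operators \cite[Theorem 6.3.8]{lecouvey2002schensted} and the shape-preservation of $S$. The paper's argument is the paragraph immediately preceding the proposition; your write-up is more explicit about why $S$ never sends a cocrystal vertex to $0$ and why the transported graph coincides with the intrinsically defined $\mathfrak{CB}^{\lambda'}(T)$, but the strategy is the same.
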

	{ As a consequence, interestingly, these elementary SJDT moves $\mathcal{E}_i$ and $\mathcal{F}_i$ in the cocrystal of a KN tableau do not incur in a loss of boxes, that is, the B2 case in the symplectic jeu de taquin never occurs.
We have thus for $X\in \mathfrak{CB}^{\lambda'}(T)$, $ \mathcal{F}_i(\mathcal{S}(X))=\mathcal{S}(\mathcal{F}_i(X))$ (similarly for $\mathcal{E}_i$),  and therefore $\mathfrak{CB}^{\lambda'}(T)$ and $\mathfrak{CB}^{\lambda'}(\mathcal{S}(T))$ are $A_{r-1}$ isomorphic crystals.  See Figure \ref{Scocrystal}.}
	
	Fulton \cite{fulton1997young} has proved the following result for semistandard tableaux.
	
	\begin{prop} \label{fultonkey}\cite[Proposition 7, Corollary 1, Appendix A.5]{fulton1997young}
		Given $T \in \mathcal{SSYT}(\lambda, n)$ and a  skew shape whose column lengths are a permutation of $\lambda'$, the column lengths of $T$, there is exactly one skew tableau with that shape that rectifies to $T$. Furthermore, the last and first columns only depend on their lengths.
	\end{prop}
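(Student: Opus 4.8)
The plan is to recast the statement entirely in terms of the dual RSK correspondence and Proposition \ref{corheokwon}, reducing both the counting (existence and uniqueness) and the ``furthermore'' to the combinatorics of the recording tableau $Q$. Write $\lambda'$ for the conjugate partition recording the column lengths of $T$, and let $\mu/\nu$ be a skew shape whose columns, read from right to left, have lengths $\beta=(c_1,\dots,c_m)$, a rearrangement of the parts of $\lambda'$. Any skew SSYT $U$ of shape $\mu/\nu$ produces a biword in $E_n^r$ whose image under $\mathrm{RSK}^\ast$ is the pair $(\mathrm{rect}(U),Q)$, where, by Proposition \ref{corheokwon}, $Q\in\mathcal{SSYT}(\lambda',r)$ has weight equal to the column lengths of $U$ read from right to left. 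Demanding $\mathrm{rect}(U)=T$ fixes the $P$-symbol to be $T$, and demanding shape $\mu/\nu$ fixes $\mathrm{wt}(Q)=\beta$.

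For uniqueness I would invoke the symmetry of Kostka numbers: since the Schur polynomial $s_{\lambda'}$ is symmetric, the number of tableaux in $\mathcal{SSYT}(\lambda',r)$ of a prescribed weight depends only on the underlying multiset of that weight, so it equals $K_{\lambda'\lambda'}=1$ when $\beta$ is a rearrangement of $\lambda'$. Hence the recording tableau $Q$ is unique, and, $\mathrm{RSK}^\ast$ being a bijection, so is the biword; this determines the content of every column of $U$ (each column being strictly increasing, its content as a set determines it). Placing these forced columns into the fixed cells of $\mu/\nu$ leaves at most one candidate, giving uniqueness.

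For existence I must check that this single candidate is a genuine semistandard filling of $\mu/\nu$. The forced columns already assemble into a valid skew SSYT in the maximally overlapping arrangement furnished by the corresponding element of $E_n^r$ (equivalently, by the vertex of the cocrystal $\mathfrak{CB}^{\lambda'}(T)$ of weight $\beta$). The point is then a monotonicity observation: for two adjacent columns $D$ (left) and $C$ (right) of fixed contents, the row conditions $D_k\le C_{k+\delta}$ relating them weaken as the vertical offset $\delta$ grows, because the entries of $C$ increase down the column; the maximally overlapping arrangement realizes the smallest admissible offset, so its validity forces validity at every larger offset. Since any genuine partition skew shape $\mu/\nu$ with column sequence $\beta$ has offsets at least as large, the candidate is semistandard there as well. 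I expect verifying that the maximally overlapping representative is itself a valid skew SSYT (so that the monotonicity has something to start from) to be the technical point of this step.

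Finally, for the ``furthermore'', I would read the rightmost column of $U$ off the biword as the biletters with top letter $1$. In the recording tableau the entries equal to $1$ always occupy exactly the boxes $(1,1),\dots,(1,c_1)$ of the first row, independently of $c_2,\dots,c_m$; consequently, reconstructing the biword by reverse insertion, the extraction of the rightmost column happens at the final stage, when the $P$-symbol has been pared down from $T$ to a single column of length $c_1$. The crux is that this reduced column depends only on $T$ and $c_1$: the shape deleted from $Q$ is always $\lambda'$ with the horizontal strip $(c_1)$ removed, so by the confluence of jeu de taquin the resulting single column is intrinsic to $T$ and $\ell=c_1$. The leftmost column is handled symmetrically, using that the largest entry of $Q$ occupies a determined vertical strip; this is exactly the left--right duality realized by the Schützenberger--Lusztig involution. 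I expect this independence of the extreme columns from the remaining shape to be the subtlest point, and the one most directly responsible for the good definition of the symplectic right and left keys.
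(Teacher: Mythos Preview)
The paper does not give its own proof of this proposition; it is quoted from Fulton \cite[Appendix~A.5]{fulton1997young} as an input to the cocrystal construction. So there is no argument in the paper to compare against, and your proposal must stand on its own.

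Your uniqueness argument via $K_{\lambda'\beta}=K_{\lambda'\lambda'}=1$ is correct, and reading the extreme columns from the positions of the extremal letters of $Q$ is the right idea for the ``furthermore''. The existence step, however, has a genuine gap. The monotonicity is fine, but the assertion ``any genuine partition skew shape $\mu/\nu$ with column sequence $\beta$ has offsets at least as large'' is precisely what must be proved, and you have not proved it. For adjacent columns of lengths $d$ (left) and $c$ (right), the partition--shape constraints force only $\delta\ge\max(0,c-d)$, whereas the $E_n^r$ maximal-overlap offset is the content-dependent minimum $\delta^{\mathrm{cont}}$ for which $D_i\le C_{i+\delta}$ holds. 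Your claim is equivalent to $\delta^{\mathrm{cont}}\le\max(0,c-d)$ for the specific columns produced by the biword; for arbitrary column contents this inequality can fail, so some property of these particular contents must be invoked, and none is. The technical point you flag---that the $E_n^r$ representative is itself semistandard---is automatic from the bijection and is not the obstacle. A clean repair, closer to Fulton's own argument, is to observe that the number of skew SSYT of shape $\mu/\nu$ rectifying to a fixed $T$ of shape $\lambda$ is the Littlewood--Richardson coefficient and hence depends only on $\lambda$; it then suffices to treat the Yamanouchi tableau $Y_\lambda$, where the forced column of length $\ell$ is $\{1,\dots,\ell\}$ and the row condition in any partition skew shape reduces to $\nu'_j\ge\nu'_{j+1}$, which holds by definition.
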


	This means that given $T \in \mathcal{SSYT}(\lambda, n)$, the cocrystal $\mathfrak{CB}^{\lambda'}(T)$ attached to $T \in \mathcal{SSYT}(\lambda, n)$  has a distinguished set of skew tableaux whose column lengths are a permutation of $\lambda'$, the column lengths of $T$. The skew shapes of these distinguished vertices are preserved by any sequence $\mathcal{S}$ of type $C_n$ crystal operators of the crystal $\mathfrak{B}^\lambda$. Thus we  obtain another proof of  Proposition 40 and Corollary 41 in second author's work \cite{santos2019symplectic} which is an extension
	of the previous Proposition \ref{fultonkey} to KN tableaux.
	\begin{prop} \label{sahpesjdt}\cite[Proposition 40, Corollary 41]{santos2019symplectic}, \cite{santos2020SLCsymplectic}
		Given $T \in \mathcal{KN}(\lambda, n)$ and a  skew shape whose column lengths are a permutation of $\lambda'$, the column lengths of $T$,  there is exactly one skew tableau with that shape that rectifies to $T$. Furthermore, the last and first columns only depend on their lengths.
	\end{prop}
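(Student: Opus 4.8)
The plan is to deduce Proposition~\ref{sahpesjdt} from the cocrystal machinery just built, rather than re-running the direct tableau-combinatorics argument of \cite{santos2019symplectic}. The key observation is that $\mathfrak{CB}^{\lambda'}(T)$ is crystal isomorphic (as a $\mathfrak{gl}_r$-crystal, $r=\ell(\lambda')$) to $\mathcal{SSYT}(\lambda',r)$ via the map sending a skew tableau $\tilde T$ in the cocrystal to the $Q$-symbol of its biword under $RSK^\ast$ (Proposition~\ref{corheokwon}), and that the weight of this $Q$-symbol records the sequence of column lengths of $\tilde T$ read from right to left. So the first step is: fix $T\in\mathcal{KN}(\lambda,n)$ and a target skew shape $\mu/\nu$ whose multiset of column lengths equals that of $T$ (equivalently, whose column-length sequence is some rearrangement $\pi$ of the column-length sequence of $T$). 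I would first reduce to the semistandard case: choose $T_0\in\mathcal{SSYT}(\lambda,n)$ and a sequence $S$ of type $C_n$ crystal operators of $\mathfrak{B}^\lambda$ with $S(T_0)=T$ (possible since $\mathfrak{B}^\lambda$ is connected with highest weight element $K(\lambda)\in\mathcal{SSYT}$). Since SJDT slides commute with the $C_n$ crystal operators \cite[Theorem 6.3.8]{lecouvey2002schensted}, applying $S$ vertex-by-vertex carries the cocrystal $\mathfrak{CB}^{\lambda'}(T_0)$ isomorphically onto $\mathfrak{CB}^{\lambda'}(T)$, preserving each vertex's shape. Hence it suffices to prove existence and uniqueness of the rectifying skew tableau of shape $\mu/\nu$ for $T_0$, which is exactly Fulton's Proposition~\ref{fultonkey}, and then transport the statement through $S$.

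For \textbf{existence}: in $\mathcal{SSYT}(\lambda',r)$ there is a unique SSYT of shape $\lambda'$ with weight equal to the column-length sequence of $\mu/\nu$ read right-to-left --- indeed that weight is a permutation of $\lambda'$ itself, so there is at least one such SSYT, e.g.\ obtained by a suitable sequence of $f_i$'s from $K(\lambda')$ whose weight is $\lambda'$ (this is where one uses that any rearrangement of a partition appears as the weight of some tableau of that shape). Pull it back through the crystal isomorphism to get a vertex $\tilde T_0\in\mathfrak{CB}^{\lambda'}(T_0)$; by Proposition~\ref{corheokwon} its shape has the prescribed column lengths, and since every vertex of the cocrystal rectifies to $T_0$ by construction, $\tilde T_0$ is a skew SSYT of the right shape rectifying to $T_0$. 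Applying $S$ gives a KN skew tableau of shape $\mu/\nu$ rectifying to $T=S(T_0)$.

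For \textbf{uniqueness} and the \textbf{first/last column} claim: a skew SSYT of a fixed shape rectifying to $T_0$ is precisely a vertex of $\mathfrak{CB}^{\lambda'}(T_0)$ of that shape; its $Q$-symbol then has shape $\lambda'$ and a fixed weight (the right-to-left column lengths), so it is a fixed vertex of $\mathcal{SSYT}(\lambda',r)$ --- but distinct vertices of a crystal have distinct... no: distinct weights need not be distinct vertices. Here is the point where I expect the real work: a single $\mathfrak{gl}_r$-crystal of shape $\lambda'$ can have several vertices of the same weight when $\lambda'$ has repeated parts, so ``same column-length sequence'' alone does not pin down the $Q$-symbol. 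The resolution is that when the column lengths are a \emph{permutation of the whole partition} $\lambda'$ (not merely a composition with the same multiset of values arising from a sub-skew-shape), the corresponding weight is $w\lambda'$ for $w$ in the symmetric group, and in the type $A$ crystal $\mathcal{SSYT}(\lambda',r)$ each such extreme weight $w\lambda'$ is attained by exactly one vertex --- this is the standard fact that the $w$-extremal weight space of an irreducible crystal is one-dimensional. I would cite or reprove this (it follows from the $\mathfrak{sl}_2$-string structure: each $s_i$-extremal condition is either top or bottom of its string), conclude the $Q$-symbol is unique, hence the cocrystal vertex is unique, hence the rectifying skew tableau is unique. The dependence of only the first and last columns on their lengths then follows because the leftmost (resp.\ rightmost) column of $spl$-type reading corresponds to the first (resp.\ last) entry of the weight vector, and Fulton's argument (or the $RSK^\ast$ description: the last column read is the first column inserted, the first column read is the last inserted) shows these are determined by length alone; transporting through $S$ preserves this since $S$ acts within a fixed shape. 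The main obstacle, then, is cleanly establishing the uniqueness of the extremal-weight vertex and checking that ``permutation of the column lengths'' genuinely lands in the Weyl-orbit of $\lambda'$ rather than a larger set of compositions --- once that is in hand, everything else is transport of structure along the two isomorphisms and along $S$.
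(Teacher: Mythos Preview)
Your proposal is correct and follows essentially the paper's own argument: reduce to Fulton's SSYT result (Proposition~\ref{fultonkey}) and transport it to an arbitrary $T\in\mathcal{KN}(\lambda,n)$ via a sequence $S$ of type $C_n$ crystal operators, using that SJDT commutes with these operators and hence $S$ preserves skew shapes vertex-by-vertex between cocrystals. The extremal-weight detour is unnecessary, since Fulton already gives both existence and uniqueness in the SSYT case; the one point you should make explicit is that for uniqueness you must apply $S^{-1}$ to a candidate KN skew tableau $\tilde T$ rectifying to $T$ and observe that the result, having $|\lambda|$ cells and weight summing to $|\lambda|$ (it rectifies to $T_0\in\mathcal{SSYT}(\lambda,n)$), can contain no barred letters and is therefore an SSYT to which Fulton applies.
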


	A key tableau in the type $A_{r-1}$ crystal $\mathcal{SSYT}(\lambda', r)$  is a tableau of shape $\lambda'$ whose weight is in $\mathfrak{S}_r\lambda'$, the $\mathfrak{S}_r$-orbit of $\lambda'$. For each element of $\mathfrak{S}_r\lambda'$ there is
	exactly one key tableau of shape $\lambda'$ with that weight. More precisely the key tableaux in  $\mathcal{SSYT}(\lambda', r)$ are the distinguished vertices which define the set $\mathfrak{S}_r K(\lambda')$ where  $s_iK(\lambda')=K(s_i\lambda')$ and $s_i$ are the simple transpositions of $\mathfrak{S}_r$,   for $i=1,\dots, r-1$. Thereby it is natural to define keys in a cocrystal (a type A crystal).

	\begin{defin} \label{Defcokey} Given $T\in \mathcal{KN}(\lambda, n)$, with  $\ell(\lambda')\leq r$, and $X\in  \mathfrak{CB}^{\lambda'}(T)$, the skew tableau $X$ is said to be a key  of $\mathfrak{CB}^{\lambda'}(T)$ if its  weight as an element of the said cocrystal, the sequence column lengths of $X$, from right to left,  is a permutation of $rev\,\lambda'$, the weight of $T$ as an element of the same cocrystal.
	\end{defin}
	
	In other words, given $T\in \mathcal{KN}(\lambda, n)$, the keys of  $\mathfrak{CB}^{\lambda'}(T)$ are the image of the keys in  $\mathcal{SSYT}(\lambda', r)$ via the crystal isomorphism \eqref{crystalisomorphism}. We then have an action of the left cosets $\mathfrak{S}_r/\mathfrak{S}_{r\lambda'}$ on the set of the keys in $\mathfrak{CB}^{\lambda'}(T)$. In Figure \ref{Scocrystal} the  weak Bruhat order on the  cosets $\mathfrak{S}_3/\mathfrak{S}_{3(3,3,1)}$ can be identified with the  weak Bruhat order  on the keys of the cocrystal $\mathfrak{CB}^{\lambda'}(T)$
\begin{align}\label{bruhatcocrystal}\mathcal{S}(T)=\YT{0.17in}{}{
						{{1},{2},{2}},
						{{2},{3}},
						{{4},\overline{4}}}
\underset{\mathcal{E}^2_1}\longrightarrow \begin{tikzpicture}[scale=.444, baseline={([yshift=-.8ex]current bounding box.center)}]
						\draw (0,0) rectangle +(1,1);
						\draw (0,1) rectangle +(1,1);
						\draw (0,2) rectangle +(1,1);
						\draw (1,2) rectangle +(1,1);
						\draw (2,2) rectangle +(1,1);
						\draw (2,3) rectangle +(1,1);
						\draw (2,4) rectangle +(1,1);
						\node at (.5,.5) {$4$};
						\node at (.5,1.5) {$2$};
						\node at (.5,2.5) {$1$};
						\node at (1.5,2.5) {$2$};	
						\node at (2.5,2.5) {$\overline 4$};
						\node at (2.5,3.5) {$3$};
						\node at (2.5,4.5) {$2$};
					\end{tikzpicture}
\underset{\mathcal{E}^2_2}\longrightarrow
\SYT{0.17in}{}{
						{{2},1},
						{{3},{2}},
						{{\overline{4}},4,2}}.
\end{align}
	
	\begin{ex}Recall the right hand side crystal in  Figure \ref{cocrystal}. 
$T$ is in the type $C_4$ crystal $\mathcal{KN}((3,2,2,0),4)$. Hence we can apply to each vertex of $\mathfrak{CB}^{\lambda'}(T)$ the sequence of crystal operators $S=f_4$, obtaining a new cocrystal, on the right, whose vertices are KN skew tableaux connected via SJDT. This cocrystal $\mathfrak{CB}^{\lambda'}(f_4(T))$ is a type $A_2$ crystal. See Figure \ref{Scocrystal}.
		
\begin{figure}[h]
\begin{center}	
			\begin{tikzpicture}
				\node at (0,10.5) {$\mathfrak{CB}^{\lambda'}(T)$};
				\node at (5,10.5) {$\mathfrak{CB}^{\lambda'}(f_4(T))=f_4\mathfrak{CB}^{\lambda'}(T)$};
				\node at (2,1.6){$\overset{f_4}\longrightarrow$};
				\node at (2,3.3){$\overset{f_4}\longrightarrow$};
				\node at (-2.75,1.6){$K(\lambda)\rightarrow\cdots\rightarrow$};
				\node at (-3.1,3.3){$\mathcal{E}_1K(\lambda)\rightarrow\cdots\rightarrow$};
				\node at (8,1.6){$\rightarrow\cdots\rightarrow K(-\lambda)$};
				\node at (8,3.3){$\rightarrow\cdots\rightarrow \mathcal{E}_1K(-\lambda)$};
				
				\node at (0,1.6) {$T=\YT{0.17in}{}{
						{{1},{2},{2}},
						{{2},{3}},
						{{4},4}}$};
				\node at (-.4,3.4) {${\tilde T}=\begin{tikzpicture}[scale=.444, baseline={([yshift=-.8ex]current bounding box.center)}]
						\draw (0,0) rectangle +(1,1);
						\draw (0,1) rectangle +(1,1);
						\draw (0,2) rectangle +(1,1);
						\draw (1,1) rectangle +(1,1);
						\draw (1,2) rectangle +(1,1);
						\draw (2,2) rectangle +(1,1);
						\draw (2,3) rectangle +(1,1);
						\node at (.5,.5) {$4$};
						\node at (.5,1.5) {$2$};
						\node at (.5,2.5) {$1$};
						\node at (1.5,1.5) {$4$};
						\node at (1.5,2.5) {$2$};
						\node at (2.5,2.5) {$3$};
						\node at (2.5,3.5) {$2$};
					\end{tikzpicture}$};
				\node at (-1.2,4.9) {$\begin{tikzpicture}[scale=.444, baseline={([yshift=-.8ex]current bounding box.center)}]
						\draw (0,0) rectangle +(1,1);
						\draw (0,1) rectangle +(1,1);
						\draw (1,0) rectangle +(1,1);
						\draw (1,1) rectangle +(1,1);
						\draw (1,2) rectangle +(1,1);
						\draw (2,1) rectangle +(1,1);
						\draw (2,2) rectangle +(1,1);
						\node at (.5,.5) {$4$};
						\node at (.5,1.5) {$2$};
						\node at (1.5,0.5) {$4$};
						\node at (1.5,1.5) {$2$};
						\node at (1.5,2.5) {$1$};
						\node at (2.5,1.5) {$3$};
						\node at (2.5,2.5) {$2$};
					\end{tikzpicture}$};
				\node at (1.6,5.52) {$$\begin{tikzpicture}[scale=.444, baseline={([yshift=-.8ex]current bounding box.center)}]
						\draw (0,0) rectangle +(1,1);
						\draw (0,1) rectangle +(1,1);
						\draw (0,2) rectangle +(1,1);
						\draw (1,2) rectangle +(1,1);
						\draw (2,2) rectangle +(1,1);
						\draw (2,3) rectangle +(1,1);
						\draw (2,4) rectangle +(1,1);
						\node at (.5,.5) {$4$};
						\node at (.5,1.5) {$2$};
						\node at (.5,2.5) {$1$};
						\node at (1.5,2.5) {$2$};	
						\node at (2.5,2.5) {$4$};
						\node at (2.5,3.5) {$3$};
						\node at (2.5,4.5) {$2$};
					\end{tikzpicture}$$};
				\node at (0,7.05) {$$\begin{tikzpicture}[scale=.444, baseline={([yshift=-.8ex]current bounding box.center)}]
						\draw (0,0) rectangle +(1,1);
						\draw (0,1) rectangle +(1,1);
						\draw (1,1) rectangle +(1,1);
						\draw (1,2) rectangle +(1,1);
						\draw (2,1) rectangle +(1,1);
						\draw (2,2) rectangle +(1,1);
						\draw (2,3) rectangle +(1,1);
						\node at (.5,.5) {$4$};
						\node at (.5,1.5) {$2$};
						\node at (1.5,1.5) {$2$};
						\node at (1.5,2.5) {$1$};	
						\node at (2.5,1.5) {$4$};
						\node at (2.5,2.5) {$3$};
						\node at (2.5,3.5) {$2$};
					\end{tikzpicture}$$};
				\node at (0,9) {$\SYT{0.17in}{}{
						{{2},1},
						{{3},{2}},
						{{4},4,2}}$};
				\draw[->] [draw=red,  thick] (0, 8) -- (0,7.6);
				\draw[->] [draw=blue,  thick] (0, 2.8) -- (0,2.4);
				\draw[->] [draw=blue,  thick] (-.5, 6.) -- (-.8,5.7);
				\draw[->] [draw=red,  thick] (.5, 6) -- (.8,5.7);
				\draw[->] [draw=red,  thick] (-.8, 4.5) -- (-.5,4.1);
				\draw[->] [draw=blue,  thick] (.8, 4.8) -- (.5,4.4);	
				\node at (5,1.6) {$f_4(T)=\YT{0.17in}{}{
						{{1},{2},{2}},
						{{2},{3}},
						{{4},\overline{4}}}$};
				\node at (4.2,3.4) {${ f_4(\tilde T)}=\begin{tikzpicture}[scale=.444, baseline={([yshift=-.8ex]current bounding box.center)}]	
						\draw (0,0) rectangle +(1,1);
						\draw (0,1) rectangle +(1,1);
						\draw (0,2) rectangle +(1,1);
						\draw (1,1) rectangle +(1,1);
						\draw (1,2) rectangle +(1,1);
						\draw (2,2) rectangle +(1,1);
						\draw (2,3) rectangle +(1,1);
						\node at (.5,.5) {$4$};
						\node at (.5,1.5) {$2$};
						\node at (.5,2.5) {$1$};
						\node at (1.5,1.5) {$\overline{4}$};
						\node at (1.5,2.5) {$2$};
						\node at (2.5,2.5) {$3$};
						\node at (2.5,3.5) {$2$};
					\end{tikzpicture}$};
				\node at (3.8,4.9) {$\begin{tikzpicture}[scale=.444, baseline={([yshift=-.8ex]current bounding box.center)}]
						\draw (0,0) rectangle +(1,1);
						\draw (0,1) rectangle +(1,1);
						\draw (1,0) rectangle +(1,1);
						\draw (1,1) rectangle +(1,1);
						\draw (1,2) rectangle +(1,1);
						\draw (2,1) rectangle +(1,1);
						\draw (2,2) rectangle +(1,1);
						\node at (.5,.5) {$4$};
						\node at (.5,1.5) {$2$};
						\node at (1.5,0.5) {$\overline{4}$};
						\node at (1.5,1.5) {$2$};
						\node at (1.5,2.5) {$1$};
						\node at (2.5,1.5) {$3$};
						\node at (2.5,2.5) {$2$};
					\end{tikzpicture}$};
				\node at (6.6,5.52) {$$\begin{tikzpicture}[scale=.444, baseline={([yshift=-.8ex]current bounding box.center)}]
						\draw (0,0) rectangle +(1,1);
						\draw (0,1) rectangle +(1,1);
						\draw (0,2) rectangle +(1,1);
						\draw (1,2) rectangle +(1,1);
						\draw (2,2) rectangle +(1,1);
						\draw (2,3) rectangle +(1,1);
						\draw (2,4) rectangle +(1,1);
						\node at (.5,.5) {$4$};
						\node at (.5,1.5) {$2$};
						\node at (.5,2.5) {$1$};
						\node at (1.5,2.5) {$2$};	
						\node at (2.5,2.5) {$\overline{4}$};
						\node at (2.5,3.5) {$3$};
						\node at (2.5,4.5) {$2$};
					\end{tikzpicture}$$};
				\node at (5,7.05) {$$\begin{tikzpicture}[scale=.444, baseline={([yshift=-.8ex]current bounding box.center)}]
						\draw (0,0) rectangle +(1,1);
						\draw (0,1) rectangle +(1,1);
						\draw (1,1) rectangle +(1,1);
						\draw (1,2) rectangle +(1,1);
						\draw (2,1) rectangle +(1,1);
						\draw (2,2) rectangle +(1,1);
						\draw (2,3) rectangle +(1,1);
						\node at (.5,.5) {$4$};
						\node at (.5,1.5) {$2$};
						\node at (1.5,1.5) {$2$};
						\node at (1.5,2.5) {$1$};	
						\node at (2.5,1.5) {$\overline{4}$};
						\node at (2.5,2.5) {$3$};
						\node at (2.5,3.5) {$2$};
					\end{tikzpicture}$$};
				\node at (5,9) {$\SYT{0.17in}{}{
						{{2},1},
						{{3},{2}},
						{{\overline{4}},4,2}}$};
				\node[draw=none,fill=none] at (-2.5, 7){${\color{blue}\rightarrow}\mathcal{F}_1$};
				\node[draw=none,fill=none] at (-2.5, 6.5){${\color{red}\rightarrow} \mathcal{F}_2$};
				\node[draw=none,fill=none] at (9, 7.5){${\color{blue}\rightarrow}\mathcal{F}_1$};
				\node[draw=none,fill=none] at (9, 6.5){${\color{red}\rightarrow} \mathcal{F}_2$};
				\draw[->] [draw=red,  thick] (5, 8) -- (5,7.6);
				\draw[->] [draw=blue,  thick] (5, 2.8) -- (5,2.4);
				\draw[->] [draw=blue,  thick] (4.5, 6.) -- (4.2,5.7);
				\draw[->] [draw=red,  thick] (5.5, 6) -- (5.8,5.7);
				\draw[->] [draw=red,  thick] (4.2, 4.5) -- (4.5,4.1);
				\draw[->] [draw=blue,  thick] (5.8, 4.8) -- (5.5,4.4);	
				
			\end{tikzpicture}	
		\end{center}
\caption{\label{Scocrystal}The vertical array of isomorphic cocrystals attached to $T$ and $f_4(T)$ in $ \mathcal{KN}((3,3,2),4)$ respectively. The (horizontal) array of (isomorphic)  $C_4$ symplectic crystals generated by elementary SJDT moves on each vertex of $\mathcal{KN}((3,3,2),4)$.}
\end{figure}
		
		The KN tableaux $T$ and $f_4(T)$ are contained in the type $C_4$ crystal $ \mathcal{KN}((3,3,2),4)$ with highest weight element $K(\lambda)$ and lowest weight element $K(-\lambda)$. The KN skew tableaux in a same horizontal position of the cocrystals define a type $C_4$ crystal isomorphic  to the crystal $\mathcal{KN}((3,3,2),4)$. In fact, their highest weight elements are the Littlewood-Richardson tableaux \cite{fulton1997young} of weight $\lambda$, defining the cocrystal attached to $K(\lambda)$, the Yamanouchi tableau of weight and shape $\lambda$. For instance, the type $C_4$ crystal containing $\tilde T$  and $f_4(\tilde T)$ has highest weight element the Littlewood-Richardson tableau $\mathcal{E}_1(K(\lambda))= \begin{tikzpicture}[scale=.444, baseline={([yshift=-.8ex]current bounding box.center)}]
			\draw (0,0) rectangle +(1,1);
			\draw (0,1) rectangle +(1,1);
			\draw (0,2) rectangle +(1,1);
			\draw (1,1) rectangle +(1,1);
			\draw (1,2) rectangle +(1,1);
			\draw (2,2) rectangle +(1,1);
			\draw (2,3) rectangle +(1,1);
			\node at (.5,.5) {$3$};
			\node at (.5,1.5) {$2$};
			\node at (.5,2.5) {$1$};
			\node at (1.5,1.5) {$3$};
			\node at (1.5,2.5) {$1$};
			\node at (2.5,2.5) {$2$};
			\node at (2.5,3.5) {$1$};
		\end{tikzpicture} $

\noindent and lowest weight element its reversal (in the sense of Lusztig involution \cite{santos2019symplectic}) $\mathcal{E}_1(K(-\lambda))= \begin{tikzpicture}[scale=.444, baseline={([yshift=-.8ex]current bounding box.center)}]
			\draw (0,0) rectangle +(1,1);
			\draw (0,1) rectangle +(1,1);
			\draw (0,2) rectangle +(1,1);
			\draw (1,1) rectangle +(1,1);
			\draw (1,2) rectangle +(1,1);
			\draw (2,2) rectangle +(1,1);
			\draw (2,3) rectangle +(1,1);
			\node at (.5,.5) {$\overline{1}$};
			\node at (.5,1.5) {$\overline{2}$};
			\node at (.5,2.5) {$\overline{3}$};
			\node at (1.5,1.5) {$\overline{1}$};
			\node at (1.5,2.5) {$\overline{2}$};
			\node at (2.5,2.5) {$\overline{1}$};
			\node at (2.5,3.5) {$\overline{3}$};
		\end{tikzpicture}$.
		
	\end{ex}

	\subsection{ The Sch\"utzenberger-Lusztig involution on a cocrystal}

	The next lemma will help us relate the cocrystal $\mathfrak{CB}^{\lambda'}(T)$ and the cocrystal $\mathfrak{CB}^{\lambda'}(T^{Ev})$.

Let $\#$ be the map that given a KN tableau $T$, $\pi$-rotates $T$ and replaces each entry with its  symmetric. Recall that when $T\in \mathcal{KN}(\lambda,n)$, \cite[Algorithm 59]{santos2019symplectic},
\begin{align}evac (T)=rect(T^\#)=(arect (T))^\# \label{evac}.\end{align}
	\begin{lema}
		Let $T$ be a KN tableau with two columns $T_1$, $T_2$, left to right. Let $S$ be a skew tableau, with two columns $S_1$, $S_2$, obtained from $T$ after $\pi$-rotating $T$ and replacing  each entry   by its symmetric.
		Let us play the SJDT in both $T$ and $S$, in $T$ we are sending a cell from $T_1$ to $T_2$ and in $S$ we are sending a cell from $S_2$ to $S_1$, obtaining $T'$ and $S'$, respectively. Then $S'$ is obtained from $T'$ by $\pi$-rotating $T'$ and replacing each entry by its symmetric.
	\end{lema}
	\begin{proof}
		Let $T'_1$, $T'_2$ be the columns of $T'$ and $S'_1$, $S'_2$ be the columns of $S'$.
		Let $\#$ be the map that given $T$, $\pi$-rotates $T$ and replaces each entry with its  symmetric. Then $S=T^\#$, $S_1=T_2^\#$, $S_2=T_1^\#$, and $spl(S)=spl(T)^\#$.

		Note that the entry $\alpha$ that gets slid from $T_1$ to $T_2$  is symmetric to the entry that gets slid from $S_2$ to $S_1$. So we have two cases:
		\begin{itemize}
			\item If $\alpha$ is positive: $T_1'=T_1\setminus \{\alpha\}$,  $T_2'=\Phi^{-1}(\Phi(T_2)\cup \{\alpha\})$, $S_2'=S_2\setminus \{\overline{\alpha}\}$,  $T_1'=\Phi^{-1}(\Phi(T_1)\cup \{\overline{\alpha}\})$. So, $S_1'=T_2'^\#$ and $S_2'=T_1'^\ast$. Hence $S'=T'^\#$.
			
			\item If $\alpha$ is negative:$T_1'=\Phi^{-1}(\Phi(T_1)\setminus \{\alpha\})$, $T_2'=T_2\cup \{\alpha\}$,  $S_2'=\Phi^{-1}(\Phi(S_2)\setminus \{\overline{\alpha}\})$, $S_1'=S_1\cup \{\overline{\alpha}\}$. So, $S_1'=T_2'^\#$ and $S_2'=T_1'^\#$. Hence $S'=T'^\#$.
		\end{itemize}
	\end{proof}
	
	\begin{prop}
		Let $T\in \mathcal{KN}(\lambda,n)$. If we  $\pi$-rotate $\mathfrak{CB}^{\lambda'}(T)$, replace each entry by its symmetric, and for every $i \in [n]$, recolour all arrows of colour $i$ with  the colour $r-i$ and reverse them, we obtain  $\mathfrak{CB}^{\lambda'}(T^{Ev})$.
	\end{prop}
	\begin{proof}
		Note that, after doing all the steps in the statement, all the arrows stay intact. Recall the map $^\#$ from the previous proof. Schützenberger evacuation, $evac\, T=rect(T^\#)$,  implies that $T^\#$ is the highest weight element of $\mathfrak{CB}^{\lambda'}(T^{Ev})$.
		
		Using the previous lemma, we have that, for all $i\in [n]$, $\mathcal{E}_i(T)=\mathcal{F}_{r-i}(T^\#)$. Hence we can prove, recursively, that for every skew tableau $S\in \mathfrak{CB}^{\lambda'}(T)$, we have $S^\#\in \mathfrak{CB}^{\lambda'}(T^{Ev})$ and the position of $S^\#$ in $\mathfrak{CB}^{\lambda'}(T^{Ev})$ can be found if we $\pi$-rotate the whole cocrystal.
	\end{proof}
	\begin{obs}
		The last proposition implies that all first of the skew tableaux in $\mathfrak{CB}^{\lambda'}(T)$ are symmetric to all last of the skew tableaux in $\mathfrak{CB}^{\lambda'}(T^{Ev})$. Hence we have that $\text{wt}(K_+(T))=-\text{wt}(K_-(T^{Ev}))$, and the key tableaux are uniquely described by their weight, we can say that $K_+(T)=K_-(T^{Ev})^{Ev}$. Analogously, we have that $K_-(T)=K_+(T^{Ev})^{Ev}$. This is also proven in \cite[Proposition $64$]{santos2019symplectic}.
	\end{obs}
	
	\section{The right and left key of a KN tableau - \emph{Jeu de taquin} approach and cocrystal}\label{rightkeyjdt}

\subsection{Right and left key maps and the keys of a cocrystal}
	Given $T\in \mathcal{KN}{(\lambda,n)}$, to determine the content of a given column of its right key, $K_+(T)$, we need to compute the right column of a last column, with the same length, of a skew tableau in the cocrystal $\mathfrak{CB}^{\lambda'}(T)$. Analogously, to determine the content of a given column of its left key, $K_-(T)$, we need to compute the left column of a first column, with the same length, of a skew tableau in the cocrystal $\mathfrak{CB}^{\lambda'}(T)$ \cite{santos2019symplectic, santos2020SLCsymplectic}.  Proposition \ref{sahpesjdt} ensures that such computation of the right, or left, key of a KN tableau via SJDT is well-defined. The following is a reformulation of \cite[Theorem 43]{santos2020SLCsymplectic} using the concept of cocrystal.
	
	\begin{teo} \cite{santos2019symplectic},\cite[Theorem 43]{santos2020SLCsymplectic}
		Given a KN tableau $T$, we can replace each column with a column of the same size taken from the right columns of the last columns of all keys in the cocrystal of $T$. This tableau is the right key tableau of $T$, $K_+(T)$. If we replace each column of $T$ with a column of the same size taken from the left columns of all keys of the cocrystal of $T$, we obtain the left key of $T$, $K_-(T)$.
	\end{teo}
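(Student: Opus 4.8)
The plan is to obtain both assertions from Proposition \ref{sahpesjdt} together with the cocrystal $\mathfrak{CB}^{\lambda'}(T)$ constructed above. Recall that the right key map via SJDT of Section \ref{rightkeyjdt} computes the column of $K_+(T)$ sitting in the position of a column of length $m$ by applying elementary SJDT slides that successively swap the length of that column with the length of the column to its left, until it becomes the last column, and then recording the right column of that last column; the left key map of Section \ref{leftkeyjdt} proceeds dually, bringing a column of length $m$ into first position and recording its left column. Since $T$ is straight shaped it equals its own rectification, hence is minimal, and by the discussion preceding this section an SJDT length-swap applied to a minimal skew tableau never triggers a gain or loss of boxes in an elementary step $B.2$; therefore every skew tableau produced along the way is Knuth equivalent to $T$, has the same number of cells, and has column lengths a permutation of $\lambda'$. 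By Proposition \ref{corheokwon} and the construction of $\mathfrak{CB}^{\lambda'}(T)$, such a skew tableau is exactly a key skew tableau of $\mathfrak{CB}^{\lambda'}(T)$ in the sense of Definition \ref{Defcokey}. So it suffices to show that the last column of a key skew tableau of $\mathfrak{CB}^{\lambda'}(T)$ is determined by its length alone, and dually for the first column.

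This is precisely what Proposition \ref{sahpesjdt} supplies: every key skew tableau of $\mathfrak{CB}^{\lambda'}(T)$ rectifies to $T$ (the lowest-weight vertex of the cocrystal is $T$, and SJDT preserves the Knuth class \cite{lecouvey2002schensted}) and has column lengths a permutation of the column lengths of $T$, so two of them whose last columns have the same length $m$ have literally the same last column $C_m^{\mathrm{last}}$, and likewise the same first column $C_m^{\mathrm{first}}$. As $C_m^{\mathrm{last}}$ and $C_m^{\mathrm{first}}$ are admissible, their right and left columns $rC_m^{\mathrm{last}}$ and $\ell C_m^{\mathrm{first}}$ are defined, and they depend only on $m$. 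Reassembling the columns $rC_{\lambda'_j}^{\mathrm{last}}$ (resp.\ $\ell C_{\lambda'_j}^{\mathrm{first}}$) into an array of shape $\lambda$ in the positions of the original columns of $T$ therefore yields well-defined tableaux; these are what we call $K_+(T)$ and $K_-(T)$.

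It remains to identify these tableaux with the SJDT right and left key maps of \cite{santos2019symplectic, santos2020SLCsymplectic}, which also pins down that they are genuine key tableaux of shape $\lambda$ detecting the Demazure atom and the opposite Demazure atom of $T$. Here the crystal isomorphism $\mathfrak{CB}^{\lambda'}(T)\cong\mathcal{SSYT}(\lambda', r)$ of \eqref{isomorph} does the bookkeeping: under it the key skew tableaux of $\mathfrak{CB}^{\lambda'}(T)$ correspond to the keys $\mathfrak{S}_r K(\lambda')$ of $\mathcal{SSYT}(\lambda', r)$, and passing from one key skew tableau to another is realized by sequences of SJDT slides on consecutive columns, i.e.\ by exactly the length-swapping used in Sections \ref{rightkeyjdt} and \ref{leftkeyjdt}. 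Consequently, column by column, the array built above coincides with the output of the SJDT right (resp.\ left) key algorithm, which by \cite{santos2019symplectic} is the unique key tableau in the Demazure atom crystal (resp.\ opposite Demazure atom crystal) containing $T$. I expect the main obstacle to be precisely the verification that the length-swapping procedure stays inside the cocrystal, i.e.\ never loses or gains boxes in an elementary step $B.2$: this is what forces one to keep to minimal skew tableaux --- equivalently, to vertices of $\mathfrak{CB}^{\lambda'}(T)$ with column-length multiset $\lambda'$ --- and it is exactly the hypothesis under which Proposition \ref{sahpesjdt} applies verbatim.
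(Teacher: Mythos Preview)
Your proposal is correct and follows exactly the route the paper indicates: the paper does not give a self-contained proof here but cites \cite{santos2019symplectic, santos2020SLCsymplectic} and explicitly states that ``Proposition \ref{sahpesjdt} ensures that such computation of the right, or left, key of a KN tableau via SJDT is well-defined,'' which is precisely the mechanism you invoke. Your additional care about minimality (no loss/gain of boxes in step $B.2$) mirrors the discussion at the end of Section \ref{tabjdt}, and your use of the cocrystal to identify the length-swapping procedure with the SJDT key algorithms of Sections \ref{rightkeyjdt} and \ref{leftkeyjdt} is exactly the refinement the paper sets up in Section \ref{SecDemCrys}.
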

The cocrystal keys in \eqref{bruhatcocrystal} give the right and left keys of $\mathcal{S}(T)$, $K^+(S(T))=\YT{0.17in}{}{
						{{2},{2},{2}},
						{{3},{3}},
						{{\overline 4},\overline{4}}}$ and $K_ -(S(T))=\YT{0.17in}{}{
						{{1},{1},{2}},
						{{2},{2}},
						{{ 4},{4}}}$.
	
	Hence, the cocrystal $\mathfrak{CB}^{\lambda'}(T)$ contains all the needed information to compute right and left keys. This is explored again in the next section.

	Given $T\in \mathcal{KN}(\lambda, n)$, we apply the SJDT on consecutive columns to compute the keys of $\mathfrak{CB}^{\lambda'}(T)$, and, henceforth, all skew tableaux in the conditions of Proposition \ref{sahpesjdt}.
	
	\begin{ex}\label{jeudetaquin3}
		
		The tableau $T=\YT{0.17in}{}{
			{{1},{3},{\overline{1}}},
			{{3},{\overline{3}}},
			{{\overline{3}}}}$ gives rise to the cocrystal $\mathfrak{CB}^{\lambda'}(T)$, with $\lambda=(3,2,1)$. The following are the vertices of $\mathfrak{CB}^{\lambda'}(T)$ consisting of the six KN skew tableaux with the same number of columns of each length as $T$, each one corresponding to a permutation of its column lengths.
		
		\begin{tikzpicture}
			\node at (-0.5,0) {$T=\YT{0.17in}{}{
					{{1},{3},{\overline{1}}},
					{{3},{\overline{3}}},
					{{\overline{3}}}}$};	
			\node at (3.5,1) {$\begin{tikzpicture}[scale=.4, baseline={([yshift=-.8ex]current bounding box.center)}]
					\draw (0,0) rectangle +(1,1);
					\draw (0,1) rectangle +(1,1);
					\draw (0,2) rectangle +(1,1);
					\draw (1,2) rectangle +(1,1);
					\draw (2,2) rectangle +(1,1);
					\draw (2,3) rectangle +(1,1);
					\node at (.5,.5) {$\overline{3}$};
					\node at (.5,1.5) {$3$};
					\node at (.5,2.5) {$1$};
					\node at (1.5,2.5) {$\overline{3}$};
					\node at (2.5,2.5) {$\overline{1}$};
					\node at (2.5,3.5) {$3$};
				\end{tikzpicture}$};
			\node at (7,1) {$\begin{tikzpicture}[scale=.4, baseline={([yshift=-.8ex]current bounding box.center)}]
					\draw (0,0) rectangle +(1,1);
					\draw (1,0) rectangle +(1,1);
					\draw (1,1) rectangle +(1,1);
					\draw (1,2) rectangle +(1,1);
					\draw (2,1) rectangle +(1,1);
					\draw (2,2) rectangle +(1,1);
					\node at (.5,.5) {$2$};
					\node at (1.5,.5) {$\overline{2}$};
					\node at (1.5,1.5) {$\overline{3}$};
					\node at (1.5,2.5) {$1$};
					\node at (2.5,1.5) {$\overline{1}$};
					\node at (2.5,2.5) {$3$};
				\end{tikzpicture}$};
			\node at (3.5,-1) {$\begin{tikzpicture}[scale=.4, baseline={([yshift=-.8ex]current bounding box.center)}]
					\draw (0,0) rectangle +(1,1);
					\draw (0,1) rectangle +(1,1);
					\draw (1,0) rectangle +(1,1);
					\draw (1,1) rectangle +(1,1);
					\draw (1,2) rectangle +(1,1);
					\draw (2,2) rectangle +(1,1);
					\node at (.5,.5) {$2$};
					\node at (.5,1.5) {$1$};
					\node at (1.5,.5) {$\overline{2}$};
					\node at (1.5,1.5) {$\overline{3}$};
					\node at (1.5,2.5) {$3$};
					\node at (2.5,2.5) {$\overline{1}$};
				\end{tikzpicture}$};
			\node at (7,-1) {$$\begin{tikzpicture}[scale=.4, baseline={([yshift=-.8ex]current bounding box.center)}]
					\draw (0,0) rectangle +(1,1);
					\draw (0,1) rectangle +(1,1);
					\draw (1,1) rectangle +(1,1);
					\draw (2,1) rectangle +(1,1);
					\draw (2,2) rectangle +(1,1);
					\draw (2,3) rectangle +(1,1);
					\node at (.5,.5) {$2$};
					\node at (.5,1.5) {$1$};
					\node at (1.5,1.5) {$\overline{2}$};
					\node at (2.5,1.5) {$\overline{1}$};
					\node at (2.5,2.5) {$\overline{3}$};
					\node at (2.5,3.5) {$3$};
				\end{tikzpicture}$$};
			\node at (10.5,0) {$\SYT{0.17 in}{}{{{3}},{{\overline{3}},{1}},{{\overline{1}},{\overline{2}},{2}}}$};
			\draw [->] (1,.5) -- (2.5,1);
			\draw [->] (1,-.5) -- (2.5,-1);
			\draw [->] (4.5,1) -- (6,1);
			\draw [->] (4.5,-1) -- (6,-1);
			\draw [->] (8,1) -- (9.5,.5);
			\draw [->] (8,-1) -- (9.5,-.5);	
		\end{tikzpicture}
		
		\noindent The right key tableau of $T$ has columns $r\!\YT{0.17in}{}{
			{{3}},
			{{\overline{3}}},
			{{\overline{1}}}}$, $r\!\YT{0.17in}{}{
			{{3}},
			{{\overline{1}}}}$ and $r\!\YT{0.17in}{}{
			{{\overline{1}}}}$.
		Hence $K_+(T)=\YT{0.17in}{}{
			{{3},{3},{\overline{1}}},
			{{\overline{2}},{\overline{1}}},
			{{\overline{1}}}}$.
The left key tableau of  $T$
 has columns $\ell \!\YT{0.17in}{}{
			{{1}},
			{{3}},
			{{\overline{3}}}}$, $\ell \!\YT{0.17in}{}{
			{{1}},
			{{2}}}$ and $\ell \!\YT{0.17in}{}{
			{{2}}}$.
		Hence $K_-(T)=\YT{0.17in}{}{
			{{1},{1},{2}},
			{{2},{2}},
			{{\overline{3}}}}$.

To obtain the {  weak} Bruhat  graph of the cocrystal keys in $\mathfrak{CB}^{\lambda'}(evac(T))$,
we $\pi$-rotate the whole  Bruhat graph of the cocrystal keys in $\mathfrak{CB}^{\lambda'}(T)$, $\pi$-rotate each co-crystal key and replace each entry by its symmetric. That is, we compute $\xi(\mathfrak{CB}^{\lambda'}(T))$

\begin{tikzpicture}
			\node at (-1,0) {$evac (T)=\YT{0.17in}{}{
					{{1},{2},{\overline{2}}},
					{{3},{\overline{1}}},
					{{\overline{3}}}}$};	
			\node at (3.5,1) {$\begin{tikzpicture}[scale=.4, baseline={([yshift=-.8ex]current bounding box.center)}]
					\draw (0,0) rectangle +(1,1);
					\draw (0,1) rectangle +(1,1);
					\draw (0,2) rectangle +(1,1);
					\draw (1,2) rectangle +(1,1);
					\draw (2,2) rectangle +(1,1);
					\draw (2,3) rectangle +(1,1);
					\node at (.5,.5) {${\overline 3}$};
					\node at (.5,1.5) {$3$};
					\node at (.5,2.5) {$1$};
					\node at (1.5,2.5) {$2$};
					\node at (2.5,2.5) {$\overline{1}$};
					\node at (2.5,3.5) {$\overline 2$};
				\end{tikzpicture}$};
			\node at (7,1) {$\begin{tikzpicture}[scale=.4, baseline={([yshift=-.8ex]current bounding box.center)}]
					\draw (0,0) rectangle +(1,1);
					\draw (1,0) rectangle +(1,1);
					\draw (1,1) rectangle +(1,1);
					\draw (1,2) rectangle +(1,1);
					\draw (2,1) rectangle +(1,1);
					\draw (2,2) rectangle +(1,1);
					\node at (.5,.5) {$1$};
					\node at (1.5,.5) {$\overline{3}$};
					\node at (1.5,1.5) {${3}$};
					\node at (1.5,2.5) {$2$};
					\node at (2.5,1.5) {$\overline{1}$};
					\node at (2.5,2.5) {$\overline 2$};
				\end{tikzpicture}$};
			\node at (3.5,-1) {$\begin{tikzpicture}[scale=.4, baseline={([yshift=-.8ex]current bounding box.center)}]
					\draw (0,0) rectangle +(1,1);
					\draw (0,1) rectangle +(1,1);
					\draw (1,0) rectangle +(1,1);
					\draw (1,1) rectangle +(1,1);
					\draw (1,2) rectangle +(1,1);
					\draw (2,2) rectangle +(1,1);
					\node at (.5,.5) {$\overline{3}$};
					\node at (.5,1.5) {$1$};
					\node at (1.5,.5) {$\overline{1}$};
					\node at (1.5,1.5) {${3}$};
					\node at (1.5,2.5) {$2$};
					\node at (2.5,2.5) {$\overline{2}$};
				\end{tikzpicture}$};
			\node at (7,-1) {$$\begin{tikzpicture}[scale=.4, baseline={([yshift=-.8ex]current bounding box.center)}]
					\draw (0,0) rectangle +(1,1);
					\draw (0,1) rectangle +(1,1);
					\draw (1,1) rectangle +(1,1);
					\draw (2,1) rectangle +(1,1);
					\draw (2,2) rectangle +(1,1);
					\draw (2,3) rectangle +(1,1);
					\node at (.5,.5) {$\overline{3}$};
					\node at (.5,1.5) {$1$};
					\node at (1.5,1.5) {${3}$};
					\node at (2.5,1.5) {$\overline{1}$};
					\node at (2.5,2.5) {$\overline{3}$};
					\node at (2.5,3.5) {$3$};
				\end{tikzpicture}$$};
			\node at (10.5,0) {$\SYT{0.17 in}{}{{3},{\overline 3,{ 3}},{\overline  1,\overline{3}, 1}}$};
			\draw [->] (1,.5) -- (2.5,1);
			\draw [->] (1,-.5) -- (2.5,-1);
			\draw [->] (4.5,1) -- (6,1);
			\draw [->] (4.5,-1) -- (6,-1);
			\draw [->] (8,1) -- (9.5,.5);
			\draw [->] (8,-1) -- (9.5,-.5);	
		\end{tikzpicture}

\noindent The right key tableau of $evac T$ has columns $r\!\YT{0.17in}{}{
			{{3}},
			{{\overline{3}}},
			{{\overline{1}}}}$, $r\!\YT{0.17in}{}{
			{{\overline 2}},
			{{\overline{1}}}}$ and $r\!\YT{0.17in}{}{
			{{\overline{2}}}}$.
		Hence $K_+(evac T)=\YT{0.17in}{}{
			{{3},{\overline 2},{\overline{2}}},
			{{\overline{2}},{\overline{1}}},
			{{{\overline 1}}}}$.
The left key tableau of the tableau $evac T=\YT{0.17in}{}{
			{{1},{3},{\overline{1}}},
			{{3},{\overline{3}}},
			{{\overline{3}}}}$ has columns $\ell \!\YT{0.17in}{}{
			{{1}},
			{{3}},
			{{\overline{3}}}}$, $\ell \!\YT{0.17in}{}{
			{{1}},
			{{\overline 3}}}$ and $\ell \!\YT{0.17in}{}{
			{{1}}}$.
		Hence $K_-(evac T)=\YT{0.17in}{}{
			{{1},{1},{1}},
			{{2},{\overline{3}}},
			{{\overline{3}}}}$.

\end{ex}

	Let $T=C_1C_2\cdots C_k$ be a straight KN tableau with columns $C_1, C_2, \dots,  C_k$. Note that, to compute which entries appear in the $i$-th column of $K_+(T)$ we do not need to look to the first $i-1$ columns of $T$. We only need the last column of a skew tableau obtained by applying the SJDT to the columns $C_i\cdots C_k$ of $T$, so that the last column has the length of $C_i$, because, by Proposition \ref{sahpesjdt}, all last columns of skew tableaux associated to $T$ with the same length
	are equal. Let $K_+^1(T)$ be the map that given a tableau returns the first column of $K_+(T)$.
	This is noticeable in Example \ref{jeudetaquin3} where $K_+(T)=K^1_+(C1C2C3)K^1_+(C2C3)K^1_+(C3)$. In general, $K_+(T)=K_+^1(C_1\cdots C_k)K_+^1(C_2\cdots C_k)\cdots K_+^1(C_k)$.
	Based on this observation and Proposition \ref{sahpesjdt}, next algorithm refines our way to compute $K_1^+(T)$ using SJDT:
	\begin{alg}\label{rightkjdtq}
		Let $T$ be a straight KN tableau:
		\begin{enumerate}
			\item Let  $i=2$.
			\item  If $T$ has exactly one column, return the right column of $T$. Otherwise, let $T_i:=T_2$ be the tableau formed by the first two columns of $T$.
			\item  If the length of the two columns of $T_i$ is the same, put $T_i':=T_i$. Else, play the SJDT on $T_i$ until both column lengths are swapped, obtaining $T_i'$.
			\item  If $T$ has more than $i$ columns, redefine $i:=i+1$, and define $T_i$ to be the two-columned tableau formed with the rightmost column of $T'_{i-1}$ and the $i$-th column of $T$, and go back to $2.$ Else, return the right column of the rightmost column of $T_i'$.\end{enumerate}
	\end{alg}
	
	This algorithm is illustrated on the top path of \eqref{jeudetaquin3}.

	\begin{cor}
		If $T$ is a rectangular tableau, $K_+(T)=rC_k  rC_k\cdots rC_k$ ($k$ times).
	\end{cor}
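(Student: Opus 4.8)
The plan is to read the result off the column-by-column factorization of $K_+$ recorded just before Algorithm \ref{rightkjdtq}, together with Proposition \ref{sahpesjdt}. Write $T = C_1 C_2 \cdots C_k$; since $T$ is rectangular, the columns $C_1,\dots,C_k$ all have a common length, say $\ell$. First I would invoke the identity $K_+(T) = K_+^1(C_1\cdots C_k)\, K_+^1(C_2\cdots C_k)\cdots K_+^1(C_k)$, which reduces the statement to showing $K_+^1(C_i\cdots C_k) = r C_k$ for each $i = 1,\dots,k$.

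Next I would fix $i$ and set $U := C_i C_{i+1}\cdots C_k$, which is again a straight (rectangular) KN tableau, so that its first column $C_i$ and its last column $C_k$ both have length $\ell$. By the theorem recalled above and the observation preceding Algorithm \ref{rightkjdtq}, $K_+^1(U)$ equals the right column of the last column of a skew tableau associated to $U$ whose last column has the length of the first column of $U$, namely $\ell$; and by Proposition \ref{sahpesjdt} this column does not depend on the choice of such a skew tableau. Since the straight tableau $U$ is itself such a skew tableau — it is the lowest weight vertex of its own cocrystal, that is, it corresponds to the trivial rearrangement of its column-length sequence, and its genuine last column $C_k$ already has length $\ell$ — one may take it as the representative and conclude $K_+^1(U) = r C_k$. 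This column is well-defined because $C_k$, being a column of a KN tableau, is admissible. Assembling the $k$ factors then gives $K_+(T) = rC_k\, rC_k\cdots rC_k$ ($k$ times). As an alternative route, one may simply trace Algorithm \ref{rightkjdtq} on $T$: at every pass the two columns of $T_i$ have the same length $\ell$, so step $3$ sets $T_i':=T_i$ and no SJDT slide is ever performed, whence the algorithm returns the right column of the last column $C_k$.

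I do not expect a genuine obstacle: the corollary is essentially immediate once the cocrystal machinery of Proposition \ref{sahpesjdt} is available. The single point that needs a moment's care is the claim, used above, that for a rectangular tableau the representative skew tableau in Proposition \ref{sahpesjdt} with last column of the prescribed length $\ell$ may be taken to be the tableau itself; this is exactly the instance of that proposition corresponding to the identity permutation of the column lengths, and it is precisely what makes every SJDT slide in Algorithm \ref{rightkjdtq} vacuous.
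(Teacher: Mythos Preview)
Your proposal is correct and matches the paper's intent: the corollary is stated without proof, as an immediate consequence of the factorization $K_+(T)=K_+^1(C_1\cdots C_k)\cdots K_+^1(C_k)$ and Algorithm \ref{rightkjdtq}, and your argument (especially the alternative route tracing the algorithm, where step $3$ is always vacuous since the two columns of $T_i$ share the same length) is exactly this reasoning made explicit.
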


	Let $T=C_1C_2\cdots C_k$ be a KN tableau with columns $C_1, C_2, \dots,  C_k$. We note that given a tableau $T$, to compute which entries appear in the $i$-th column of $K_-(T)$ we only need to look to the first $i$ columns of $T$. We need the first column of a skew tableau obtained by applying the SJDT to the columns $C_1\cdots C_i$ of $T$, so that the first column has the length of $C_i$. Let $K_-^1(T)$ be the map that given a tableau returns the last column of $K_-(T)$.
	
	In Example \ref{jeudetaquin3} we have $K_-(T)=K^1_-(C1)K^1_-(C1C2)K^1_-(C1C2C3)$.
	In general, $K_-(T)=K_-^1(C_1)\cdots K_-^1(C_1\cdots C_{k-1})K_-^1(C_1\cdots C_k)$.
	Next we present how we compute $K_-^1(T)$ using SJDT:
	\begin{alg}\label{leftkjdtq}
		Let $k$ be the number of columns of $T$ and $i=k-1$.
		\begin{enumerate}
			\item If $T$ has exactly one column, return the left column of $T$. Otherwise, let $T_i:=T_{k-1}$ be the tableau formed by the last two columns of $T$.
			\item If the length of the two columns of $T_i$ is the same, put $T_i':=T_i$. Else, play the SJDT on $T_i$ until both column lengths are swapped, obtaining $T_i'$.
			\item If $i\neq 1$, redefine $i:=i-1$, and define $T_i$ as the two-columned tableau formed with the leftmost column of $T'_{i+1}$ and the $i$-th column of $T$, and go back to $(1)$. Else, return the left column of the leftmost column of $T_i'$.
		\end{enumerate}
	\end{alg}
	
	This algorithm is exemplified on the bottom path of Example \ref{jeudetaquin3}.

	\begin{cor}
		If $T$ is a rectangular tableau, $K_-(T)=\ell C_1  \ell C_1\cdots \ell C_1$ ($k$ times).
	\end{cor}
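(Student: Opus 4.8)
The plan is to read off the statement directly from the definition of the left key map and Proposition~\ref{sahpesjdt}, and then confirm it by running Algorithm~\ref{leftkjdtq}. First I would record the elementary fact that a rectangular $T = C_1C_2\cdots C_k$ has all of its columns of one common length $\ell$. Consequently, every KN skew tableau that enters Definition~\ref{leftkeymap} for $T$ — the skew tableaux with the same multiset of column lengths as $T$, one for each permutation of its column-length sequence — has all columns of length $\ell$; in particular each of them has a first column of length $\ell$. By the last assertion of Proposition~\ref{sahpesjdt} (``the last and first columns only depend on their lengths''), all of these skew tableaux therefore have the \emph{same} first column, and since $T$ itself is one of them (its column-length sequence is the trivial permutation of that of $T$) that common first column is $C_1$. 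Definition~\ref{leftkeymap} now instructs us to replace each column of $T$ — every one of length $\ell$ — by the length-$\ell$ left column drawn from the first column of these skew tableaux, i.e. by $\ell C_1$. Hence $K_-(T) = \ell C_1\,\ell C_1\cdots \ell C_1$ ($k$ copies), as claimed. (This is the exact dual of the corollary for $K_+$ in Section~\ref{rightkeyjdt}, which follows symmetrically: all skew tableaux rectifying to $T$ share the same last column, namely $C_k$, so the column used is always $rC_k$.)

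As a cross-check I would also run Algorithm~\ref{leftkjdtq}. Because any two consecutive columns of the straight rectangular $T$ already have equal length $\ell$, step (2) of the algorithm always takes its trivial branch $T_i' := T_i$ and no symplectic \emph{jeu de taquin} slide is ever performed; equivalently, the SJDT on two equal-length columns of a straight shape is the identity, there being no inner corner and hence nothing to slide. Tracking the ``leftmost column'' through the recursion, each intermediate two-column tableau is straight with both columns of length $\ell$, and the recursion terminates at $i=1$ with leftmost column $C_1$, returning its left column $\ell C_1$. The same argument applied to each left-prefix gives $K_-^1(C_1\cdots C_j) = \ell C_1$ for every $j$, so the factorisation $K_-(T) = K_-^1(C_1)K_-^1(C_1C_2)\cdots K_-^1(C_1\cdots C_k)$ recalled just above the corollary again yields $K_-(T) = \ell C_1\cdots \ell C_1$.

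I do not expect a real obstacle here. The only points needing a line of justification are: (i) that the family of skew tableaux appearing in Definition~\ref{leftkeymap} genuinely collapses in the rectangular case — there is nothing to permute in the column-length sequence — so Proposition~\ref{sahpesjdt} is being applied to tableaux all sharing the first column $C_1$; and (ii) on the algorithmic side, that the SJDT on two equal-length columns of a straight two-column tableau does nothing, so the recursion of Algorithm~\ref{leftkjdtq} carries $C_1$ unchanged down to the final step. Both are immediate once spelled out, and it would also be worth remarking in passing that $\ell C_1\cdots \ell C_1$ is indeed a key tableau (equal columns, so trivially nested, and $\ell C_1$ contains no symmetric pair of letters), consistent with $K_-(T)$ being a key tableau of the shape of $T$.
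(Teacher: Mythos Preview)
Your proposal is correct. The paper states this corollary without proof, as an immediate consequence of Algorithm~\ref{leftkjdtq} (and, equivalently, of Definition~\ref{leftkeymap} together with Proposition~\ref{sahpesjdt}); both of your arguments spell out exactly that implicit reasoning, so you are taking essentially the same approach as the paper.
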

	%

	Next, we present a way of computing $K_+^1(T)$ that does not require the SJDT. Willis has done this when $T$ is a SSYT \cite{Willis2011ADW}. It is a simplified version of the algorithm presented here.
	%
	
	\section{Direct way: right and left}\label{sec:directway}
\subsection{Right key}\label{rightkeydw}
	Let $T = C_1C_2$ be a straight KN two column tableau and $spl(T)=\ell C_1rC_1$ $\ell C_2rC_2$ a straight semistandard tableau. In particular, $rC_1\ell C_2$ is a semistandard tableau. The \emph{matching between $rC_1$ and $\ell C_2$} is defined as follows:
	
	$\bullet$  Let $\beta_1<\dots<\beta_{m'}$ be the elements of $\ell C_2$. Let $i$ go from $m'$ to $1$, match $\beta _i$ with the biggest, not yet matched, element of $rC_1$ smaller or equal than $\beta_i$.

	\begin{teo}[The direct way algorithm for the right key] \label{mainthm}
		Let $T$ be a straight KN tableau with columns $C_1, C_2, \dots, C_k$, and consider its split form $spl(T)$. For every right column $rC_2, \dots, rC_k$, add empty cells to the bottom in order to have all columns with the same length as $rC_1$. We will fill all of these empty cells recursively, proceeding from left to right. The extra numbers that are written in the column $rC_2$ are found in the following way:
		
		$\bullet$ match $rC_1$ and $\ell C_2$.
		
		$\bullet$ Let $\alpha_1<\dots<\alpha_m$ be the elements of $r C_1$. Let $i$ go from $1$ to $m$. If $\alpha_i$ is not matched with any entry of $\ell C_2$, write in the new empty cells of $rC_2$ the smallest element bigger or equal than $\alpha_i$ such that neither it nor its symmetric exist in $rC_2$ or in its new cells. Let $C_2'$ be the column defined by $rC_2$ together with the filled extra cells, after ordering.
		
		To compute the filling of the extra cells of $rC_3$, we do the same thing, with $C_2'$ and $C_3$. If we do this for all pairs of consecutive columns, we eventually obtain a column $C_k'$, consisting of $rC_k$ together with extra cells, with the same length as $rC_1$. We claim that $C_k'=K_+^1(T)$.
	\end{teo}	
	\begin{ex}
		Let $T=C_1C_2C_3=\YT{0.17in}{}{
			{{1},{3},{\overline{1}}},
			{{3},{\overline{3}}},
			{{\overline{3}}}}$, with split form 	$spl(T)=\YT{0.17in}{}{
			{{1},{1},{2},{3},{\overline{1}},{\overline{1}}},
			{{2},{3},{\overline{3}},{\overline{2}}},
			{{\overline{3}},{\overline{2}}}}$.
		We match $rC_1$ and $\ell C_2$, as indicated by the letters $a$ and $b$:
		$\YT{0.17in}{}{
			{{1},{1^a},{2^a},{3},{\overline{1}},{\overline{1}}},
			{{2},{3^b},{\overline{3}^b},{\overline{2}}},
			{{\overline{3}},{\overline{2}}}}$. Hence $\overline{2}$ creates a $\overline{1}$ in $rC_2$, completing the right column  $rC_2$:
		$\YT{0.17in}{}{
			{{1},{1},{2},{3},{\overline{1}^a},{\overline{1}}},
			{{2},{3},{\overline{3}},{\overline{2}}},
			{{\overline{3}},{\overline{2}},,{{\color{blue}\overline{1}^a}}}}$.
		Now we match $C_2'$ and $\ell C_3$, which is already done, and see what new cells $3$ and $\overline{2}$ we create in $rC_3$, obtaining
		$\YT{0.17in}{}{
			{{1},{1},{2},{3},{\overline{1}},{\overline{1}}},
			{{2},{3},{\overline{3}},{\overline{2}},,{{\color{blue}3}}},
			{{\overline{3}},{\overline{2}},,{{\color{blue}\overline{1}}},,{{\color{blue}\overline{2}}}}}$. Hence $K_+^1(T)=\YT{0.17in}{}{
			{{3}},
			{{\overline{2}}},{{\overline{1}}}}$ is obtained from $C'_3$ after reordering its entries.
	\end{ex}
	
	\subsection{The proof of Theorem \ref{mainthm}}
	It is enough to prove that by the end of this algorithm, the entries in $C_k'$ are the entries on the right column of the rightmost column of $T'_k$ from Algorithm \ref{rightkjdtq}.
	In fact, it is enough to do this for $k=2$. For bigger $k$ note that the entries that are "slid" into $C_k$ come from $rC_{k-1}$, so, to go to the next step on the SJDT algorithm we only need to know the previous right column, which is exactly what we claim to compute this way. The next lemma determines which number is added to $rC_2$ given that we know $\alpha$, the entry that is horizontally slid:

	\begin{lema}
		Suppose that $T=C_1C_2$ is a non-rectangular two-column tableau (if the tableau is rectangular then we have nothing to do). Play the SJDT on this tableau which ends up moving one cell from the first column to the second (some entries may change their values). Then,
		
		$\bullet$ Immediately before the horizontal slide of the SJDT, the entry $\alpha$, on the left of the puncture,  is an unmatched cell of $rC_1$.
		
		$\bullet$ Call $C_1'$ and $C_2'$ to both columns after the horizontal slide on $T$. The new entry in $rC_2'$, compared to $rC_2$, is the smallest element bigger or equal than $\alpha$ such that neither it nor its symmetric exist in $rC_2$.
	\end{lema}
	\begin{ex}
		Let $T=\YT{0.17in}{}{
			{{2},{3}},
			{{3},{4}},
			{{5},{\overline{5}}},
			{{\overline{5}}},
			{{\overline{2}}}}$. After splitting, and just before the first horizontal slide, we have  $T=\YT{0.17in}{}{
			{{1},{2},{3},{3}},
			{{3},{3},{4},{4}},
			{{4},{5},{\overline{5}},{\overline{5}}},
			{{\overline{5}},{\overline{4}},{\ast}, {\ast}},
			{{\overline{2}},{\overline{1}}}}$.
		The new entry in $rC_2$ is $\overline{2}$, as predicted by the lemma:
		$\YT{0.17in}{}{
			{{1},{2},{2},{3}},
			{{3},{3},{3},{4}},
			{{4},{4},{\overline{5}},{\overline{5}}},
			{{\ast},{\ast},{\overline{4}},{\overline{2}}},
			{{\overline{2}},{\overline{1}}}}$.
	\end{ex}
	\begin{proof}
		\textbf{Case 1:} $\alpha$ is barred. Then $C_2'=C_2\cup\{\alpha\}$. If $\overline{\alpha}$  does  not exist neither in $C_2$ nor in $\Phi(C_2)$, then $\alpha$ will exist in both $C_2'$ and $\Phi(C_2')$.
		If $\overline{\alpha}$ does exist in $C_2$, and consequently in $\Phi(C_2)$ (but $\alpha \notin \Phi(C_2)$), then $\alpha$ and $\overline{\alpha}$ will both exist in $C_2'$. Hence, in the construction of the barred part of $\Phi(C_2')$, compared to $\Phi(C_2)$, there will be a new barred number which is the smallest number bigger (or equal, but the equality can not happen) than $\alpha$ such that neither it nor its symmetric exist in the barred part of $\Phi(C_2)$ or the unbarred part of $C_2$ (i.e., $rC_2$). If $\alpha$ existed in $\Phi(C_2)$, then $\overline{\alpha}$ existed in $\Phi(C_2)$. That means that whatever number got sent to $\alpha$ in the construction of $\Phi(C_2)$ will be sent to the next available number, meaning that in $rC_2$ will appear a new number, the smallest number bigger (or equal, but the equality can not happen because $\alpha$ is already there) than $\alpha$ such that neither it nor its symmetric exist in $rC_2$.

		\textbf{Case 2:} $\alpha$ is unbarred. Then $C_2'=\Phi^{-1}(\Phi(C_2)\cup \{\alpha\})$. If $\overline{\alpha}$ does  not exist in $C_2$ nor in
		$\Phi(C_2)$, then $\alpha$ will exist in both $C_2'$ and $\Phi(C_2')$.
		If $\overline{\alpha}$ existed in $\Phi(C_2)$, and consequently in $C_2$, then both $\alpha$ and $\overline{\alpha}$ will exist in $\Phi(C_2')$, hence, if we start in the coadmissible column, in the construction of the unbarred part of $C_2'$, compared to $C_2$, there will be a new unbarred number which is the smallest number bigger than $\alpha$ such that neither it nor its symmetric exist in $rC_2$. Finally, if $\alpha$ existed in $C_2$, then $\overline{\alpha}$ also existed in $C_2$.
		That means that whatever number got sent to $\alpha$ in the construction of $C_2$, from $\Phi(C_2)$, will be sent to the next available number, meaning that in $rC_2$ will appear a new number, the smallest number bigger than $\alpha$ such that neither it nor its symmetric exist in $rC_2$.
	\end{proof}
	
	\begin{proof}[Proof of Theorem \ref{mainthm}:]
		Each SJDT in $T$, a two-column skew tableau, moves a cell from the first to the second column. We will prove that if we apply the direct way algorithm after each SJDT, the output $C_2'$ does not change. The cells on $\ell C_2$ without cells to its left do not get to be matched. When we slide horizontally, the columns $rC_1$ and $\ell C_2$ may change more than the adding/removal of $\alpha$, the horizontally slid entry.	
		Since the horizontal slides happen from top to bottom, we only need to see  what changes happen to bigger entries than the one slid. 
		All entries above $\alpha$ are matched to the entry in the same row in $\ell C_2$.
		
		If $\alpha$ is barred then, the remaining barred entries of $rC_1$ and $\ell C_2$ remain unchanged, and since all entries above $\alpha$, including the unbarred ones, are matched to the entry directly on their right, there is no noteworthy change and everything runs as expected.
		
		If $\alpha$ is unbarred then, the remaining unbarred entries of $rC_1$ and $\ell C_2$ remain unchanged.
		In the barred part of $rC_1$ either nothing happens, or there is an entry bigger than $\overline{\alpha}$, $\overline{x}$, that gets replaced by $\overline{\alpha}$. Note that $\overline{x}$ must be such that for every number between $\overline{x}$ and $\overline{\alpha}$, either it or its symmetric existed in $rC_1$.
		In the barred part of $\ell C_2$, if $\overline{\alpha}\in \ell C_2$, then $\overline{\alpha}$ gets replaced by $\overline{y}$, smaller than $\overline{\alpha}$, such that for every number between $\overline{y}$ and $\overline{\alpha}$, either it or its symmetric existed in $\ell C_2$, and both $y$ and $\overline{y}$ do not exist in $\ell C_2$.
		
		Let's look to $\ell C_2$. Let $\alpha<p_1<p_2<\dots< p_m=y$ be the numbers between $\alpha$ and $y$ that does not  exist in $\ell C_2$, right before the horizontal slide. Then, their symmetric exist in $\ell C_2$. For all numbers in $rC_2$ between $\alpha$ and $y$, there exists, in the same row in $rC_1$, a number between $\alpha$ and $y$. Let $\alpha<p_1'<p_2'<\dots< p_m'= y$ be the missing numbers between $\alpha$ and $y$ in $rC_1$, then $p_i\leq p_i'$.
		Note that $\overline{p_1}>\overline{p_2}>\dots >\overline{p_m}=\overline{y}$ exist in $\ell C_2$ after the horizontal slide and that the biggest numbers between $\overline{\alpha}$ and $\overline{y}$ (not including $\overline{\alpha}$) that can exist in $rC_1$ are $\overline{p_1'}>\overline{p_2'}>\dots> \overline{p_m'}$, and since $\overline{p_i}\geq \overline{p_i'}$, the matching holds for this interval after swapping $\overline{\alpha}$ by $\overline{y}$ in $\ell C_2$.

		Now let's look to $rC_1$. Before the slide, call $\overline{x'}$ to the biggest unmatched number of $rC_1$ smaller or equal than $\overline{x}$. If there is no such $\overline{x'}$, then everything in $rC_1$ between $\overline{\alpha}$ and $\overline{x}$ is matched, hence swapping $\overline{x}$ by $\overline{\alpha}$ will keep all of them matched, meaning that the algorithm works in this scenario.
		Let $x'<q_1<q_2<\dots< q_m<\alpha$ be the numbers between $x'$ and $\alpha$ that does not  exist in $r C_1$, right before the horizontal slide. Then, their symmetric exist in $r C_1$. For all numbers in $r C_1$ between $x'$ and $\alpha$, there exists, in the same row in $\ell C_2$, a number between $x'$ and $\alpha$, because $\alpha$ is unmatched. Let $x'<q_1'<q_2'<\dots< q_m'<\alpha$ be the missing numbers between $x'$ and $\alpha$ in $\ell C_2$, then $q_i\geq q_i'$.
		Note that $\overline{q_1}>\overline{q_2}>\dots >\overline{q_m}>\overline{\alpha}$ exist in $r C_1$ after the horizontal slide and the numbers between $\overline{x'}$ and $\overline{\alpha}$ that can exist in $\ell C_2$ are $\overline{q_1'}>\overline{q_2'}>\dots> \overline{q_m'}$, and since $\overline{q_i}\leq \overline{q_i'}$, these numbers are matching a number bigger or equal then $q_i$  in $r C_1$, meaning that $\alpha$ is unmatched in $rC_1$.
		Ignoring signs, the numbers that appear in either $rC_1$ or $\ell C_2$ are the same. So before playing the SJDT, applying the direct way algorithm we have that the unmatched numbers in $rC_1$ are sent to the not used numbers of $\overline{q_1'}>\overline{q_2'}>\dots> \overline{q_m'}$ in $\ell C_2$ (this is a bijection), and $\overline{x'}$ is sent to the smallest available number, bigger or equal than $\overline{x'}$. Now consider $rC_1$ and $\ell C_2$ after the slide. In $rC_1$ we replace $x'$ by $\overline{\alpha}$ and remove $\alpha$ and in $\ell C_2$ there is $\alpha$ or $\overline{\alpha}$. In the direct algorithm, all unmatched numbers of $\overline{q_1}>\overline{q_2}>\dots >\overline{q_m}>\overline{\alpha}$ are sent to the not used numbers of $\overline{q_1'}>\overline{q_2'}>\dots> \overline{q_m'}$ in $\ell C_2$, but now we have more numbers in the first set than in the second, meaning that $\overline{\alpha}$ will bump the image of the least unmatched number, which will bump the image of the second least unmatched number, and so on, meaning that the image of biggest unmatched will be out of this set. This image will be the smallest number available, which was the image of $x'$ before the horizontal slide.
		
		Hence, the outcome of the direct way does not change due to the changes to the columns when we play the SJDT, meaning that the outcome is what we intend.
	\end{proof}

	\subsection{Left key - a direct way}\label{leftkeydw}
	
	\begin{teo}\label{mainthmleft}
		Let $T$ be a KN tableau with columns $C_1, C_2, \dots, C_k$, and consider its split form $spl(T)$. 
		
		We will now delete entries from the left columns, proceeding from right to left, in such a way that in the end every left column has as many entries as $C_k$. The entries deleted from $\ell C_{k-1}$ are found in the following way:
		
		We start by creating a matching between $rC_{k-1}$ and $\ell C_k$. Let $\beta_1<\dots<\beta_{m}$ be the unmatched elements  of $r C_{k-1}$. For $i$ between $1$ and $m$, let $\alpha_i$ be the entry on $\ell C_{k-1}$ next to $\beta_i$. Let $i$ go from $1$ to $m$. Starting at $\alpha_i$ and going up, delete the first entry of $\ell C_{k-1}$ {bigger than the entry directly Northeast of it}. If there is no entry in this conditions, delete the top entry of $\ell C_{k-1}$. Also delete $\beta_i$ from $r C_{k-1}$. By the end of this procedure we obtain $\ell C'_{k-1}$ with the same number of cells as $C_k$.
		
		To continue the algorithm, we do the same thing with $C_{k-2}$ and $\ell C'_{k-1}$. If we do this for all pairs of consecutive columns, we eventually obtain a column $\ell C_1'$, consisting of $\ell C_1$ with some entries deleted, with the same length as $C_k$. We claim that $\ell C_1'=K_-^1(T)$.
	\end{teo}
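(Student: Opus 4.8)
The overall strategy parallels the proof of Theorem~\ref{mainthm}: reduce to the two-column case $T = C_1 C_2$, and show that the deletion rule applied to $\ell C_1$ correctly reproduces the left column of the first column of the skew tableau obtained from $T$ by the SJDT that swaps the two column lengths (Algorithm~\ref{leftkjdtq}). For $k > 2$ the reduction is the same as before: the SJDT sliding columns $C_{i-1}, C_i$ only modifies $\ell C_{i-1}$ and $r C_{i-1}$ using data from $\ell C_i$, so once we know $\ell C'_{i}$ (equivalently $C'_i$ via $\Phi$ and splitting) the next step depends only on it, and iterating left to right matches the recursive structure $K_-(T) = K_-^1(C_1)\cdots K_-^1(C_1\cdots C_k)$. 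So the core is the two-column statement, which I would split into a lemma mirroring the one used for the right key.

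\textbf{The two-column lemma.} Suppose $T = C_1 C_2$ is non-rectangular, and SJDT slides one cell from the \emph{second} column to the \emph{first} (this is the direction relevant to the left key, since we are swapping a shorter $C_2$ into the place of a longer $C_1$ — note that for the left key we are doing the reverse-direction analysis: SJDT moves down/left, so a cell leaves $C_2$). I would isolate: (i) immediately before the horizontal slide, the entry being slid out of $C_2$ corresponds, after the dust settles, to an unmatched entry $\beta$ of $rC_1$ in the matching between $rC_1$ and $\ell C_2$; and (ii) the entry deleted from $\ell C_1$ is exactly the one prescribed by the rule — starting at the entry $\alpha$ of $\ell C_1$ lying in the same row as $\beta$ and going up, the first entry strictly larger than the entry directly to its northeast, or the top entry if none exists. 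Part (ii) is a case analysis on whether the slid entry is barred or unbarred, exactly as in the lemma preceding the proof of Theorem~\ref{mainthm}, using the reversibility of $\Phi$ and the description of $B.1$/$B.2$ slides; the barred/unbarred bookkeeping in $\Phi(C_1)$ versus $C_1$ translates a ``new number appears in $rC_1$'' statement into a ``this particular old number disappears from $\ell C_1$'' statement. The ``first entry larger than its northeast neighbor'' condition is the shape-theoretic manifestation of which cell can legally be vacated by a vertical slide sequence before the horizontal move happens — it is the lowest cell in $\ell C_1$ that sits strictly above a cell of $C_2$ and whose removal keeps the intermediate skew tableau semistandard.

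\textbf{Stability under iterated slides.} As in the proof of Theorem~\ref{mainthm}, the real content is that applying the direct-way deletion rule \emph{after each elementary SJDT step} produces the same column $\ell C'_1$: each SJDT slide changes $rC_1$ and $\ell C_2$ by more than just adding/removing $\alpha$, so one must check the matching between $rC_1$ and $\ell C_2$ is disturbed only in controlled ways above the slid entry, and that the set of unmatched entries of $rC_1$ together with their prescribed deletions in $\ell C_1$ is invariant. I would reuse the interval-comparison argument: list the missing numbers $\alpha < p_1 < \dots < p_m$ in $\ell C_2$ versus $\alpha < p_1' < \dots < p_m'$ in $rC_1$ with $p_i \le p_i'$, pass to barred versions to see the matching survives a swap of $\overline\alpha$ for $\overline y$, and symmetrically track what happens in $rC_1$; the bijection between unmatched entries and ``free slots'' gets a bump-chain perturbation that relocates exactly the extremal element, which is precisely the entry the vertical-slide prefix had already earmarked for deletion from $\ell C_1$.

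\textbf{Expected main obstacle.} The hard part is \emph{not} the algebra of $\Phi$ (that is mechanical given Definition~\ref{Defsplit} and the $B.1$/$B.2$ descriptions), but pinning down the geometric claim in part (ii) of the lemma: that the ``first entry of $\ell C_1$, reading up from $\alpha_i$, that exceeds its northeast neighbor (or the top entry)'' is exactly the cell which a maximal sequence of vertical SJDT slides removes from the first column before the horizontal slide carrying $\beta_i$ out of $C_2$ — and crucially that performing these deletions for $i = 1, \dots, m$ in increasing order does not collide. I would handle the non-collision by an induction on $i$, observing that after deleting for indices $< i$ the entry $\alpha_i$ and everything strictly above it that is still present still satisfies the hypotheses needed to locate the next deletion, because the deletions for smaller $\beta_j$ happen ``lower'' in a suitable sense (the $\beta_j$ are increasing and the vertical-slide paths are nested). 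Getting this ordering/nesting claim precise, and reconciling it with the reverse-SJDT convention adopted in the paper, is where I expect to spend the most care.
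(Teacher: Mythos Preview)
Your reduction to two columns and the overall architecture (lemma + stability) match the paper, but there is a genuine misconception in the two-column lemma that would derail the argument. You assume that for the left key the SJDT slides a cell \emph{from $C_2$ into $C_1$}; the paper does \emph{the same SJDT as in the right-key case}, moving a cell from the longer first column $C_1$ to the shorter second column $C_2$ (Lemma~\ref{leftremove}: ``Play the SJDT on this tableau, which ends up moving one cell from the first column to the second''). What changes between right and left key is not the direction of the slide but what one tracks: the right-key lemma records the new entry appearing in $rC_2$, while the left-key lemma records which entry \emph{disappears from $\ell C_1$} when the same cell leaves $C_1$. The entry $\beta$ ``on the left of the puncture'' is sitting in $rC_1$, not being slid out of $C_2$. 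With the direction reversed, your part~(i) identifies the wrong unmatched entry, and the case analysis you sketch for part~(ii) (describing what is added to a column) does not translate directly into the required statement (describing what is deleted from $\ell C_1$); the paper instead proves Lemma~\ref{leftremove} via an up/down counter $k$ along the alphabet that locates the deleted entry.

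For the stability step you propose to reuse the interval-comparison argument ($p_i \le p_i'$, etc.) from the right-key proof verbatim. The paper's stability argument for the left key is shorter and different in flavor: it observes that when the slid entry $\beta$ is barred nothing relevant changes, and when $\beta$ is unbarred the only perturbation in $rC_1$ is a swap of some $\overline{x}$ for $\overline{\beta}$, then uses the counter construction from the proof of Lemma~\ref{leftremove} to argue that all unmatched entries in the affected range still prescribe deletions $\le \overline{x}$ from $\ell C_1$, so the outcome is unchanged. Your nesting/ordering worry about collisions is handled implicitly by this argument rather than by an explicit induction on $i$. Once you fix the slide direction, the pieces you need are there, but the bookkeeping must be redone from scratch.
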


	\begin{ex}
		Consider $T=\YT{0.17in}{}{
			{{2},{3},{\overline{3}}},
			{{3},{\overline{3}}},
			{{\overline{3}}}}$ whose split form is	$spl(T)=\YT{0.17in}{}{
			{{1},{2},{2},{3},{\overline{3}},{\overline{3}}},
			{{2},{3},{\overline{3}},{\overline{2}}},
			{{\overline{3}},{\overline{1}}}}$.
		We match $rC_2$ and $\ell C_3$, obtaining:
		$\YT{0.17in}{}{
			{{1},{2},{2},{3^a},{\overline{3}^a},{\overline{3}}},
			{{2},{3},{\overline{3}},{\overline{2}}},
			{{\overline{3}},{\overline{1}}}}$. Hence $\overline{2}$ is unmatched in $rC_2$. So it will get deleted, alongside the $\overline{3}$ in $\ell C_2$.
		Thus we have $\YT{0.17in}{}{
			{{1},{2^a},{2^a},{3},{\overline{3}},{\overline{3}}},
			{{2},{3},{{\color{gray}\overline{3}}},{{\color{gray}\overline{2}}}},
			{{\overline{3}},{\overline{1}}}}$ (the deleted entries are greyed out).
		
		Now we have to create the match between $\ell C_2'$ and $r C_1$, which is already done. The entries $3$ and $\overline{1}$ are unmatched in $rC_1$, hence they will be removed alongside the entries $1$ and $\overline{3}$ in $\ell C_1$, obtaining
		$\YT{0.17in}{}{
			{{{\color{gray}1}},{2},{2},{3},{\overline{3}},{\overline{3}}},
			{{2},{{\color{gray}3}},{{\color{gray}\overline{3}}},{{\color{gray}\overline{2}}}},
			{{{\color{gray}\overline{3}}},{{\color{gray}\overline{1}}}}}$.
		Hence $K^1_-(T)=\YT{0.17in}{}{
			{{2}}}$.
	\end{ex}

	\subsection{Proof of Theorem \ref{mainthmleft}}
	It is enough to prove that by the end of this algorithm, the entries in $\ell C_j'$ are the entries on the left column of the leftmost column of $T'_j$ from Algorithm \ref{leftkjdtq}.
	Just like in the right key case, it is enough to do this for $j=k-1$. For smaller $j$ note that we only need to know what remains in the left column $\ell C_j'$, which is exactly what we claim to compute this way.
	
	So only need to prove this when $T$ is a two-column tableaux.
	
	\begin{lema}\label{leftremove}
		Suppose that $T$ is a non-rectangular two-column tableau (if the tableau is rectangular then we have nothing to do). Play the SJDT on this tableau, which ends up moving one cell from the first column to the second (some entries may change its value). Immediately before the horizontal slide of the SJDT, the entry $\beta$, on the left of the puncture,  is an unmatched cell of $rC_1$.
		Call $C_1'$ and $C_2'$ to both columns after the slide.
		
		Then  $\ell C_1'$ will lose an entry, compared to $\ell C_1$, which
		is the biggest entry of $\ell C_{1}$, in a row not under the row that contains $\beta$, bigger than the entry directly Northeast of it.
	\end{lema}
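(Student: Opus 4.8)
\subsection*{Proof proposal for Lemma~\ref{leftremove}}

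The plan is to analyse the single horizontal SJDT slide directly, tracking how $\ell C_1$ turns into $\ell C_1'$, in exact parallel with the proof of the right-key lemma of Section~\ref{rightkeydw}, which tracked the change of $rC_2$. First I would invoke that lemma: immediately before the horizontal slide the entry $\beta$ to the left of the puncture is an unmatched cell of $rC_1$ for the matching between $rC_1$ and $\ell C_2$, and the reverse slide removes $\beta$ from the $C_1$-part of $T$, after which the puncture climbs column~$1$ and exits without removing a further cell. Thus $C_1'$ is obtained from $C_1$ by losing a single cell; crucially, its set of symmetric pairs is contained in that of $C_1$, so --- recalling that $\ell C$ has the barred part of $C$ and the unbarred part obtained by replacing each $z_i\in I$ with the splitting letter $t_i$ --- it will be enough to recompute the splitting data of $C_1'$ and compare it with that of $C_1$.

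Next I would split into cases on the sign of $\beta$ and on whether $\beta$ or $\overline\beta$ participates in a symmetric pair of $C_1$. If $\beta$ is unbarred, deleting it cannot break the $1CC$, so $C_1'=C_1\setminus\{\beta\}$; if $\beta$ is barred, then $\beta$ is precisely the barred letter that leaves the coadmissible column $\Phi(C_1)$ during the slide, so $C_1'=\Phi^{-1}\!\big(\Phi(C_1)\setminus\{\beta\}\big)$, and I would use the explicit description of the entries of $\Phi(C)$ recalled in Section~\ref{tabjdt} to pin down $C_1'$. In each sub-case I would re-derive the letters $t_i$ (equivalently, in the barred case, the coadmissible letters $h_i$) for $C_1'$, read off $\ell C_1'$, verify that $\ell C_1$ and $\ell C_1'$ differ by the removal of exactly one entry, and check that the removed entry is the one in the statement: going up from the row of $\beta$ in $spl(T)$, the first entry of $\ell C_1$ strictly larger than the entry immediately to its Northeast in $rC_1$, and the topmost entry of $\ell C_1$ when no such row occurs (this fallback being exactly the situation in which $\beta$, or its splitting letter, was tied to a symmetric pair whose $t_i$ then vanishes from $\ell C_1$). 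The conceptual bridge between the two formulations is that, for a cell of $\ell C_1$, strictly exceeding its Northeast neighbour in $spl(T)$ is precisely the slack that permits the associated splitting letter to be lowered --- or the cell to be dropped --- when $C_1$ is re-split after losing $\beta$.

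The main obstacle is controlling the cascade that the deletion of $\beta$ triggers among the splitting letters, i.e.\ the analogue of the chain of bumps in the right-key proof. Concretely I would establish: (i) every entry of $\ell C_1$ in a row strictly below that of $\beta$ is unchanged, since the reverse slide disturbs nothing below that row; (ii) on the rows weakly above that of $\beta$, an entry of $\ell C_1$ that is already \emph{tight}, i.e.\ not exceeding its Northeast neighbour, cannot be the vanishing one, because re-splitting $C_1'$ must still yield an admissible column --- whence the vanishing entry is the lowest, hence largest, entry of $\ell C_1$ on those rows that strictly exceeds its Northeast neighbour; and (iii) if no such row exists, the deletion propagates all the way to the top of $\ell C_1$. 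Combining (i)--(iii) proves Lemma~\ref{leftremove}, and inserting it into Algorithm~\ref{leftkjdtq} supplies the inductive step needed for the proof of Theorem~\ref{mainthmleft}.
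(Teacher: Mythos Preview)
Your plan is sound but takes a genuinely different route from the paper. The paper does not re-split $C_1'$ or track the cascade among the splitting letters $t_i$; instead it runs a short balance argument. For $\beta$ unbarred it initializes a counter $k$ to the number of symmetric pairs $z\in I$ with $z\ge\beta$ whose partner in the coadmissible column does not lie above $\beta$, then sweeps $i$ from $\beta-1$ down to $1$, incrementing $k$ whenever $i,\overline{i}$ both lie in $C_1$ and decrementing whenever neither does; admissibility of $C_1$ forces $k$ to hit $0$, and that value of $i$ is declared to be the entry removed from $\ell C_1$. The ``bigger than its Northeast neighbour'' property then falls out for free, because at that $i$ the columns $\ell C_1$ and $rC_1$ contain the same number of entries of absolute value at least $|i|$. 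The barred case is the symmetric sweep on $[\overline{n},\overline{1}]$, using coadmissibility of $\Phi(C_1)$ in place of admissibility of $C_1$.

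What each approach buys: the paper's counter never writes down $C_1'$ or $\ell C_1'$ at all, so it completely avoids the question of how the splitting letters of $C_1'$ relate to those of $C_1$, and the two sign cases are disposed of in a couple of lines each. Your approach is closer in spirit to the right-key lemma of Section~\ref{rightkeydw} and makes the mechanism more explicit, but the step you label (ii) is where the real work hides. In the barred case $C_1'=\Phi^{-1}\!\big(\Phi(C_1)\setminus\{\beta\}\big)$ is \emph{not} in general a subset of $C_1$ (a former splitting letter $t_j$ may become an honest entry of $C_1'$), so the assertion that ``its set of symmetric pairs is contained in that of $C_1$'' and, more importantly, that $\ell C_1'$ is literally $\ell C_1$ with one entry deleted, both require their own argument before your tightness criterion can bite. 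The paper's balance argument sidesteps this entirely.
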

	\begin{ex}
		Consider the tableau $T=\YT{0.17in}{}{
			{{2},{3}},
			{{4},{4}},
			{{5},{\overline{2}}},
			{{\overline{5}}},
			{{\overline{2}}}}$. After split, and just before the horizontal slide, we have  $T=\YT{0.17in}{}{
			{{1},{2},{3},{3}},
			{{3},{4},{4},{4}},
			{{4},{5},{\ast},{\ast}},
			{{\overline{5}},{\overline{3}},{\overline{2}}, {\overline{2}}},
			{{\overline{2}},{\overline{1}}}}$.
		So $5$ slides from $rC_1$ to $\ell C_2$, obtaining the tableau $\YT{0.17in}{}{
			{{2},{3}},
			{{4},{4}},
			{{\ast},{5}},
			{{\overline{5}},{\overline{2}}},
			{{\overline{2}}}}$, whose split is $\YT{0.17in}{}{
			{{1},{2},{3},{3}},
			{{4},{4},{4},{4}},
			{{\ast},{\ast},{5},{5}},
			{{\overline{5}},{\overline{5}},{\overline{2}},{\overline{2}}},
			{{\overline{2}},{\overline{1}}}}$. The entry removed from $\ell C_1$ is $3$, as predicted by the lemma.
	\end{ex}
	\begin{proof} If $\beta$ is unbarred then look at all numbers $\beta\leq i\leq n$, and count, in $C_1$, count how many of them exist together with its symmetric and it is not matched to a number with bigger than $\beta$ in the coadmissible column. Let $k$ be that count. Now let $i$ go from $\beta-1$ to $1$. If $i$ and $\overline{i}$ exist in $C_1$ then $k:=k+1$, and if neither exist then $k:=k-1$. Since $C_1$ is admissible, eventually $k=0$ and this is the $i$ removed from $\ell C_1$. So, the columns $\ell C_1$ and $rC_1$ have same number of entries with absolute value bigger or equal than $i$, hence the entry $i$ of $\ell C_1$ is bigger than the entry directly Northeast of it.
		
		If $\beta$ is barred then look at all numbers $\beta\leq i\leq \overline{1}$, and count, in $C_1$, count how many of them exist together with its symmetric and it is not matched to a number bigger than $\beta$ in the coadmissible column. Let $k$ be that count. Now let $i$ go from $\beta-1$ to $\overline{n}$. If $i$ and $\overline{i}$ exist in $C_1$ then $k:=k+1$, and if neither exist then $k:=k-1$. Since $\Phi(C_1)$ is coadmissible, eventually $k=0$ and this is the $i$ removed from $\ell C_1$. The columns $\ell C_1$ and $rC_1$ have same number of entries with absolute value smaller or equal than $\overline{i}$, hence the entry $i$ of $\ell C_1$ is bigger than the entry directly Northeast of it (remember that $i$ is negative).
	\end{proof}
	\begin{proof}[Proof of Theorem \ref{mainthmleft}:]
		
		Hence we have determined which entry is removed from $\ell C_1$ given that we know $\beta$, the entry of the cell that is horizontally slid.
		The SJDT on $T$ may change the entries or the matching in $rC_1$. We need to prove that, even with these eventual changes, the entries removed from $\ell C_1$ are the ones that we calculated in the beginning, before doing any SJDT slide.

		If $\beta$ is barred, since we run the unmatched entries of $rC_1$ from smallest to biggest, when removing $\beta$ from $rC_1$ the unbarred part of $rC_1$ remains the same, hence, the remaining entries and matched entries do not change, hence the outcome will be the one predicted.

		If $\beta$ is unbarred then the remaining unbarred entries of $rC_1$ remain unchanged. In the barred part of $rC_1$ either nothing happens, or there is an entry bigger than $\overline{\beta}$, $\overline{x}$, that gets replaced by $\overline{\beta}$. Note that $\overline{x}$ must be such that for every number between $\overline{x}$ and $\overline{\beta}$, either it or its symmetric existed in $rC_1$. This can only happen if $k$, from the proof of Lemma \ref{leftremove} starts being bigger than $0$.
		
		Since for all numbers between $\overline{x}$ and $\overline{\beta}$ either it or its symmetric exist in $rC_1$, all unmatched entries here will remove from $\ell C_1$ an entry smaller or equal than $\overline{x}$. In fact, the way of constructing $\overline{x}$ and $i$, from the proof of Lemma \ref{leftremove}, is effectively the same. Since, after the slide of $\beta$,  we may have different matches in the numbers between  $\overline{x}$ and $\overline{\beta}$, and the number of unmatched entries remains the same after the slide. Since all unmatched entries in here will remove something smaller or equal than $\overline{\beta}$ from $\ell C_1$, the outcome of the algorithm is the same as if we apply it to $\ell C_1, \, rC_1$ before or after the horizontal slide. Hence we do not need to do any SJDT in order to know the entries of $\ell C_1$ after the SJDT.
	\end{proof}

	\subsection{SJDT and direct keys example}\label{SecExample}
	In this section we illustrate the computation of the right and left keys of a KN tableau via SJDT and using the \emph{direct way}.
	
	Let $n=5$ and let  be  $T=\YT{0.17in}{}{
		{{2},{3},{3},{4}},
		{{4},{\overline{4}},{\overline{4}},{\overline{4}}},
		{{\overline{5}},{\overline{3}},{\overline{2}}},
		{{\overline{4}}},
		{{\overline{3}}}}$ the KN tableau
	with split form $$spl(T)=\YT{0.17in}{}{
		{{1},{2},{2},{3},{3},{3},{3},{4}},
		{{\color{magenta} 2},{4},{\overline{4}},{\overline{4}},{\overline{4}},{\overline{4}},{\overline{4}},{\overline{3}}},
		{{\overline{5}},{\overline{5}},{\overline{3}},{\overline{2}},{\overline{2}},{\overline{2}}},
		{{\overline{4}},{\overline{3}}},
		{{\overline{3}},{\overline{1}}}}.$$
	
	In order to find the right (resp. left) key of $T$, we play the SJDT to swap heights of consecutive columns, and find skew tableaux, Knuth related to $T$, such that for every column height there is a skew tableau whose last column (resp. first) has that height.

	\begin{tikzpicture}
		\node at (0,0) {$\YT{0.17in}{}{
				{{2},{3},{3},{4}},
				{{4},{\overline{4}},{\overline{4}},{\overline{4}}},
				{{\overline{5}},{\overline{3}},{\overline{2}}},
				{{\overline{4}}},
				{{\overline{3}}}}$};	
		\node at (2,0) {$\begin{tikzpicture}[scale=.4, baseline={([yshift=-.8ex]current bounding box.center)}]
				\draw (0,0) rectangle +(1,1);
				\draw (0,1) rectangle +(1,1);
				\draw (0,2) rectangle +(1,1);
				\draw (1,0) rectangle +(1,1);
				\draw (1,1) rectangle +(1,1);
				\draw (1,2) rectangle +(1,1);
				\draw (1,3) rectangle +(1,1);
				\draw (1,4) rectangle +(1,1);
				\draw (2,2) rectangle +(1,1);
				\draw (2,3) rectangle +(1,1);
				\draw (2,4) rectangle +(1,1);
				\draw (3,3) rectangle +(1,1);
				\draw (3,4) rectangle +(1,1);
				\node at (.5,.5) {$\overline{3}$};
				\node at (.5,1.5) {$\overline{5}$};
				\node at (.5,2.5) {$2$};
				\node at (1.5,0.5) {$\overline{3}$};
				\node at (1.5,1.5) {$\overline{4}$};
				\node at (1.5,2.5) {$\overline{5}$};
				\node at (1.5,3.5) {$5$};
				\node at (1.5,4.5) {$3$};
				\node at (2.5,2.5) {$\overline{2}$};
				\node at (2.5,3.5) {$\overline{4}$};
				\node at (2.5,4.5) {$3$};
				\node at (3.5,3.5) {$\overline{4}$};
				\node at (3.5,4.5) {$4$};
			\end{tikzpicture}$};
		\node at (4,0) {$\begin{tikzpicture}[scale=.4, baseline={([yshift=-.8ex]current bounding box.center)}]
				\draw (0,0) rectangle +(1,1);
				\draw (0,1) rectangle +(1,1);
				\draw (0,2) rectangle +(1,1);
				\draw (1,0) rectangle +(1,1);
				\draw (1,1) rectangle +(1,1);
				\draw (1,2) rectangle +(1,1);
				\draw (1,3) rectangle +(1,1);
				\draw (1,4) rectangle +(1,1);
				\draw (3,5) rectangle +(1,1);
				\draw (2,3) rectangle +(1,1);
				\draw (2,4) rectangle +(1,1);
				\draw (3,3) rectangle +(1,1);
				\draw (3,4) rectangle +(1,1);
				\node at (.5,.5) {$\overline{3}$};
				\node at (.5,1.5) {$\overline{5}$};
				\node at (.5,2.5) {$2$};
				\node at (1.5,0.5) {$\overline{3}$};
				\node at (1.5,1.5) {$\overline{4}$};
				\node at (1.5,2.5) {$\overline{5}$};
				\node at (1.5,3.5) {$5$};
				\node at (1.5,4.5) {$3$};
				\node at (2.5,3.5) {$\overline{4}$};
				\node at (2.5,4.5) {$3$};
				\node at (3.5,3.5) {$\overline{2}$};
				\node at (3.5,4.5) {$\overline{4}$};
				\node at (3.5,5.5) {$4$};
			\end{tikzpicture}$};
		\node at (6,0) {$\begin{tikzpicture}[scale=.4, baseline={([yshift=-.8ex]current bounding box.center)}]
				\draw (0,0) rectangle +(1,1);
				\draw (0,1) rectangle +(1,1);
				\draw (0,2) rectangle +(1,1);
				\draw (1,1) rectangle +(1,1);
				\draw (1,2) rectangle +(1,1);
				\draw (2,1) rectangle +(1,1);
				\draw (2,2) rectangle +(1,1);
				\draw (2,3) rectangle +(1,1);
				\draw (2,4) rectangle +(1,1);
				\draw (2,5) rectangle +(1,1);
				\draw (3,3) rectangle +(1,1);
				\draw (3,4) rectangle +(1,1);
				\draw (3,5) rectangle +(1,1);
				\node at (.5,.5) {$\overline{3}$};
				\node at (.5,1.5) {$\overline{5}$};
				\node at (.5,2.5) {$2$};
				\node at (1.5,1.5) {$\overline{4}$};
				\node at (1.5,2.5) {$2$};
				\node at (2.5,1.5) {$\overline{2}$};
				\node at (2.5,2.5) {$\overline{4}$};
				\node at (2.5,3.5) {$\overline{5}$};
				\node at (2.5,4.5) {$5$};	
				\node at (2.5,5.5) {$3$};
				\node at (3.5,3.5) {$\overline{2}$};
				\node at (3.5,4.5) {$\overline{4}$};
				\node at (3.5,5.5) {$4$};
			\end{tikzpicture}$};
		\node at (8,0) {$\begin{tikzpicture}[scale=.4, baseline={([yshift=-.8ex]current bounding box.center)}]
				\draw (0,1) rectangle +(1,1);
				\draw (0,2) rectangle +(1,1);
				\draw (1,1) rectangle +(1,1);
				\draw (1,2) rectangle +(1,1);
				\draw (1,3) rectangle +(1,1);
				\draw (2,1) rectangle +(1,1);
				\draw (2,2) rectangle +(1,1);
				\draw (2,3) rectangle +(1,1);
				\draw (2,4) rectangle +(1,1);
				\draw (2,5) rectangle +(1,1);
				\draw (3,3) rectangle +(1,1);
				\draw (3,4) rectangle +(1,1);
				\draw (3,5) rectangle +(1,1);
				\node at (.5,1.5) {$\overline{5}$};
				\node at (.5,2.5) {$2$};
				\node at (1.5,3.5) {$2$};
				\node at (1.5,2.5) {$\overline{4}$};
				\node at (1.5,1.5) {$\overline{3}$};
				\node at (2.5,1.5) {$\overline{2}$};
				\node at (2.5,2.5) {$\overline{4}$};
				\node at (2.5,3.5) {$\overline{5}$};
				\node at (2.5,4.5) {$5$};	
				\node at (2.5,5.5) {$3$};
				\node at (3.5,3.5) {$\overline{2}$};
				\node at (3.5,4.5) {$\overline{4}$};
				\node at (3.5,5.5) {$4$};
			\end{tikzpicture}$};
		\node at (10,0) {$\SYT{0.17in}{}{
				{{4}},
				{{5}},
				{{\overline{5}},{3},{2}},
				{{\overline{4}},{\overline{4}},{\overline{4}},{2}},
				{{\overline{2}},{\overline{2}},{\overline{3}},{\overline{5}}}}$};
	\end{tikzpicture}
	
	Each tableau is obtained from the previous after playing SJDT in two consecutive columns, swapping their heights.
	
	If we compute the right (resp. left) columns  of all last (resp. first) columns of these tableaux, we find the columns of the right (resp. left) key associated to $T$:
	
	$K_+(T)=\YT{0.17in}{}{
		{{4},{4},{4},{4}},
		{{5},{\overline{3}},{\overline{3}},{\overline{3}}},
		{{\overline{3}},{\overline{2}},{\overline{2}}},
		{{\overline{2}}},
		{{\overline{1}}}}$ and
	$K_-(T)=\YT{0.17in}{}{
		{{1},{2},{2},{2}},
		{{2},{\overline{5}},{\overline{5}},{\overline{5}}},
		{{\overline{5}},{\overline{3}},{\overline{3}}},
		{{\overline{4}}},
		{{\overline{3}}}}$.
	
	Note that we have $9$ horizontal slides in our sequence of tableaux, and for each horizontal slide we have to apply the map $\Phi$, or its inverse twice. This means that we are effectively computing the split form of $9$ skew tableaux, even though we only need $3$ tableaux (the first, the third and the last one) to have all column heights in each end of the tableau.
	
	Now we compute both keys using the direct way. In here we only need to compute one split form, and make some calculations on it, and on subtableaux of the split form.
	
	To compute the \emph{right key}, via \emph{direct way}, we need to compute the columns $K_+^1\left(\YT{0.17in}{}{
		{{2},{3},{3},{4}},
		{{4},{\overline{4}},{\overline{4}},{\overline{4}}},
		{{\overline{5}},{\overline{3}},{\overline{2}}},
		{{\overline{4}}},
		{{\overline{3}}}}\right)$,
	$K_+^1\left(\YT{0.17in}{}{
		{{3},{3},{4}},
		{{\overline{4}},{\overline{4}},{\overline{4}}},
		{{\overline{3}},{\overline{2}}}}\right)=K_+^1\left(\YT{0.17in}{}{
		{{3},{4}},
		{{\overline{4}},{\overline{4}}},
		{{\overline{2}}}}\right)$ and $K_+^1\left(\YT{0.17in}{}{
		{{4}},
		{{\overline{4}}} }\right)$.

	
	We start by splitting and matching, and every $\mapsto$ marks when new entries, written in blue, are added to a right column, and we do these until there are no columns left.
	
	$\YT{0.17in}{}{
		{{2},{3},{3},{4}},
		{{4},{4},{\overline{4}},{\overline{4}}},
		{{\overline{5}},{\overline{3}},{\overline{2}}},
		{{\overline{4}}},
		{{\overline{3}}}}\stackrel{split}{\rightarrow}
	\YT{0.17in}{}{
		{{1},{2^a},{2^a},{3},{3},{3},{3},{4}},
		{{2},{4},{\overline{4}^b},{\overline{4}},{\overline{4}},{\overline{4}},{\overline{4}},{\overline{3}}},
		{{\overline{5}},{\overline{5}^b},{\overline{3}^c},{\overline{2}},{\overline{2}},{\overline{2}}},
		{{\overline{4}},{\overline{3}^c}},
		{{\overline{3}},{\overline{1}}}}\mapsto
	\YT{0.17in}{}{
		{{1},{2},{2},{3^a},{3^a},{3},{3},{4}},
		{{2},{4},{\overline{4}},{{\color{blue}5}},{\overline{4}^b},{\overline{4}},{\overline{4}},{\overline{3}}},
		{{\overline{5}},{\overline{5}},{\overline{3}},{\overline{4}^b},{\overline{2}^c},{\overline{2}}},
		{{\overline{4}},{\overline{3}},,{\overline{2}^c}},
		{{\overline{3}},{\overline{1}},,{{\color{blue}\overline{1}}}}}$
	
	$\mapsto
	\YT{0.17in}{}{
		{{1},{2},{2},{3},{3},{3^a},{3^a},{4}},
		{{2},{4},{\overline{4}},{{\color{blue}5}},{\overline{4}},{{\color{blue}5}},{\overline{4}^b},{\overline{3}}},
		{{\overline{5}},{\overline{5}},{\overline{3}},{\overline{4}},{\overline{2}},{\overline{4}^b}},
		{{\overline{4}},{\overline{3}},,{\overline{2}},,{\overline{2}}},
		{{\overline{3}},{\overline{1}},,{{\color{blue}\overline{1}}},,{{\color{blue}\overline{1}}}}}
	\mapsto
	\YT{0.17in}{}{
		{{1},{2},{2},{3},{3},{3},{3},{4}},
		{{2},{4},{\overline{4}},{{\color{blue}5}},{\overline{4}},{{\color{blue}5}},{\overline{4}},{{\color{blue}5}}},
		{{\overline{5}},{\overline{5}},{\overline{3}},{\overline{4}},{\overline{2}},{\overline{4}},,{\overline{3}}},
		{{\overline{4}},{\overline{3}},,{\overline{2}},,{\overline{2}},,{{\color{blue}\overline{2}}}},
		{{\overline{3}},{\overline{1}},,{{\color{blue}\overline{1}}},,{{\color{blue}\overline{1}}},,{{\color{blue}\overline{1}}}}},\qquad K_+^1(T)=\YT{0.17in}{}{
		{{4}},
		{{{\color{blue}5}}},
		{{\overline{3}}},
		{{{\color{blue}\overline{2}}}},
		{{{\color{blue}\overline{1}}}}}$;
	\medskip
	
	$\YT{0.17in}{}{
		{{3},{4}},
		{{\overline{4}},{\overline{4}}},
		{{\overline{2}}}}\stackrel{split}{\rightarrow}
	\YT{0.17in}{}{
		{{3},3^a,3^a,{4}},
		{{\overline{4}},\overline{4}^b,{\overline{4}}^b,\overline{3}},
		{{\overline{2}}, \overline{2}}}\mapsto
	\YT{0.17in}{}{
		{{3},3,3,{4}},
		{{\overline{4}},\overline{4},{\overline{4}},\overline{3}},
		{{\overline{2}}, \overline{2},,\color{blue}\overline{2}}},\qquad
 K_+^1\left(\YT{0.17in}{}{
		{{3},{3},{4}},
		{{\overline{4}},{\overline{4}},{\overline{4}}},
		{{\overline{3}},{\overline{2}}}}\right)=K_+^1\left(\YT{0.17in}{}{
		{{3},{4}},
		{{\overline{4}},{\overline{4}}},
		{{\overline{2}}}}\right)=\YT{0.17in}{}{
		{{4}},
		{\overline{3}},
		{\color{blue}\overline{2}}};
	$
	\medskip
	
	$\YT{0.17in}{}{
		{{4}},
		{{\overline{4}}}}\stackrel{split}{\rightarrow}
	\YT{0.17in}{}{
		{3,{4}},
		{ {\overline{4}},\overline{3}}},\qquad K_+^1(\YT{0.17in}{}{
		{{4}},
		{{\overline{4}}}})=\YT{0.17in}{}{
		{{4}},
		{\overline{3}}}$.

	\medskip
	To compute the \emph{left key}, via \emph{direct way}, we need to compute the columns $K_-^1\left(\YT{0.17in}{}{
		{{2},{3},{3},{4}},
		{{4},{\overline{4}},{\overline{4}},{\overline{4}}},
		{{\overline{5}},{\overline{3}},{\overline{2}}},
		{{\overline{4}}},
		{{\overline{3}}}}\right)$,
	$K_-^1\left(\YT{0.17in}{}{
		{{2},{3}},
		{{4},{\overline{4}}},
		{{\overline{5}},{\overline{3}}},
		{{\overline{4}}},
		{{\overline{3}}}}\right)=K_-^1\left(\YT{0.17in}{}{
		{{2},{3},{3}},
		{{4},{\overline{4}},{\overline{4}}},
		{{\overline{5}},{\overline{3}},{\overline{2}}},
		{{\overline{4}}},
		{{\overline{3}}}}\right)$ and $K_-^1\left(\YT{0.17in}{}{
		{{2}},
		{{4}},
		{{\overline{5}}},
		{{\overline{4}}},
		{{\overline{3}}}}\right)$.
	We start by splitting and matching, and every $\mapsto$ marks when entries are removed from a left column, and we do these until there are no columns left. Recall that this algorithm goes from right to left.
	
	$\YT{0.17in}{}{
		{{2},{3},{3},{4}},
		{{4},{\overline{4}},{\overline{4}},{\overline{4}}},
		{{\overline{5}},{\overline{3}},{\overline{2}}},
		{{\overline{4}}},
		{{\overline{3}}}}\stackrel{split}{\rightarrow}
	\YT{0.17in}{}{
		{{1},{2},{2},{3},{3},{3^a},{3^a},{4}},
		{{2},{4},{\overline{4}},{\overline{4}},{\overline{4}},{\overline{4}^b},{\overline{4}^b},{\overline{3}}},
		{{\overline{5}},{\overline{5}},{\overline{3}},{\overline{2}},{\overline{2}},{\overline{2}}},
		{{\overline{4}},{\overline{3}}},
		{{\overline{3}},{\overline{1}}}}\mapsto \YT{0.17in}{}{
		{{1},{2},{2},{3^a},{3^a},{3},{3},{4}},
		{{2},{4},{\overline{4}},{\overline{4}^b},{\overline{4}^b},{\overline{4}},{\overline{4}},{\overline{3}}},
		{{\overline{5}},{\overline{5}},{\overline{3}},{\overline{2}},,{}},
		{{\overline{4}},{\overline{3}}},
		{{\overline{3}},{\overline{1}}}}$
	
	$\mapsto \YT{0.17in}{}{
		{{1},{2^a},{2^a},{3},{3},{3},{3},{4}},
		{{2},{4},{\overline{4}^b},{\overline{4}},{\overline{4}},{\overline{4}},{\overline{4}},{\overline{3}}},
		{{\overline{5}},{\overline{5}^b},{},{},,{}},
		{{\overline{4}},{\overline{3}}},
		{{\overline{3}},{\overline{1}}}}
	\mapsto \YT{0.17in}{}{
		{{},{2},{2},{3},{3},{3},{3},{4}},
		{{2},{},{\overline{4}},{\overline{4}},{\overline{4}},{\overline{4}},{\overline{4}},{\overline{3}}},
		{{\overline{5}},{\overline{5}},{},{\overline{2}},,{\overline{2}}},
		{{},{}},
		{{},{}}},
\qquad K_-^1(T)=\YT{0.17in}{}{
		{{2}},
		{{\overline{5}}}}.$
	
	In the final step, we are removing $\overline{3}$ from $\ell C_1$, because the entry directly Northeast of it is $\overline{5}$, because the $\overline{3}$ of $rC_1$ has already been slid out.
	
	\medskip
	
	$\YT{0.17in}{}{
		{{2},{3}},
		{{4},{\overline{4}}},
		{{\overline{5}},{\overline{3}}},
		{{\overline{4}}},
		{{\overline{3}}}}\stackrel{split}{\rightarrow}
	\YT{0.17in}{}{
		{1,2^a,{2^a},{3}},
		{2,{4},\overline{4}^b,{\overline{4}}},
		{\overline{5},{\overline{5}^b},{\overline{3}^c},\overline{2}},
		{{\overline{4}}, \overline{3}^c},
		{{\overline{3}}, \overline{1}}}\mapsto
	\YT{0.17in}{}{
		{,2,{2},{3}},
		{2,{},\overline{4},{\overline{4}}},
		{\overline{5},{\overline{5}},{\overline{3}},\overline{2}},
		{{}, \overline{3}},
		{{\overline{3}}, }},\qquad
K_-^1\left(\YT{0.17in}{}{
		{{2},{3}},
		{{4},{\overline{4}}},
		{{\overline{5}},{\overline{3}}},
		{{\overline{4}}},
		{{\overline{3}}}}\right)=K_-^1\left(\YT{0.17in}{}{
		{{2},{3},{3}},
		{{4},{\overline{4}},{\overline{4}}},
		{{\overline{5}},{\overline{3}},{\overline{2}}},
		{{\overline{4}}},
		{{\overline{3}}}}\right)=\YT{0.17in}{}{
		{2},
		{\overline{5}},
		{{\overline{3}}}}.
	$
	
	\medskip
	
	$\YT{0.17in}{}{
		{{2}},
		{{4}},
		{{\overline{5}}},
		{{\overline{4}}},
		{{\overline{3}}}}\stackrel{split}{\rightarrow}
	\YT{0.17in}{}{
		{1,2},
		{2,{4}},
		{\overline{5},{\overline{5}}},
		{{\overline{4}}, \overline{3}},
		{{\overline{3}}, \overline{1}}},\qquad
K_-^1\left(\YT{0.17in}{}{
		{{2}},
		{{4}},
		{{\overline{5}}},
		{{\overline{4}}},
		{{\overline{3}}}}\right)=\YT{0.17in}{}{
		{1},
		{2},
		{\overline{5}},
		{{\overline{4}}},
		{{\overline{3}}}}.
	$
	\section{Virtual symplectic keys}\label{sec:virtual}
In this section  we build on \cite{att} and refer to it and \cite{ba00a} for unexplained notation.
The Dynkin diagram of an irreducible root system is a graph whose nodes are in bijection with  the simple roots and simultaneously with the generators of the corresponding Weyl group $W$ as a Coxeter group.  If $\Delta=\{\alpha_i: i\in I\}$ is the set of simple roots of the given root system, the node  $i\in I$ in the Dynkin diagram corresponds to $\alpha_i\in \Delta$ and to the simple reflection $s_i\in W$. When folding the Dynkin diagram $A_{2n-1}$ through the node $n$ one obtains the Dynkin diagram $C_n$, in particular, the hyperoctahedral group $B_n$ embedded as a subgroup of $\mathfrak{S}_{2n}=\langle s^A_i:1\le i<2n\rangle$. That is, $B_n\simeq B_n^A:=\langle \tilde s_i:=s^A_is^A_{2n-i}, \tilde s_n:=s^A_n, 1\le i<n\rangle$ as a subgroup of $\mathfrak{S}_{2n}$. (See also, \cite{bjorner2006combinatorics}.)
More precisely, our construction is based on  the Dynkin diagram folding (or quotient) of $A_{2n-1}$ 
\begin{align}
C_n\;\; \dynkin[edge length=1cm,labels*={1,2,p,n-1,n}] C{***.**}&\longrightarrow& \text{$A_{2n-1}$ \scriptsize folded}\; \;\dynkin[edge length=1.1cm,fold,labels={1,2,p,n-1,n, n+1, 2n-p, 2n-2, 2n-1}]A{***.***.***}
\nonumber
\end{align}
where the nodes of the $C_n$ Dynkin diagram are mapped into  the node-pairs of the $A_{2n-1}$ Dynkin diagram as follows: $i\mapsto\{i,{ 2n-i}\}$, $1\le i<n$ and $n\mapsto n$. For more details, we refer to \cite{bump2017crystal}.

\subsection{Baker embedding} \label{baker}Let $\Lambda_1,\dots,\Lambda_n$ be the $C_n$ fundamental weights, and $\Lambda^A_1,\dots,$ $\Lambda^A_n,$ $\dots,\Lambda^A_{2n-1}$ the $A_{2n-1}$ fundamental weights. We also write $\alpha_i^A$, $1\le i\le 2n-1$, for the  $A_{2n-1}$ simple roots. For $\lambda=\Lambda_{m_1}+\cdots+\Lambda_{m_k}\in\mathbb{Z}^n$, with $1\le m_1\le\cdots\le m_k\le n$, let
\begin{align}\label{lambdaA}
\lambda^A=\Lambda^A_{2n-m_1}+\Lambda^A_{m_1}+\cdots+\Lambda^A_{2n-m_k}+\Lambda^A_{m_k}, 
\end{align}
be a $\mathbb{Z}^{2n}$ partition with at most $2n-1$ parts, and $\textsf{SSYT}(\lambda^A,2n)$ the $A_{2n-1}$ crystal of semistandard tableaux of shape $\lambda^A$ in the alphabet $[2n]$. We consider
inside  the $A_{2n-1}$ crystal $\textsf{SSYT}(\lambda^A,2n)$ the crystal generated  by acting with lowering operators $f^{E}_{i} := f^{A}_{i}    f^{A}_{2n-i}=   f^{A}_{2n-i}f^{A}_{i} $, $1\le i< n$, and $f^{E}_{n} =(f^{A}_{n})^{2} $ on the highest weight element $K(\lambda^A)$ of $\textsf{SSYT}(\lambda^A, 2n)$,  where $f^{A}_{i}$ denote the $A_{2n-1}$ crystal operators acting on $\textsf{SSYT}(\lambda^A,2n)$. This is an embedding of the crystal  $\textsf{KN}(\lambda,n)$ into the crystal $\textsf{SSYT}(\lambda^A, 2n)$, called Baker embedding \cite{ba00a}, $E:\textsf{KN}(\lambda,n)\hookrightarrow \textsf{SSYT}(\lambda^A, 2n)$,
 \begin{align}\label{def:virtualcrystal}& \textsf{KN}(\lambda,n))=
\{{f}_{i_{1}}^{k_{1}}\cdots{f}%
_{i_{\ell}}^{k_{\ell}}(K({\lambda}))\mid i_1,\dots,i_{\ell}\in [n],\; (k_{1},\ldots,k_{\ell})\in\mathbb{Z}_{\geq
0}^{\ell},\; \ell\ge 0\}\setminus\{0\}, \;\mbox{and}\nonumber\\
&E(\textsf{KN}(\lambda,n))=\nonumber\\
&=\{{f_{i_{1}}^A}^{k_{1}}{f^A}_{2n-i_{1}}^{k_{1}}\cdots{f^A}%
_{i_{\ell}}^{k_{\ell}}{f^A}%
_{2n-i_{\ell}}^{k_{\ell}}(K({\lambda^A}))\mid i_1,\dots,i_{\ell}\in [n],\; (k_{1},\ldots,k_{\ell})\in\mathbb{Z}_{\geq
0}^{\ell},\; \ell\ge 0\}\setminus\{0\}.
\end{align}
The  \emph{virtual crystal} $E(\textsf{K}(\lambda,n))$  models the crystal $\textsf{K}(\lambda,n)$ inside  the $A_{2n-1}$ crystal $\textsf{SSYT}(\lambda^A,2n)$.

It is well known that given two semistandard tableaux $R$ and $S$  the column Schensted  insertion \cite{fulton1997young} $[R\leftarrow S]:=[R\leftarrow a_1\leftarrow a_2\leftarrow\cdots \leftarrow a_t]$ with $w(S)=a_1a_2\cdots a_s$ the  Chinese/Japanese  reading of $S$, defines a crystal isomorphism between the crystal connected component containing $R\otimes S$ and respectively the crystal containing $[R\leftarrow S]$. Thus  we identify $R\otimes S=$ $[R\leftarrow S]$ $=[\emptyset\leftarrow R\leftarrow S]$ \cite{kwon}. In particular $K(\lambda)\otimes K(\mu)=K(\lambda+\mu)$ the highest weight element of $\mathfrak{B}(\lambda+\mu)\subseteq \mathfrak{B}(\lambda)\otimes
\mathfrak{B}(\mu)$ for $\lambda, \mu\in \mathcal{P}_{2n}$.

Let $\psi:\textsf{KN}(\Lambda_i,n)\hookrightarrow \textsf{SSYT}(\Lambda^A_{2n-i}+\Lambda^A_{i},2n)\subseteq \mathfrak{B}(\Lambda^A_i,2n)\otimes \mathfrak{B}(\Lambda^A_{2n-i},2n)$, $1\le i\le n$, be the crystal embedding  explicitly defined in  \cite[Proposition 2.2]{ba00a}. In particular, $\psi(K(\Lambda_i))=K(\Lambda^A_{2n-i}+\Lambda^A_{i})$, $1\le i\le n$. The embedding of $\textsf{KN}(\Lambda_i,n)$ in $\textsf{SSYT}(\Lambda^A_{2n-i}+\Lambda^A_{i},2n)$ is generated in $\textsf{SSYT}(\lambda^A,2n)$ by acting with lowering operators
\begin{align}f^{E}_{i} :=\begin{cases} f^{A}_{i}    f^{A}_{2n-i}=f^{A}_{2n-i}f^{A}_{i}, 1\le i< n,\\
 (f^{A}_{n})^{2}, i=n.
 \end{cases}\label{fE}
 \end{align}
  on the highest weight element $K(\Lambda^A_{2n-i}+\Lambda^A_{i})$ of $\textsf{SSYT}(\Lambda^A_{2n-i}+\Lambda^A_{i}, 2n)$. That is, $\psi(\textsf{KN}(\Lambda_i,n)$ is realized in the crystal connected component of $\mathfrak{B}(\Lambda^A_i,2n)\otimes \mathfrak{B}(\Lambda^A_{2n-i},2n)$ with highest weight element $K(\Lambda^A_{2n-i}+\Lambda^A_{i})$ by acting successively by lowering operators $f^{E}_{i}$ on $K(\Lambda^A_{2n-i}+\Lambda^A_{i})$.

For $\lambda\in \mathcal{P}_n$  and $\lambda^A$ in \eqref{lambdaA}, the Baker virtualization \cite[Proposition 2.3]{ba00a} is the crystal embedding defined by the injective map
\begin{align*}
 {\textsf{E}}: \textsf{KN}(\lambda,n) &\lhook\joinrel\longrightarrow \textsf{SSYT}(\lambda^A,2n)\subseteq \bigotimes_{i=1}^k\mathfrak{B}(\Lambda^A_i+\Lambda^A_{2n-i},2n) \\
 T=C_1C_2\cdots C_k&\mapsto{\textsf{E}}(T)= \psi(C_k)\otimes\cdots\otimes \psi(C_1)\\
 &\qquad\qquad
=[\emptyset\leftarrow w(\psi(C_k)) \leftarrow \cdots \leftarrow w(\psi(C_1))]
 \end{align*}
 where 
 $ w(\psi(C_i))$ is  the Chinese/Japanese  reading of the two column semistandard tableau $\psi(C_i)$.  Then
$K(\lambda^A)=\psi(K(\Lambda_{m_1}))\otimes\cdots\otimes \psi(K(\Lambda_{m_k}))=E(K(\lambda)).$
   Note that according to our conventions $T=C_1\cdots C_k=C_k\otimes\cdots\otimes C_1\in \textsf{KN}(\lambda,n)$ and we have
\begin{align}{\textsf{E}}(C_k\otimes\cdots\otimes C_1)= \psi(C_k)\otimes\cdots\otimes \psi(C_1).\label{tensor}
\end{align}
The crystal $E(\textsf{KN}(\lambda,n))$ is realized by the crystal connected component of  $\bigotimes_{i=1}^k\mathfrak{B}(\Lambda^A_i+\Lambda^A_{2n-i},2n)$ with highest weight element $K(\lambda^A )$ by acting with  lowering operators $f^{E}_{i}$ on $K(\lambda^A)$.

One has,
${\textsf{E}}f_{i}(T) =f^{E}_{i}{\textsf{E}}(T)$, for $T \in   \textsf{KN}(\lambda,n)$, $1\le i\le n$,
and length function
\begin{align}\varphi_i(T)=\begin{cases}\varphi_i^A (E(T))=\varphi_{2n-i}^A(E(T)),& 1\le i<n\\
1/2\varphi_n ^A(E(T)),& i=n.
    \end{cases}\label{length}
    \end{align}
similarly for $\varepsilon_i(T)$ where $\varphi_i^A, \varepsilon^A_i$ denote the length functions in $\textsf{SSYT}(\lambda^A,2n)$.
From \eqref{length}, the following observations follow.

\begin{obs}\label{basics}\begin{enumerate}
\item[(a)]
For $1\le i<n$, \begin{align*}\langle \lambda,\alpha_i\rangle=\lambda_i-\lambda_{i+1}&=\varphi_i (K(\lambda))\nonumber\\ 
    &=\varphi_i^A (K(\lambda^A))=\langle \lambda^A,\alpha_i\rangle=
    \lambda_i^A-\lambda_{i+1}^A\nonumber\\
    &=\varphi_{2n-i}^A(K(\lambda^A))=\langle \lambda^A,\alpha_{2n-i}\rangle=\lambda_{2n-i}^A-\lambda^A_{2n-i+1}.\label{lambdai}
    \end{align*}
    For $i=n$,
 \begin{equation*}\lambda^A_n-\lambda^A_{n+1}=\langle \lambda^A,\alpha_n\rangle=\varphi_n^A (K(\lambda^A))=2\varphi_n(K(\lambda))=2\langle \lambda,e_n\rangle=2\lambda_n=2\langle \lambda,\alpha^\vee_n\rangle.\label{lambdanwhale}
\end{equation*}
Therefore the stabilizer of $\lambda\in \mathcal{P}_n$ under the action of $B_n$  is identified with the stabilizer of $\lambda^A\in \mathcal{P}_{2n-1}$ under the action of $B_n^A$ ($\subseteq \mathfrak{S}_{2n}$), and $B_n^\lambda=B^{A,\lambda^A}_n$.

\item[(b)] For $u\in \mathfrak{S}_{2n}\lambda^A$, and $1\le i<n$,
\begin{equation*}\varphi_{i}^A(K(u))=\langle u,\alpha_i\rangle=\varphi_{i}^A(K(s^A_{2n-i}u))=\langle s^A_{2n-i}u,\alpha_i\rangle,\label{snale}
\end{equation*}
\begin{equation*}
\varphi_{2n-i}^A(K(u))=\langle u,\alpha_{2n-i}\rangle=\varphi_{2n-i}^A(K(s^A_iu))=\langle s^A_iu,\alpha_{2n-i}\rangle.\label{snale1}
\end{equation*}
\begin{align*}u<s^A_i u\Leftrightarrow s^A_{2n-i} u< s^A_is^A_{2n-i}u\\
u<s^A_{2n-i} u\Leftrightarrow s^A_{i} u< s^A_{2n-i}s^A_{i}u
\end{align*}

\end{enumerate}
\end{obs}
{ Let $O_{B_n}(\lambda^A)=\{K(u^A): u^A\in B^A_n\lambda^A\}$ be the $B_n$-orbit of $\lambda^A$,} where $B_n$ is identified with $\langle s^A_is^A_{2n-i}, s^A_n, 1\le i<n\rangle\subseteq \mathfrak{S}_{2n}$, and
  $O_{\mathfrak{S}_{2n}}(\lambda^A)=\{K(u): \mu\in \mathfrak{S}_{2n}\lambda^A\}$ the $\mathfrak{S}_{2n}
  $-orbit of $\lambda^A$.
 Indeed
$$O_{B_n}(\lambda^A)\subseteq O_{\mathfrak{S}_{2n}}(\lambda^A).$$

\begin{obs}\label{maxs} Recall,  for any $1\le i\le 2n-1$, ${f_{i}^A}^{\textrm{max}}(T):={f_{i}^A}^{\varphi_i^A(T)}(T)$  for $T\in \textsf{SSYT}(\lambda^A,2n)$. For $T\in \textsf{KN}(\lambda,n)$, from \eqref{fE}, \eqref{length}, \begin{align*}E({f_{i}}^{\textrm{max}}(T))&=E({f_{i}}^{\varphi_i(T)}(T))={f^A_{2n-i}}^{\varphi_i(T)}{f^A_{i}}^{\varphi_i(T)}(E(T))\\
&={f^A_{2n-i}}^{\varphi^A_{2n-i}(E(T))}{f^A_{i}}^{\varphi^A_i(E(T))}(E(T))={f^A_{i}}^{\textrm{max}}{{f^A}^{\textrm{max}}_{2n-i}}(E(T)), \;\mbox{for $1\le i<n$},
\end{align*}
and
\begin{align*}E({f_{n}}^{\textrm{max}}(T))&=E({f_{n}}^{\varphi_n(T)}(T))={f^A_{n}}^{2\varphi_n(T)}\\
&={f^A_{n}}^{\varphi^A_n(E(T))}(E(T))={f^A_{n}}^{\textrm{max}}(E(T)).
\end{align*}
Therefore, for $T\in \textsf{KN}(\lambda,n)$, we define \begin{align*}{f_i^E}^{\textrm{max}}(E(T))=\begin{cases}{f^A_{i}}^{\textrm{max}}{{f^A}^{\textrm{max}}_{2n-i}}(E(T)),&{\textrm{max}=
 \varphi_i(T)=
\varphi^A_{2n-i}(E(T))=\varphi^A_i(E(T))}, 1\le i<n\\
{f^A_{n}}^{\textrm{max}}(E(T)),&{\textrm{max}=\varphi^A_n(E(T))=2\varphi_n(T)}, i=n,
\end{cases}
\end{align*}
and $E({f_{i}}^{\textrm{max}}(T))={f_i^E}^{\textrm{max}}(E(T)),$ $1\le i\le n$.
\end{obs}

\subsection{ Baker embedding of symplectic keys into $\mathfrak{gl}_{2n}$ keys: virtual keys and virtual Demazure crystals}

We show that the injection $E$ embeds the keys of $\textsf{KN}(\lambda,n)$, that is, the keys in $O(\lambda)$ into the keys in $O_{\mathfrak{S}_{2n}}(\lambda^A)$ restricted to $O_{B_n^A}(\lambda^A)$. More precisely, $E(O(\lambda))=O_{B^A_n}(\lambda^A)$. However the injection $E$ does not need to send a   Demazure crystal of $\textsf{KN}(\lambda,n)$ to a Demazure crystal of $\textsf{SSYT}(\lambda^A,2n)$ parameterized in $B_n^A\lambda^A$, instead $E$ embeds it inside to a such Demazure crystal of $\textsf{SSYT}(\lambda^A,2n)$ . More precisely,
for  $ v\in B_n\lambda$, $E(\mathfrak{B}_v)=\mathfrak{B}_{\tilde v}\cap \textsf{KN}(\lambda,n)$,  where the map $v=s_{i_r}\cdots s_{i_1}\lambda\mapsto \tilde v=\tilde s_{i_r}\cdots \tilde s_{i_1}\lambda^A$ with $s_{i_r}\cdots s_{i_1}\in W^\lambda=B_n^\lambda$ defines a bijection between $B_n\lambda$ and $B_n^A\lambda^A$.
\begin{lema}\label{basics2}
\begin{enumerate}
\item For $1\le j<  n$, the following are equivalent
\begin{itemize}
\item  $\lambda<s_j \lambda$  in $B_n\lambda$.
\item $\lambda^A<s^A_j \lambda^A$ in ${\mathfrak{S}_{2n}}\lambda^A$.
\item $\lambda^A<s^A_{2n-j}\lambda^A$ in ${\mathfrak{S}_{2n}}\lambda^A$.
\item $\lambda^A<s^A_j \lambda^A<s^A_{2n-j}s^A_j \lambda^A  $
\item $\lambda^A<s^A_{2n-j} \lambda^A<s^A_js^A_{2n-j} \lambda^A  $
in ${\mathfrak{S}_{2n}}\lambda^A$.

    \item $\lambda^A<\tilde s_j\lambda^A$  in  $B^A_n\lambda^A$.
    \item $f_j^{\textrm{max}}(K(\lambda))=K(s_j\lambda),\; \textrm{max}=\varphi_j(K(\lambda))=\langle \lambda,\alpha_j\rangle>0$.
\item  ${f_j^E}^{\textrm{max}}(K(\lambda^A))=K(\tilde s_j\lambda^A)$,
   $\textrm{max}=$ $\varphi_j(K(\lambda))=$ $
\varphi^A_{2n-j}(K(\lambda^A))$ $=\varphi^A_j(K(\lambda^A))=$ $\langle \lambda^A,\alpha_j\rangle=$ $\langle \lambda^A,\alpha_{2n-j}\rangle>0$.
\end{itemize}
\item For $j=n$,
\begin{align*} \lambda<s_n\lambda \;\mbox{in $B_n\lambda$}&\Leftrightarrow \lambda^A<\tilde s_n\lambda^A \; \mbox{in ${\mathfrak{S}_{2n}}\lambda^A$}\\
&\Leftrightarrow f_{n}^{\textrm{max}}(K(\lambda))=K( s_n\lambda),\; \textrm{max}=\varphi_n(K(\lambda))
>0\\
&\Leftrightarrow {f_n^E}^{\textrm{max}}(K(\lambda^A))=K(\tilde s_n\lambda^A),\; \textrm{max}=\varphi^A_n(K(\lambda^A))=2\varphi_n(K(\lambda))>0.
\end{align*}
\end{enumerate}
\end{lema}
\begin{proof} $(a)$ Let $1\le j<n$.  From \eqref{length}, Proposition \ref{keystring} and  Remark \ref{basics},
\begin{align*}\lambda<s_j \lambda \; \mbox{ in $B_n\lambda$}&\Leftrightarrow 0<\langle \lambda,\alpha_j\rangle=\varphi_j (K(\lambda))
    =\varphi_j^A (K(\lambda^A))=\langle \lambda^A,\alpha_j\rangle>0\\
    &\Leftrightarrow \lambda^A<s_j \lambda^A
    \end{align*}
    and
    \begin{align*}
    & \lambda^A<s_j \lambda^A\Leftrightarrow 0<\langle \lambda^A,\alpha_j\rangle=\varphi_j^A (K(\lambda^A))=\varphi_{2n-j}^A(K(\lambda^A))=\langle \lambda^A,\alpha_{2n-j}\rangle>0
  \end{align*}
    \begin{align*}
    &\Leftrightarrow \lambda^A<s^A_{2n-j} \lambda^A \\
    &\Leftrightarrow \lambda^A<s^A_{2n-j} \lambda^A<s^A_{2n-j}s^A_j \lambda^A \\
    &\Leftrightarrow \lambda^A<s^A_{j} \lambda^A<s^A_{j}s^A_{2n-j} \lambda^A.
     \end{align*}
     We prove now that $\lambda^A<s^A_{2n-j} \lambda^A<s^A_js^A_{2n-j}\lambda^A \Leftrightarrow \lambda^A<\tilde s_j\lambda^A$. It is enough to prove that $\lambda^A<\tilde s_j\lambda^A\Rightarrow \lambda<s_j \lambda$.
     If $\lambda\nless s_j \lambda$ since $\lambda$ is a partition,
     \begin{align*}\lambda=s_j\lambda&\Leftrightarrow \varphi_j(K(\lambda))=\varphi_j^A(K(\lambda^A))=\varphi_{2n-j}^A(K(\lambda^A))=0\\
     &\Leftrightarrow \langle \lambda^A,\alpha_j\rangle= \langle \lambda^A,\alpha_{2n-j}\rangle=0
     \end{align*}
     \begin{align*}
     &\Leftrightarrow\lambda^A=s_j\lambda^A, \; \lambda^A=s_{2n-j}\lambda^A\Leftrightarrow \lambda^A\nless s_j \lambda^A, \; \lambda^A\nless s_{2n-j} \lambda^A.
      \end{align*}

      Next
      \begin{align*} &\lambda<s_j\lambda\; \mbox{in $B_n\lambda$}\Leftrightarrow\\
 &\Leftrightarrow f_j^{\textrm{max}}(K(\lambda))=K(s_j\lambda),\; {\textrm{max}=\varphi_j(T)>0},\; \mbox{ by Lemma \ref{keystring}}\\
&\Leftrightarrow E(f_j^{\textrm{max}}(K(\lambda))=E(K(s_j\lambda)),\; {\textrm{max}=\varphi_j(T)>0},
\mbox{ since $E$ is an injection}
\end{align*}
\begin{align*}
 &\Leftrightarrow{f^E_{j}}^{\textrm{max}}(K(\lambda^A))={f^A_{j}}^{\textrm{max}}{{f^A}^{\textrm{max}}_{2n-j}}(K(\lambda^A)) = E(K(s_j\lambda)),\\
 &\mbox{ with } {\textrm{max}=\varphi_j(K(\lambda))}=
{\varphi^A_{2n-j}(K(\lambda^A))=\varphi^A_j(K(\lambda^A))>0},\\
&={{f_{2n-j}^A}^{\textrm{max}}}(K(s_j\lambda^A))=K(s_{2n-j}s_j\lambda^A)=K(\tilde s_j\lambda^A)=E(K(s_j\lambda)), \; 1\le j<n,
\end{align*}

$(b)$ Let $j=n$,
\begin{align*}
&\lambda<s_n\lambda \;\mbox{in $B_n\lambda$} \Leftrightarrow f_{n}^{\textrm{max}}(K(\lambda))=K( s_n\lambda),\; \textrm{max}=\varphi_n(K(\lambda))
>0
\end{align*}
\begin{align*}
&\Leftrightarrow {f^E_{n}}^{\textrm{max}}(K(\lambda^A)={f^A_{n}}^{\textrm{max}}(K(\lambda^A)=K(\tilde s_n\lambda^A)=E(K(s_n\lambda)),\\
&\mbox{ with } {\textrm{max}=\varphi^A_n(K(\lambda^A))=2\varphi_n(K(\lambda))>0},\;  j=n,
 \mbox{by Remark \ref{maxs}} \\
 &={f^A_{n}}^{\textrm{max}}(K(\lambda^A))=K(\tilde s_n\lambda^A)=E(K(s_n\lambda)),\;\textrm{max}>0,
  \mbox{ by Lemma \ref{keystring} and Remark \ref{basics}}
\end{align*}
\begin{align*}
 &\Leftrightarrow\lambda^A<\tilde s_n \lambda^A \;\mbox{in $B_n\lambda^A$}. 
\end{align*}
\end{proof}

The  weak Bruhat  graph defined by the vertices of $O(\lambda)=\{K(u): u\in B_n\lambda\}$, in bijection with $B_n^\lambda$, and  with edges the $i$-strings connecting them in $\textsf{KN}(\lambda, n)$ is embedded in $O_{\mathfrak{S}_{2n}}(\lambda^A)$. For $1\le i\le n$ and $u\in B_n\lambda$, there is a non trivial  $i$-chain connecting $K(u)$ to $K(s_iu)$ in $\textsf{KN}(\lambda,n)$ if and only if $u<s_iu$ in $B_n\lambda$, equivalently $f_i^{\textrm{max}}(K(u))= K(s_iu)$ with $\textrm{max}=\varphi_i(K(u))=\left\langle u,\alpha_i^\vee\right\rangle >0$.
More precisely,
\[O(\lambda)=\{{f^{\textrm{max}}_{i_r}}\cdots {f^{\textrm{max}}_{i_1}}(K(\lambda))\mid i_1,\dots, i_r \in [n], ~r\ge 0\}.\]
 We prove that the keys in $\textsf{KN}(\lambda,n)$ are embedded by $E$  into keys in $\textsf{SSYT}(\lambda^A,2n)$. One has $K(\lambda)\in O(\lambda)$ and $E(K(\lambda))=K(\lambda^A)\in O_{B_n}(\lambda^A)$. More generally,  we claim that, $E(O(\lambda))=O_{B_n}(\lambda^A)\hookrightarrow O_{\mathfrak{S}_{2n}}(\lambda^A)$ such that, for $u\in B_n\lambda$,   whenever $\lambda<s_{i_1}\lambda<\cdots<s_{i_r}\cdots s_{i_1}\lambda=u$ for some $s_{i_1},\dots,s_{i_r}\in B_n$, then
 $K(u)$ $\mapsto K(u^A)$ with $\lambda^A<\tilde s_{i_1}\lambda^A<\cdots<\tilde s_{i_r}\cdots \tilde s_{i_1}\lambda^A=u^A$.

\begin{prop}
\begin{enumerate} \label{virtualkeyy}
\item  Let $v\in B_n\lambda$ and  $s_{i_1},\dots,s_{i_r}\in B_n$, $r\ge 0$, such that $s_{i_r}\cdots s_{i_1}\lambda=v$. Then the following are equivalent
\begin{itemize}
\item $\lambda<s_{i_1}\lambda<\cdots<s_{i_r}\cdots s_{i_1}\lambda=v$.
\item $E(
K(s_{i_r}\cdots s_{i_1}\lambda))={
f_{i_r}^E}^{\textrm{max}}\cdots {f^E_{i_1}}^{\textrm{max}}(K(\lambda^A))=K( \tilde s \lambda^A)\in  O_{B_n}(\lambda^A)$, with $\tilde s=\tilde s_{i_r}\cdots \tilde s_{i_1}$ and every $max>0$.
\item $ \lambda^A<\tilde s_{i_1}\lambda^A<\cdots<\tilde s_{i_r}\cdots \tilde s_{i_1}\lambda^A \in B^A_n\lambda^A$.
\end{itemize}
\item $\label{O0}O_{B_n}(\lambda^A)
=\{{f^E_{i_r}}^{\textrm{max}}\cdots {f^E_{i_1}}^{\textrm{max}}(K(\lambda^A))\mid \mbox{  $ i_1,\dots, i_r \in [n]$},\; r\ge 0 \}
=E(O(\lambda)).$

The set $ O_{B_n}(\lambda^A)$ is called the set of \emph{virtual keys} of $\textsf{KN}(\lambda,n)$.
\end{enumerate}
  \end{prop}

\begin{proof} $(a)$ By induction on $r\ge 0$ with base case   $r=0$ and, by  Lemma \ref{basics2},  $r=1$. Let $r>1$ and  suppose the statement true for $r-1$. Consider $ s_{i_{r-1}}\cdots  s_{i_1}\lambda=:u\in B_n\lambda$ and $\tilde s_{i_{r-1}}\cdots \tilde s_{i_1}\lambda^A=:\tilde u\in B_n^A\lambda^A$.
One has
\begin{align*}u<v=s_{i_r}u\;\mbox{ in $B_n\lambda$}\Leftrightarrow K(v)&={f^{\textrm{max}}_{i_r}}K(u), \;\textrm{max}>0,\;\mbox{by
Proposition \ref{keystring}} \\
&\Leftrightarrow E(K(v))=E({f^{\textrm{max}}_{i_r}}K(u)), \;\textrm{max}>0\\
&\Leftrightarrow E(K(v))={f^E_{i_r}}^{\textrm{max}}E(K(u)), \;\textrm{max}>0\\
\end{align*}
\begin{align*}
&\Leftrightarrow E(K(v))={f^E_{i_r}}^{\textrm{max}}{f^E_{i_{r-1}}}^{\textrm{max}}\cdots {f^E_{i_1}}^{\textrm{max}}(K(\lambda^A)), \;\mbox{all $\textrm{max}>0$},
\mbox{by induction}\\
\end{align*}
\begin{align*}
&\Leftrightarrow E(K(v))={f^E_{i_r}}^{\textrm{max}}K( \tilde s_{i_{r-1}}\cdots \tilde s_{i_1} \lambda^A),  \;\mbox{ $\textrm{max}>0$}\\
\end{align*}
\begin{align*}
&\Leftrightarrow E(K(v))=K(\tilde s_{i_{r}} \tilde s_{i_{r-1}}\cdots \tilde s_{i_1} \lambda^A)\\
&\Leftrightarrow \tilde s_{i_{r-1}}\cdots \tilde s_{i_1}\lambda^A < \tilde s_{i_r}\cdots \tilde s_{i_1}\lambda^A \in B^A_n\lambda^A,\;\mbox{by Proposition \ref{keystring}}\\
&\Leftrightarrow \tilde u<\tilde v=\tilde s_{i_r}\tilde u.
\end{align*}
$(b)$ Follows from $(a)$.
\end{proof}

 \begin{obs}$E(\textsf{KN}(\lambda,n))\subseteq \textsf{SSYT}(\lambda^A, 2n)$ and  $f_i^{\textrm{max}}(K(u))= K(s_i\mu)$, with $ u <s_i u$  if and only if
${f^E_i}^{\textrm{max}}(K(u))= K(\tilde s_i u^A)$, with $u^A<\tilde s_i u^A$. This means that there is a $i$-chain of length $\langle u,\alpha_i\rangle>0$ in $\textsf{KN}(\lambda,n)$ connecting $K(u)$ and $K(s_iu)$ if and only if there is a diamond made of two double chains $i,2n-i$ and $  2n-i,i$, each edge of length $\langle u,\alpha_i\rangle>0$,
 connecting $E(K(u))=K(u^A)$ and $\tilde s_i  K(u^A)$ in $\textsf{SSYT}(\lambda^A, 2n)$.
There is  a $n$-chain of length $\langle u,\alpha^\vee_n\rangle>0$  connecting $K(u)$ and $K(s_nu)$ in $\textsf{KN}(\lambda,n)$ if and only if there is a $n,n$-double chain of length $2\langle u,\alpha^\vee_n\rangle>0$ connecting $K(u^A)$ and
 $\tilde s_n K( u^A)=E({f_n}^{\textrm{max}}(K(u)))=(f^{A}_{n})^{\langle u,\alpha^\vee_n\rangle}    (f^{A}_{n})^{\langle u,\alpha^\vee_n\rangle}K(u^A)$ in $\textsf{SSYT}(\lambda^A, 2n)$. See Example \ref{KN(1,0),2}.
 \end{obs}

 Let $s_{i_r}\cdots s_{i_1}$ be a reduced word in  $B_n^\lambda$,   $v=s_{i_r}\cdots s_{i_1}\lambda\in B_n\lambda$,  and $\tilde v=\tilde s_{i_r}\cdots \tilde s_{i_1}\lambda^A\in B_n\lambda^A$, where $\tilde s_{i_r}\cdots \tilde s_{i_1}$ is also a reduced word in $B_n^{A,\lambda^A}$ (and also in $\mathfrak{S}_{2n}^{\lambda^A}$). Then
 \begin{align}
		\mathfrak{B}_v
&=\{f_{i_{r}}^{k_{r}}\cdots f_{i_1}^{k_{1}}(K(\lambda))\mid(k_{r},\ldots,k_{1})\in\mathbb{Z}_{\geq
0}^{r}\}\setminus\{0\}\subseteq \textsf{KN}(\lambda,n)\label{demaz},
	\end{align}
and
\begin{align*}
		E(\mathfrak{B}_v)&
=\{(f^E_{i_{r}})^{k_{r}}\cdots (f^E_{i_1})^{k_{1}}(K(\lambda^A))\mid(k_{r},\ldots,k_{1})\in\mathbb{Z}_{\geq
0}^{\ell}\}\setminus\{0\}\\
&=\{(f^A_{i_{r}})^{k_{r}}(f^A_{2n-i_{r}})^{k_{r}}\cdots (f_{i_1})^{k_{1}}(f_{2n-i_1})^{k_{1}}(K(\lambda^A))\mid(k_{r},\ldots,k_{1})\in\mathbb{Z}_{\geq
0}^{\ell}\}\setminus\{0\}\\
&\subseteq \{(f^A_{i_{r}})^{k_{r}}(f^A_{2n-i_{r}})^{k'_{r}}\cdots (f_{i_1})^{k_{1}}(f_{2n-i_1})^{k'_{1}}(K(\lambda^A))\mid(k_{r},k'_{r}\ldots,k_{1},k'_{1})\in\mathbb{Z}_{\geq
0}^{\ell}\}\setminus\{0\}\\
&=\mathfrak{B}^A_{\tilde v}\subseteq\textsf{SSYT}(\lambda^A, 2n)
\mbox{  where $\tilde v=\tilde s_{i_r}\cdots \tilde s_{i_1}\lambda^A\in B_n\lambda^A$}.
	\end{align*}
For each $v\in B_n\lambda$,   the injection $E$ embeds  the Demazure cystal $\mathfrak{B}_v \subseteq\textsf{KN}(\lambda,n)$ into the Demazure crystal  $\mathfrak{B}^A_{\tilde v} \subseteq\textsf{SSYT}(\lambda^A, 2n)$ with $\tilde v \in B_n\lambda^A$. Henceforth, recalling \eqref{def:virtualcrystal}, we have the following assertion.

\begin{prop}\label{prop:virtualdemazure} With the setup above,
$E(\mathfrak{B}_v)=\mathfrak{B}^A_{\tilde v}\cap E(\textsf{KN}(\lambda,n))$.
\end{prop}
The \emph{virtual Demazure crystals} of $\textsf{KN}(\lambda,n)$ are  $E(\mathfrak{B}_v)=\mathfrak{B}^A_{\tilde v}\cap E(\textsf{KN}(\lambda,n))$, for all $\tilde v \in B_n\lambda^A$. Note $\mathfrak{B}^A_{s_r\cdots s_1\lambda^A}, \mathfrak{B}^A_{s_{2n-r}\cdots s_{2n-1}\lambda^A} \subseteq \mathfrak{B}^A_{\tilde v}$ as Demazure crystals of $\textsf{SSYT}(\lambda^A, 2n)$ whereas this containment does not happen in $E(\textsf{KN}(\lambda,n))$.

\subsection{ {Baker} embedding and crystal dilatation  commute}
Recall \S\ \ref{subsecdilatation} and Proposition \ref{Th_Dila}, (\cite[Proposition 8.3.2]{kash2}).  Given   a positive integer $m$, there is a  unique  embedding of (abstract) crystals $\mathfrak{B}(\lambda)\hookrightarrow \mathfrak{B}(m\lambda)$
such that for any vertex $b\in \mathfrak{B}(\lambda)$ and any path $b={f}_{i_{1}%
}\cdots{f}_{i_{l}}(b_{\lambda})$ in $\mathfrak{B}(\lambda)$
\begin{equation}\theta_m(b)=f_{i_l}^m\cdots f_{i_1}^m(b_{m\lambda}).\label{theta}\end{equation}
In particular, for $\sigma\in W^\lambda$,  $\theta_m(b_{\sigma \lambda})=b_{\sigma m\lambda}$, that is, $\theta_m(O(\lambda))=O(m\lambda)$.
Furthermore, $\varphi_i(\theta_m(b))=m\varphi_i(b)$, $\varepsilon_i(\theta_m(b))=m\varphi_i(b)$ and $\textsf{wt}(\theta_m(b))=m \,\textsf{wt}(b)$, for $i\in I$. This embedding is called {\em $m$-dilatation map} . Since $b_{m\lambda}$ and $b_\lambda^{\otimes m}$ have the same weight $m\lambda$, we may realize this dilatation $\theta_m$ in a canonical way as a crystal embedding of $\mathfrak{B}(\lambda)$ into the connected component of $\mathfrak{B}({\lambda
})^{\otimes m}$ of highest weight $m\lambda$,

\begin{equation}
\theta_{m}:\left\{
\begin{array}
[c]{c}%
\mathfrak{B}(b_{\lambda})\hookrightarrow \mathfrak{B}(b_{\lambda}^{\otimes m})\subset \mathfrak{B}(b_{\lambda
})^{\otimes m}\\
b\longmapsto b_{1}\otimes\cdots\otimes b_{m}%
\end{array}
\right.  %
\end{equation}

When $m$ has enough factors (in general, $m$ is equal to  the least common multiple of the maximal
lengths of the i-chains in $ \mathfrak{B}(\lambda)$), the $m$-dilatation map on $\mathfrak{B}(b_{\lambda})$ transform the vertices into a tensor product of $m$ keys in $O(\lambda)$,

\begin{equation}
\theta_{m}:\left\{
\begin{array}
[c]{c}%
\mathfrak{B}({\lambda})\hookrightarrow \mathfrak{B}(K({\lambda})^{\otimes m})\subset \mathfrak{B}({\lambda
})^{\otimes m}\\
b\longmapsto K(\sigma_1\lambda)\otimes\cdots\otimes K(\sigma_{m}\lambda)%
\end{array}
\right.  \label{embededd}%
\end{equation}
where $\sigma_{1}\ge \ldots\ge \sigma_{m}$ in $W^{\lambda}$, $K^+(b)=K(\sigma_1\lambda)$ and $K^-(b)=K(\sigma_m \lambda)$.
It is important, to recall that the elements $K(\sigma_{1}\lambda)$ and $K(\sigma_{m}\lambda)$ in
$\theta_{m}(T)$ do not then depend on $m$, once we get one $m$ that sends, via $\theta_m$,  all $T\in \mathfrak{B}(\lambda)$ to a tensor product of keys in $\mathfrak{B}(\lambda)$.

That is, the dilated crystal $\theta_m(\mathfrak{B}({\lambda}))$ is  generated in $\mathfrak{B}({m\lambda})$ (equivalently in  $\mathfrak{B}({K(\lambda)}^{\otimes m}$) by acting successively with $f_i^{m}$ on $K(m\lambda)$ (equivalently ${K(\lambda)}^{\otimes m}$) for $i\in I$,
\[\theta_m(\mathfrak{B}(\lambda)=\{f_{i_l}^{m }\cdots f_{i_1}^{m }(K(m\lambda)):({i_l},\dots, {i_1})\in [n]^l, l\ge 0\}\setminus\{0\}\subseteq \mathfrak{B}(m\lambda,n),\]
and the generated vertices in $\mathfrak{B}(K({\lambda})^{\otimes m})\subseteq \mathfrak{B}({K(\lambda
)})^{\otimes m}$ are tensor product of keys of $\mathfrak{B}(\lambda)$ such that  the leftmost and rightmost keys in this factorization are  the right key and the left key respectively of the corresponding vertex in $\mathfrak{B}(\lambda)$. Each $i$-directed edge in $\mathfrak{B}({\lambda})$ corresponds to a $\underbrace{i,\dots,i}_m$-directed edge in $\theta_m(\mathfrak{B}(\lambda)) \subseteq \mathfrak{B}(K({\lambda})^{\otimes m})\simeq \mathfrak{B}(m\lambda)$ and vice-versa. See Figure \ref{dilatation} where the dilated crystal $\theta_6(\textsf{KN}((2,1),2)=\{f_{i_l}^{6 }\cdots f_{i_1}^{6 }(K((12,6))\mid i_1,\dots,i_l\in[1,2]\}\setminus\{0\}\subseteq \textsf{KN}((12,6),2)$ each directed edge is defined by the dilated lowering  operator $f_i^6$ when defined.

\begin{obs} By induction on $l\ge 0$, the base case $l=0$ gives $\theta_m(K(\lambda))=K(\lambda)^{\otimes m}$.
Furthermore, if $f_{i_1}\cdots f_{i_l}(K(\lambda))=T$ in $\textsf{KN}(\lambda,n)$, then

\begin{align}\theta_m(T)&=f_{i_1}^m\cdots f_{i_l}^m(K(\lambda^{\otimes m})=K(\sigma_1\lambda)\otimes\cdots\otimes K(\sigma_{m}\lambda)\nonumber\\
 &=K^+(T)\otimes K(\sigma_2\lambda)\otimes\cdots\otimes K(\sigma_{m-1}\lambda)\otimes K^-(T)\mbox{ in $\mathfrak{B}(K(\lambda^{\otimes m})\simeq \textsf{KN}(m\lambda,n)$},
 \end{align}
with $\sigma_1\ge \sigma_2\ge \cdots\ge \sigma_m$ in $B_n^\lambda$. Then $K^+(T)=K(\sigma_1\lambda)$ and $K^-(T)=K(\sigma_m \lambda)$.
In particular, $\theta_m(K(s_i\lambda))=$ $\theta_m(f_i^{\textrm{max}}(K(\lambda))=$ $f_i^{\textrm{max}\,m}(K(\lambda)^{\otimes m})=$ $
K(s_i\lambda)^{\otimes m}$, where $\textrm{max}=\varphi_i(K(\lambda))$. More generally, from \eqref{tensorphi0}, \eqref{tensorphi1}, $\theta_m(K(\sigma\lambda))=K(\sigma\lambda)^{\otimes m}$ for $\sigma\in B_n^\lambda$.
\end{obs}
Consider the setting of \S \ref{baker} and the following proposition.
\begin{prop}{\em \cite[Corollaire 8.1.6]{kash2}} Let $\lambda$ and $\mu$ two dominant weights. If $V(\nu)$ appears in $V(\lambda)\otimes V(\mu)$  then $V(m\nu)$ appears in $V(m\lambda)\otimes V(m\mu)$ for any positive integer $m$.
\end{prop}

Since $\textsf{SSYT}(\lambda^A,2n)\subseteq \bigotimes_{i=1}^k \textsf{SSYT}(\Lambda^A_i+\Lambda^A_{2n-i},2n)$ then $$\textsf{SSYT}(m\lambda^A,2n)\subseteq \bigotimes_{i=1}^k\textsf{SSYT}(m(\Lambda^A_i+\Lambda^A_{2n-i}),2n)$$ where $\displaystyle m\lambda^A=\sum_{i=1}^k m(\Lambda^A_i+\Lambda^A_{2n-i})=m\sum_{i=1}^k(\Lambda^A_i+\Lambda^A_{2n-i})$ for any integer $m>0$.

In what follows we denote by $\theta_m$  the $m$-dilatation map  either on $\textsf{KN}(m\lambda,n)$ or on $\textsf{SSYT}(\lambda^A,2n)$.
One then has, by definition of $\theta_m$ and $E$,
\begin{align}\textsf{KN}(m\lambda,n)&\underset{\theta_m}\hookrightarrow \textsf{KN}(m\lambda,n)&\nonumber\\
T=f_{i_1}\cdots f_{i_l}(K(\lambda))&\mapsto \theta_m(T)=f^m_{i_1}\cdots f^m_{i_l}(K(m\lambda))&,\nonumber\\
\nonumber\\
\textsf{KN}(m\lambda,n)&\underset{E}\hookrightarrow \textsf{SSYT}(m\lambda^A,2n)&\nonumber\\
\theta_m(T)=f^m_{i_1}\cdots f^m_{i_l}(K(m\lambda))&\mapsto E\theta_m(T)={f_{i_1}^A}^m{(f^A_{2n-i_1})}^m\cdots {f^A_{i_l}}^m{(f^A_{2n-i_l})}^m(K(m\lambda^A))&\label{whatneeded1}
\end{align}

On the other hand,
\begin{align}\textsf{KN}(\lambda,n)&\underset{E}\hookrightarrow\textsf{SSYT}(\lambda^A,2n)&\nonumber\\
T=f_{i_1}\cdots f_{i_l}(K(\lambda))&\mapsto E(T)=f^A_{i_1}f^A_{2n-i_1}\cdots f^A_{i_l}f^A_{2n-i_l}(K(\lambda^A))&
\nonumber \\
\nonumber\\
\textsf{SSYT}(\lambda^A,2n)&\underset{\theta_m}\hookrightarrow \textsf{SSYT}(m\lambda^A,2n)&\nonumber\\
E(T)&\mapsto\theta_mE(T)={f_{i_1}^A}^m{(f^A_{2n-i_1})}^m\cdots {f^A_{i_l}}^m{(f^A_{2n-i_l})}^m(K(m\lambda^A))&,\label{whatneeded12}
\end{align}
\begin{align*}
\mbox{ where  } E(T)=f^A_{i_1}f^A_{2n-i_1}\cdots f^A_{i_l}f^A_{2n-i_l}(K(\lambda^A)) \mbox{ in $E(\textsf{KN}(\lambda,n))$}
\end{align*}
Therefore, from \eqref{whatneeded1} and \eqref{whatneeded12},
\begin{align*}\theta_m(E(T))={f_{i_1}^A}^m{(f^A_{2n-i_1})}^m\cdots {f^A_{i_l}}^m{(f^A_{2n-i_l})}^m(K({\lambda^A}^{\otimes m}))
=E(\theta_m(T)).
\end{align*}

It then follows

\begin{prop}\label{commute}The injections $\theta_m$ and $E$ commute in $\textsf{KN}(\lambda,n)$: $\theta_mE=E\theta_m$.
 \[\vcenter{\hbox{\begin{tikzpicture}[scale=0.7]
\node(tl) at (0,3){{\textsf{KN}$(\lambda,n)$}};
\node(tr) at (5,3){{\textsf{KN}$(m\lambda,n)$}};
\node(bl) at (0,0){{\textsf{SSYT}$(\lambda^A,2n)$}};
\node(br) at (5,0){{\textsf{SSYT}$(m\lambda^A, 2n)$}};
\draw[->](tl)--node[above]{${\theta_m}$}(tr);
\draw[->](bl)--node[below]{$\theta_m$}(br);

\draw[->](tl)--node[left]{$E$}(bl);
\draw[->](tr)--node[left]{$E$}(br);
\end{tikzpicture}}}
\]
\end{prop}

\subsection{Baker embedding and right and left key maps commute}
We show that the composition of the Baker embedding $E$ with the $C_n$ right (respect. left) key map $K^+$ (respect. $K^-$) is equal to the composition of the  $A_{2n-1}$ right (respect. left) key map with the Baker embedding $E$. That is, that for all $T\in \textsf{KN}(\lambda,n)$, $E(K^+(T))=K^+(E(T))$ and  $E(K^-(T))=K^-(E(T))$. Henceforth,  $K_+(T)=E^{-1}(K_+(E(T)))$ and
$K^-(T)=E^{-1}(K^-(E(T)))$ where $E^{-1}$ is the reverse of Baker embedding using the type $A_{2n-1}$ reverse column Schensted insertion \cite{agl}.

\begin{teo} \label{Main}
\begin{enumerate}
\item [(a)]
Let $m$ be a positive  integer such that, for each $T\in \textsf{KN}(\lambda,n)$,  the $m$-dilatation map $\theta_m$
 on $\textsf{KN}(\lambda,n)$  gives $\theta_m(T)=K(\sigma_1\lambda)\otimes\cdots\otimes K(\sigma_{m}\lambda)$ for some $\sigma_1\ge$ $\cdots$ $\ge \sigma_m$  in $B_n^\lambda$.
 Then  \begin{equation}\label{mymainresult} \theta_m(E(T))=E(\theta_m(T))=E(K(\sigma_1\lambda))\otimes\cdots\otimes E(K(\sigma_{m}\lambda))=K(\tilde\sigma_1\lambda^A)\otimes\cdots\otimes K(\tilde\sigma_m\lambda^A),
\end{equation}
where $E(K(\sigma_i\lambda))=K(\tilde\sigma_i\lambda^A)\in O_{B^A_n}(\lambda^A)$, $1\le i\le m$.
\item[(b)] $E(K^+(T))=K^+(E(T))$ and  $E(K^-(T))=K^-(E(T))$ for all $T\in \textsf{KN}(\lambda,n)$.

\end{enumerate}
\end{teo}
\begin{proof}(a) Let $r\ge 0$, $T=f_{i_r}\cdots f_{i_1}(K(\lambda))$ and $\theta_m(T)=K(\sigma_1\lambda)\otimes\cdots\otimes K(\sigma_m\lambda)$ with $\sigma_1\ge\cdots\ge\sigma_m$ in $B_n^\lambda$. That is, by the assumption of our statement, the $m$-dilation map $\theta_m$ on $\textsf{KN}({\lambda},n)$ satisfies
\begin{equation}
\theta_{m}:\left\{
\begin{array}
[c]{c}%
\textsf{KN}({\lambda},n)\hookrightarrow \mathfrak{B}(K({\lambda})^{\otimes m})\subset \textsf{KN}({\lambda,n
})^{\otimes m}\\
T\longmapsto K(\sigma_1\lambda)\otimes\cdots\otimes K(\sigma_{m}\lambda)%
\end{array}\label{assumpt}
\right.  %
\end{equation}
where $\sigma_{1}\ge \cdots\ge \sigma_{m}$ in $B_
{n}^{\lambda}$.

 The proof of \eqref{mymainresult} is by induction on $r\ge 0$. For $r=0$,
\begin{align}&\theta_m(E(K(\lambda)))=\theta_m(K(\lambda^A))=K(\lambda^A)^{\otimes m},  \mbox{ by the proprety of $m$-dilatation on keys, Theorem \ref{Th_Dila}, (a)}\nonumber\\
&=E(\theta_m(K(\lambda)))=E(K(\lambda)^{\otimes m}), \mbox{ by Proposition \ref{commute}}\nonumber\\
&\Rightarrow \theta_m(E(K(\lambda)))=E(K(\lambda)^{\otimes m})=K(\lambda^A)^{\otimes m}=E(K(\lambda))^{\otimes m}.\label{base}
\end{align}

Assume by induction on $r\ge 0$, with base case $r=0$ \eqref{base}, that \eqref{mymainresult} holds
\begin{align}\theta_m(E(T))=E(\theta_m(T))&=E(K(\sigma_1\lambda)\otimes\cdots\otimes K(\sigma_{m}\lambda))\nonumber\\
&=E(K(\sigma_1\lambda))\otimes\cdots\otimes E(K(\sigma_{m}\lambda)) \mbox{ by induction}\nonumber\\
&=K(\tilde\sigma_1\lambda^A)\otimes\cdots\otimes K(\tilde\sigma_{m}\lambda^A)),\mbox{ by Proposition \ref{virtualkeyy}}\label{whatneeded2}\\
&\mbox{ with $\tilde\sigma_1\ge\cdots\ge\tilde\sigma_m$ in $B_n^{A,\lambda^A}$.}\nonumber
 \end{align}
Let $f_j(T)\neq 0$, for some $1\le j\le n$. 
Then,  by definition of the $m$-dilatation map $\theta_m$ on $\textsf{KN}({\lambda},n)$, \eqref{assumpt},   there exists
\begin{equation}1\le q_1<\cdots< q_t\le m,\label{sequence}
\end{equation}
with $\langle\sigma_{q_i}\lambda,\alpha_j^\vee\rangle=\varphi_j(K(\sigma_{q_i}\lambda))>0$, $1\le i\le t$,  $$\displaystyle \sum_{1\le i\le t} \langle\sigma_{q_i}\lambda,\alpha_j^\vee\rangle=m, \mbox{  and } \sigma_1\ge\cdots\ge s_j\sigma_{q_1}\ge \cdots\ge s_j\sigma_{q_t}\ge \cdots\ge \sigma_m \mbox{ in } B_n^\lambda,$$
such that
\begin{align*} \theta_m(f_j(T))&=f_j^m\theta_m(T)=f_j^m(K(\sigma_1\lambda)\otimes\cdots\otimes K(\sigma_m\lambda))\\
&=K(\sigma_1\lambda)\otimes\cdots\otimes K(s_j\sigma_{q_1}\lambda)\otimes\cdots\otimes K(s_j\sigma_{q_t}\lambda)\otimes\cdots\otimes K(\sigma_m\lambda).
\end{align*}
Therefore, considering the $m$-dilatation map $\theta_m$ on $\textsf{SSYT}(\lambda^A,2n)$,
\begin{align}\theta_mE(f_j (T))&=\theta_mf_j^EE(T)=(f_j^E)^m\theta_mE(T) \mbox{ by \eqref{theta}}\nonumber\\
&=(f_j^E)^m(K(\tilde\sigma_1\lambda^A)\otimes\cdots\otimes K(\tilde\sigma_{m}\lambda^A)),  \mbox{ by induction on $r\ge 0$, \eqref{whatneeded2}}\nonumber\\
&=\begin{cases}(f^A_j)^m(f^A_{2n-j})^m(K(\tilde\sigma_1\lambda^A)\otimes\cdots\otimes K(\tilde\sigma_{m}\lambda^A)), \; 1\le j<n\\
(f_n^A)^2(K(\tilde\sigma_1\lambda^A)\otimes\cdots\otimes K(\tilde\sigma_{m}\lambda^A)), \; j=n.
\end{cases}\label{computetensor}
\end{align}

For $\sigma\in B_n$ ($\tilde\sigma\in B^A_n$) and $1\le i\le n$, one has by \eqref{lengths}
\begin{align}\label{weneedit}\langle\sigma\lambda,\alpha_i^\vee\rangle&=\langle \textsf{wt}(K(\sigma\lambda)),\alpha_i^\vee\rangle
&=\begin{cases}\langle \sigma\lambda,\alpha_i\rangle=\varphi_i(K(\sigma\lambda))-\varepsilon_i(K(\sigma\lambda)),\;1\le i<n\\
\langle\sigma\lambda,e_n\rangle=\varphi_n(K(\sigma\lambda))-\varepsilon_n(K(\sigma\lambda)),\; i=n,
\end{cases}
\end{align}
by \eqref{length}
\begin{align*}
&=\begin{cases}\varphi^A_i(EK(\sigma\lambda))-\varepsilon^A_i(EK(\sigma\lambda))=\varphi^A_{2n-i}(EK(\sigma\lambda))-\varepsilon^A_{2n-i}(EK(\sigma\lambda)),\;1\le i<n\\
1/2[\varphi^A_n(EK(\sigma\lambda))-\varepsilon^A_n(EK(\sigma\lambda))],\; i=n,
\end{cases}\\
&\mbox{ by Proposition \ref{virtualkeyy}, }\\
&=\begin{cases}\varphi^A_i(K(\tilde\sigma\lambda^A))-\varepsilon^A_i(K(\tilde\sigma\lambda^A))=\varphi^A_{2n-i}(K(\tilde\sigma\lambda^A))
-\varepsilon^A_{2n-i}(K(\tilde\sigma\lambda^A)),\; 1\le i<n\\
1/2[\varphi^A_n(K(\tilde\sigma\lambda^A))-\varepsilon^A_n(K(\tilde\sigma\lambda^A))],\; i=n,
\end{cases}\\
\mbox{ by \eqref{lengths}}\nonumber\\
&=\begin{cases}\langle \tilde\sigma\lambda^A,\alpha^A_i\rangle=\langle\tilde\sigma\lambda^A,\alpha^A_{2n-i}\rangle,\; 1\le i<n\\
1/2\langle\tilde\sigma\lambda^A,\alpha^A_n\rangle,\; i=n.
\end{cases}
\end{align*}

To compute $ (f_j^E)^m(\theta_m E(T))$ \eqref{computetensor}, we need to apply the tensor product rule in \eqref{tensorphi0} and \eqref{tensorphi1}, and from \eqref{weneedit}, one has
\begin{align}\varphi_j^A(\theta_m E(T))&=\begin{cases}\max\{\varphi_j^A(K(\tilde\sigma_k\lambda^A))+\sum_{k<u\le m}\langle\tilde\sigma_u\lambda^A,\alpha_j^A\rangle\}=\\
\max\{\varphi_{2n-j}^A(K(\tilde\sigma_k\lambda^A))+\sum_{k<u\le m}\langle\tilde\sigma_u\lambda^A,\alpha_{2n-j}^A\rangle\},\;1\le j<n,\\
    \max\{\varphi_{n}^A(K(\tilde\sigma_k\lambda^A))+\sum_{k<u\le m}\langle\tilde\sigma_u\lambda^A,\alpha_{n}^A\rangle\},\;j=n,\\
\end{cases},\nonumber\\
\mbox{ by \eqref{weneedit}}\nonumber\\
&=\begin{cases}\max\{\varphi_j(K(\sigma_k\lambda))+\sum_{k<u\le m}\langle\sigma_u\lambda,\alpha_j\rangle\}=\varphi_j(\theta_m(T)), \; 1\le j<n,\\
2\max\{\varphi_{n}(K(\sigma_k\lambda))+\sum_{k<u\le m}\langle\sigma_u\lambda,e_n\rangle\}=2\varphi_n(\theta_m(T)),\;j=n,\\
\end{cases}\label{alsoneeded}
\end{align}

Therefore from the tensor product rule the sequence $1\le q_1<q_2<\cdots<q_t\le m$ in \eqref{sequence} exists, satisfying
$$\displaystyle \sum_{1\le i\le t} \langle\tilde\sigma_{q_i}\lambda^A,\alpha^A_j\rangle=\begin{cases}m, \: 1\le j<n,\\
2m,\; j=n,\end{cases}$$  and $$ \tilde\sigma_1\ge\cdots\ge \tilde s_j\tilde \sigma_{q_1}\ge \cdots\ge \tilde s_j\tilde\sigma_{q_t}\ge \cdots\ge \tilde\sigma_m \mbox{ in } B_n^{A,\lambda^A},$$ such that
\begin{align*}&(f_{2n-j}^A)^m(f^A_j)^m(K(\tilde\sigma_1\lambda^A)\otimes\cdots\otimes K(\tilde\sigma_{m}\lambda^A))=\\
=&f_{2n-j}^A)^m(K(\tilde\sigma_1\lambda^A)\otimes\cdots\otimes K( s_j^A\tilde \sigma_{q_1}\lambda^A)\otimes\cdots\otimes K( s_j^A\tilde \sigma_{q_t}\lambda^A))\otimes\cdots\otimes K(\tilde\sigma_m\lambda^A))\\
=&K(\tilde\sigma_1\lambda^A)\otimes\cdots\otimes K(s^A_{2n-j} s_j^A\tilde \sigma_{q_1}\lambda^A)\otimes\cdots\otimes K(s^A_{2n-j} s_j^A\tilde \sigma_{q_t}\lambda^A))\otimes\cdots\otimes K(\tilde\sigma_m\lambda^A)\\
=&EK(\sigma_1\lambda)\otimes\cdots\otimes EK( s_j \sigma_{q_1}\lambda)\otimes\cdots\otimes EK( s_j \sigma_{q_t}\lambda))\otimes\cdots\otimes EK(\sigma_m\lambda) \mbox{ by Proposition \ref{virtualkeyy}}
\end{align*}

and

\begin{align*}&(f^A_n)^{2m}(K(\tilde\sigma_1\lambda^A)\otimes\cdots\otimes K(\tilde\sigma_{m}\lambda^A))=\\
=&K(\tilde\sigma_1\lambda^A)\otimes\cdots\otimes K( s_n^A\tilde \sigma_{q_1}\lambda^A)\otimes\cdots\otimes K( s_n^A\tilde \sigma_{q_t}\lambda^A))\otimes\cdots\otimes K(\tilde\sigma_m\lambda^A)\\
=&EK(\sigma_1\lambda)\otimes\cdots\otimes EK( s_n \sigma_{q_1}\lambda)\otimes\cdots\otimes EK( s_n \sigma_{q_t}\lambda))\otimes\cdots\otimes EK(\sigma_m\lambda).
\end{align*}
Thus
\begin{align*}
\theta_mE(f_j (T))&=
E\theta_m(f_j (T))\\
&=\begin{cases}E(K(\sigma_1\lambda)\otimes\cdots\otimes K( s_j \sigma_{q_1}\lambda)\otimes\cdots\otimes K( s_j \sigma_{q_t}\lambda))\otimes\cdots\otimes K(\sigma_m\lambda)),\; 1\le j<n,\\
EK(\sigma_1\lambda)\otimes\cdots\otimes K( s_n \sigma_{q_1}\lambda)\otimes\cdots\otimes K( s_n \sigma_{q_t}\lambda))\otimes\cdots\otimes K(\sigma_m\lambda),\; j=n.
\end{cases}\\
&=\begin{cases}EK(\sigma_1\lambda)\otimes\cdots\otimes EK( s_j \sigma_{q_1}\lambda)\otimes\cdots\otimes EK( s_j \sigma_{q_t}\lambda))\otimes\cdots\otimes EK(\sigma_m\lambda),\; 1\le j<n,\\
EK(\sigma_1\lambda)\otimes\cdots\otimes EK( s_n \sigma_{q_1}\lambda)\otimes\cdots\otimes EK( s_n \sigma_{q_t}\lambda))\otimes\cdots\otimes EK(\sigma_m\lambda),\; j=n.
\end{cases}
\end{align*}

Thereby $K^+(E(f_j(T))=EK^+(f_j(T))$ and $K^-(E(f_j(T))=EK^-(f_j(T))$ from which $(b)$ follows.
\end{proof}

\begin{ex}\label{KN(1,0),2} Let $T\in \textsf{SSYT}(\lambda^A,2n)$. If $K^+(T)\in O_{B_n}(\lambda^A)$  we may not conclude that $T\in E(\textsf{KN}(\lambda,n))$.
For example, below, on the left one has the crystal $\textsf{KN}((1,0),2)=O(B_2)$, in the middle the virtual crystal $E \textsf{KN}((1,0),2)=O(B_2^A)$ and on the right its embedding on the crystal $\textsf{SSYT}(\lambda^A=(2,1,1), 4)$.
$$\begin{array}{ccccccccc}
\includegraphics[height=10cm]{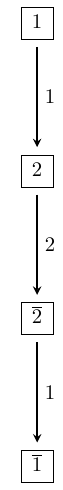}&&&\includegraphics[height=10cm]{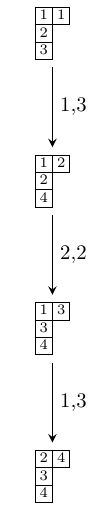}&&&\includegraphics[height=12cm]{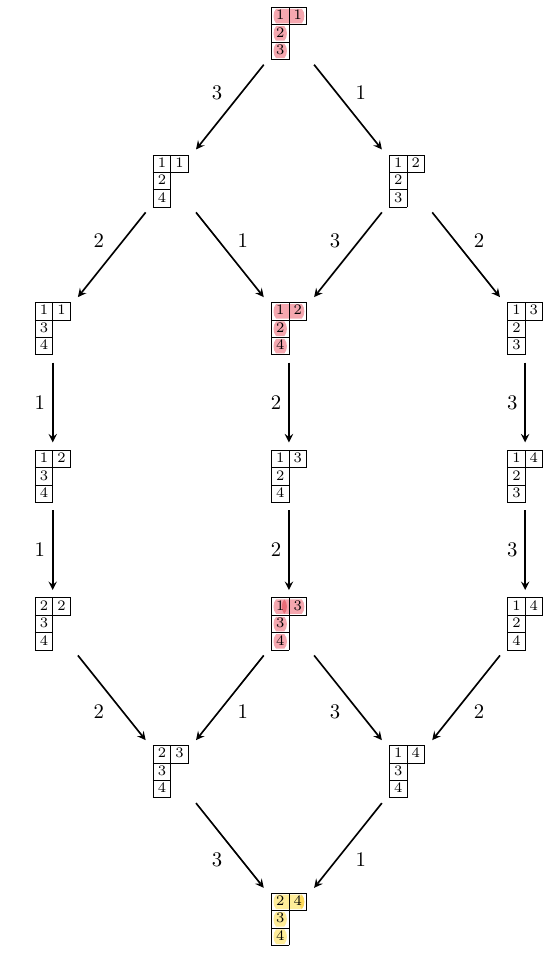}
\end{array}$$
In the crystal $\textsf{SSYT}(\lambda^A=(2,1,1), 4)$ the tableaux \[S=
\YT{0.19in}{}{
	    	{{1},{3}},
			{{2}},
			{{4}}},\quad
T=\YT{0.19in}{}{
	    	{{1},3},
			{{3}},
			{{4}}}=E(\overline 2)\in E \textsf{KN}((1,0),2)
			\] have the same right key $K^+(S)=K^+(T)=K^-(T)=T\in O(B_2^{A})$ while $S\notin E\textsf{KN}((1,0),2)$ $=O(B_2^A).$ See also
\cite[Fig. 5.3]{bump2017crystal}.

\end{ex}

\begin{obs}\label{virtualLS}
If $m$ has enough factors (in general at least equal to the least comom multiple of the $i$-string lengths of $\textsf{KN}(\lambda,n)$), $\theta_m$  defines a bijection between the Kashiwara crystal $\textsf{KN}(\lambda,n)$ and the crystal $\textbf{B}(\lambda)$ of L-S paths of shape $\lambda$ as $B_n$-paths where  each vertex of $\theta_m(\textsf{KN}(\lambda,n))$, $K(\tau_0\lambda)^{\otimes x_0}\otimes (K(\tau_1\lambda))^{\otimes x_1}\ge \cdots\otimes (K(\tau_r\lambda))^{\otimes x_r}$ such that $\tau_0>\cdots>\tau_r$ in the Bruhat order in $W/W_\lambda$ and $x_0+x_1+\cdots+x_r=m$, provides the L-S path in $\textbf{B}(\lambda)$ with parameters $(\tau_0>\tau_1>\cdots>\tau_r;0<a_1=x_0/m<a_2=a_1+x_1/m<\cdots<a_{r}=a_{r-1}+x_{r-}/m <1=a_r+x_r/m)$. See Figure \ref{dilatation}.

If we consider $M>0 $  a multiple of $m$ and with enough factors (in general at least equal  to the least common multiple of the $i$-string lengths of $\textsf{SSYT}(\lambda^A,2n)$) and calculate $\theta_M(\textsf{SSYT}(\lambda^A,2n))$ then  this dilatation of $\textsf{SSYT}(\lambda^A,2n))$ yields bijectively the set $\mathbf{B}(\lambda^A)$ of L-S paths, as $\mathfrak{S}_{2n}$-paths. As a crystal $\mathbf{B}(\lambda^A)$ is isomorphic to
the Kashiwara crystal $\textsf{SSYT}(\lambda^A,2n)$ containing $E\textsf{KN}(\lambda,n)$.
 On the other hand, $\theta_M(E\textsf{KN}(\lambda,n))=E\theta_M(\textsf{KN}(\lambda,n))$, and both $\theta_m(\textsf{KN}(\lambda,n))$ and $\theta_M(\textsf{KN}(\lambda,n))$ provide the same crystal $\textbf{B}(\lambda)$ of L-S paths of shape $\lambda$, as $B_n$-paths. Henceforth,
 the crystal $\textbf{B}(\lambda)$ of L-S paths of shape $\lambda$, as $B_n$-paths, is embedded in the crystal $\mathbf{B}(\lambda^A)$ of L-S paths, as $\mathfrak{S}_{2n}$-paths, where the parameters $\tau_0>\cdots>\tau_r$ are sent to $\tilde\tau_0>\cdots>\tilde\tau_r$ and the new rational parameters depend on the $M$-folder tensor products in $\theta_M(\textsf{SSYT}(\lambda^A,2n))$.
Thanks to Theorem \ref{Main}, $E$  embeds $\textbf{B}(\lambda)$ into the crystal $\mathbf{B}(\lambda^A)$ of L-S paths as $\mathfrak{S}_{2n}$-paths, since for $T\in \textsf{KN}(\lambda,n)$,
$$E\theta_M(T)=E((K(\tau_0\lambda))^{\otimes x'_0}\otimes (K(\tau_1\lambda))^{\otimes x'_1}\otimes \cdots\otimes (K(\tau_r\lambda))^{\otimes x'_r})$$ $$= K(\tilde\tau_0\lambda)^{\otimes x'_0}\otimes (K(\tilde\tau_1\lambda))^{\otimes x'_1}\otimes \cdots\otimes (K(\tilde\tau_r\lambda))^{\otimes x'_r}=\theta_M(E(T))\in\theta_M(\textsf{KN}(\lambda,n)),$$
where $\tilde\tau_i\in B_n^A(\lambda^A)$.
\end{obs}

\subsection{Virtual symplectic right and left keys examples}\label{sec:virtualkeys}
\subsubsection{Baker virtualization of the symplectic Kashiwra crystal} Consider $n=5$, and the KN tableau $T$ of shape $\lambda=\Lambda_4+\Lambda_3=(2,2,2,1,0)$ in $\textsf{KN}(\Lambda_4+\Lambda_3,5)$,
\[
T=\YT{0.19in}{}{
	    	{{1},{2}},
			{{3},{\overline{5}}},
			{{\overline{4}},{\overline{3}}},
			{{\overline{3}}}},\quad { \textsf{wt}(T)=(1,1,-1,-1,-1)}.
\]
Labelling the columns of $T$ from left to right as $C_1$ and $C_2$, we obtain $E(T)$ with shape
$\lambda^A=\Lambda^A_7+\Lambda^A_3+\Lambda^A_6+\Lambda^A_4$ where for convenience we consider the alphabet $[2n]$ represented by $\{1<2<\cdots<n<\bar n<\cdots<\bar 1\}$,:

\begin{align}\label{ET}
\psi(C_2)=\YT{0.19in}{}{
	    	{{1},{2}},
			{{2},{\overline{5}}},
			{{4},{\overline{3}}},
			{{\overline{5}}},
			{{\overline{4}}},
			{{\overline{3}}},
			{{\overline{1}}}},
\psi(C_1)=\YT{0.19in}{}{
	    	{{1},{1}},
			{{2},{3}},
			{{5},{\overline{4}}},
			{{\overline{5}}, {\overline{2}} },
			{{\overline{4}}},
			{{\overline{3}}}} \Rightarrow
{E}(T)=[\emptyset\leftarrow w(\psi(C_2)) \leftarrow w(\psi(C_1))]=
\YT{0.19in}{}{
	    	{{1},{1},{1},{2}},
			{{2},{2},{4},{\overline{5}}},
			{{3},{\overline{5}},{\overline{4}},{\overline{3}}},
			{{5},{\overline{4}},{\overline{1}}},
			{{\overline{5}},{\overline{3}}},
			{ {\overline{4}},{\overline{2}} },
			{{\overline{3}}}}.
\end{align}	

We get $E(T)\in \textsf{SSYT}(\lambda^A, 10)$ and one may apply to $E(T)$ any type $A$ procedure  to compute its right and left keys  in $\textsf{SSYT}(\lambda^A, 10)$. For example, JDT procedure gives

\begin{align} \label{virtualkeys}K_+(E(T))=\YT{0.19in}{}{
	    	{{1},           {2},            {2},            {2}},
			{{2},           {4},           {\overline{5}}, {\overline{5}}},
			{{4},           {\overline{5}},{\overline{3}}, {\overline{3}}},
			{{\overline{5}},{\overline{4}},{\overline{1}}},
			{{\overline{4}},{\overline{3}}},
			{\overline{3}, {\overline{1}} },
			{\overline{1}}}\qquad
 K_-(E(T))=\YT{0.19in}{}{
	    	{{1},           {1},            {1},            {1}},
			{{2},           {2},           {2}, 2},
			{3,           {{5}},{\overline{4}}, {\overline{4}}},
			{{{5}},{\overline{5}},{\overline{3}}},
			{{\overline{5}},{\overline{4}}},
			{\overline{4}, {\overline{3}} },
			{\overline{3}}}
\end{align}

Using $Q_\lambda$, which is uniquely determined by $\lambda=\Lambda_4+\Lambda_3$ \cite[Proposition 1, Corollary 1]{att}, to perform the reverse column Schensted insertion on  $K_+(E(T))$ and $ K_-(E(T))$ respectively provides the
image under $\psi$ of two pairs of KN columns $C'_1$, $C'_2$ and $C"_1$,$C"_2$ respectively. Applying $\psi^{-1}$ to each column results in:

\[
Q_\lambda=\YT{0.19in}{}{
	    	{{1},{4},{11},{15}},
			{{2},{5},{12},{16}},
			{{3},{6},{13},{17}},
			{{7},{14},{18}},
			{{8},{19}},
			{{9},{20} },
			{{10}}} \Rightarrow
\psi(C'_1)=\YT{0.19in}{}{
	    	{2,2},
			{4,\overline 5},
			{\overline 5, \overline 3},
			{\overline 4,\overline 1},
			{{\overline 3}},
			{\overline 1 }},\quad
\psi(C'_2)=\YT{0.19in}{}{
	    	{1,2},
			{2,\overline 5},
			{4, \overline 3},
			{\overline 5},
			{{\overline 4}},
			{\overline 3 },
{\overline 1 }}\overset{Theorem \;\ref{Main}}\Longrightarrow K_+(T)=C'_1C'_2=
\YT{0.19in}{}{
	    	{2,2},
			{\overline 5,\overline 5},
			{\overline 3, \overline 3},
{\overline 1 }}\\
\]
\[\Rightarrow K_+(T)=K(-1,2,-2,0,-2)=K(w\lambda), \mbox{ with $w=[2\,\overline 5\,\overline 3\,\overline 1\, 4]\in B_5$}\]  and $T \in \mathfrak{B}_w(\lambda) \subseteq \textsf{KN}(\lambda,5).$ 

Therefore  $\mathfrak{B}_w(\lambda)$ $\subseteq$ $\textsf{KN}(\lambda,5)$ is  the set KN tableaux in $\textsf{KN}(\lambda,5)$ \emph{standard} (recall  the definition in Section \S \ref{standardlaklitt}) in the symplectic Schubert variety $X_w\subseteq Sp(2n,\mathbb{C})/B$ where $B$ is a Borel subgroup of $Sp(2n,\mathbb{C})$.
Note $K^+(E(T))=E(K^+(T))=K(\tilde w\lambda^A)=K(1,4,0,2,0,4,2,4,0,3)$, the $A_{2n-1}$ key tableau on the LHS of \eqref{virtualkeys}, where $\tilde w=2\,\overline 5\,\overline 3\,\overline 1\, 4\,\overline 4\,1\,3\,5\,\overline 2 $ $\in \mathfrak{S}_{10}$. Therefore, recalling Proposition \ref{prop:virtualdemazure}, $E(T )\in E(\mathfrak{B}_w(\lambda))=$ $\mathfrak{B}_{\tilde w}(\lambda^A) \cap E(\textsf{KN}(\lambda,5))\subseteq $ $ \textsf{SSYT}(\lambda^A,10)$,  and the  SSYT tableaux in $\mathfrak{B}_{\tilde w}(\lambda^A)\cap {E}\textsf{KN}(\lambda,5)$ are \emph{standard} in the Schubert variety $X_{\tilde w}\subseteq Gl(n,\mathbb{C})/{\tilde B}$, $\tilde B$ a Borel subgroup of $Gl(n,\mathbb{C})$ such that  $B=\tilde B\cap Sp(2n,\mathbb{C})$. (The Borel subgroups of $Sp(2n,\mathbb{C})$ are obtained by intersecting the Borel subgroups of $Gl(2n,\mathbb{C})$ with $Sp(2n,\mathbb{C})$. All  Borel subgroups of $G$ are conjugate to each other.)
That is, $T\in \textsf{KN}(\lambda,n)$ is  \emph{standard} on the Schubert variety $X_w\subseteq Sp(2n,\mathbb{C})/B$ if and only if $E(T)$ is \emph{standard} on the Schubert variety $X_{\tilde w}\subseteq Gl(n,\mathbb{C})/{\tilde B}$.

 And
\[Q_\lambda\Rightarrow\psi(C"_1)=\YT{0.19in}{}{
	    	{1,1},
			{2,2},
			{ 5, \overline 4},
			{\overline 5,\overline 3},
			{{\overline 4}},
			{\overline 3 }},\quad
\psi(C"_2)=\YT{0.19in}{}{
	    	{1,2},
			{2,2},
			{3, \overline 4},
			{\overline 5},
			{{\overline 5}},
			{\overline 4 },
{\overline 3 }}\overset{Theorem \;\ref{Main}}\Longrightarrow K^-(T)=C"_1C"_2=
\YT{0.19in}{}{
	    	{1,1},
			{2, 2},
			{\overline 4, \overline 4},
{\overline 3 }}
\]
\[\Rightarrow  K^-(T)=K(2,2,-1,-2,0)=K(\sigma\lambda), \mbox{ with $\sigma=[1\,2\,\overline 4\,\overline 3\, 5]\in B_5$ and $T \in \mathfrak{B}^\sigma(\lambda)$}.\]
Therefore  $\mathfrak{B}^\sigma(\lambda)$ $\subseteq$ $\textsf{KN}(\lambda,5)$ is  the set KN tableaux in $\textsf{KN}(\lambda,5)$ \emph{standard} on the opposite symplectic Schubert variety $X^\sigma\subseteq Sp(2n,\mathbb{C})/B$.

Again $K^-(E(T))=E(K^-(T))=K(\tilde \sigma\lambda^A)=K(1,4,0,2,0,4,2,4,0,3)$, the $A_{2n-1}$ key tableau on the RHS of \eqref{virtualkeys}, where $\tilde \sigma=2\,\overline 5\,\overline 3\,\overline 1\, 4\,\overline 4\,1\,3\,5\,\overline 2 $ $\in \mathfrak{S}_{10}$. Therefore, recalling Proposition \ref{prop:virtualdemazure}, $E(T )\in E(\mathfrak{B}_\sigma(\lambda))=$ $\mathfrak{B}_{\tilde \sigma}(\lambda^A) \cap E(\textsf{KN}(\lambda,5))\subseteq $ $ \textsf{SSYT}(\lambda^A,10)$,  and the  SSYT tableaux in $\mathfrak{B}_{\tilde \sigma}(\lambda^A)\cap {E}\textsf{KN}(\lambda,5)$ are \emph{standard} on the Schubert variety $X_{\tilde \sigma}\subseteq Gl(n,\mathbb{C})/{\tilde B}$, $\tilde B$ a Borel subgroup of $Gl(n,\mathbb{C})$ such that  $B=\tilde B\cap Sp(2n,\mathbb{C})$.
That is, $T\in \textsf{KN}(\lambda,n)$ is  \emph{standard} on the Schubert variety $X_\sigma\subseteq Sp(2n,\mathbb{C})/B$ if and only if $E(T)$ is \emph{standard} on the Schubert variety $X_{\tilde \sigma}\subseteq Gl(n,\mathbb{C})/{\tilde B}$.

Since  $T \in \mathfrak{B}_w(\lambda)$ and $T \in \mathfrak{B}^\sigma(\lambda)$, we conclude that $T$ is \emph{standard} on  the Richardson $X_w\cap X^\sigma $  in full flag variety $Sp(2n,\mathbb{C})/B$. More generally $\mathfrak{B}_w(\lambda)\cap\mathfrak{B}^\sigma(\lambda)$ $\subseteq$ $\textsf{KN}(\lambda,5)$ is  the set KN tableaux in $\textsf{KN}(\lambda,5)$ \emph{standard} on the  symplectic Richardson variety $X_w\cap X^\sigma \subseteq Sp(2n,\mathbb{C})/B$.
\medskip

 Applying the SJDT procedure to $T$ (or the direct way) provides
 \[T=\YT{0.19in}{}{
	    	{{1},{2}},
			{{3},{\overline{5}}},
			{{\overline{4}},{\overline{3}}},
			{\overline{3}}}\longrightarrow
\YT{0.19in}{}{
	    	{{1},{2}},
			{{2},\overline{5}},
			{\overline{4},\overline{3}},
			{,\overline{2}}},\quad
r\YT{0.19in}{}{
	    	{{2}},
			{\overline{5}},
			{\overline{3}},
			{\overline{2}}}=
\YT{0.19in}{}{
	    	{{2}},
			{\overline{5}},
			{\overline{3}},
			{\overline{1}}},\quad
\ell\YT{0.19in}{}{
	    	{{1}},
			{{3}},
			{\overline{4}},
			{\overline{3}}}=
\YT{0.19in}{}{
	    	{{1}},
			{{2}},
			{\overline{4}},
			{\overline{3}}}
 \]
and we get the same output as the procedure by virtualization.
\subsubsection{  Baker virtualization of the  crystal of Lakshmibai-Seshadri paths as $B_n$ paths}For the dilatation of size $m=6$, equal to the least common multiple of the maximal $i$-string lengths of $\textsf{KN}(\lambda,3)$, $\lambda=\Lambda_2+\Lambda_1=(2,1)$,
 we  give in Figure \ref{laks} the 
   crystal $\textbf{B}(\Lambda_)$ of L-S paths of shape $\lambda=(2,1)$ as  $B_2$-paths isomorphic to $\textsf{KN}(\lambda,3)$. To embed it
      in the crystal $\textbf{B}(\lambda^A)$ of L-S paths of shape $\lambda^A=\Lambda^A_3+\Lambda^A_1+2\Lambda^A_2$ as $\mathfrak{S}_{4}$-paths, we may follow Remark \ref{virtualLS}. Embed, by $E$, $\textsf{KN}(\lambda,3)$ in $\textsf{SSYT}(\lambda^A,6)$ and choose $M$, with enough factors, in general     the least common multiple of $m$ and the $i$-string lengths of $\textsf{SSYT}(\lambda^A,6)$, to apply the dilatation of size $M$ to $\textsf{SSYT}(\lambda^A,6)$, or dilate by $M$, as just defined, $\textsf{KN}(\lambda,3)$ and embed it into $\textsf{SSYT}(M\lambda^A,6)$ by $E$.

\section{Final remarks} \label{sec:final}Recently  a new method to compute keys was provided in \cite{krv23}. Calculations on KN tableaux support evidence that this type A new method can be adapted to type KN tableaux using the split form.

	\section{Acknowledgements}
This work was partially supported by the Centre for Mathematics of the University of Coimbra (funded by the Portuguese Government through FCT/MCTES, DOI 10.54499/UIDB/00324/2020). The second author (JMS)  was  supported by FCT, through the grant PD/BD/142954/2018, under POCH funds, co-financed by the European Social Fund and Portuguese National Funds from MCTES.
This
work has been partially conducted when the second author (JMS) was under  PhD supervision  by the first author (OA), and a member of the Centre for Mathematics of
 University of Coimbra.

 The first author is grateful to Thomas Gobet,  C\'edric Lecouvey, Cristian Lenart, Jake Levinson, Peter Littelmann, and Travis Scrimshaw for enlightening discussions. She also thanks to Inês Rodrigues for a Latex drawing.

	\printbibliography


	
\end{document}